\newcommand{\vech}[1]{\vec{#1}_h}
\newcommand{\dt}{\partial_t}
\newcommand{\dv}{\mathrm{div}\,}
\newcommand{\dvh}{\mathrm{div}_h\,}
\newcommand{\nablah}{\nabla_h}
\newcommand{\deltah}{\Delta_h}
\newcommand{\dz}{\partial_z}
\newcommand{\inth}{\int_{\Omega_h}}
\newcommand{\intw}{\int_{\Omega}}
\newcommand{\idxh}{\,d\vec{x}_h}
\newcommand{\idx}{\,d\vec{x}}
\newcommand{\subeqref}[2]{$\eqref{#1}_{#2}$}
\newcommand{\abs}[2]{\bigl| #1 \bigr|^{#2}}
\newcommand{\norm}[2]{\bigl\Arrowvert #1 \bigr\Arrowvert_{#2}}
\newcommand{\hnorm}[2]{\bigl| #1 \bigr|_{#2}}
\newcommand{\Lnorm}[1]{L^{#1}}
\newcommand{\Hnorm}[1]{H^{#1}}
\newcommand{\hHnorm}[1]{H^{#1}_h}
\newtheorem{proposition}{Proposition}
\newtheorem{lemma}{Lemma}
\newtheorem{theorem}{Theorem}
\theoremstyle{remark}
\newtheorem{remark}{Remark}
\numberwithin{equation}{section}
\title{Asymptotic stability of the equilibrium for the free boundary problem of a compressible atmospheric primitive model with physical vacuum}
\author{
Xin Liu
\footnote{Department of Mathematics, Texas A\&M University, College Station, TX 77843-3368.
\href{mailto:xliu23@tamu.edu}{xliu23@tamu.edu}
}, 
\quad 
Edriss S. Titi
\footnote{Department of Mathematics, Texas A\&M University, College Station, TX 77843-3368, USA; Department of Applied Mathematics and Theoretical Physics, University of Cambridge, Cambridge CB3 0WA UK; also Department of Computer Science and Applied Mathematics, Weizmann Institute of Science, Rehovot 76100, Israel.
\href{mailto:titi@math.tamu.edu}{titi@math.tamu.edu} and \href{mailto:Edriss.Titi@maths.cam.ac.uk}{Edriss.Titi@maths.cam.ac.uk}
},
\quad and \quad
Zhouping Xin
\footnote{
	The Institute of Mathematical Sciences, The Chinese University of Hong Kong, Shatin, New Territories, Hong Kong, \href{mailto:zpxin@ims.cuhk.edu.hk}{zpxin@ims.cuhk.edu.hk}
}
}
\date{\today}
\begin{document}
\maketitle
\begin{abstract}
	This paper concerns the large time asymptotic behavior of solutions to the free boundary problem of the compressible primitive equations in atmospheric dynamics with physical vacuum. Up to second order of the perturbations of an equilibrium, we have introduced a model of the compressible primitive equations with a specific viscosity and shown that the physical vacuum free boundary problem for this model system has a global-in-time solution converging to an equilibrium exponentially, provided that the initial data is a small perturbation of the equilibrium. More precisely, we introduce
	a new coordinate system by choosing the enthalpy (the square of sound speed) as the vertical coordinate, and 
	thanks to the hydrostatic balance, the degenerate density at the free boundary admits a representation with separation of variables in the new coordinates. Such a property allows us to establish horizontal derivative estimates without involving the singular vertical derivative of the density profile, 
	which plays a key role in our analysis. 
	\bigskip
	{\par\noindent\bf Keywords: } Hydrostatic approximations, Compressible flows, Asymptotic stability, Free boundary problem, Physical vacuum, Primitive equations, Modified viscosity, Atmospheric dynamics
	{\par\noindent\bf MSC2020: } 35Q35, 76E20, 76N10, 86A10
\end{abstract}
	\tableofcontents

\section{Introduction}

\subsection{Compressible flows with physical vacuum}
The dynamics of compressible flows is governed by the compressible Euler or Navier-Stokes system, describing the conservations of mass, momentum, and energy. See, e.g., \cite{Feireisl2009a,Feireisl2004,Lions1996}. In particular, for an isentropic flow, under the influence of gravity, one has that
\begin{equation*}\tag{CNS}\label{CNS}
	\begin{cases}
		\dt \rho + \dv (\rho u) = 0,\\
		\dt (\rho u) + \dv (\rho u\otimes u) + \nabla P(\rho) = \mathcal D_v - \rho g \vec{e}_3,
	\end{cases}
\end{equation*}
where $ \rho \in [0,\infty), u \in \mathbb R^3 $ are the scalar and vector-valued functions, representing the density and the velocity field of the compressible flow, respectively. Here $ P(\rho) = \rho^\gamma $ is the pressure in isentropic flows, for $ \gamma > 1 $. $ \mathcal D_v $ represents the viscous term in the Navier-Stokes equations, and $ \vec{e}_3 = (0,0,1)^\top $. 
We consider the stationary equilibrium state of system \eqref{CNS}, $ (\rho_e, u_e) \equiv (\bar\rho, 0) $ with $ \bar\rho = \bar \rho (z) $ satisfying
\begin{equation*}
	\nabla P(\bar\rho) = - \bar\rho g \vec{e}_3.
\end{equation*}
Then it is clear that in the subset $ \lbrace \bar\rho > 0\rbrace $,
\begin{equation*}
	- \infty < \dz P'(\bar\rho) = \gamma \dz \bar\rho^{\gamma-1} < 0.
\end{equation*}
In particular, when vacuum occurs, i.e., $ \lbrace \bar\rho = 0 \rbrace \neq \emptyset $, $ \bar\rho $ is only H\"older continuous across the gas-vacuum interface. 

In this work, the stationary density profile is taken to be
\begin{equation}\label{def:rho-bar}
	\bar\rho^{\gamma-1} = \dfrac{\gamma-1}{\gamma} g (1 - z),
\end{equation}
where $ \lbrace z=0 \rbrace $ and $ \lbrace z=1 \rbrace $ is the solid ground and the stationary gas-vacuum interface, respectively.

Such a kind of singularity/degeneracy is referred to as the physical vacuum and it appears widely in compressible flows connected to vacuum, e.g., the flows in porous media \cite{Barenblatt1953,Liu1996} and configurations of the gaseous stars \cite{Chandrasekhar1958,KUAN1996,Fu1998}. We also refer to related works in \cite{Chanillo1994,Chanillo2012,Luo2008a,Luo2009b,Caffarelli1980}. Such a singularity makes the study of the evolutionary problem more challenging. As pointed out by T.-P. Liu in  \cite{Liu1996}, the standard symmetrizing method of hyperbolic systems does not apply in the case of compressible Euler equations with physical vacuum density. Essentially, on the gas-vacuum interface, the compressible Euler system loses the hyperbolicity and {\it the characteristic surfaces hit the vacuum surface tangentially and then bounce back}. The local well-posedness of the free boundary problem with physical vacuum is developed in \cite{Jang2009b,Jang2015,Jang2010,LuoXinZeng2014,Gu2012,Gu2015,Coutand2012,Coutand2011a,Coutand2010}. 
In the case of one-dimensional motion or spherically symmetric motion, the long time dynamics has been investigated for the compressible Euler equations with damping \cite{ZengHH2017,ZengHH2015a}, the compressible Navier-Stokes equations \cite{ZengHH2015,Luo2000a,XL2018}, and the gaseous star problem \cite{Hadzic2016,Jang2014,Jang2013a,LuoXinZeng2016,LuoXinZeng2015,XL2018a, XL2018a-2}. Without the symmetry assumption, see, e.g., \cite{Hadzic2018,Shkoller2019,Sideris2017} for the study of the compressible Euler equations, and \cite{ZengHH2021a,ZengHH2021b,ZengHH2022} for the study of the compressible Euler equations with damping.

On the other hand, the asymptotic stability of the physical vacuum profile \eqref{def:rho-bar} with general perturbation is still open, for either the compressible Euler or Navier-Stokes equations. The main difficulty is that any spatial tangential/horizontal derivative of the momentum equations \subeqref{CNS}{2} will involve derivative of $ \rho $ along the singular/gradient direction ($ \nabla \rho $), regardless how small the perturbation is. {In this paper, we consider a compressible model in the context of atmospheric dynamics, and establish the asymptotic stability result for the underlying equilibrium. }

\bigskip

Specifically, we study system \eqref{CNS} with the {\bf hydrostatic approximation}, which can be obtained by performing a small aspect ratio (between vertical and horizontal scales) limit. 
The resulting equations are referred to as the compressible primitive equations (CPE) of atmospheric dynamics (c.f. \cite{Ersoy2011a}, or see \eqref{isen-CPE-fb}, below). Similarly, as in \eqref{CNS}, the physical vacuum profile $(\rho_e, u_e)  = (\bar\rho, 0) $ with $ \bar\rho $ as in \eqref{def:rho-bar} is a stationary solution to the CPE. We will investigate the nonlinear stability of such a profile and establish a global solution for the compressible flow with the physical vacuum singularity in multi-dimensional space without imposing symmetry. 

The hydrostatic approximation, i.e., formally replacing the equation of the vertical momentum in \eqref{CNS} by the hydrostatic balance equation (or quasi-static equilibrium equation)
\begin{equation}\label{eq:hydrostatic}
	\partial_z P(\rho) = - \rho g,
\end{equation}
is used to model flows with a much larger horizontal scale than the vertical scale such as the case of atmospheric flows. Indeed, the hydrostatic balance approximation turns out to be highly accurate for atmosphere and is a fundamental equation in the atmospheric science (see, e.g., \cite{Lions1992,Richardson1965,Washington2005}). 

The major stabilizing feature of the CPE in comparison to \eqref{CNS} is that, the hydrostatic balance \eqref{eq:hydrostatic} indicates that the density is always monotone in the vertical direction. This inspires us to introduce the change of coordinates \eqref{new-coordinates}, below, which is the key starting point of this work. 

\bigskip

The first mathematical study of the CPE can be tracked back to Lions, Temam, and Wang \cite{Lions1992}. By formulating the equations in the pressure coordinates ($ p $-coordinates), the resulting equations are in the form of the incompressible primitive equations (referred to as the primitive equations, or the PE in the following). In yet another work \cite{JLLions1992}, the same authors established the PE as the hydrostatic approximation of the Boussinesq equations. They showed the existence of global weak solutions and therefore indirectly studied the CPE. See \cite{Lions2000,JLLions1994} for more works by the authors. 
With fixed boundaries, Gatapov, Kazhikhov, Ersoy, and Ngom constructed a global weak solution to the two-dimensional CPE in \cite{Ersoy2012,Gatapov2005}. The uniqueness of such weak solutions is shown in \cite{JiuLiWang2018}. Meanwhile, Ersoy, Ngom, Sy, Tang, and Gao studied the compactness/stability of sequences of weak solutions to the CPE with degenerate viscosities in \cite{Ersoy2011a,Tang2015}.
In \cite{LT2018b}, we show the existence of global weak solutions of the CPE with degenerate viscosities satisfying the entropy conditions. The same result is independently shown in \cite{WangDouJiu2020}. Also, in \cite{LT2018a}, we show the local well-posedness of strong solutions to the CPE.


\subsection{The change of coordinates and a viscous compressible primitive system}

This work is a contribution to our program of studying the compressible primitive equations. We focus here on investigating the long time dynamics of the CPE with the free boundary condition, 
and the stability of the equilibrium state with physical vacuum. 
The major benefit given by the hydrostatic approximation is that the enthalpy (the square of the sound speed) $ \partial_\rho P(\rho) $ is determined by the height of the moving boundary \eqref{Square-SS}, below. Consequently, by taking the ratio between the enthalpy and its value on the solid ground as the vertical coordinate (see \eqref{new-coordinates}, below) 
together with the equation for the square of the sound speed and the new ``vertical velocity'' \eqref{def:new-vertical-v}, below, we will arrive at a reformulation of the free boundary problem in a fixed domain (see \eqref{rfeq:isen-CPE-fb}, below). Such a formulation introduces a system that describes explicitly the physical vacuum in the degenerate direction, a separation of variables of the density, and a simple representation of the dynamical boundary condition (see \eqref{new-bc-01}, below). 
However, the loss of evolutionary equation for the vertical velocity, due to the hydrostatic balance, brings new degeneracy to the system. We overcome this difficulty through a complicated, but straightforwards ``vertical velocity'' formula \eqref{eq:verticalvelocity}, below. Moreover, as we will explain later, since we only focus on the global stability theory, we will {\bf linearize the viscosities and the boundary condition} in our equations. This is done by neglecting the $ \mathcal O (\varepsilon^2) $ terms, with $ \varepsilon $ representing the size of perturbation. However, 
this disrupts the conservation of momentum. To avoid this shortcoming, we add some momentum correction 
term on the right-hand side of the momentum equations. 
Such a correction is important for establishing the Poincar\'e type inequality (Lemma \ref{lm:poincare-ineq}) and showing the asymptotic stability (Proposition \ref{prop:decay-est}). In fact, we show that the perturbation around the equilibrium state will decay exponentially for $ \gamma > 4/3 $, which incidentally is the same stability threshold established for the gaseous star problem (see \cite{LuoXinZeng2015,LuoXinZeng2016,Jang2014,Lin1997}). We refer to such a system with the momentum correction term as the conservative system. In section \ref{sec:non-conservative}, we also investigate a non-conservative system with only the linear viscosity terms.

\bigskip

We remark that the aforementioned reformulation of the free boundary problem of the compressible primitive equations has been introduced and presented earlier in \cite{LT2018a}. Below, we will recall it for the sake of completeness. Indeed, by denoting $ v, w $ as the horizontal and vertical components of the velocity $ u $ (i.e. $ u = (v, w) \in \mathbb R^2 \times \mathbb R$), the compressible primitive system with moving free boundary is given by:
\begin{equation*}\tag{FBCPE}\label{isen-CPE-fb}
	\begin{cases}
		\dt \rho + \dvh (\rho v) + \dz (\rho w) = 0 & \text{in} ~ \Omega(t), \\
	\dt (\rho v) + \dvh (\rho v \otimes v) + \dz (\rho w v) + \nablah P = \mathcal D_v & \text{in} ~ \Omega(t),\\
	\dz P + \rho g = 0 & \text{in} ~ \Omega(t),
	\end{cases}
\end{equation*}
where $ P = \rho^\gamma, \gamma > 1 $ and $ \mathcal D_v = \mathcal D_v(\nabla v) $ is the eddy viscosity term given by
\begin{equation*}
	\mathcal D_v := \mu \dvh (\nablah v+ \nablah v^\top) + \lambda \nablah \dvh v + \mu' \partial_{zz} v,
\end{equation*}
with the constant viscosity coefficients $ \mu, \lambda, \lambda' $ satisfying the Lam\'e conditions $ \mu > 0, \mu' > 0, \mu + \lambda > 0 $. 
Here $ \Omega(t) = \lbrace \vec x = (\vech{x}, z) \in \mathbb T^2 \times (0 , Z(\vech{x},t)) \rbrace $ is the evolving domain, with $ z = 0 $ being the bottom solid ground and $ z = Z(\vech{x},t) $ being the interface (free boundary) connecting the flows and the vacuum. 

System \eqref{isen-CPE-fb} should be complemented with 
the following boundary conditions:
\begin{equation}\label{bc-fb}
	\begin{gathered}
	- \dt Z - v|_{z =  Z(\vech{x},t)} \cdot \nablah Z + w|_{z =  Z(\vech{x},t)}=0 ,\\
	 P|_{z =  Z(\vech{x},t)} = 0,
	w|_{z = 0} = 0, 
	\dz v|_{z=0} = 0, \\ 
	\biggl( \begin{array}{cc}
		\mu (\nablah v+\nablah v^\top) + \lambda \dvh v \mathbb I_{2} & \mu' \dz v \\
		0 & 0
	\end{array}\biggr)\Big|_{z=Z}\vec{N}(\vech{x},t) = 0, \\
	\text{where} ~ \vec{N}(\vech{x},t) = \biggl( \begin{array}{c}
		- \nablah Z \\
		1
	\end{array}\biggr). 
	\end{gathered}
\end{equation}
In this work, we assume that $ \mu = \mu' > 0 $ and $ \lambda > 0 $.
%

The hydrostatic balance equation \subeqref{isen-CPE-fb}{3} yields that the enthalpy (the square of sound speed) can be calculated as 
\begin{equation}\label{Square-SS}
\begin{aligned}
	\rho^{\gamma-1}(\vech x, z, t) & = \dfrac{\gamma-1}{\gamma} g( Z(\vech{x},t)-z) + \rho^{\gamma-1}(\vech{x}, Z(\vech{x},t),t)\\
	& = \dfrac{\gamma-1}{\gamma} g(Z(\vech{x},t)-z),
\end{aligned}
\end{equation}
where we use in the above that $ \gamma > 1 $ and that the density at the free boundary $ z = Z(\vech x, t) $ vanishes. 
On the other hand, from \subeqref{isen-CPE-fb}{1}, we have
\begin{equation}\label{CPE-fb-001}
	\dt \rho^{\gamma-1} + v \cdot \nablah \rho^{\gamma-1} + w \dz  \rho^{\gamma-1} + (\gamma-1) \rho^{\gamma-1}(\dvh v + \dz w) = 0.
\end{equation}
Now, we define the new coordinates as
\begin{equation}\label{new-coordinates}
	\begin{gathered}
		 \vech{x}' = (x',y') := \vech{x} = (x,y), ~ t' = t, \\
		 z' := \dfrac{\rho^{\gamma-1}}{\rho^{\gamma-1}|_{z=0}} =  \dfrac{Z(\vech{x},t)-z}{Z(\vech{x},t)}= 1 - \dfrac{z}{Z(\vech{x},t)} .
	 \end{gathered}
\end{equation}
In this new coordinate system, $ \lbrace z' = 0 \rbrace $ and $ \lbrace z' = 1 \rbrace $ correspond to the upper moving boundary connecting the fluid to the vacuum and the bottom fixed boundary, 
respectively. 
It is easy to verify that $ \nabla_{\vech{x}'} Z = \nabla_{\vech{x}} Z, \dt Z = \partial_{t'} Z $ and the following chain rules: 
\begin{align*}
	\partial_{x} & = \partial_{x'} + \biggl(\dfrac{z}{Z^2} \partial_x Z \biggr)  \partial_{z'} = \partial_{x'} + \biggl( \dfrac{1- z'}{Z} \partial_{x'} Z \biggr) \partial_{z'}, \\
	\partial_{y} & = \partial_{y'} + \biggl(\dfrac{z}{Z^2} \partial_y Z\biggr)  \partial_{z'} = \partial_{y'} + \biggl(\dfrac{1-z'}{Z} \partial_{y'} Z\biggr) \partial_{z'}, \\
	\partial_z & = - \dfrac{1}{Z} \partial_{z'} ,\\
	\partial_t & = \partial_{t'} + \biggl(\dfrac{z}{Z^2} \partial_t Z\biggr)  \partial_{z'} = \partial_{t'} + \biggl(\dfrac{1-z'}{Z} \partial_{t'} Z\biggr) \partial_{z'}, \\
	\vec x' = (\vech{x}', z') & \in \Omega_h \times(0,1) =: \Omega'_e.
\end{align*}
Recall that $ \Omega_h = \mathbb T^2 $ is the horizontal domain. 
Now we rewrite equations \eqref{isen-CPE-fb} and \eqref{CPE-fb-001} in the new coordinates $ (\vech{x}',z',t') $ as follows:
\begin{equation*}
	\begin{cases}
		\partial_{t'} Z + v \cdot \nabla_{\vech{x}'} Z - w + (\gamma-1) z' \bigl( Z \dv_{\vech{x}'} v \\
		~~~~ ~~~~ ~~~~ - (z'-1) \nabla_{\vech{x}'} Z \cdot \partial_{z'} v - \partial_{z'} w\bigr) = 0 & \text{in} ~ \Omega'_e,\\
		\rho \bigl(\partial_{t'} v + v \cdot \nabla_{\vech{x}'} v + \dfrac{1}{Z}( -(z'-1) \partial_{t'}Z - (z'-1) v \cdot \nabla_{\vech{x}'} Z\\
		~~~~ ~~~~ ~~~~ - w )\partial_{z'} v  + g \nabla_{\vech{x}'} Z  \bigr) = \mathcal D_v'& \text{in} ~ \Omega'_e,\\
		\partial_{z'} Z = 0 & \text{in} ~ \Omega'_e,
	\end{cases}
\end{equation*}
where \begin{equation}\label{def:density} \rho = \biggl( \dfrac{\gamma-1}{\gamma} g z' Z \biggr)^{1/(\gamma-1)},\end{equation}
and $ \mathcal D_v' $ represents the viscosity $ \mathcal D_v $ in the $ (x',y',z',t') $-coordinates. 

Moreover, we define the new ``vertical velocity''
\begin{equation}\label{def:new-vertical-v}
	W(\vech x', z', t') := \dfrac{-(z'-1) \partial_{t'}Z - (z'-1) v \cdot \nabla_{\vech{x}'} Z - w }{Z}.
\end{equation}
Then dropping the prime sign yields the following system
\begin{equation}\label{rfeq:isen-CPE-fb}
	\begin{cases}
		\gamma z(\dt Z + v \cdot \nablah Z) + ( \gamma-1) z Z ( \dvh v + \partial_z W) + Z W = 0 & \text{in} ~ \Omega_e,\\
		\rho (\dt v + v \cdot \nablah v + W \partial_z v + g \nablah Z) = \mathcal D_v& \text{in} ~ \Omega_e,\\
		\partial_z Z = 0 & \text{in} ~ \Omega_e,
	\end{cases}
\end{equation}
with $ \Omega_e = \Omega_h \times (0,1) = \mathbb T^2 \times (0,1) $ and $ \rho = \bigl( \frac{\gamma-1}{\gamma}gz Z \bigr)^{1/(\gamma-1)} $ given by \eqref{def:density}. 
Meanwhile, from the definitions in \eqref{new-coordinates} and \eqref{def:new-vertical-v}, the boundary conditions in \eqref{bc-fb} yield
\begin{equation}\label{new-bc-01}
	W|_{z = 0 , 1}=0.
\end{equation}
We recall that $ Z $ is the height of the free boundary and is also proportional to the enthalpy on the ground, from which the pressure (potential) on the ground can be recovered as $$ P_s = \bigl( \dfrac{\gamma-1}{\gamma} g Z\bigr)^{\frac{\gamma}{\gamma-1}}. $$ $ W $ serves as if it is the new ``vertical velocity'' in this formulation. 
The equilibrium state $ (\rho_e,u_e) = (\bar\rho, 0) $ is represented by \begin{equation}\label{equilibrium} (Z_e,v_e,W_e) \equiv (1,0,0) \qquad \text{or} \qquad  ( Z_e, v_e) \equiv (1,0) .\end{equation}

\bigskip

Next, multiplying \subeqref{rfeq:isen-CPE-fb}{1} with $  z^{1/(\gamma-1) - 1} $ leads to 
\begin{equation}\label{eq:25Feb2022-1}
	\gamma z^{\frac{1}{\gamma-1}}(\dt Z + v \cdot \nablah Z) + (\gamma-1) z^{\frac{1}{\gamma-1}} \dvh v Z + (\gamma-1) Z \partial_z(z^{\frac{1}{\gamma-1}} W) = 0.
\end{equation}
Noticing that since $ \partial_z Z = 0 $, integrating the above equation with respect to the vertical variable $ Z $ over the interval $ (0,1) $ and using \eqref{new-bc-01} yields
\begin{equation}\label{new-interface}
	(\gamma-1) \dt Z + \gamma \biggl(\overline{z^{\frac{1}{\gamma-1}} v}\biggr) \cdot \nablah Z + (\gamma-1) \biggl(\overline{ z^{\frac{1}{\gamma-1}} \dvh v}\biggr) Z =0,
\end{equation}
where we have used and will use the notation
\begin{equation}\label{averaging}
	\overline{f} := \int_0^1 f(z) \,dz.
\end{equation}
Next, we eliminate $ \dt Z $ by employing \eqref{eq:25Feb2022-1} and \eqref{new-interface} to obtain
\begin{align*}
	& (\gamma-1) Z \partial_z (z^{\frac{1}{\gamma-1}} W) = \dfrac{\gamma}{\gamma-1} z^{\frac{1}{\gamma-1}} \bigl\lbrack \gamma \bigl(\overline{z^{\frac{1}{\gamma-1}} v}\bigr) \cdot\nablah Z + ( \gamma-1) \bigl(\overline{z^{\frac{1}{\gamma-1}} \dvh v}\bigr) Z \bigr\rbrack \\
	& ~~~~ - \gamma z^{\frac{1}{\gamma-1}} v \cdot\nablah Z - (\gamma-1) z^{\frac{1}{\gamma-1}} \bigl(\dvh v\bigr) Z.
\end{align*}
Therefore, we have the following representation of $ W $, after applying the fundamental theorem of calculus in the $ z $-variable and using \eqref{new-bc-01},
\begin{equation}\label{new-verticle-v}
	\begin{aligned}
		& (\gamma-1) z^{\frac{1}{\gamma-1}} Z W= z^{\frac{\gamma}{\gamma-1}}\bigl\lbrack \gamma \bigl(\overline{z^{\frac{1}{\gamma-1}} v}\bigr) \cdot\nablah Z + ( \gamma-1) \bigl(\overline{z^{\frac{1}{\gamma-1}} \dvh v}\bigr) Z \bigr\rbrack \\
		& ~~ - \gamma \biggl(\int_0^z \xi^{\frac{1}{\gamma-1}}v(\vech{x},\xi,t) \,d\xi\biggr) \cdot\nablah Z \\
		& ~~ - (\gamma-1) Z  \biggl(\int_0^z \xi^{\frac{1}{\gamma-1}}\dvh v(\vech{x},\xi,t) \,d\xi \biggr).
	\end{aligned}
\end{equation}
Hence, the above relation recovers the ``vertical velocity''. 

Next, we make a comment on the derivation of \eqref{rfeq:isen-CPE-fb}. In \cite[Chapter 3]{Washington2005}, the $ \sigma $-coordinate system is introduced to reformulate \eqref{isen-CPE-fb}. Our choice of coordinates \eqref{new-coordinates} shares the same philosophy as the $ \sigma $-coordinates as well as the $ p $-coordinates in \cite{Lions1992}. 
Denote by $ \alpha : = \frac{1}{\gamma-1} $. On the other hand, the viscosity $ \mathcal D_v $ becomes very complicated in the new coordinates and brings additional difficulty to our analysis. To avoid this, we consider a model whose viscosity is derived by linearizing the $ \mathcal D_v $ and taking into account the momentum correction term (see Remark \ref{rm:viscosity}, below). 
Then system \eqref{rfeq:isen-CPE-fb} can be modified as 
\begin{equation}\label{FB-CPE}
	\begin{cases}
		(\alpha + 1) z(\dt Z + v \cdot \nablah Z) + z Z ( \dvh v + \partial_z W) \\
		~~~~ ~~~~ + \alpha Z W = 0 & \text{in} ~ \Omega_e,\\
		\rho (\dt v + v \cdot \nablah v + W \partial_z v + g \nablah Z) = \mu \deltah v + (\mu+\lambda)\nablah \dvh v \\
		~~~~ ~~~~ 
		+ \mu \partial_{zz} v + \mu \bigl(\nablah \log Z\bigr) \cdot \nablah v +(\mu+\lambda) \bigl(\dvh v\bigr) \nablah \log Z & \text{in} ~ \Omega_e,\\
		\partial_z Z = 0 & \text{in} ~ \Omega_e,
	\end{cases}
\end{equation}
with \eqref{def:density}, \eqref{new-interface}, and \eqref{new-verticle-v} being written as
\begin{align}
	& \Omega_e = \Omega_h \times (0,1) , ~~ \rho = \biggl( \dfrac{1}{\alpha + 1} gzZ \biggr)^\alpha =: A_\alpha z^\alpha Z^\alpha ,  {\nonumber}\\
	& \dt Z + (\alpha + 1) \overline{z^\alpha v} \cdot \nablah Z + \overline{z^\alpha \dvh v} Z = 0, \label{eq:movingboundary}  \\
	& z^\alpha Z W = z^{\alpha+1} \bigl( (\alpha+1) \overline{z^\alpha v} \cdot \nablah Z + \overline{z^\alpha \dvh v} Z\bigr)  \nonumber \\
	& ~~~~ ~~~~ - (\alpha+ 1) \biggl( \int_0^z \xi^\alpha v\,d\xi\biggr) \cdot \nablah Z - \biggl( \int_0^z \xi^\alpha \dvh v \,d\xi\biggr) \cdot Z, \label{eq:verticalvelocity}
\end{align} 
where  the viscosity has the form
\begin{equation}\label{def:artificial-viscosity}\begin{gathered} 
\underbrace{\mu \deltah v + (\mu+\lambda)\nablah \dvh v + \mu \partial_{zz} v}_\text{linearized viscosity} \\
+  \underbrace{\mu \bigl( \nablah \log Z \bigr) \cdot \nablah v +(\mu+\lambda) \bigl( \dvh v \bigr) \nablah \log Z}_\text{momentum correction}. \end{gathered}\end{equation}
The following boundary conditions for \eqref{FB-CPE} are imposed:
\begin{equation}\label{FB-BC-CPE}
	W|_{z=0,1} = 0, \qquad \dz v|_{z=0,1} = 0.
\end{equation}

\bigskip

Without loss of generality, we assume $ A_\alpha \equiv 1 $. We will study \eqref{FB-CPE} with initial data $ (Z,v)|_{t=0} = (Z_0, v_0) \in H^2(\Omega_e) \times H^2(\Omega_e) $ satisfying 
\begin{gather}\label{vanish-of-initial-momentum}
	\int_{\Omega_e} \rho v Z\idx \big|_{t=0} = \int_{\Omega_e} \bigl( z^\alpha Z_0^\alpha v_0 Z_0 \bigr) \idx = 0, \\
	\int_{\Omega_e} Z^{\alpha+1} \idx \big|_{t=0} = \int_{\Omega_e} Z_0^{\alpha+1} \idx = 1. \nonumber
\end{gather}
\begin{remark}
Following the derivation of \eqref{rfeq:isen-CPE-fb}, the `real' physical viscosity term and the boundary conditions should be given by applying the change of coordinates to the momentum equations in \eqref{isen-CPE-fb} and \eqref{bc-fb}. In particular, the viscosity $\mathcal D_v $ in the new coordinates is much more complicated than the viscosity \eqref{def:artificial-viscosity}. However, we aim at studying the stability of the steady state given by $ (Z_e, v_e) \equiv (1,0) $. Therefore, the change of coordinates will only bring some terms which are at least quadratic and $ o (1) $ as the perturbation size goes to zero, although these terms may involve higher order derivatives. For this reason, we study here the modified viscosity and boundary conditions as a plausible model, which will substantially simplify the calculations and presentation. The original case will be treated in future study. 
\end{remark}
\begin{remark}\label{rm:viscosity}
The momentum correction term $$ \mu \bigl( \nablah \log Z \bigr) \cdot \nablah v +(\mu+\lambda) \bigl(\dvh v \bigr) \nablah \log Z, $$ is used to recover the physical principle of the conservation of momentum. In fact, since we have simplified the viscosity term and the boundary conditions by dropping the $ o (1) $ terms, the conservative structure of the momentum is violated. We fix this by putting this correction term at the right-hand side of \subeqref{FB-CPE}{2} and we will have the conservation of momentum,
\begin{equation*}
	\dfrac{d}{dt} \intw \rho v Z\idx = 0.
\end{equation*}
Also, from \eqref{eq:movingboundary} we have the conservation of mass,
\begin{equation*}
	\dfrac{d}{dt} \inth Z^{\alpha+1} \idxh = 0.
\end{equation*}
\end{remark}
\bigskip
The rest of this work will be organized as follows. In the next section, we introduce basic notations and frequently used inequalities. Furthermore, we will also state our main theorems. In section \ref{sec:aprior}, we show the global {\it a priori} estimates, which together with the local well-posedness theory in section \ref{sec:local}, show the global well-posedness and stability in section \ref{sec:global-stability}. In section \ref{sec:regularity-1} and section \ref{sec:regularity-2}, we will explore the regularity of the solutions. This will finish the proof of Theorem \ref{thm:global}, below. The asymptotic stability theory is established in section \ref{sec:decay}, which is summarized as Theorem \ref{thm:stability}, below. In section \ref{sec:non-conservative}, we introduce and show the global stability theory for a non-conservative system.

\section{Preliminaries and main theorems}
In section \ref{sec:notations}, below, we introduce some notations and inequalities that will be used in this paper. With the energy and dissipation functionals defined in section \ref{sec:functional}, we will state the main theorems in section \ref{sec:main-thm}. 
\subsection{Notations and Hardy's inequality}\label{sec:notations}
As we stated before, we will use the following notations to denote the differential operators in the horizontal directions:
\begin{gather*}
	\nablah := ( \partial_x, \partial_y)^\top, \qquad  \partial_h \in \lbrace \partial_x, \partial_y \rbrace,\\
	\dvh  :=  \nablah \cdot, \qquad \deltah := \dvh \nablah.
\end{gather*}
We will use $ \hnorm{\cdot}{}, \norm{\cdot}{} $ to denote norms in $ \Omega_h \subset \mathbb R^2 $ and $ \Omega \subset \mathbb R^3 $, respectively. 
Correspondingly, the associated Sobolev norm in the horizontal variables is denoted as,
\begin{equation*}
	\norm{\cdot}{\hHnorm{s}} := \norm{\cdot}{\Lnorm{2}} + \norm{\nablah \cdot}{\Lnorm{2}} + \cdots + \norm{\nablah^s \cdot}{\Lnorm{2}}, ~ \text{for $ s \in \mathbb Z^+ $}.
\end{equation*}
For instance, given $ f \in L^2(\Omega;\mathbb R) = L^2(\Omega_h\times(0,1); \mathbb R) $, denote by
\begin{gather*}
	\hnorm{f}{\Lnorm{2}}(z) := \bigl( \inth \abs{f(\vech x, z)}{2} \idxh \bigr)^{1/2} \qquad \text{and} \qquad \norm{f}{\Lnorm{2}} := \bigl( \int \abs{f}{2} \idx \bigr)^{1/2},
\end{gather*}
where
\begin{gather*}
	\inth \cdot \idxh = \inth \cdot \,dxdy \qquad \text{and} \qquad \int \cdot \idx = \intw \cdot \idx = \intw \cdot \,dxdydz,
\end{gather*}
represent the integrals with respect to the horizontal variables and the full spatial variables, respectively. Then, with the notation defined in \eqref{averaging}, it is easy to verify,
\begin{gather*}
	\hnorm{\overline{f}}{\Hnorm{s}} \leq C \norm{f}{\hHnorm{s}},
\end{gather*}
for some constant $ 0 < C < \infty $ provided the right-hand side is finite. In addition, we have the following property:
	\begin{lemma}
		Consider $ p_i \geq 1, \alpha_i > 0 $ satisfying
		\begin{equation*}
			\dfrac{\alpha_1}{p_1} + \dfrac{\alpha_2}{p_2} + \cdots \dfrac{\alpha_n}{p_n} \leq 1.
		\end{equation*}
		The following H\"older's inequality holds:
		\begin{equation*}
			\int_0^1 \hnorm{f_1}{\Lnorm{p_1}}^{\alpha_1}\hnorm{f_2}{\Lnorm{p_2}}^{\alpha_2} \cdots \hnorm{f_n}{\Lnorm{p_n}}^{\alpha_n} \,dz  \leq C \norm{f_1}{\Lnorm{p_1}}^{\alpha_1}\norm{f_2}{\Lnorm{p_2}}^{\alpha_2} \cdots \norm{f_n}{\Lnorm{p_n}}^{\alpha_n},
		\end{equation*}
		provided the right-hand side is finite.
	\end{lemma}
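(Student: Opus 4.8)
The plan is to reduce the estimate to the generalized Hölder inequality applied in the vertical variable $z\in(0,1)$, and then to recombine the resulting one-dimensional integrals by Fubini's theorem.

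First I would record the elementary but crucial observation that the hypothesis $\frac{\alpha_1}{p_1}+\frac{\alpha_2}{p_2}+\cdots+\frac{\alpha_n}{p_n}\le 1$, together with the positivity of each summand, forces $\frac{\alpha_i}{p_i}\le 1$ for every $i$. Consequently the conjugate exponents $q_i:=p_i/\alpha_i$ all satisfy $q_i\ge 1$ and $\sum_{i=1}^n\frac1{q_i}=\sum_{i=1}^n\frac{\alpha_i}{p_i}\le 1$, so the generalized Hölder inequality with exponents $q_1,\dots,q_n$ is legitimately applicable on the probability space $\bigl((0,1),dz\bigr)$; if the sum is strictly less than $1$ one simply notes that, since $|(0,1)|=1$, the $L^1$-norm in $z$ is dominated by any $L^r$-norm with $r\ge 1$ (equivalently, one appends one further factor equal to the constant function $1$, carrying the leftover exponent and having norm $1$). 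Writing $h_i(z):=\hnorm{f_i}{\Lnorm{p_i}}(z)$ for the horizontal $L^{p_i}$-norm at height $z$, the left-hand side of the claimed inequality is exactly $\int_0^1\prod_{i=1}^n h_i(z)^{\alpha_i}\,dz$.

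Next I would apply generalized Hölder to this integral and then use $\alpha_i q_i=p_i$:
\[
\int_0^1\prod_{i=1}^n h_i(z)^{\alpha_i}\,dz
\;\le\;\prod_{i=1}^n\Bigl(\int_0^1 h_i(z)^{\alpha_i q_i}\,dz\Bigr)^{1/q_i}
\;=\;\prod_{i=1}^n\Bigl(\int_0^1 h_i(z)^{p_i}\,dz\Bigr)^{\alpha_i/p_i}.
\]
Finally, by Fubini's theorem,
\[
\int_0^1 h_i(z)^{p_i}\,dz=\int_0^1\inth \abs{f_i(\vech x,z)}{p_i}\idxh\,dz=\norm{f_i}{\Lnorm{p_i}}^{p_i},
\]
so that $\bigl(\int_0^1 h_i(z)^{p_i}\,dz\bigr)^{\alpha_i/p_i}=\norm{f_i}{\Lnorm{p_i}}^{\alpha_i}$, and substituting into the previous display gives the asserted bound, in fact with $C=1$.

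I do not expect any genuine obstacle here: the only step deserving a moment of care is the verification that the hypothesis indeed yields $q_i\ge 1$, which is precisely what makes the generalized Hölder inequality available; everything else is bookkeeping, with the finite length of the vertical interval $(0,1)$ absorbing the case of strict inequality $\sum_i\alpha_i/p_i<1$.
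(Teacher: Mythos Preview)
Your proof is correct and is exactly the standard argument one would expect: generalized H\"older in the $z$-variable with exponents $q_i=p_i/\alpha_i$, followed by Fubini to reassemble the full $L^{p_i}(\Omega)$-norms. The paper itself states this lemma without proof, treating it as a routine consequence of H\"older's inequality, so there is no alternative argument to compare against; your write-up simply fills in the omitted details, and even sharpens the statement by observing that one may take $C=1$.
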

	After applying the
	Gagliardo-Nirenberg (or Ladyzhenskaya) inequality 
	in $ \Omega_h $ (c.f. \cite{GN-ineq}), one has
	\begin{equation}\label{ineq-supnorm}
	\begin{gathered}
		\hnorm{f}{\Lnorm{4}} \leq C \hnorm{f}{\Lnorm{2}}^{1/2} \hnorm{f}{\Hnorm{1}}^{1/2}, \\
		\hnorm{f}{\infty} \leq C \hnorm{f}{W^{1,4}}\leq C \hnorm{f}{\Hnorm{1}}^{1/2} \hnorm{f}{\Hnorm{2}}^{1/2},
	\end{gathered}
	\end{equation}
	for the function $ f $ with bounded right-hand sides. 

	The standard Hardy inequalities read:	
	\begin{lemma}[Hardy's inequalities] \label{lm:hardy} Let $ k $ and $ L $ be given real and positive numbers, respectively, and let $ g:[0,L] \mapsto \mathbb R $ be a measurable function satisfying $ \int_0^L s^k ({\abs{g(s)}{2}}/{L^2} + \abs{g'(s)}{2})\,ds < \infty $.
	\begin{enumerate}
		\item[a)] If $ k > 1 $, then we have
		\begin{equation*}
			\int_0^L s^{k-2} \abs{g(s)}{2}\,ds \leq C \int_{0}^{L} s^k (\abs{g(s)}{2}/L^2 + \abs{g'(s)}{2})\,ds;
		\end{equation*}
		\item[b)] if $ k < 1 $, then $ g $ has a trace at $ x=0 $ and, moreover,
		\begin{equation*}
			\int_0^L s^{k-2} \abs{g(s)- g(0)}{2}\,ds\leq C\int_0^L s^k \abs{g'(s)}{2} \,ds;
		\end{equation*}
		\item[c)] if $ k> 1 $, then we have, for $ 0<\omega < 1$,
		\begin{align*}
			\int_0^\omega s^{k-2} \abs{g(s)}{2} \,ds 
			\leq C\omega^{-2} \int_0^\omega s^k \abs{g(s)}{2} \,ds + C  \omega^2 \int_0^\omega s^{k-2} \abs{g'(s)}{2}\,ds ,
		\end{align*}
		provided the right-hand side is finite. 
	\end{enumerate}
	Here $ 0 < C < \infty $ is a positive constant independent of $ L $ and $ \omega $.
\end{lemma}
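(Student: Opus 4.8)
The plan is to derive all three inequalities from a single integration by parts against the primitive $\tfrac{1}{k-1}s^{k-1}$ of the weight $s^{k-2}$, regularizing by a cut-off at $s=\epsilon>0$ to make the computation rigorous, and then to obtain (c) from (a) by a trivial rescaling. I would treat (a) first. Since near $s=L$ the weight $s^k$ is bounded above and below, $g$ has a genuine trace $g(L)$, and so does $g(\epsilon)$ for every $\epsilon>0$; also $\int_\epsilon^L s^{k-2}|g|^2\,ds\le \epsilon^{-2}\int_0^L s^k|g|^2\,ds<\infty$. Integrating by parts on $[\epsilon,L]$ gives
\[
\int_\epsilon^L s^{k-2}|g|^2\,ds=\tfrac{1}{k-1}L^{k-1}|g(L)|^2-\tfrac{1}{k-1}\epsilon^{k-1}|g(\epsilon)|^2-\tfrac{2}{k-1}\int_\epsilon^L s^{k-1}g\,g'\,ds.
\]
Because $k>1$ the $\epsilon$-boundary term is $\le 0$ and is discarded. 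To control the other boundary term I would use the averaging identity $\tfrac{L^{k+1}}{k+1}|g(L)|^2=\int_0^L s^k|g|^2\,ds+\int_0^L s^k\big(\int_s^L 2g\,g'\,d\tau\big)\,ds$; after Fubini the double integral is $\le\tfrac{2L}{k+1}\int_0^L\tau^k|g||g'|\,d\tau$, and Cauchy--Schwarz plus Young's inequality yield $L^{k-1}|g(L)|^2\le(k+2)R$, where $R:=\int_0^L s^k(|g|^2/L^2+|g'|^2)\,ds$. For the last term, $\big|\int_\epsilon^L s^{k-1}g\,g'\big|\le\big(\int_\epsilon^L s^{k-2}|g|^2\big)^{1/2}\big(\int_0^L s^k|g'|^2\big)^{1/2}$, and Young's inequality absorbs half of $\int_\epsilon^L s^{k-2}|g|^2$ into the left side, leaving $\int_\epsilon^L s^{k-2}|g|^2\,ds\le C(k)R$ uniformly in $\epsilon$; sending $\epsilon\to0$ by monotone convergence finishes (a), the constant depending on $k$ only.

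For (b), the assumption $k<1$ makes $s^{-k}$ integrable at $0$, so $|g(s)-g(s')|^2\le\big(\int_{s'}^s\tau^{-k}\,d\tau\big)\int_0^L\tau^k|g'|^2\,d\tau\to0$ as $s,s'\to0$, which produces the trace $g(0)$. Replacing $g$ by $g-g(0)$ (which leaves $g'$ unchanged) I may assume $g(0)=0$, so that $g(s)=\int_0^s g'$ and $s^{k-1}|g(s)|^2\le\tfrac{1}{1-k}\int_0^s\tau^k|g'|^2\,d\tau\to0$ as $s\to0$. The same integration by parts on $[\epsilon,L]$, now with $k-1<0$ so that the boundary term at $L$ has the good sign and is dropped while the boundary term at $\epsilon$ vanishes in the limit, combined with Cauchy--Schwarz and Young exactly as in (a), gives $\int_0^L s^{k-2}|g|^2\,ds\le C(k)\int_0^L s^k|g'|^2\,ds$.

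Part (c) I would obtain by simply applying (a) on the interval $[0,\omega]$ instead of $[0,L]$ (its hypothesis holds since $\int_0^\omega s^k|g'|^2\,ds\le\omega^2\int_0^\omega s^{k-2}|g'|^2\,ds<\infty$): this gives $\int_0^\omega s^{k-2}|g|^2\,ds\le C\omega^{-2}\int_0^\omega s^k|g|^2\,ds+C\int_0^\omega s^k|g'|^2\,ds$, and since $s^k=s^2 s^{k-2}\le\omega^2 s^{k-2}$ on $(0,\omega)$ the last integral is $\le\omega^2\int_0^\omega s^{k-2}|g'|^2\,ds$, which is the claimed bound; the constant, inherited from (a), is independent of $\omega$.

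The hard part is (a). Unlike the textbook Hardy inequality on a half-line, where one exploits that $g$ vanishes at an endpoint, here $g$ need vanish nowhere, so the naive decomposition $g(s)=g(L)-\int_s^L g'$ is useless: it creates a factor $\int_0^L s^{-1}\,ds=\infty$, and the needed cancellation must instead be extracted through the integration by parts. This forces me to estimate the ``outer'' trace $g(L)$ separately (the averaging identity above), and to be careful with the $\epsilon$-cutoff so that the term fed into Young's inequality is known to be finite before it is absorbed.
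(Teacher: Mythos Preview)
Your proof is correct. The paper itself does not prove (a) and (b) at all but merely refers to \cite{Jang2014} and \cite{Kufner}, and for (c) it says only that ``$c)$ is a special case of $a)$'' --- exactly the rescaling argument you wrote out. So your derivation of (c) matches the paper's one-line hint, and for (a) and (b) you have supplied the full integration-by-parts proofs that the paper delegates to references; the arguments you give are the standard ones and are sound.
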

\begin{proof}
We refer $ a), b) $ to \cite{Jang2014} and \cite{Kufner}. $ c) $ is a special case of $ a) $. 
\end{proof}
A straight forward calculation shows that Hardy's inequalities in Lemma \ref{lm:hardy} hold with $ s $ replaced by $ z $ and $ \abs{g(s)}{}, \abs{g'(s)}{} $ replaced by $ \hnorm{f}{\Lnorm{2}}(z), \hnorm{\dz f}{\Lnorm{2}}(z) $, respectively. Additionally, we will apply the special case when $ L = 1 $ throughout this paper. 

For any two quantities $ A, B $, we use $ A \lesssim B $ to denote that there is a scale invariant constant $ 0 < C < \infty $ such that
\begin{equation*}
	A \leq C B. 
\end{equation*}
We use the notation
	$ \mathcal H(\cdot) $
to represent a polynomial of its arguments, which may be different from line to line, satisfying
\begin{equation*}
	\mathcal H(0) = 0.
\end{equation*}


\subsection{Energy and dissipation functionals}\label{sec:functional}
In this section, we introduce the main functionals to be used in this work.
Recalling that $ \alpha = \frac{1}{\gamma-1} $, we introduce first, the following energy functionals, 
\begin{equation}\label{STB-total-energy}
	\begin{aligned}
		& \mathcal E(t) := \norm{z^{\alpha/2} v}{\hHnorm{2}}^2 + \norm{z^{\alpha/2} v_t}{\Lnorm{2}}^2 + \norm{z^{\alpha/2}v_z}{\hHnorm{1}}^2 + \norm{v}{\Hnorm{1}}^2 \\
		& ~~~~ + \norm{Z - 1}{\Hnorm{2}}^2 + \norm{Z_t}{\Lnorm{2}}^2.
	\end{aligned}
\end{equation}
Moreover, we denote the total initial energy as
\begin{equation}\label{STB-ttl-ini-energy}
	\begin{aligned}
		& \mathcal E_0 := \norm{z^{\alpha/2} v_0}{\hHnorm{2}}^2 + \norm{z^{\alpha/2} v_1}{\Lnorm{2}}^2 + \norm{z^{\alpha/2}v_{0,z}}{\hHnorm{1}}^2 + \norm{v_0}{\Hnorm{1}}^2 \\
		& ~~~~ + \norm{Z_0 - 1}{\Hnorm{2}}^2 + \norm{Z_1}{\Lnorm{2}}^2,
	\end{aligned}
\end{equation}
where $v_0 = v|_{t=0}, v_1 = v_t|_{t=0}, Z_1 = Z_t|_{t=0} $ are given by
\begin{align*}
	& z^\alpha Z^\alpha v_1 = \mu \deltah v_0 + (\mu+\lambda) \nablah \dvh v_0 + \mu \partial_{zz} v_0 + \mu \nablah \log Z_0 \cdot \nablah v_0 \\
	& ~~~~ + (\mu+\lambda) \dvh v_0 \nablah \log Z_0 - g z^\alpha Z^\alpha_0 \nablah Z_0 - z^\alpha Z^\alpha v_0 \cdot \nablah v_0 \\
	& ~~~~ - z^{\alpha+1} \bigl( \dfrac{\alpha+1}{\alpha} \overline{z^\alpha v_0} \cdot\nablah Z_0^{\alpha} + \overline{z^\alpha \dvh v_0} Z_0^{\alpha} \bigr)  \dz v_0 \\
	& ~~~~ + \dfrac{\alpha+1}{\alpha} \int_0^z \xi^\alpha v_0 \,d\xi \cdot\nablah Z_0^\alpha \dz v_0 + \int_0^z \xi^\alpha \dvh v_0 \,d\xi Z_0^\alpha \dz v_0, \\
	& Z_1 = - (\alpha+1) \overline{z^\alpha v_0} \cdot \nablah Z_0 - \overline{z^\alpha \dvh v_0} Z_0.
\end{align*}
For some constant $ 0 < c < \infty $, we denote the total relative energy as
\begin{equation}\label{STB-inst-energy}
	\begin{aligned}
		& \mathfrak E_c(t) := \dfrac{1}{2} \int z^\alpha Z^{\alpha+1} \abs{v}{2} \idx + \dfrac{g}{\alpha+2} \int \biggl( \bigl( Z^{\alpha+2} - 1 \bigr) - \dfrac{\alpha+2}{\alpha+1} \bigl( Z^{\alpha+1} - 1 \bigr) \biggr) \idx \\
		& ~~~~ + c \int \biggl( \dfrac{\mu}{2} \abs{\nablah v}{2} + \dfrac{\mu+\lambda}{2} \abs{\dvh v}{2} + \dfrac{\mu}{2}\abs{\dz v}{2} \\
		& ~~~~ ~~~~ - \dfrac{g}{\alpha+1} z^\alpha \bigl( Z^{\alpha+1} - 1\bigr) \dvh v \biggr) \idx + \dfrac{1}{2} \int \biggl( z^\alpha Z^{\alpha+1} \bigl(\abs{v_t}{2}+ \abs{\nablah v}{2} \\
		& ~~~~ ~~~~ + \abs{ \nablah^2 v}{2} + \abs{v_z}{2} + \abs{\nablah v_{z}}{2} \bigr) \biggr) \idx  + \dfrac{g}{2} \int \biggl( \bigl( Z^\alpha  \abs{Z_t}{2} + Z^\alpha \abs{\nablah Z}{2} \\
		& ~~~~ ~~~~ + \abs{\nablah^2 Z}{2}\bigr) \biggr) \idx.
	\end{aligned}
\end{equation}
We observe that if $ \mathcal E(t) $ is sufficiently small, $ \mathfrak E_c(t) $ is equivalent to $ \mathcal E(t) $ for some fixed constant $ c > 0 $:
\begin{proposition}\label{lm:equal-of-energies}
	There is a constant $ \varepsilon_1 \in (0,1) $ small enough, such that if $ \mathcal E(t) < \varepsilon_1 $ for $ t \in [0,T] $, the following inequalities hold for some positive constant $ C < \infty $, depending on $ \varepsilon_1 $ and independent of $ t $, 
	\begin{gather}
		\abs{Z(\vech x, t) - 1}{} \leq C \varepsilon_1^{1/2} < 1/2, \label{aprasm:Z}\\
		C^{-1} \mathcal E(t) \leq \mathfrak E_c(t) \leq C \mathcal E(t), \label{aprasm:energy}
	\end{gather}
	where $$ c := \dfrac{\mu}{8g},$$
	in equation \eqref{STB-inst-energy}.
\end{proposition}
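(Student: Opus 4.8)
The plan is to extract from the smallness of $\mathcal E(t)$ a uniform pointwise bound on $Z-1$, and then to compare $\mathfrak E_c(t)$ with $\mathcal E(t)$ term by term; the only genuinely delicate point will be the indefinite cross term $-c\,\frac{g}{\alpha+1}\int z^{\alpha}(Z^{\alpha+1}-1)\dvh v\idx$ inside the $c$-bracket of \eqref{STB-inst-energy}. First I would prove \eqref{aprasm:Z}: since $\dz Z=0$ (see \eqref{FB-CPE}), the height $Z$ depends only on $(\vech x,t)$, so the Gagliardo--Nirenberg inequality \eqref{ineq-supnorm} on $\Omega_h$ gives
\[
\| Z-1\|_{L^{\infty}(\Omega_h)}\le C\,\| Z-1\|_{H^{1}(\Omega_h)}^{1/2}\| Z-1\|_{H^{2}(\Omega_h)}^{1/2}\le C\,\| Z-1\|_{H^{2}(\Omega_e)}\le C\,\mathcal E(t)^{1/2}<C\varepsilon_1^{1/2},
\]
using the definition \eqref{STB-total-energy} of $\mathcal E$ and the $z$-independence of $Z-1$. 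Fixing $\varepsilon_1$ so small that $C\varepsilon_1^{1/2}<1/2$ yields \eqref{aprasm:Z}; shrinking $\varepsilon_1$ further pins $Z$ inside any prescribed neighbourhood of $1$, so that each power $Z^{\beta}$ ($\beta\in\{\alpha,\alpha+1,\alpha+2\}$) lies between fixed positive constants and $|Z^{\beta}-1|\le C_{\beta}|Z-1|$ with $C_{\beta}$ as close to $\beta$ as desired. Every estimate below rests on this.

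For the upper bound $\mathfrak E_c\le C\mathcal E$ in \eqref{aprasm:energy} I would bound $\mathfrak E_c$ term by term. The weighted quadratic contributions $\frac12\int z^{\alpha}Z^{\alpha+1}|v|^{2}\idx$, $\frac12\int z^{\alpha}Z^{\alpha+1}(|v_t|^{2}+|\nablah v|^{2}+|\nablah^{2}v|^{2}+|v_z|^{2}+|\nablah v_z|^{2})\idx$ and $\frac g2\int(Z^{\alpha}|Z_t|^{2}+Z^{\alpha}|\nablah Z|^{2}+|\nablah^{2}Z|^{2})\idx$ are dominated by the matching terms of $\mathcal E$ because $Z^{\beta}\le C$. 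The potential-energy integrand $F(Z):=\frac{g}{\alpha+2}(Z^{\alpha+2}-1)-\frac{g}{\alpha+1}(Z^{\alpha+1}-1)$ satisfies $F(1)=F'(1)=0$ and $F''(Z)=g\,Z^{\alpha-1}((\alpha+1)Z-\alpha)$, which is bounded on the range of $Z$, so Taylor's theorem gives $0\le F(Z)\le C(Z-1)^{2}$ and this term is $\le Cg\| Z-1\|_{L^{2}}^{2}$. Finally, in the $c$-bracket the three nonnegative square terms are $\le Cc\|\nabla v\|_{L^{2}}^{2}$, and Cauchy--Schwarz, Young and $|Z^{\alpha+1}-1|\le C|Z-1|$ bound its cross term by $C(\| Z-1\|_{L^{2}}^{2}+\|\nablah v\|_{L^{2}}^{2})$; all of these are controlled by $\mathcal E$.

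For the lower bound $\mathcal E\le C\mathfrak E_c$, the fourth and fifth groups in \eqref{STB-inst-energy} directly dominate, after $Z^{\beta}\ge C^{-1}$, the norms $\| z^{\alpha/2}(v_t,\nablah v,\nablah^{2}v,v_z,\nablah v_z)\|_{L^{2}}^{2}$ and $g\| Z_t\|_{L^{2}}^{2}+g\|\nablah Z\|_{L^{2}}^{2}+\|\nablah^{2}Z\|_{L^{2}}^{2}$; the first group yields $\| z^{\alpha/2}v\|_{L^{2}}^{2}$; and Taylor with $F''\ge g(1-\eta)>0$ near $1$ makes the potential-energy term $\ge\frac{g(1-\eta)}{2}\| Z-1\|_{L^{2}}^{2}$. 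The only part of $\mathcal E$ still missing is the unweighted $\| v\|_{H^{1}}^{2}$, and here the value $c=\mu/(8g)$ is used. I would leave $\frac{\mu}{2}|\nablah v|^{2}$ untouched and complete the square in $\dvh v$ in the remaining two terms of the $c$-bracket,
\[
\tfrac{\mu+\lambda}{2}|\dvh v|^{2}-\tfrac{g}{\alpha+1}z^{\alpha}(Z^{\alpha+1}-1)\dvh v=\tfrac{\mu+\lambda}{2}\Bigl|\dvh v-\tfrac{g\,z^{\alpha}(Z^{\alpha+1}-1)}{(\mu+\lambda)(\alpha+1)}\Bigr|^{2}-\tfrac{g^{2}z^{2\alpha}(Z^{\alpha+1}-1)^{2}}{2(\mu+\lambda)(\alpha+1)^{2}},
\]
so that, using $\int_{0}^{1}z^{2\alpha}\,dz=\frac1{2\alpha+1}$ and $\| Z^{\alpha+1}-1\|_{L^{2}}\le C\| Z-1\|_{L^{2}}$, the $c$-bracket is $\ge\frac{c\mu}{2}(\|\nablah v\|_{L^{2}}^{2}+\| v_z\|_{L^{2}}^{2})-\frac{cg^{2}C^{2}}{2(\mu+\lambda)(\alpha+1)^{2}(2\alpha+1)}\| Z-1\|_{L^{2}}^{2}$. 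With $c=\mu/(8g)$ and $\mu/(\mu+\lambda)<1$, the coefficient of the negative $\| Z-1\|_{L^{2}}^{2}$ term is, for $\varepsilon_1$ small, strictly below $\frac{g(1-\eta)}{4}$, so it is absorbed into the potential-energy term and $\mathfrak E_c$ still controls $\|\nablah v\|_{L^{2}}^{2}$, $\| v_z\|_{L^{2}}^{2}=\|\dz v\|_{L^{2}}^{2}$ and $\| Z-1\|_{L^{2}}^{2}$. Lastly $\| v\|_{L^{2}(\Omega_e)}^{2}\le C(\| z^{\alpha/2}v\|_{L^{2}}^{2}+\|\dz v\|_{L^{2}}^{2})$ by a one-dimensional Poincar\'e-type estimate in $z$ (write $v(\vech x,z)=v(\vech x,z')-\int_{z'}^{z}\dz v$ and average $z'$ over $(1/2,1)$, where $z^{\alpha}\ge C^{-1}$); collecting these bounds gives $\mathcal E\le C\mathfrak E_c$ and completes \eqref{aprasm:energy}.

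The step I expect to be the real obstacle is the indefinite cross term in the lower bound: a plain Young's inequality is not enough, because $cg=\mu/8$ does not degenerate as $g\to0$, so one must use the algebraic structure — completing the square in $\dvh v$, which crucially relies on $\mu+\lambda>0$, together with the smallness of $c$ — to keep the velocity dissipation positive while absorbing the residual into the potential energy. Everything else is routine bookkeeping resting only on $Z\approx1$ and the Gagliardo--Nirenberg and Hölder-type inequalities recalled in Section \ref{sec:notations}.
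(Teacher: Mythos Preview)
Your proof is correct and follows essentially the same template as the paper's: Sobolev embedding on $\Omega_h$ for \eqref{aprasm:Z}, then term-by-term comparison for \eqref{aprasm:energy}, with the only nontrivial work in the cross term $-c\,\frac{g}{\alpha+1}\int z^{\alpha}(Z^{\alpha+1}-1)\dvh v\,\idx$. The only substantive differences are cosmetic. For the cross term you complete the square in $\dvh v$, whereas the paper applies a plain Young's inequality
\[
\Bigl|\tfrac{g}{\alpha+1}z^{\alpha}(Z^{\alpha+1}-1)\dvh v\Bigr|\le \tfrac{\mu}{4}|\nablah v|^{2}+\tfrac{g^{2}}{\mu(\alpha+1)^{2}}|Z^{\alpha+1}-1|^{2},
\]
after which the choice $c=\mu/(8g)$ makes the negative contribution $c\cdot\frac{g^{2}}{\mu(\alpha+1)^{2}}=\frac{g}{8(\alpha+1)^{2}}$ exactly half of the potential-energy coefficient $\frac{g}{4(\alpha+1)^{2}}$ (from the lower bound in \eqref{BE-002}). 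So your closing remark that ``a plain Young's inequality is not enough'' is incorrect: it is precisely what the paper uses, and the arithmetic balances with a clean factor of two to spare. Your completing-the-square route is of course also valid and gives a slightly sharper residual (denominator $\mu+\lambda$ instead of $\mu$), but it is not needed. Likewise, your averaging argument for $\|v\|_{L^{2}}^{2}\lesssim\|z^{\alpha/2}v\|_{L^{2}}^{2}+\|\dz v\|_{L^{2}}^{2}$ is a fine alternative to the paper's iterated Hardy inequality (Lemma~\ref{lm:hardy}, displayed as \eqref{STB-001}); both achieve the same end.
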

\begin{proof}
	Notice first that, $ C \mathcal E(t) \geq \norm{Z(\cdot, t)-1}{\Lnorm{\infty}}^2 $, which implies \eqref{aprasm:Z}. Furthermore, for $ \varepsilon_1 $ sufficiently small, 
	\begin{equation*}\tag{\ref{BE-002}}
	\begin{aligned}
		& \dfrac{\alpha+2}{4(\alpha+1)^2} \abs{Z^{\alpha+1} - 1}{2} \leq (Z^{\alpha+2} - 1) - \dfrac{\alpha+2}{\alpha+1}(Z^{\alpha+1} - 1)  \\
		& ~~~~ ~~~~ \leq \dfrac{\alpha+2}{(\alpha+1)^2} \abs{Z^{\alpha+1} - 1}{2}. 
	\end{aligned}
	\end{equation*}
	Meanwhile, after applying Young's inequality, one has
	\begin{align*}
		& \abs{\dfrac{g}{\alpha+1} z^\alpha \bigl( Z^{\alpha+1} - 1\bigr) \dvh v}{} \leq \dfrac{\mu}{4} \abs{\nablah v}{2} + \dfrac{g^2}{\mu (\alpha+1)^2}\abs{Z^{\alpha+1}-1}{2}.
	\end{align*}
	Hence for $ 0 < c \leq \frac{\mu}{8g} $, one has
	\begin{align*}
		& C \mathcal E(t)  \geq  \mathfrak E_c(t) \geq \dfrac{1}{2} \int z^\alpha Z^{\alpha+1} \abs{v}{2} \idx + \dfrac{g}{8(\alpha+1)^2} \int \abs{Z^{\alpha+1}-1}{2} \idx \\
		& ~~~~ + c\int \dfrac{\mu}{4} \abs{\nabla v}{2} \idx + \dfrac{1}{2} \int \biggl( z^\alpha Z^{\alpha+1} \bigl(\abs{v_t}{2}+ \abs{\nablah v}{2} \\
		& ~~~~ ~~~~ + \abs{ \nablah^2 v}{2} + \abs{v_z}{2} + \abs{\nablah v_{z}}{2} \bigr) \biggr) \idx  + \dfrac{g}{2} \int \biggl( \bigl( Z^\alpha  \abs{Z_t}{2} + Z^\alpha \abs{\nablah Z}{2} \\
		& ~~~~ ~~~~ + \abs{\nablah^2 Z}{2}\bigr)\biggr) \idx \geq C^{-1} \mathcal E(t),
	\end{align*}
	for some positive constant $  1 < C < \infty $. Here we have applied Hardy's inequality, Lemma \ref{lm:hardy}, and, thanks to \eqref{aprasm:Z}, the fact that $ \abs{Z-1}{} < 1/2 $ 
	as follows, 
	\begin{align}\label{STB-001}
		& \norm{v}{\Lnorm{2}}^2 = \int_0^1 \hnorm{v}{\Lnorm{2}}^2 \,dz \lesssim \int_0^1 z^2 (\hnorm{v}{\Lnorm{2}}^2 + \hnorm{\dz v}{\Lnorm{2}}^2 ) \,dz  \lesssim \int_0^1 z^{4} \hnorm{v}{\Lnorm{2}}^2 \nonumber \\
		& ~~~~ + \hnorm{\dz v}{\Lnorm{2}}^2\,dz \underbrace{\lesssim \cdots \lesssim}_{\max\lbrace[\alpha/2]-1,0\rbrace\text{ times}} \norm{z^{\alpha/2} v}{\Lnorm{2}}^2 + \norm{\dz v}{\Lnorm{2}}^2, \\
		& \abs{Z-1}{} = \abs{(Z^{\alpha+1})^{1/(\alpha+1)} - 1}{} \lesssim \abs{Z^{\alpha+1}-1}{} \lesssim \abs{Z - 1}{}. \label{STB-002}
	\end{align}
	This finishes the proof. 
\end{proof}
In addition, we introduce the 
relative dissipation functional,
\begin{equation}\label{STB-inst-dissipation}
	\begin{aligned}
		& \mathfrak D(t) : = \dfrac{\mu}{2} \int Z \bigl( \abs{\nabla v}{2} + \abs{\nabla v_t}{2} + \abs{\nabla^2 v}{2} + \abs{\nabla^2 \nablah v}{2}  \bigr) \idx\\
		& ~~~~ + \int z^\alpha Z^\alpha \abs{v_t}{2}  \idx
		+ \int \bigl( \abs{Z_t}{2} + \abs{\nablah Z}{2} + \abs{\nablah^2 Z}{2} \bigr) \idx.
	\end{aligned}
\end{equation}
We have the following lemma: 
\begin{proposition}\label{lm:equal-of-dissipation}
	In addition to the assumptions in Proposition \ref{lm:equal-of-energies}, suppose that $ (Z,v) $ is a solution to \eqref{FB-CPE}. Then there is a constant $ 0 < C < \infty $ such that,
	\begin{equation}\label{aprasm:dissipation}
		 \begin{aligned}
		 & C^{-1} \bigl( \norm{Z-1}{\Hnorm{2}}+ \norm{Z_t}{\Lnorm{2}}^2  + \norm{\nabla v}{\Hnorm{1}}^2 + \norm{\nablah v}{\Hnorm{2}}^2 \\
		 & ~~~~ + \norm{z^{\alpha/2} v_t}{\Lnorm{2}}^2 + \norm{\nabla v_t}{\Lnorm{2}}^2 \bigr) \leq \mathfrak D(t) \leq C \bigl( \norm{Z-1}{\Hnorm{2}}+ \norm{Z_t}{\Lnorm{2}}^2 \\
		 & ~~~~ ~~~~ + \norm{\nabla v}{\Hnorm{1}}^2 + \norm{\nablah v}{\Hnorm{2}}^2 + \norm{z^{\alpha/2} v_t}{\Lnorm{2}}^2 + \norm{\nabla v_t}{\Lnorm{2}}^2 \bigr).
		 \end{aligned}
	\end{equation} 
\end{proposition}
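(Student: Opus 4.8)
The plan is to establish the two-sided bound \eqref{aprasm:dissipation} by comparing $\mathfrak D(t)$, as defined in \eqref{STB-inst-dissipation}, term by term with the quantity on the right-hand side of \eqref{aprasm:dissipation}, using two facts: first, the pointwise control $\abs{Z-1}{} < 1/2$ from \eqref{aprasm:Z}, which makes $Z$ comparable to $1$ and hence allows us to freely insert or remove the weight $Z$ in the viscous terms (so $\frac{\mu}{2}\int Z(\abs{\nabla v}{2}+\abs{\nabla v_t}{2}+\abs{\nabla^2 v}{2}+\abs{\nabla^2\nablah v}{2})\idx$ is comparable to $\frac{\mu}{2}\int(\abs{\nabla v}{2}+\abs{\nabla v_t}{2}+\abs{\nabla^2 v}{2}+\abs{\nabla^2\nablah v}{2})\idx$, i.e.\ to $\norm{\nabla v}{\Hnorm1}^2 + \norm{\nablah v}{\Hnorm2}^2 + \norm{\nabla v_t}{\Lnorm2}^2$ after noting $\nabla^2\nablah v$ together with $\nabla^2 v$ accounts for $\norm{\nablah v}{\Hnorm2}^2$); and second, Hardy's inequality (Lemma \ref{lm:hardy}) together with the chain of weighted estimates already displayed in \eqref{STB-001}, which lets us trade the degenerate weight $z^\alpha$ against a vertical derivative.

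Concretely, I would proceed in the following order. (i) For the terms $\norm{\nabla v}{\Hnorm1}^2$, $\norm{\nablah v}{\Hnorm2}^2$ and $\norm{\nabla v_t}{\Lnorm2}^2$: these are equivalent, up to the constant $\mu/2$ and the harmless factor $Z\in(1/2,3/2)$, to the first integral in \eqref{STB-inst-dissipation}; this is immediate and requires only \eqref{aprasm:Z}. (ii) For $\norm{Z-1}{\Hnorm2}$, $\norm{Z_t}{\Lnorm2}^2$: since $\partial_z Z = 0$, the quantities $\norm{Z_t}{\Lnorm2}^2$, $\norm{\nablah Z}{\Lnorm2}^2$, $\norm{\nablah^2 Z}{\Lnorm2}^2$ appear verbatim in the last integral of \eqref{STB-inst-dissipation}; to upgrade $\norm{\nablah Z}{\Lnorm2}^2$ and $\norm{\nablah^2 Z}{\Lnorm2}^2$ to $\norm{Z-1}{\Hnorm2}^2$ one needs to recover the zeroth-order term $\norm{Z-1}{\Lnorm2}^2$, which is not present among the dissipation terms; here I would invoke the conservation of mass $\inth Z^{\alpha+1}\idxh = \inth Z_0^{\alpha+1}\idxh = 1$ from Remark \ref{rm:viscosity} together with \eqref{STB-002}, so that $Z^{\alpha+1}-1$ (equivalently $Z-1$) has zero average over $\Omega_h$ after subtracting a constant of size $O(\norm{\nablah Z}{\Lnorm2}^2)$, and then apply the Poincaré inequality on $\Omega_h = \mathbb T^2$ to bound $\norm{Z-1}{\Lnorm2}^2$ by $\norm{\nablah Z}{\Lnorm2}^2$ up to a quadratic error absorbed by smallness of $\mathcal E(t)$. (iii) For $\norm{z^{\alpha/2}v_t}{\Lnorm2}^2$: this is controlled by $\int z^\alpha Z^\alpha \abs{v_t}{2}\idx$ using $Z^\alpha\in(2^{-\alpha},(3/2)^\alpha)$, which is exactly the second integral in \eqref{STB-inst-dissipation}. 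For the reverse direction in all three groups the inequalities are the same, read backwards, since every term of $\mathfrak D(t)$ is, after the same substitutions, dominated by a term on the right-hand side.

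The one genuine subtlety — and the step I expect to be the main obstacle — is the zeroth-order control $\norm{Z-1}{\Lnorm2}^2 \lesssim \mathfrak D(t)$, because the pure dissipation functional \eqref{STB-inst-dissipation} contains only derivatives of $Z$ and no undifferentiated $Z-1$. This is precisely the point where the conservation of mass in Remark \ref{rm:viscosity} (whose derivation relied on the momentum-correction term) must be used: writing $Z-1 = (Z - \langle Z\rangle) + (\langle Z\rangle - 1)$ with $\langle Z\rangle := \frac{1}{\abs{\Omega_h}{}}\inth Z\idxh$, the first piece is handled by Poincaré on $\mathbb T^2$, and the second is shown to be of order $\norm{Z-1}{\Lnorm2}\,\norm{\nablah Z}{\Lnorm2} + \norm{\nablah Z}{\Lnorm2}^2$ by expanding $Z^{\alpha+1} = 1 + (\alpha+1)(Z-1) + O(\abs{Z-1}{2})$ inside the conserved integral $\inth (Z^{\alpha+1}-1)\idxh = 0$; combining and using $\mathcal E(t) < \varepsilon_1$ to absorb the quadratic terms yields $\norm{Z-1}{\Lnorm2}^2 \lesssim \norm{\nablah Z}{\Lnorm2}^2 \le \mathfrak D(t)$. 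Once this is in hand, the remaining inequalities are the routine weight-manipulation and Hardy-inequality estimates already rehearsed in the proof of Proposition \ref{lm:equal-of-energies}, and the proof concludes by collecting constants.
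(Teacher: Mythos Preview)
Your proposal is correct and follows essentially the same route as the paper: the routine terms are handled by the pointwise bound $|Z-1|<1/2$ from \eqref{aprasm:Z}, and the only nontrivial step is recovering $\norm{Z-1}{\Lnorm{2}}^2$ from the dissipation, which both you and the paper do via conservation of mass plus Poincar\'e on $\mathbb T^2$. The paper's execution is marginally slicker: rather than splitting $Z-1=(Z-\langle Z\rangle)+(\langle Z\rangle-1)$ and Taylor-expanding, it applies Poincar\'e directly to $Z^{\alpha+1}-1$ (which has \emph{exactly} zero mean by \eqref{apriori:conserv-mass}) to get $\norm{Z^{\alpha+1}-1}{\Lnorm{2}}\lesssim\norm{\nablah Z}{\Lnorm{2}}$, and then invokes \eqref{STB-002} to pass back to $\norm{Z-1}{\Lnorm{2}}$; also note that Hardy's inequality is not actually needed here, since the target quantity contains no undifferentiated $v$ term.
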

\begin{proof}
	As the consequence of \eqref{STB-002} and \eqref{STB-inst-dissipation}, it suffices to show 
	\begin{equation*}
		\norm{Z^{\alpha+1} - 1}{\Lnorm{2}} \lesssim \norm{\nablah Z}{\Lnorm{2}}.
	\end{equation*}
	But this is a direct consequence of the conservation of mass $ \int Z^{\alpha+1} \idx = \int Z_0^{\alpha+1} \idx = 1 $ (see \eqref{apriori:conserv-mass} in Proposition \ref{lm:basic-energy}, below) , the Poincar\'e inequality, and \eqref{aprasm:Z}.
\end{proof}


\subsection{Main theorems}\label{sec:main-thm}
Now we can state the main theorems in this contribution. 
\begin{theorem}[Global Stability]\label{thm:global} Recalling $ \alpha = \frac{1}{\gamma-1} $, consider initial data $ (Z,v)|_{t=0} = (Z_0, v_0) \in H^2(\Omega) \times H^2(\Omega) $ satisfying the compatible conditions,
\begin{gather*}
	\dz v_{0}|_{z=0,1} = 0, ~ \dz Z_0 = 0, ~ \int Z_0^{\alpha+1} \idx = 1, ~ W_0|_{z=0,1} = 0,
\end{gather*}
where $ W_0 $ is defined in \eqref{eq:verticalvelocity} by replacing $ v $ and $ Z $ with $ v_0 $ and $ Z_0 $, respectively. 
Suppose that $ \mathcal E_0 < \varepsilon' $ for some positive constant $ \varepsilon' $ given in Proposition \ref{prop:global-stability}, below. Then: 
{\par\noindent\bf I. }
There is a unique strong global solution $ (Z,v) $ to the simplified model equations \eqref{FB-CPE}, subject to the boundary condition \eqref{FB-BC-CPE}, for the free boundary problem of the compressible primitive equations. The global strong solution admits the following regularity:
\begin{gather*}
	Z - 1 \in L^\infty(0,\infty;H^2(\Omega))\cap L^2(0,\infty;H^2(\Omega)), \\
	\dt Z \in L^\infty(0,\infty;L^2(\Omega))\cap L^2(0,\infty;L^2(\Omega)), \\
	z^{\alpha/2} v, z^{\alpha/2} \nablah v, z^{\alpha/2}\nablah^2 v, z^{\alpha/2} \dz v, z^{\alpha/2} \nablah \dz v \in L^\infty(0,\infty;L^2(\Omega)), \\
	v \in L^\infty(0,\infty;H^2(\Omega)), \nabla v \in L^2(0,\infty;H^1(\Omega)), \nablah v \in L^\infty(0,\infty;H^2(\Omega)),\\
	z^{\alpha/2} \dt v\in L^\infty(0,\infty;L^2(\Omega)) \cap L^2(0,\infty;L^2(\Omega)), 
	\nabla v_t\in L^2(0,\infty;L^2(\Omega)),\\
	z^{1/2} \partial_{zzz} v \in L^2(0,\infty;L^2(\Omega)).
\end{gather*}
Moreover, for every $ T \in (0,\infty) $, the following estimates hold for the solution $ (Z,v) $,
\begin{align*}
	& \bullet \sup_{0\leq t\leq T} \mathcal E(t) + \int_0^T \biggl( \norm{Z-1}{\Hnorm{2}}^2+ \norm{Z_t}{\Lnorm{2}}^2  + \norm{\nabla v}{\Hnorm{1}}^2 + \norm{\nablah v}{\Hnorm{2}}^2 \\
	& ~~~~ + \norm{z^{\alpha/2} v_t}{\Lnorm{2}}^2 + \norm{\nabla v_t}{\Lnorm{2}}^2 \biggr) \,dt \leq C \mathcal E_0; \\
	& \bullet \sup_{0\leq t \leq T} \norm{v(t)}{\Hnorm{2}} \leq C \bigl( \mathcal E_0 + 1 \bigr) \mathcal E_0^{1/2}; \\
	& \bullet \text{if $ \alpha > 1/2 $, i.e., } 1 < \gamma < 3,  ~ \int_0^T \norm{\partial_{zzz} v}{\Lnorm{2}}^2 \,dt \leq C \mathcal E_0 + C \mathcal E_0^3; \\
	& \bullet \text{if $ 0 < \alpha\leq 1/2 $, i.e., } \gamma \geq 3 , ~ \int_0^T \norm{z^{\beta}\partial_{zzz} v}{\Lnorm{2}}^2 \,dt \leq C \mathcal E_0 + C \mathcal E_0^3, \\
	& ~~~~  \text{with $ \beta > \frac{1}{2} - \alpha $},
\end{align*}
for some constant $ 0 < C < \infty $. In particular,
		\begin{align*}
			&  \int_0^T \norm{z^{1/2}\partial_{zzz} v}{\Lnorm{2}}^2 \,dt \leq C \mathcal E_0 + C \mathcal E_0^3. 
		\end{align*}
{\par\noindent\bf II. }
If in addition, $ (Z_0, v_0) $ satisfies
\begin{equation}\label{high-regular-initial}
	\norm{Z_0}{\Hnorm{3}} + \norm{z^{\alpha/2}\nablah^3 v_0}{\Lnorm{2}} < \infty,
\end{equation}
then 
for any $  T \in (0, \infty) $, there is a positive constant $ 0 < C_T < \infty $ such that,
	\begin{align*}
	& \sup_{0\leq t\leq T} \bigl\lbrace \norm{z^{\alpha/2} \nablah^3 v}{\Lnorm{2}}^2 + \norm{\nablah^3 Z}{\Lnorm{2}}^2 +  \norm{\dt Z}{\Lnorm{\infty}}^2 +  \norm{\nablah Z}{\Lnorm{\infty}}^2 \bigr\rbrace \\
	& ~~~~ + \int_0^T \norm{\nabla\nablah^3 v}{\Lnorm{2}}^2\,dt \leq C_T \bigl( \norm{z^{\alpha/2} \nablah^3 v_0}{\Lnorm{2}}^2 + \norm{\nablah^3 Z_0}{\Lnorm{2}}^2 + 1 \bigr).
	\end{align*}
\end{theorem}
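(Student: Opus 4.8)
The plan is to establish Part~I by a standard continuation argument: combining the local well-posedness theory (to be proved in Section~\ref{sec:local}) with global-in-time \emph{a priori} estimates (to be proved in Section~\ref{sec:aprior}). The scheme is as follows. First, assume the bootstrap hypothesis $\sup_{[0,T]}\mathcal E(t)<\varepsilon_1$, so that Propositions~\ref{lm:equal-of-energies} and \ref{lm:equal-of-dissipation} apply and $\mathfrak E_c(t)$, $\mathfrak D(t)$ are comparable to $\mathcal E(t)$ and to the natural dissipation norm, respectively; in particular $|Z-1|<1/2$ pointwise, so the degenerate weight $\rho=z^\alpha Z^\alpha$ behaves like $z^\alpha$. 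Then I would derive the basic relative-energy identity by testing \subeqref{FB-CPE}{2} with $v$ (and using the moving-boundary equation \eqref{eq:movingboundary} together with the vertical-velocity formula \eqref{eq:verticalvelocity}, the conservation of momentum and mass from Remark~\ref{rm:viscosity}), obtaining control of $\|z^{\alpha/2}v\|_{L^2}$, $\|\nabla v\|_{L^2}$ and $\|Z-1\|_{H^1}$ up to the cross term $\frac{g}{\alpha+1}z^\alpha(Z^{\alpha+1}-1)\dvh v$, which is absorbed using the coercivity \eqref{BE-002} and the choice $c=\mu/(8g)$ — this is exactly where the $\gamma>4/3$ (i.e.\ $\alpha<3$, or more precisely the threshold in \eqref{BE-002}) threshold enters. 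Next I would differentiate the system in $t$ and in the horizontal variables $\nablah,\nablah^2$, test with $v_t$, $\nablah v$, $\nablah^2 v$ respectively, and — crucially, exploiting that horizontal and time derivatives commute with $z^\alpha$, so no singular $\dz\rho$ ever appears — close the higher-order relative energy $\mathfrak E_c$ against the dissipation $\mathfrak D$. The nonlinear terms are handled by the Hölder-in-$z$ lemma, the Gagliardo–Nirenberg inequalities \eqref{ineq-supnorm}, Hardy's inequalities (Lemma~\ref{lm:hardy}), and the smallness of $\mathcal E$, all terms being collected into the polynomial $\mathcal H(\cdot)$ with $\mathcal H(0)=0$. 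This yields a differential inequality of the form $\frac{d}{dt}\mathfrak E_c + \mathfrak D \le \mathcal H(\mathcal E)\,\mathfrak D$, whence $\sup_{[0,T]}\mathcal E + \int_0^T\mathfrak D\,dt \le C\mathcal E_0$ provided $\varepsilon'$ is small enough; a standard continuity argument then removes the bootstrap hypothesis and yields the global solution with the stated regularity.

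For the remaining regularity claims in Part~I I would proceed by elliptic-type recovery of the vertical derivatives. Rewriting \subeqref{FB-CPE}{2} as $\mu\partial_{zz}v = \rho(\dt v + v\cdot\nablah v + W\dz v + g\nablah Z) - \mu\deltah v - (\mu+\lambda)\nablah\dvh v - (\text{momentum correction})$ gives $\|\partial_{zz}v\|$-type bounds, hence $v\in H^2$ and $\|v(t)\|_{H^2}\lesssim(\mathcal E_0+1)\mathcal E_0^{1/2}$. Differentiating this relation once more in $z$ produces $\mu\partial_{zzz}v$ in terms of $\dz$ of the right-hand side; the only genuinely singular contribution is $\dz\rho\cdot(\ldots)=\alpha z^{\alpha-1}Z^\alpha(\ldots)$, which is square-integrable (against the full measure) precisely when $2(\alpha-1)>-1$, i.e.\ $\alpha>1/2$; for $\alpha\le 1/2$ one multiplies by the weight $z^\beta$ with $\beta>\tfrac12-\alpha$ to restore integrability, giving the dichotomy stated, and $z^{1/2}\partial_{zzz}v\in L^2_tL^2_x$ in all cases since $\beta=1/2$ always works. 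For Part~II, under the extra regularity \eqref{high-regular-initial} I would apply $\nablah^3$ to the system and test with $\nablah^3 v$, reproducing the energy argument one order higher; the time-dependent constant $C_T$ appears because at this top order the energy inequality is of Grönwall type $\frac{d}{dt}(\ldots)\lesssim (1+\mathcal E)(\ldots)$ rather than yielding a closed smallness bound.

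The main obstacle throughout is the degeneracy of the density $\rho=z^\alpha Z^\alpha$ at $z=0$ combined with the loss of the evolution equation for the vertical velocity $W$: every estimate must be weighted consistently by powers of $z^{\alpha/2}$, and the nonlinear term $W\dz v$ together with the explicit formula \eqref{eq:verticalvelocity} (which contains $\int_0^z\xi^\alpha(\ldots)d\xi$ and $\overline{z^\alpha(\ldots)}$) must be bounded without ever differentiating $\rho$ in $z$ — this is exactly the point of the change of coordinates \eqref{new-coordinates}, which achieves separation of variables $\rho=z^\alpha Z^\alpha$ so that $\dt$, $\nablah$ hit only the regular factor $Z^\alpha$. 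The weighted Hardy inequalities are what convert $\|z^{\alpha/2}\dz v\|$-type control back into $\|v\|$ control (as in \eqref{STB-001}), and keeping track of exactly how many powers of $z$ are needed — especially in the $\partial_{zzz}$ estimates and the borderline case $\alpha\le 1/2$ — will be the most delicate bookkeeping. I expect the hardest single estimate to be closing the $\nablah^2 v$ (and $\nabla v_t$) energy against $\mathfrak D$, since these sit at the top of the energy $\mathcal E$ used in the bootstrap and the nonlinear terms $v\cdot\nablah v$, $W\dz v$ at that order require the full strength of \eqref{ineq-supnorm} and the Hölder-in-$z$ lemma with the weights distributed just so.
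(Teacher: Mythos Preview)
Your outline is correct and is the paper's own route: a priori weighted energy estimates on time and horizontal derivatives (Section~\ref{sec:aprior}), elliptic recovery of $\partial_{zz}v$ and $\partial_{zzz}v$ (Section~\ref{sec:regularity-1}), local well-posedness by a fixed-point argument plus continuation (Section~\ref{sec:global-stability}), and a Gr\"onwall argument at one order higher for Part~II (Section~\ref{sec:regularity-2}).

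Two corrections, however. First, the threshold $\gamma>4/3$ (i.e.\ $\alpha<3$) plays \emph{no} role in Theorem~\ref{thm:global}: the coercivity \eqref{BE-002} and the choice $c=\mu/(8g)$ in Proposition~\ref{lm:equal-of-energies} hold for all $\alpha>0$, since \eqref{BE-002} is just the second-order Taylor expansion of $Z^{\alpha+2}-1-\tfrac{\alpha+2}{\alpha+1}(Z^{\alpha+1}-1)$ near $Z=1$. The restriction $\alpha<3$ appears only in the Poincar\'e-type Lemma~\ref{lm:poincare-ineq}, which is used exclusively for the exponential decay in Theorem~\ref{thm:stability}; likewise conservation of momentum is invoked only there, not here. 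Second, the test functions in the paper carry an extra factor of $Z$: one tests \subeqref{FB-CPE}{2} with $Zv$, $Zv_t$, $Zv_h$, $-(Zv_{hh})_h$, $-Z\partial_{zz}v$, $-Z\partial_{zz}v_h$. The factor $Z$ is what makes the transport identity \eqref{eq:density} close the material-derivative terms exactly, and the last two tests are genuine energy identities (Propositions~\ref{lm:L^2-v-dz-est}, \ref{lm:L^2-v-dhz-est}) needed to close the $\norm{z^{\alpha/2}v_z}{\hHnorm{1}}$ component of $\mathcal E$ --- the pointwise-in-time elliptic recovery of Proposition~\ref{prop:H^2-of-v} alone does not suffice for that.
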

\begin{proof}[Proof of Theorem \ref{thm:global}]
	The existence and uniqueness of the strong solutions follow from Proposition \ref{prop:global-stability}, below. The estimates and regularity are given in Proposition \ref{prop:stability-theory}, Proposition \ref{prop:H^2-of-v}, and Proposition \ref{lm:diss-dzzz-v}, below. The estimate with initial data satisfying \eqref{high-regular-initial} is given in Proposition \ref{prop:Higher-order}, below.
\end{proof}

\begin{theorem}[Asymptotic Stability]\label{thm:stability}
Consider $ (Z_0, v_0) \in H^2(\Omega) \times H^2(\Omega) $. 
	Suppose that in addition to the conditions given in Theorem \ref{thm:global}, $ 0 < \alpha < 3 $ (i.e., $ \gamma > \frac{4}{3} $)  and \eqref{vanish-of-initial-momentum} holds for the initial data, i.e.,
	\begin{equation*}\tag{\ref{vanish-of-initial-momentum}}
		\int \bigl( z^\alpha Z_0^{\alpha+1} v_0 \bigr) \idx = 0.
	\end{equation*}
	Then, there exist constants $ 0 < \varepsilon'' < \varepsilon' , 0 < C_1, C_2 < \infty $, such that if $ \mathcal E_0 < \varepsilon'' $, we have the following decay estimate: for all $ t > 0 $,
	\begin{equation*}
		\mathcal E(t) \leq e^{-C_1t}C_2 \mathcal E_0.
	\end{equation*}
	In particular, 
	\begin{equation*}
		 \abs{Z(\vech x, t)-1}{} + \abs{v(\vec x, t)}{} \leq \norm{Z(t)-1}{\Hnorm{2}} + \norm{v(t)}{\Hnorm{2}} \leq C e^{-Ct},
	\end{equation*}
	for some constant $ 0 < C < \infty $. 
\end{theorem}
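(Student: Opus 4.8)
The plan is to upgrade the uniform-in-time bound $\sup_{t\ge0}\mathcal E(t)\le C\mathcal E_0$ already furnished by Theorem~\ref{thm:global} into an exponential decay rate, via a Gr\"onwall argument on the relative energy $\mathfrak E_c$. Two ingredients are needed. First, revisiting the a priori estimates of Section~\ref{sec:aprior} that underlie Theorem~\ref{thm:global}, one extracts not merely the integrated bound but a \emph{differential} energy--dissipation inequality of the schematic form $\frac{d}{dt}\mathfrak E_c(t)+\mathfrak D(t)\le C\,\mathcal E(t)^{1/2}\,\mathfrak D(t)$. Since $\mathcal E(t)\le C\mathcal E_0$ is small, the right-hand side is absorbed, leaving $\frac{d}{dt}\mathfrak E_c(t)+\tfrac12\mathfrak D(t)\le 0$. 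Second --- and this is the heart of the matter --- one needs a Poincar\'e-type inequality $\mathfrak E_c(t)\le C\,\mathfrak D(t)$, so that the dissipation controls the energy itself and the inequality closes as $\frac{d}{dt}\mathfrak E_c+c_0\mathfrak E_c\le 0$.

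For the Poincar\'e-type inequality (Lemma~\ref{lm:poincare-ineq}) one invokes the two conservation laws of Remark~\ref{rm:viscosity}, whose validity is exactly what the momentum correction term in \eqref{def:artificial-viscosity} was designed to guarantee. By Propositions~\ref{lm:equal-of-energies} and \ref{lm:equal-of-dissipation}, once $\mathcal E$ is small almost every term of $\mathfrak E_c$ is already dominated by $\mathfrak D$: the potential term is comparable to $\tfrac g2\norm{Z-1}{\Lnorm2}^2$, controlled by $\norm{\nablah Z}{\Lnorm2}^2$ through the torus Poincar\'e inequality and the conserved mass $\int Z^{\alpha+1}\idx=1$ (whence $\int(Z^{\alpha+1}-1)\idx=0$); and the terms of $\mathfrak E_c$ carrying a derivative of $v$, together with the $v_t$ and $Z_t$ terms, are bounded by $\norm{\nabla v}{\Hnorm1}^2+\norm{\nablah v}{\Hnorm2}^2+\norm{z^{\alpha/2}v_t}{\Lnorm2}^2+\norm{Z_t}{\Lnorm2}^2\lesssim\mathfrak D$ because $z^\alpha\le1$ and $Z\approx1$. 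The one genuinely new term is the undifferentiated weighted norm $\tfrac12\int z^\alpha Z^{\alpha+1}\abs{v}{2}\idx$ (which also enters $\mathcal E$ through $\norm{z^{\alpha/2}v}{\hHnorm2}^2$ and, via \eqref{STB-001}, through $\norm{v}{\Hnorm1}^2$). Here the conserved momentum $\int z^\alpha Z^{\alpha+1}v\idx=0$ is the key: splitting $v$ into its $\Omega_h$-average $\langle v\rangle_h(z,t)$ and its mean-zero horizontal fluctuation, the fluctuation satisfies $\int_0^1 z^\alpha\hnorm{v-\langle v\rangle_h}{\Lnorm2}^2\,dz\lesssim\norm{\nablah v}{\Lnorm2}^2\le\mathfrak D$ by Poincar\'e on $\mathbb T^2$, while $\langle v\rangle_h$ obeys a one-dimensional weighted Poincar\'e inequality $\int_0^1 z^\alpha\abs{\langle v\rangle_h}{2}\,dz\lesssim\int_0^1\abs{\dz\langle v\rangle_h}{2}\,dz+\bigl|\int_0^1 z^\alpha\langle v\rangle_h\,dz\bigr|^2$; the first term is $\lesssim\norm{\dz v}{\Lnorm2}^2\le\mathfrak D$, and, substituting $Z^{\alpha+1}=1+(Z^{\alpha+1}-1)$ into the momentum identity, the last term is $O\bigl(\mathcal E^{1/2}\norm{z^{\alpha/2}v}{\Lnorm2}\bigr)$ and is absorbed for $\mathcal E$ small. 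Together with \eqref{STB-001}--\eqref{STB-002} this yields $\mathcal E(t)\le C\,\mathfrak D(t)$, equivalently $\mathfrak E_c(t)\le C\,\mathfrak D(t)$. It is at this step that the threshold $\gamma>\tfrac43$, i.e. $\alpha<3$, must be imposed so that the constant in Lemma~\ref{lm:poincare-ineq} remains finite.

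Combining the two ingredients gives $\frac{d}{dt}\mathfrak E_c(t)+c_0\mathfrak E_c(t)\le0$ for some $c_0>0$, hence $\mathfrak E_c(t)\le e^{-c_0 t}\mathfrak E_c(0)$ for all $t>0$ (the solution being global by Theorem~\ref{thm:global}); the equivalence \eqref{aprasm:energy}, together with $\mathfrak E_c(0)\le C\mathcal E(0)=C\mathcal E_0$, then yields $\mathcal E(t)\le C_2 e^{-C_1 t}\mathcal E_0$ with $C_1=c_0$. The pointwise bound follows from the three-dimensional embedding $\Hnorm2(\Omega)\hookrightarrow\Lnorm\infty(\Omega)$ applied to $Z-1$, and for $v$ by inserting the now exponentially decaying $\mathcal E(t)$ into the (degenerate-)elliptic $\Hnorm2$-estimate for the momentum equation \subeqref{FB-CPE}{2}, whose right-hand side consists of $\mathcal E$-controlled quantities. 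I expect the Poincar\'e step to be the main obstacle: one must check that the zero-momentum condition --- which fixes only a single $z^\alpha$-weighted average of $v$ over the whole slab and is moreover entangled with $Z$ --- nonetheless lets $\mathfrak D$ dominate the full undifferentiated part of $\mathcal E$, and that the weighted one-dimensional Poincar\'e constant stays finite precisely when $\gamma>\tfrac43$. This is carried out in Proposition~\ref{prop:decay-est}.
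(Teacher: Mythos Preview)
Your overall architecture --- differential energy--dissipation inequality, absorption for small $\mathcal E$, Poincar\'e-type coercivity $\mathfrak E_c\lesssim\mathfrak D$, Gr\"onwall, then the elliptic $H^2$ bound of Proposition~\ref{prop:H^2-of-v} --- matches the paper's proof via Proposition~\ref{prop:decay-est}. The genuine difference is in how the Poincar\'e step is executed. The paper's Lemma~\ref{lm:poincare-ineq} passes to the variable $z'=z^{\alpha+1}/(\alpha+1)$, applies the standard Poincar\'e inequality on the transformed slab, and is left with the singular term $\int z^{-\alpha}|\partial_z v|^2$; two iterations of Hardy's inequality then trade the negative weight for higher vertical derivatives, ending with $\int z^\beta|\partial_{zzz}v|^2$ for $\beta<4-\alpha$, which must finally be controlled through the pointwise elliptic estimate behind Proposition~\ref{lm:diss-dzzz-v}. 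The admissible choice $\beta=1$ is what forces $\alpha<3$. Your horizontal/vertical splitting followed by the one-dimensional weighted Poincar\'e inequality $\int_0^1 z^\alpha|f|^2\lesssim\int_0^1|f'|^2$ for $f$ of $z^\alpha$-weighted mean zero is more elementary: it needs only $\norm{\partial_z v}{\Lnorm{2}}$ and $\norm{\nablah v}{\Lnorm{2}}$, both already present in $\mathfrak D$, and bypasses the third vertical derivative entirely. In fact that one-dimensional constant is finite for every $\alpha>0$ --- since $H^1(0,1)\hookrightarrow C[0,1]$ and the mean-zero condition gives $|f(1)|=(\alpha+1)\bigl|\int_0^1 z^\alpha(f(1)-f(z))\,dz\bigr|\le\norm{f'}{\Lnorm{2}}$, whence $\norm{f}{\Lnorm{\infty}}\le 2\norm{f'}{\Lnorm{2}}$ --- so, contrary to your suspicion, the threshold $\alpha<3$ does not arise in your argument at all; it is an artifact of the paper's change-of-variable route rather than an intrinsic feature of the coercivity step.
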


\begin{proof}[Proof of Theorem \ref{thm:stability}]
	This is the consequence of Proposition \ref{prop:decay-est} and Proposition \ref{prop:H^2-of-v}, below. 
\end{proof}

\begin{remark}
	The condition \eqref{vanish-of-initial-momentum} is only required in showing the asymptotic stability, which can be reached via a Galilean transformation. 
	The study of a non-conservative system, in which case such an assumption does not make sense, is discussed in section \ref{sec:non-conservative}.

	With more regular initial data satisfying \eqref{high-regular-initial}, Theorem \ref{thm:global} shows that the solution is more regular in the tangential direction. In particular, this will yield the change of coordinates \eqref{new-coordinates} is regular, and the reformulated equations \eqref{rfeq:isen-CPE-fb} are equivalent to the original equations \eqref{isen-CPE-fb}. 	
\end{remark}

\section{{\it \textbf{A priori}} estimates}\label{sec:aprior}
In this section, we derive the required global {\it a priori} estimates which guarantee the stability of the equilibrium. We assume that the strong solution $ (Z, v) $ to \eqref{FB-CPE} exists in $ \Omega \times (0,T) $ for some positive time $ T \in (0,\infty) $ and that $ (Z, v) $ is regular enough so that the quantities in the following relevant statements are finite and well-defined. This is the essential block of the continuity arguments later in this paper. 
Indeed, we will establish the following proposition:
\begin{proposition}\label{prop:stability-theory}
Suppose that $ (Z,v) $ is a strong solution to \eqref{FB-CPE} subject to the boundary condition \eqref{FB-BC-CPE}. 
There is a constant $ \varepsilon_2 \in (0,\varepsilon_1) $, with $ \varepsilon_1 $ given in Proposition \ref{lm:equal-of-energies}, such that if $ \mathcal E(t) < \varepsilon_2 $, for $ t \in (0,T) $, the following inequality holds:
\begin{equation}\label{apriori:total-energy}
	\begin{aligned}
		& \sup_{0<t<T} \mathcal E(t) + \int_0^T \biggl( \norm{Z-1}{\Hnorm{2}}^2+ \norm{Z_t}{\Lnorm{2}}^2  + \norm{\nabla v}{\Hnorm{1}}^2 + \norm{\nablah v}{\Hnorm{2}}^2 \\
		& ~~~~ + \norm{z^{\alpha/2} v_t}{\Lnorm{2}}^2 + \norm{\nabla v_t}{\Lnorm{2}}^2 \biggr) \,dt \leq C_1 \mathcal E_0,
	 \end{aligned}
\end{equation}
	where $ 0 < C_1 < \infty $ is a constant depending only on $ \varepsilon_2 $. 
\end{proposition}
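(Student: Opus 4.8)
The plan is to derive a closed differential inequality for the relative energy and then integrate it. Concretely, I would prove that for $ t \in (0,T) $ with $ \mathcal E(t) < \varepsilon_2 $ one has $ \frac{d}{dt}\mathfrak E_c(t) + \mathfrak D(t) \le C\,\mathcal H\bigl(\sqrt{\mathcal E(t)}\bigr)\,\mathfrak D(t) $, and I would also use the elementary fact that $ \mathcal E(t) \lesssim \mathfrak D(t) $ (which follows from Proposition \ref{lm:equal-of-dissipation} together with a Poincar\'e inequality, Lemma \ref{lm:poincare-ineq}, so that every term of size $ \mathcal H(\sqrt{\mathcal E})\,\mathcal E $ is equally absorbable). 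Then, choosing $ \varepsilon_2 \in (0,\varepsilon_1) $ so small that $ C\,\mathcal H(\sqrt{\varepsilon_2}) \le \frac12 $, one gets $ \frac{d}{dt}\mathfrak E_c + \frac12 \mathfrak D \le 0 $; integrating over $ (0,t) $ and invoking Proposition \ref{lm:equal-of-energies} ($ \mathfrak E_c \simeq \mathcal E $) and Proposition \ref{lm:equal-of-dissipation} (the characterization of $ \mathfrak D $) yields \eqref{apriori:total-energy} with $ C_1 $ depending only on $ \varepsilon_2 $. All the work is in the energy inequality, which I would build through a hierarchy of estimates.

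\emph{Basic energy.} Using the conservation of mass and momentum (Remark \ref{rm:viscosity}) and testing \subeqref{FB-CPE}{2} against $ Zv $, I would extract the lowest-order dissipation. The crucial algebraic point is that the modified viscosity can be written in the divergence form $ \frac{\mu}{Z}\dvh(Z\nablah v) + \frac{\mu+\lambda}{Z}\nablah(Z\dvh v) + \mu\partial_{zz}v $ --- which is precisely what the momentum-correction term buys --- so integration by parts (with no boundary contributions, since $ \dz v|_{z=0,1}=0 $ and $ \dz Z = 0 $) produces $ \mu\int Z(\abs{\nablah v}{2}+\abs{\dz v}{2})\idx + (\mu+\lambda)\int Z\abs{\dvh v}{2}\idx $, while the pressure work $ g\int z^\alpha Z^{\alpha+1}\nablah Z\cdot v\idx $ is cancelled against the time derivative of the relative potential-energy term in $ \mathfrak E_c $ after substituting $ \dt Z $ from the moving-boundary equation \eqref{eq:movingboundary}.

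\emph{Higher-order and recovery estimates.} The structural gain of the new coordinates is that the degenerate weight $ \rho = z^\alpha Z^\alpha $ separates variables, so $ \nablah $, $ \nablah^2 $ and $ \dt $ never fall on $ z^\alpha $ along the singular direction. I would then: (i) apply $ \nablah $ and $ \nablah^2 $ to \subeqref{FB-CPE}{2} and test against $ Z\nablah v $ and $ Z\nablah^2 v $, producing the dissipation of $ \nabla\nablah v $ and $ \nabla\nablah^2 v $ together with the time derivatives of the corresponding pieces of $ \mathfrak E_c $; (ii) differentiate \subeqref{FB-CPE}{2} in $ t $ and test against $ Zv_t $, producing the dissipation of $ \nabla v_t $ and of $ z^{\alpha/2}v_t $; (iii) recover $ \partial_{zz}v $ and $ \nablah\partial_{zz}v $ algebraically by solving the momentum equation for $ \mu\partial_{zz}v $, at a cost of only $ \norm{z^\alpha v_t}{\Lnorm2} \le \norm{z^{\alpha/2}v_t}{\Lnorm2} $, $ \norm{z^\alpha\nablah Z}{\Lnorm2} $ and already-controlled horizontal derivatives; and (iv) recover $ \norm{Z-1}{\Hnorm2} $ and $ \norm{Z_t}{\Lnorm2} $ from \eqref{eq:movingboundary}, the conservation of mass, and the Poincar\'e inequality. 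The cross term $ -c\,\frac{g}{\alpha+1}\int z^\alpha(Z^{\alpha+1}-1)\dvh v\idx $ in $ \mathfrak E_c $ is included so that, upon differentiation in $ t $ and substitution of $ \dvh v_t $ (whose leading pressure part is $ -g\deltah Z $) from the momentum equation and of $ \dt Z $ from \eqref{eq:movingboundary}, the dissipation $ \norm{\nablah Z}{\Lnorm2}^2 $ is extracted after integration by parts. Weights $ z^\alpha $ near $ z=0 $ are handled by the Hardy inequalities of Lemma \ref{lm:hardy} (trading powers of $ z $ for $ \dz $), and the nonlinear terms arising from $ v\cdot\nablah v $, $ W\dz v $, and the $ \nablah\log Z $ factors are bounded by $ \mathcal H(\sqrt{\mathcal E})\,\mathfrak D $ via the anisotropic (vertical) H\"older inequality and the Gagliardo--Nirenberg inequalities \eqref{ineq-supnorm}.

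\emph{Main obstacle.} I expect the principal difficulty to be the nonlocal vertical velocity $ W $: through \eqref{eq:verticalvelocity}, $ W $ is built from vertical integrals of $ v $ and $ \dvh v $ against the weight $ \xi^\alpha $, and the convection term $ \rho W\dz v = z^\alpha Z^\alpha W\dz v $ couples this nonlocal quantity to the degeneracy, so at every order of differentiation one must verify that the resulting contributions carry enough powers of $ z $ --- or can be redistributed by Hardy's inequality --- to be absorbed by $ \mathfrak D $ without ever generating an unweighted vertical derivative of $ \rho $. A closely related technical point is the algebraic recovery of $ \partial_{zz}v $ and $ \nablah\partial_{zz}v $ near $ z=0 $: the viscosity coefficient multiplying $ \partial_{zz}v $ is non-degenerate while the material-derivative side carries the factor $ \rho \sim z^\alpha $, so the bookkeeping of powers of $ z $ in this step, and its consistency with the weighted norms that appear in $ \mathfrak D $ (Proposition \ref{lm:equal-of-dissipation}), must be carried out with care.
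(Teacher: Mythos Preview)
Your overall strategy --- assemble weighted energy identities at each derivative level, sum them into a differential inequality for $\mathfrak E_c$, and absorb the nonlinear remainder by smallness --- is the paper's strategy, and your treatment of the basic energy and the tangential/temporal derivatives matches Propositions \ref{lm:basic-energy}, \ref{lm:L^2-v-t-est}, \ref{lm:L^2-v-dhh-est}. Two points, however, are handled differently in the paper and deserve correction in your plan. First, the dissipation of $\norm{\nablah Z}{\Lnorm{2}}$ and $\norm{\nablah^2 Z}{\Lnorm{2}}$ is \emph{not} obtained from the time derivative of the cross term in $\mathfrak E_c$; that cross term arises in Proposition \ref{lm:H^1-v-est} (testing against $v_t$) simply as the natural rewriting of the pressure work, and its presence is only needed for the equivalence $\mathfrak E_c\simeq\mathcal E$. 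The $\nablah Z$ dissipation is extracted separately by vertically integrating \subeqref{FB-CPE}{2} (using $\dz v|_{z=0,1}=0$ to kill $\partial_{zz}v$) to get the algebraic identity \eqref{eq:dh-Z}, and then reading off $\norm{\nablah Z}{\Lnorm{2}}\lesssim\norm{z^{\alpha/2}v_t}{\Lnorm{2}}+\norm{\nablah^2 v}{\Lnorm{2}}+\ldots$; see \eqref{SPE-001}, \eqref{SPE-003}. Your ``substitute $\dvh v_t$'' description would require dividing by $\rho\sim z^\alpha$, which is singular. Second, your step (iii) --- purely algebraic recovery of $\partial_{zz}v$ --- gives the needed \emph{dissipation} $\int_0^T\norm{\partial_{zz}v}{\Lnorm{2}}^2$ but does not generate the \emph{energy} pieces $\frac{d}{dt}\int z^\alpha Z^{\alpha+1}(\abs{v_z}{2}+\abs{\nablah v_z}{2})\idx$ that sit inside $\mathfrak E_c$. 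The paper obtains those by testing \subeqref{FB-CPE}{2} and \subeqref{eq:dh-FBCPE}{1} against $-Z\partial_{zz}v$ and $-Z\partial_{zz}v_h$ (Propositions \ref{lm:L^2-v-dz-est}--\ref{lm:L^2-v-dhz-est}); without this you cannot close the differential inequality for $\mathfrak E_c$ as defined.

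Your appeal to Lemma \ref{lm:poincare-ineq} to get $\mathcal E(t)\lesssim\mathfrak D(t)$ is both unjustified and unnecessary here. That lemma requires $0<\alpha<3$ and the vanishing-momentum condition \eqref{vanish-of-initial-momentum}, neither of which is assumed in Proposition \ref{prop:stability-theory}; indeed $\mathcal E\lesssim\mathfrak D$ is simply false in general (take $v$ constant, $Z\equiv1$). The good news is that you do not need it: every nonlinear remainder that arises in the estimates of Propositions \ref{lm:basic-energy}--\ref{lm:L^2-v-dhz-est} is already of the form $\mathcal H(\mathcal E)\cdot(\text{dissipation})$ (see \eqref{SPE-006}), so the absorption $\mathcal H(\varepsilon_2)\le\tfrac12$ works directly, exactly as you state, without ever comparing $\mathcal E$ to $\mathfrak D$. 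The Poincar\'e lemma enters only later, for the exponential decay in Proposition \ref{prop:decay-est}.
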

Throughout the rest of this section, without loss of generality,
we assume that
\begin{equation}\label{apriori-boundness-of-Z}
	\dfrac{1}{2} < Z (\vech x, t) < 2,
\end{equation}
for all $ \vech x \in \Omega_h $ and $ t \in [0,T] $.

\subsection{Basic energy}
In this section, we will perform some $ L^2 $-energy estimates on \eqref{FB-CPE}. Our first lemma is concerning the basic energy of the system:
\begin{proposition}\label{lm:basic-energy}
	Let $ (Z, v) $ be a regular enough strong solution on $ [0,T] $ to \eqref{FB-CPE}. Then one has the following, for all $ t \in [0,T] $:
	\begin{align}
		& \bullet \text{Conservation of mass:} ~~~~ \int Z^{\alpha+1}(\vech x, t)\idx = \int Z_0^{\alpha+1}(\vech x, t) \idx; \label{apriori:conserv-mass}\\
		& \bullet \text{Conservation of energy:} \label{apriori:conserv-kinetic-energy} \nonumber \\
		&  \dfrac{1}{2} \int z^\alpha Z^{\alpha+1}(\vech x, t) \abs{v(\vec x, t)}{2}  \idx + \dfrac{g}{\alpha+2} \int \biggl( \bigl( Z^{\alpha+2}(\vech x, t) - 1 \bigr) \nonumber \\
		& ~~~~ ~~~~ ~~~~ - \dfrac{\alpha+2}{\alpha+1} \bigl( Z^{\alpha+1}(\vech x, t)-1\bigr) \biggr) \idx \nonumber  \\
		& ~~~~ ~~~~ + \int_0^T \int Z\bigl( \mu \abs{\nablah v}{2} + (\mu+\lambda) \abs{\dvh v}{2} + \mu \abs{\dz v}{2} \bigr) \idx \,dt  \\ 
		& =  \dfrac{1}{2} \int z^\alpha Z_0^{\alpha+1} \abs{v_0}{2}  \idx  + \dfrac{g}{\alpha+2} \int \biggl( \bigl( Z_0^{\alpha+2} - 1 \bigr) - \dfrac{\alpha+2}{\alpha+1} \bigl( Z_0^{\alpha+1}-1\bigr) \biggr) \idx.  \nonumber
	\end{align}
	Furthermore, the following inequalities hold for some constant $ 0 < C < \infty $:
	\begin{align}
		& \int_0^T \norm{Z_t}{\Lnorm{2}}^2 \,dt \leq C \int_0^T \norm{\nablah v}{\Lnorm{2}}^2 \,dt \nonumber \\
		& ~~~~ ~~~~ ~~~~ + C \int_0^T \norm{z^{\alpha/2} v}{\hHnorm{1}}^2 \norm{\nablah Z}{\Hnorm{1}}^2 \,dt, \label{apriori:dissipation-Z-t} \\
		& \int_0^T \norm{Z_t}{\Lnorm{3}}^3 \,dt \leq C \int_0^T \bigl( \norm{z^{\alpha/2} v}{\hHnorm{1}}^3 \norm{\nablah Z}{\Hnorm{1}}^3 + \norm{z^{\alpha/2}\nablah v}{\hHnorm{1}}^{3} \bigr) \,dt.  \label{apriori:dissipation-Z-t-L3}
	\end{align}

\end{proposition}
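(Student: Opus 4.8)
The plan is to read off all four conclusions from the conservative structure of \eqref{FB-CPE} after two preliminary reformulations. First, multiplying the momentum equation \subeqref{FB-CPE}{2} by $Z$ and noting that the momentum correction in \eqref{def:artificial-viscosity} is chosen precisely so that
\begin{gather*}
	\mu\deltah v + \mu(\nablah\log Z)\cdot\nablah v = \tfrac{\mu}{Z}\dvh(Z\nablah v), \\
	(\mu+\lambda)\nablah\dvh v + (\mu+\lambda)(\dvh v)\nablah\log Z = \tfrac{\mu+\lambda}{Z}\nablah(Z\dvh v), \\
	z^\alpha Z^{\alpha+1}\bigl(\dt v + v\cdot\nablah v + W\dz v + g\nablah Z\bigr) = \mu\,\dvh(Z\nablah v) + (\mu+\lambda)\nablah(Z\dvh v) + \mu Z\,\partial_{zz}v ,
\end{gather*}
one obtains the horizontal momentum balance in the last displayed (divergence) form. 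Second, multiplying \subeqref{FB-CPE}{1} by $z^{\alpha-1}Z^\alpha$ and using $\dz Z=0$ together with \eqref{def:density}, one checks that $\varrho:=z^\alpha Z^{\alpha+1}=\rho Z$ satisfies the continuity equation $\dt\varrho+\dvh(\varrho v)+\dz(\varrho W)=0$. Conservation of mass then comes for free: multiplying \eqref{eq:movingboundary} by $(\alpha+1)Z^\alpha$ and using $(\alpha+1)Z^\alpha\nablah Z=\nablah Z^{\alpha+1}$ gives $\dt Z^{\alpha+1}=-(\alpha+1)\dvh(Z^{\alpha+1}\,\overline{z^\alpha v})$, which integrates to zero over the torus $\Omega_h$; since $Z$ is $z$-independent this is \eqref{apriori:conserv-mass}.

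For the energy identity \eqref{apriori:conserv-kinetic-energy} I would take the $\Lnorm{2}(\Omega_e)$ inner product of the displayed momentum equation with $v$. The material-derivative contribution $\int\varrho(\dt v+v\cdot\nablah v+W\dz v)\cdot v\idx$ equals $\tfrac{d}{dt}\tfrac12\int\varrho\abs{v}{2}\idx$: one integrates by parts in $\vech x$ and $z$, uses the continuity equation for $\varrho$ to cancel the quadratic remainder, and discards the vertical boundary terms using $W|_{z=0,1}=0$ (near $z=0$ the factor $\varrho W\abs{v}{2}\sim z^\alpha W\abs{v}{2}$ vanishes, given the assumed regularity). For the pressure work, since every factor other than $z^\alpha$ is $z$-independent, $g\int\varrho\nablah Z\cdot v\idx=g\inth Z^{\alpha+1}\nablah Z\cdot\overline{z^\alpha v}\idxh=\tfrac{g}{\alpha+2}\inth\nablah Z^{\alpha+2}\cdot\overline{z^\alpha v}\idxh$; integrating by parts in $\vech x$ and eliminating $\overline{z^\alpha\dvh v}$ through \eqref{eq:movingboundary} identifies this term with $\tfrac{g}{\alpha+2}\tfrac{d}{dt}\inth Z^{\alpha+2}\idxh$, which, by the conservation of mass just proved, equals $\tfrac{g}{\alpha+2}\tfrac{d}{dt}\int\bigl((Z^{\alpha+2}-1)-\tfrac{\alpha+2}{\alpha+1}(Z^{\alpha+1}-1)\bigr)\idx$. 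Finally, integrating the viscous terms by parts — in $\vech x$ for the first two, in $z$ for the last, using $\dz v|_{z=0,1}=0$ from \eqref{FB-BC-CPE} — produces $-\int Z\bigl(\mu\abs{\nablah v}{2}+(\mu+\lambda)\abs{\dvh v}{2}+\mu\abs{\dz v}{2}\bigr)\idx$. Integrating the resulting identity over $[0,T]$ gives \eqref{apriori:conserv-kinetic-energy}.

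The bounds \eqref{apriori:dissipation-Z-t}--\eqref{apriori:dissipation-Z-t-L3} follow directly from the moving-boundary equation \eqref{eq:movingboundary}: using $\tfrac12<Z<2$ from \eqref{apriori-boundness-of-Z}, one has, pointwise in time, $\norm{Z_t}{\Lnorm{2}}\lesssim\hnorm{\overline{z^\alpha v}\cdot\nablah Z}{\Lnorm{2}}+\hnorm{\overline{z^\alpha\dvh v}}{\Lnorm{2}}$ and the analogous inequality with the $\Lnorm{3}$-norm. By Cauchy--Schwarz in the $z$-variable and the anisotropic H\"older inequality of Section~\ref{sec:notations} one gets $\hnorm{\overline{z^\alpha v}}{\Lnorm{p}}\lesssim\norm{z^{\alpha/2}v}{\hHnorm{1}}$ for $p\in\{4,6\}$, $\hnorm{\overline{z^\alpha\dvh v}}{\Lnorm{2}}\lesssim\norm{\nablah v}{\Lnorm{2}}$, and $\hnorm{\overline{z^\alpha\dvh v}}{\Lnorm{3}}\lesssim\norm{z^{\alpha/2}\nablah v}{\hHnorm{1}}$; combining these with H\"older in $\Omega_h$ and the Gagliardo--Nirenberg/Ladyzhenskaya inequalities \eqref{ineq-supnorm} (to control $\nablah Z$ in $\Lnorm{4}$ and $\Lnorm{6}$ by $\hnorm{\nablah Z}{\Hnorm{1}}$), one reaches the integrands on the right-hand sides of \eqref{apriori:dissipation-Z-t}--\eqref{apriori:dissipation-Z-t-L3}; integration in $t$ concludes.

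The routine parts are the two $Z_t$ estimates and the viscous and kinetic terms in the energy identity. The delicate point — and the only place where the special algebraic structure is really used — is the pressure-work term: it must be recognized as an exact time derivative of the relative potential energy, which is forced by the hydrostatic/moving-boundary relation \eqref{eq:movingboundary} rather than by manipulating $\nablah Z$ directly, and the integrations by parts in the degenerate vertical direction at $z=0$ have to be justified with the available weighted regularity.
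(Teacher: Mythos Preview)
Your proposal is correct and follows essentially the same route as the paper: test \subeqref{FB-CPE}{2} against $Zv$, use the continuity identity for $\varrho=z^\alpha Z^{\alpha+1}$ on the kinetic part, recognize the pressure work as $\tfrac{d}{dt}\tfrac{g}{\alpha+2}\int Z^{\alpha+2}$, subtract the conserved mass, and read off the $Z_t$ bounds from \eqref{eq:movingboundary}. The only cosmetic difference is that for the pressure term you first average in $z$ and invoke \eqref{eq:movingboundary}, whereas the paper integrates by parts in $\vech x$ and substitutes the full three-dimensional continuity equation \eqref{eq:density}; since \eqref{eq:movingboundary} is precisely the $z$-average of \eqref{eq:density}, the two computations are equivalent.
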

\begin{proof}
	Taking the $ L^2 $-inner produce of \subeqref{FB-CPE}{2} with $ Z v $ yields, after applying integration by parts in the resultant,
	\begin{equation}\label{BE-001}
		\begin{aligned}
			& \dfrac{d}{dt} \biggl\lbrace \dfrac{1}{2} \int z^\alpha Z^{\alpha+1} \abs{v}{2} \idx \biggr\rbrace + \int z^\alpha Z^{\alpha+1} g \nablah Z \cdot v \idx \\
			& ~~~~ + \int Z \bigl\lbrack  \mu \abs{\nablah v}{2}  + ( \mu+ \lambda)  \abs{\dvh v}{2} + \mu \abs{\dz v}{2} \bigr\rbrack \idx \\
			&  = 0,
		\end{aligned}
	\end{equation}
	where we have applied the fact that from \subeqref{FB-CPE}{1}, one has
	\begin{equation}\label{eq:density}
		\dt (z^\alpha Z^{\alpha+1} ) + \dvh (z^\alpha Z^{\alpha+1} v) + \dz ( z^\alpha Z^{\alpha+1} W) = 0.
	\end{equation}
	On the other hand, after applying integration by parts and \eqref{eq:density}, one will have
	\begin{align*}
		& \int z^\alpha Z^{\alpha+1} g \nablah Z \cdot v \idx = - g \int  Z \dvh ( z^{\alpha} Z^{\alpha+1} v) \idx  \\
		& = g \int \bigl\lbrack \dt (z^\alpha Z^{\alpha+1} ) Z + \dz (z^\alpha Z^{\alpha+1} W) Z \bigr\rbrack \idx = \dfrac{d}{dt} \biggl\lbrace  \dfrac{(\alpha+1)g}{\alpha+2} \int z^{\alpha} Z^{\alpha+2} \idx \biggr\rbrace\\
		& = \dfrac{d}{dt} \biggl\lbrace \dfrac{g}{\alpha+2} \int Z^{\alpha+2} \idx \biggr\rbrace,
	\end{align*}
	thanks to the fact that $ \dz Z = 0 $. 
	In addition, integrating \eqref{eq:movingboundary} leads to, 
	\begin{equation}\label{eq:25Feb2022-2}
		\dfrac{d}{dt} \int Z^{\alpha+1} \idx = 0.
	\end{equation}
	Thus we have
	\begin{equation}\label{eq:000001}
		\int z^\alpha Z^{\alpha+1} g \nablah Z \cdot v \idx = \dfrac{d}{dt} \biggl\lbrace \dfrac{g}{\alpha+2} \int \bigl\lbrack (Z^{\alpha+2} - 1) - \dfrac{\alpha+2}{\alpha+1}(Z^{\alpha+1} - 1) \bigr\rbrack \idx \biggr\rbrace.
	\end{equation}
	Moreover, observe that when $ |Z - 1| $ is small enough, one has
	\begin{equation}\label{BE-002}
	\begin{aligned}
		& \dfrac{\alpha+2}{4(\alpha+1)^2} \abs{Z^{\alpha+1} - 1}{2} \leq (Z^{\alpha+2} - 1) - \dfrac{\alpha+2}{\alpha+1}(Z^{\alpha+1} - 1)  \\
		& ~~~~ ~~~~ \leq \dfrac{\alpha+2}{(\alpha+1)^2} \abs{Z^{\alpha+1} - 1}{2}  
	\end{aligned}
	\end{equation}
	Therefore from \eqref{BE-001} and \eqref{eq:000001}, we have
	\begin{equation}\label{apriori-ineq-001}
	\begin{aligned}
		& \dfrac{d}{dt} \biggl\lbrace \dfrac{1}{2} \int z^\alpha Z^{\alpha+1} \abs{v}{2} \idx  +  \dfrac{g}{\alpha+2} \int \bigl\lbrack (Z^{\alpha+2} - 1) - \dfrac{\alpha+2}{\alpha+1}(Z^{\alpha+1} - 1) \bigr\rbrack \idx \biggr\rbrace \\
		& ~~~~ + \int Z \bigl(  \mu \abs{\nablah v}{2}  + ( \mu+ \lambda)  \abs{\dvh v}{2} + \mu \abs{\dz v}{2} \bigr) \idx = 0.
	\end{aligned}\end{equation}
	Integrating \eqref{eq:25Feb2022-2} and \eqref{apriori-ineq-001} with respect to time yields the conservations of mass and energy. 
	Moreover, from \eqref{eq:movingboundary}, we have
	\begin{align}
		& \norm{Z_t}{\Lnorm{2}} \lesssim \norm{z^{\alpha/2}v}{\hHnorm{1}} \norm{\nablah Z}{\Hnorm{1}} + \norm{z^{\alpha/2} \nablah v}{\Lnorm{2}}, {\nonumber} \\
		& \norm{Z_t}{\Lnorm{p}} \lesssim \hnorm{\overline{z^{\alpha/2} v}}{\Hnorm{1}} \hnorm{\nablah Z}{\Hnorm{1}} + \hnorm{\overline{z^{\alpha/2} \nablah v}}{\Lnorm{p}} \nonumber \\
		& ~~~~ \lesssim \hnorm{\overline{z^{\alpha/2} v}}{\Hnorm{1}} \hnorm{\nablah Z}{\Hnorm{1}} + \hnorm{\overline{z^{\alpha/2} \nablah v}}{\Lnorm{2}}^{2/p} \hnorm{\overline{z^{\alpha/2} \nablah v}}{\Hnorm{1}}^{1-2/p} \nonumber \\
		& ~~~~ \lesssim \norm{{z^{\alpha/2} v}}{\hHnorm{1}} \norm{\nablah Z}{\Hnorm{1}} + \norm{{z^{\alpha/2} \nablah v}}{\Lnorm{2}}^{2/p} \norm{{z^{\alpha/2} \nablah v}}{\hHnorm{1}}^{1-2/p},\label{BE-004}
	\end{align}
	for $ 2 < p < \infty $. Taking $ p = 3 $ and integrating the corresponding quantities with respect to time will complete the proof of the proposition. 
\end{proof}
Next lemma is concerning the estimate for $ \nabla v $: 
\begin{proposition}\label{lm:H^1-v-est}
	Under the same assumptions as in Proposition \ref{lm:basic-energy}, one has that
	\begin{equation}\label{apriori:H^1-v-est}
		\begin{aligned}
		& \int \bigl\lbrack \dfrac{\mu}{2} \abs{\nablah v}{2} + \dfrac{\mu+\lambda}{2} \abs{\dvh v}{2} + \dfrac{\mu}{2} \abs{\dz v}{2} \bigr\rbrack \idx \\
		& ~~~~ - \dfrac{1}{\alpha+1} \int g z^\alpha (Z^{\alpha+1}-1) \dvh v \idx + \int_0^T \int z^\alpha Z^\alpha \abs{v_t}{2} \idx \,dt \\
		& \leq \int \dfrac{\mu}{2} \abs{\nablah v_0}{2} + \dfrac{\mu+\lambda}{2} \abs{\dvh v_0}{2} + \dfrac{\mu}{2} \abs{\dz v_0}{2} \idx \\
		& ~~~~ - \dfrac{1}{\alpha+1} \int g z^\alpha (Z_0^{\alpha+1}-1) \dvh v_0 \idx  + C \int_0^T \norm{Z_t}{\Lnorm{2}}^2 \\
		& ~~~~ + \norm{\nablah v}{\Lnorm{2}}^2 \,dt  + C \int_0^T \bigl( (\norm{\nablah Z}{\Hnorm{1}} + 1) \norm{z^{\alpha/2}v}{\hHnorm{2}} \\
		& ~~~~ ~~~~ + \norm{v}{\Hnorm{1}} + \norm{\nablah Z}{\Hnorm{1}} \bigr) 
		 \bigl(\norm{\dz v}{\hHnorm{1}} + \norm{\nablah v}{\Hnorm{1}}\bigr)  \\
		& ~~~~ ~~~~ \times\bigl( \norm{z^{\alpha/2} v_t}{\Lnorm{2}} + \norm{\nabla v_t}{\Lnorm{2}} \bigr) \,dt,
		\end{aligned}
	\end{equation}
	for some constant $ 0 < C < \infty $. 
\end{proposition}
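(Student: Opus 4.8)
The plan is to perform an $H^1$-type energy estimate for $v$ by taking the $L^2(\Omega)$ inner product of the momentum equation \subeqref{FB-CPE}{2} with $v_t$ and integrating by parts. The inertial term contributes $\int \rho\,\dt v\cdot v_t\idx=\int z^\alpha Z^\alpha\abs{v_t}{2}\idx$ (recall $\rho=z^\alpha Z^\alpha$), which is the dissipation appearing on the left of \eqref{apriori:H^1-v-est}. Moving one spatial derivative off $\mu\deltah v$, $(\mu+\lambda)\nablah\dvh v$ and $\mu\partial_{zz}v$ onto $v_t$, and using the boundary conditions \eqref{FB-BC-CPE} — in particular $\dz v|_{z=0,1}=0$ to discard the boundary term in the vertical integration by parts — produces the exact time derivative $\frac{d}{dt}\bigl\{\frac{\mu}{2}\norm{\nablah v}{\Lnorm{2}}^2+\frac{\mu+\lambda}{2}\norm{\dvh v}{\Lnorm{2}}^2+\frac{\mu}{2}\norm{\dz v}{\Lnorm{2}}^2\bigr\}$. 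The remaining terms (gravity, convection, momentum correction) then have to be shown to be either further exact time derivatives or controllable remainders.

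For the gravity term I would use the hydrostatic structure: since $\dz Z=0$ we have $z^\alpha Z^\alpha\nablah Z=\frac{1}{\alpha+1}z^\alpha\nablah(Z^{\alpha+1}-1)$, so integrating by parts in the horizontal variables converts $g\int z^\alpha Z^\alpha\nablah Z\cdot v_t\idx$ into $-\frac{g}{\alpha+1}\int z^\alpha(Z^{\alpha+1}-1)\dvh v_t\idx$, and extracting the time derivative yields $-\frac{d}{dt}\bigl\{\frac{g}{\alpha+1}\int z^\alpha(Z^{\alpha+1}-1)\dvh v\idx\bigr\}+g\int z^\alpha Z^\alpha Z_t\,\dvh v\idx$. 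The first piece is precisely the term $-\frac{1}{\alpha+1}\int g z^\alpha(Z^{\alpha+1}-1)\dvh v\idx$ kept on the left of \eqref{apriori:H^1-v-est}, while the remainder $g\int z^\alpha Z^\alpha Z_t\,\dvh v\idx$ is bounded, using $z^\alpha\le1$, \eqref{apriori-boundness-of-Z} and Young's inequality, by $C(\norm{Z_t}{\Lnorm{2}}^2+\norm{\nablah v}{\Lnorm{2}}^2)$.

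The convective term $\int z^\alpha Z^\alpha(v\cdot\nablah v)\cdot v_t\idx$ and the momentum corrections $\mu\int(\nablah\log Z)\cdot\nablah v\cdot v_t\idx$, $(\mu+\lambda)\int(\dvh v)\,\nablah\log Z\cdot v_t\idx$ are handled by systematically isolating the factor carrying the test function — $z^{\alpha/2}v_t$ whenever the weight $\rho=z^\alpha Z^\alpha$ is present, and otherwise $v_t$, which is dominated by $\norm{z^{\alpha/2}v_t}{\Lnorm{2}}+\norm{\nabla v_t}{\Lnorm{2}}$ via the Hardy-type estimate already used in \eqref{STB-001} — and estimating the complementary factor by the anisotropic Hölder inequality stated above together with the Gagliardo--Nirenberg/Ladyzhenskaya inequalities \eqref{ineq-supnorm}, writing $\nablah\log Z=\nablah Z/Z$ and invoking \eqref{apriori-boundness-of-Z} to control the $\log Z$ factor. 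A final Cauchy--Schwarz splitting then reproduces the product structure $\bigl((\norm{\nablah Z}{\Hnorm{1}}+1)\norm{z^{\alpha/2}v}{\hHnorm{2}}+\norm{v}{\Hnorm{1}}+\norm{\nablah Z}{\Hnorm{1}}\bigr)\bigl(\norm{\dz v}{\hHnorm{1}}+\norm{\nablah v}{\Hnorm{1}}\bigr)\bigl(\norm{z^{\alpha/2}v_t}{\Lnorm{2}}+\norm{\nabla v_t}{\Lnorm{2}}\bigr)$ on the right of \eqref{apriori:H^1-v-est}.

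The main obstacle is the nonlocal term $\int z^\alpha Z^\alpha(W\dz v)\cdot v_t\idx$, both because the weight degenerates at $z=0$ and because $W$ is only given implicitly. Here I would substitute the explicit formula \eqref{eq:verticalvelocity}, so that $z^\alpha Z^\alpha W=Z^{\alpha-1}\bigl(z^{\alpha+1}[(\alpha+1)\overline{z^\alpha v}\cdot\nablah Z+\overline{z^\alpha\dvh v}\,Z]-(\alpha+1)(\int_0^z\xi^\alpha v\,d\xi)\cdot\nablah Z-(\int_0^z\xi^\alpha\dvh v\,d\xi)\,Z\bigr)$, each summand being the product of a horizontal quantity ($\nablah Z$, or a $\xi^\alpha$-weighted vertical average, or a partial integral of $v$ or $\dvh v$) with a definite power of $z$. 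The decisive point is that $\bigl|\int_0^z\xi^\alpha v\,d\xi\bigr|\lesssim z^{(\alpha+1)/2}\bigl(\int_0^1 z^\alpha\abs{v}{2}\,dz\bigr)^{1/2}$, and similarly with $\dvh v$, so that when the weight $z^\alpha=z^{\alpha/2}\cdot z^{\alpha/2}$ is redistributed to build the test-function factor $z^{\alpha/2}v_t$, the residual negative power $z^{-\alpha/2}$ is more than compensated and no singularity at $z=0$ survives — this is exactly the mechanism that avoids differentiating the degenerate profile $z^\alpha$. Bounding the leftover $z$-integrable weights (using, where convenient, Hardy's inequality from Lemma \ref{lm:hardy} applied in the $z$-variable to $\hnorm{\cdot}{\Lnorm{2}}$, as noted just after that lemma), one reduces this term too to the product form on the right of \eqref{apriori:H^1-v-est}. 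Integrating the resulting differential inequality over $[0,T]$ turns the total time derivative into the difference of the quantities evaluated at $t$ and at $t=0$ — giving the first four lines of \eqref{apriori:H^1-v-est} — and collecting the remainders yields the claimed estimate.
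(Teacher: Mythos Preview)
Your proposal is correct and follows essentially the same route as the paper: take the $L^2$-inner product of \subeqref{FB-CPE}{2} with $v_t$, integrate the viscous terms by parts to extract the time derivative of the $H^1$-energy, rewrite the gravity term via $z^\alpha Z^\alpha\nablah Z=\frac{1}{\alpha+1}z^\alpha\nablah(Z^{\alpha+1}-1)$ and peel off a further time derivative plus the $Z_t\dvh v$ remainder, substitute \eqref{eq:verticalvelocity} into the $W\dz v$ term and estimate each piece with H\"older/Minkowski/Sobolev, and use the Hardy-type bound $\norm{v_t}{\Lnorm{2}}\lesssim\norm{z^{\alpha/2}v_t}{\Lnorm{2}}+\norm{\dz v_t}{\Lnorm{2}}$ wherever an unweighted $v_t$ appears. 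The paper's labeling of the right-hand side as $I_1,I_2,I_3$ and the split $I_2=I_2'+I_2''$ (barred averages versus partial integrals) matches your outline exactly.
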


\begin{proof}
	Taking the $L^2$-inner product of \subeqref{FB-CPE}{2} with $ v_t $ implies, after applying integrations by parts,
	\begin{equation}\label{BE-003}
		\begin{aligned}
			& \dfrac{d}{dt} \biggl\lbrace \int \dfrac{\mu}{2} \abs{\nablah v}{2} + \dfrac{\mu+\lambda}{2} \abs{\dvh v}{2} + \dfrac{\mu}{2} \abs{\dz v}{2} \idx \biggr\rbrace + \int z^\alpha Z^\alpha \abs{v_t}{2} \idx  \\
			& ~~~~ - \dfrac{1}{\alpha+1} \int g z^\alpha Z^{\alpha+1} \dvh v_t \idx = - \int \biggl( z^\alpha Z^\alpha v\cdot\nablah v \cdot v_t \\
			& ~~~~ - \mu \nablah \log Z \cdot \nablah v \cdot v_t - (\mu+\lambda) \dvh v \nablah \log Z \cdot v_t \biggr) \idx \\
			& ~~~~ - \int z^\alpha Z^\alpha W \dz v \cdot v_t \idx =: I_1 + I_2 .
		\end{aligned}
	\end{equation}
	Also, one has
	\begin{align*}
		& - \dfrac{1}{\alpha+1} \int g z^\alpha Z^{\alpha+1} \dvh v_t \idx = \dfrac{d}{dt} \biggl\lbrace -  \dfrac{1}{\alpha+1} \int g z^\alpha (Z^{\alpha+1}-1) \dvh v \idx \biggr\rbrace\\
		& ~~~~ + \underbrace{\int g z^\alpha Z^\alpha Z_t \dvh v\idx}_{-I_3}.
	\end{align*}
	
	We estimate $ I_2 $ first. After substituting \eqref{eq:verticalvelocity}, one has
	\begin{align*}
		& I_2 = - \int  z^{\alpha+1} Z^{\alpha-1} \bigl( (\alpha+1) \overline{z^\alpha v} \cdot \nablah Z + \overline{z^\alpha\dvh v} Z\bigr) \dz v \cdot v_t \idx\\
		& ~~~~ +\int \biggl( Z^{\alpha-1} \bigl( (\alpha+1)  \int_0^z \xi^\alpha v \,d\xi \cdot\nablah Z + \int_0^z \xi^\alpha \dvh v \,d\xi Z \bigr) \dz v \cdot v_t \biggr) \idx\\
		& ~~~~ =: I_2' + I_2''.
	\end{align*}
	Notice that $ Z, \overline{z^\alpha v}, \overline{z^\alpha\dvh v} $ are independent of the $ z $-variable. Therefore after applying H\"older's, Minkowski's, the Sobolev embedding inequalities, and \eqref{ineq-supnorm}, one has that
	\begin{align*}
		& I_2' \lesssim \int_0^1 (\hnorm{\overline{z^\alpha v}}{\Lnorm{\infty}}  \hnorm{\nablah Z}{\Lnorm{4}} + \hnorm{\overline{z^\alpha \nablah v}}{\Lnorm{4}}) \hnorm{z^{\alpha/2+1} \dz v}{\Lnorm{4}} \hnorm{ z^{\alpha/2} v_t}{\Lnorm{2}} \,dz\\
		& ~~~~ \lesssim (\norm{z^\alpha v}{\hHnorm{2}} \norm{\nablah Z}{\Lnorm{2}}^{1/2} \norm{\nablah Z}{\Hnorm{1}}^{1/2} + \norm{z^\alpha\nablah v}{\hHnorm{1}} )\\
		& ~~~~ ~~ \times \int_0^1 \hnorm{z^{\alpha/2} \dz v}{\Lnorm{2}}^{1/2} \hnorm{z^{\alpha/2}\dz v}{\Hnorm{1}}^{1/2}\hnorm{z^{\alpha/2}v_t}{\Lnorm{2}}\,dz \lesssim \bigl( \norm{z^{\alpha/2} v}{\hHnorm{2}} \norm{\nablah Z}{\Hnorm{1}} \\
		& ~~~~ ~~ + \norm{z^{\alpha/2} \nablah v}{\hHnorm{1}}\bigr) \norm{z^{\alpha/2}\dz v}{\hHnorm{1}} \norm{z^{\alpha/2} v_t}{\Lnorm{2}},\\
		& I_2'' \leq \int_0^1 \bigl\lbrack \hnorm{\nablah Z}{\Lnorm{4}} \int_0^z \xi^\alpha \hnorm{v}{\infty} \,d\xi + \int_0^z \xi^\alpha \hnorm{\nablah v}{\Lnorm{4}} \,d\xi \bigr\rbrack \hnorm{\dz v}{\Lnorm{4}} \hnorm{v_t}{\Lnorm{2}} \,dz\\
		& ~~~~ \lesssim  \int_0^1 \bigl(\hnorm{\nablah Z}{\Lnorm{4}} (\int_0^z \xi^{\alpha/2} \hnorm{v}{\infty}^2 \,d\xi)^{1/2} + (\int_0^z \xi^{\alpha/2} \hnorm{\nablah v}{\Lnorm{2}}^2 \,d\xi)^{1/4}\\
		& ~~~~ ~~~~ \times (\int_0^z \xi^{\alpha/2}  \hnorm{\nablah v}{\Hnorm{1}}^2\,d\xi)^{1/4}   \bigr) z^{\alpha/2+1/2} \hnorm{\dz v}{\Lnorm{2}}^{1/2} \hnorm{\dz v}{\Hnorm{1}}^{1/2} \hnorm{v_t}{\Lnorm{2}}\,dz  \\
		& ~~~~ \lesssim (\norm{\nablah Z}{\Hnorm{1}} + 1) \norm{z^{\alpha/2}v}{\hHnorm{2}} \norm{\dz v}{\hHnorm{1}} \norm{z^{\alpha/2}v_t}{\Lnorm{2}}. 
	\end{align*}
	Therefore, we arrive at the inequality 
	$$ I_2 \lesssim \bigl(\norm{\nablah Z}{\Hnorm{1}} + 1\bigr) \norm{z^{\alpha/2}v}{\hHnorm{2}} \norm{\dz v}{\hHnorm{1}} \norm{z^{\alpha/2}v_t}{\Lnorm{2}}. $$ 
	On the other hand, employing similar arguments, one can obtain the following estimates for $ I_1, I_3 $:
	\begin{align*}
		& I_1 \lesssim \norm{z^{\alpha/2}v}{\Lnorm{3}} \norm{\nablah v}{\Lnorm{6}}\norm{z^{\alpha/2}v_t}{\Lnorm{2}} + \norm{\nablah Z}{\Lnorm{6}} \norm{\nablah v }{\Lnorm{3}} \norm{v_t}{\Lnorm{2}}\\
		& ~~~~ \lesssim 
		\norm{v}{\Hnorm{1}}
		\norm{\nablah v}{\Hnorm{1}}\norm{z^{\alpha/2}v_t}{\Lnorm{2}}
		+ \norm{\nablah Z}{\Hnorm{1}} \norm{\nablah v}{\Hnorm{1}} \bigl( \norm{z^{\alpha/2} v_t}{\Lnorm{2}} \\
		& ~~~~ + \norm{\nabla v_t}{\Lnorm{2}} \bigr) ,\\
		& I_3 \lesssim \norm{Z_t}{\Lnorm{2}} \norm{\nablah v}{\Lnorm{2}},
	\end{align*}
	where we have applied, above, Hardy's inequality as follows:
	\begin{equation}\label{TE-002}
	\begin{aligned}
		& \norm{v_t}{\Lnorm{2}}^2 = \int_0^1 \hnorm{v_t}{\Lnorm{2}}^2 \,dz \lesssim \int_0^1 z^2 (\hnorm{v_t}{\Lnorm{2}}^2 + \hnorm{\dz v_t}{\Lnorm{2}}^2 ) \,dz  \lesssim \int_0^1 z^{4} \hnorm{v_t}{\Lnorm{2}}^2 \\
		& ~~~~ + \hnorm{\dz v_t}{\Lnorm{2}}^2\,dz \underbrace{\lesssim \cdots \lesssim}_{\max\lbrace[\alpha/2]-1,0\rbrace\text{ times}} \norm{z^{\alpha/2} v_t}{\Lnorm{2}}^2 + \norm{\dz v_t}{\Lnorm{2}}^2.
	\end{aligned}
	\end{equation}
	Therefore, from \eqref{BE-003}, after summing up the inequalities above, we have
	\begin{equation}\label{apriori-ineq-002}
	\begin{aligned}
		& \dfrac{d}{dt} \biggl\lbrace \int \bigl\lbrack \dfrac{\mu}{2} \abs{\nablah v}{2} + \dfrac{\mu+\lambda}{2} \abs{\dvh v}{2} + \dfrac{\mu}{2} \abs{\dz v}{2} \bigr\rbrack \idx \\
		& ~~~~ - \dfrac{1}{\alpha+1} \int g z^\alpha (Z^{\alpha+1}-1) \dvh v \idx \biggr\rbrace + \int z^\alpha Z^\alpha \abs{v_t}{2} \idx\\
		& ~~~~ \lesssim \norm{Z_t}{\Lnorm{2}}^2 + \norm{\nablah v}{\Lnorm{2}}^2  + \bigl( (\norm{\nablah Z}{\Hnorm{1}} + 1) \norm{z^{\alpha/2}v}{\hHnorm{2}} \\
		& ~~~~ ~~~~ + \norm{v}{\Hnorm{1}} + \norm{\nablah Z}{\Hnorm{1}} \bigr) 
		 \bigl(\norm{\dz v}{\hHnorm{1}} + \norm{\nablah v}{\Hnorm{1}}\bigr)  \\
		& ~~~~ ~~~~ \times\bigl( \norm{z^{\alpha/2} v_t}{\Lnorm{2}} + \norm{\nabla v_t}{\Lnorm{2}} \bigr) .
	\end{aligned}
	\end{equation}
	Integrating the above inequality with respect to the time variable will conclude the proof of the proposition. 
\end{proof}

\subsection{The temporal and horizontal derivative estimates}
This section includes the temporal derivative estimate and the horizontal (tangential) derivative estimates.

After applying $ \dt $ to \subeqref{FB-CPE}{2}, \eqref{eq:movingboundary} and \eqref{eq:verticalvelocity}, the following equations hold
\begin{equation}\label{eq:dt-FBCPE}
	\begin{cases}
		z^\alpha Z^\alpha (\dt v_t + v \cdot \nablah v_t + W \dz v_t) + z^\alpha Z^\alpha ( v_t \cdot\nablah v + W_t \dz v) \\
		~~~~ ~~~~ + \alpha z^\alpha Z^{\alpha-1} Z_t ( v_t + v\cdot\nablah v + W\dz v) + g (z^\alpha Z^\alpha \nablah Z)_t \\
		~~~~  = \mu \deltah v_t + (\mu+\lambda) \nablah \dvh v_t + \mu \partial_{zz} v_t  + \mu \nablah \log Z \cdot \nablah v_t \\
		~~~~ ~~~~ + (\mu+\lambda) \dvh v_t \nablah \log Z + \mu \nablah (\log Z)_t \cdot \nablah v \\
		~~~~ ~~~~ + (\mu+\lambda) \dvh v \nablah (\log Z)_t , \\
		\dt Z_t + (\alpha+1) \overline{z^\alpha v} \cdot \nablah Z_t + \overline{z^\alpha \dvh v} Z_t + (\alpha+1) \overline{z^\alpha v_t} \cdot\nablah Z \\
		~~~~ ~~~~ + \overline{z^\alpha \dvh v_t} Z = 0 ,\\ 
		z^\alpha W_t = z^{\alpha+1} \bigl( (\alpha+1) \overline{z^\alpha v} \cdot \nablah (\log Z)_t + (\alpha + 1) \overline{z^\alpha v_t} \cdot \nablah \log Z \\
		~~~~ ~~~~ + \overline{z^\alpha \dvh v_t} \bigr) - (\alpha+1) \int_0^z \xi^\alpha v \,d\xi \cdot\nablah (\log Z)_t \\
		~~~~ ~~~~ - (\alpha + 1) \int_0^z \xi^\alpha v_t \,d\xi \cdot \nablah \log Z - \int_0^z \xi^\alpha \dvh v_t \,d\xi, 
		\\
		\dz Z_t = 0.
	\end{cases}
\end{equation}

Now we derive the temporal derivative estimate:
\begin{proposition}\label{lm:L^2-v-t-est}
Under the same assumptions as in Proposition \ref{lm:basic-energy}, one has that 
\begin{equation}\label{apriori:temporal-derivative}
	\begin{aligned}
		& \dfrac{1}{2} \int z^\alpha Z^{\alpha+1} \abs{v_t}{2} \idx + \dfrac{g}{2} \int Z^\alpha \abs{Z_t}{2}\idx  \\
		& ~~~~ + \int_0^T \int Z\bigl( \mu \abs{\nablah v_t}{2} + (\mu+\lambda) \abs{\dvh v_t}{2} + \mu \abs{\dz v_t}{2} \bigr) \idx \,dt \\
		& \leq  \dfrac{1}{2} \int z^\alpha Z_0^{\alpha+1} \abs{v_1}{2} \idx + \dfrac{g}{2} \int Z_0^\alpha \abs{Z_1}{2}\idx + \int_0^T \mathcal H(\norm{\nablah Z}{\Hnorm{1}}, \\
		& ~~~~ ~~~~ \norm{Z_t}{\Lnorm{2}}, \norm{z^{\alpha/2} v}{\hHnorm{2}}, \norm{v}{\Hnorm{1}}, \norm{z^{\alpha/2}v_t}{\Lnorm{2}} )\\
		& ~~~~ \times\bigl( \norm{z^{\alpha/2} v_t}{\Lnorm{2}}^2 + \norm{\nabla v_t}{\Lnorm{2}}^2 + \norm{\dz v}{\hHnorm{2}}^2 + \norm{\nablah v}{\Hnorm{2}}^2 \\
		& ~~~~ + \norm{\nablah Z}{\Hnorm{1}}^2 \bigr) 
		 + C \norm{Z_t}{\Lnorm{3}}^3 \,dt,
	\end{aligned}
\end{equation}
	for some constant $ 0 < C < \infty $. Recall that $ \mathcal H (\cdot) $ is a polynomial of its arguments with $ \mathcal H(0) = 0 $. 
\end{proposition}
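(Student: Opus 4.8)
The plan is to run an $L^2$-energy estimate on the time-differentiated system \eqref{eq:dt-FBCPE}, one derivative above the basic-energy argument of Proposition~\ref{lm:basic-energy}. Concretely, I would take the $L^2(\Omega)$-inner product of \subeqref{eq:dt-FBCPE}{1} with $Zv_t$ and, simultaneously, multiply \subeqref{eq:dt-FBCPE}{2} by $gZ^\alpha Z_t$ and integrate over $\Omega_h$, then add the two resulting identities. In the first, the leading material-derivative block $z^\alpha Z^{\alpha+1}(\dt v_t + v\cdot\nablah v_t + W\dz v_t)\cdot v_t$ becomes $\frac{d}{dt}\bigl\{\frac12\int z^\alpha Z^{\alpha+1}\abs{v_t}{2}\idx\bigr\}$ after using the density transport identity \eqref{eq:density}, $\dz Z=0$, and the boundary condition $W|_{z=0,1}=0$; the viscous terms $\mu\deltah v_t$, $(\mu+\lambda)\nablah\dvh v_t$, $\mu\partial_{zz}v_t$ integrate by parts against $Zv_t$ into the dissipation $\int Z(\mu\abs{\nablah v_t}{2}+(\mu+\lambda)\abs{\dvh v_t}{2}+\mu\abs{\dz v_t}{2})\idx$ plus commutator terms carrying one factor $\nablah Z$, the boundary contributions vanishing since $\dz v_t|_{z=0,1}=0$ and $\dz Z=0$. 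In the second identity, the $\partial_t Z_t$ term gives $\frac{g}{2}\frac{d}{dt}\int Z^\alpha\abs{Z_t}{2}\idx$ modulo the cubic remainder $-\frac{g\alpha}{2}\int Z^{\alpha-1}Z_t\abs{Z_t}{2}\idx$, which is exactly the source of the $C\norm{Z_t}{\Lnorm{3}}^3$ term in \eqref{apriori:temporal-derivative}.

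The key algebraic point is the cancellation of the pressure/gravity cross terms. From \subeqref{eq:dt-FBCPE}{1}, the term $g\int Z\,(z^\alpha Z^\alpha\nablah Z)_t\cdot v_t\idx$, after expanding the time derivative and integrating by parts in $\vech x$, produces a main piece $-g\int z^\alpha Z^{\alpha+1}Z_t\,\dvh v_t\idx$ (the other pieces carry an extra $\nablah Z$ and are lower order); this is cancelled precisely by the term $g\int Z^{\alpha+1}Z_t\,\overline{z^\alpha\dvh v_t}\idxh=g\int z^\alpha Z^{\alpha+1}Z_t\,\dvh v_t\idx$ arising from testing \subeqref{eq:dt-FBCPE}{2} with $gZ^\alpha Z_t$. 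After this cancellation and integration in time over $[0,T]$, the left-hand side of \eqref{apriori:temporal-derivative} emerges together with the initial-data terms $\frac12\int z^\alpha Z_0^{\alpha+1}\abs{v_1}{2}\idx$ and $\frac{g}{2}\int Z_0^\alpha\abs{Z_1}{2}\idx$, and all that remains is to absorb the leftover ``lower-order'' contributions into the claimed right-hand side.

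It then remains to estimate the genuinely lower-order remainders: the convective/reaction terms $z^\alpha Z^{\alpha+1}(v_t\cdot\nablah v)\cdot v_t$ and $\alpha z^\alpha Z^\alpha Z_t(v_t+v\cdot\nablah v+W\dz v)\cdot v_t$, the extra viscous terms containing $\nablah\log Z$ and $\nablah(\log Z)_t$, and---most delicately---the term $z^\alpha Z^{\alpha+1}(W_t\dz v)\cdot v_t$. For the last one I would substitute the explicit formula \subeqref{eq:dt-FBCPE}{3} for $z^\alpha W_t$, which represents it through $\overline{z^\alpha v}\cdot\nablah(\log Z)_t$, $\overline{z^\alpha v_t}\cdot\nablah\log Z$, $\overline{z^\alpha\dvh v_t}$ and the primitives $\int_0^z\xi^\alpha(\cdot)\,d\xi$, and then control the vertical integrals with the anisotropic H\"older inequality of Section~\ref{sec:notations} and Hardy's inequality (Lemma~\ref{lm:hardy}), exactly as $I_2$ is handled in the proof of Proposition~\ref{lm:H^1-v-est}. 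Throughout, one uses the a priori bound $\frac12<Z<2$ from \eqref{apriori-boundness-of-Z}, the Gagliardo--Nirenberg/Ladyzhenskaya inequalities \eqref{ineq-supnorm}, and Hardy's inequality to trade undifferentiated $v$, $v_t$ for $z^{\alpha/2}$-weighted or $\dz$-differentiated quantities, after which each product is organized as a polynomial $\mathcal H$ in $\norm{\nablah Z}{\Hnorm{1}}$, $\norm{Z_t}{\Lnorm{2}}$, $\norm{z^{\alpha/2}v}{\hHnorm{2}}$, $\norm{v}{\Hnorm{1}}$, $\norm{z^{\alpha/2}v_t}{\Lnorm{2}}$ (vanishing at the origin) times one of the dissipation-type quantities $\norm{z^{\alpha/2}v_t}{\Lnorm{2}}^2+\norm{\nabla v_t}{\Lnorm{2}}^2+\norm{\dz v}{\hHnorm{2}}^2+\norm{\nablah v}{\Hnorm{2}}^2+\norm{\nablah Z}{\Hnorm{1}}^2$, plus the isolated cubic $C\norm{Z_t}{\Lnorm{3}}^3$ (also fed by the $\norm{Z_t}{\Lnorm{3}}^3$ bound \eqref{apriori:dissipation-Z-t-L3}); integrating in time over $[0,T]$ then yields \eqref{apriori:temporal-derivative}. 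I expect the main obstacle to be exactly this bookkeeping for the $W_t\dz v$ contribution and the $\nablah(\log Z)_t$ viscous terms: these are the places where one is tempted to use unavailable norms such as $\norm{\nablah Z_t}{\Lnorm{2}}$ or $\norm{v_t}{\Lnorm{\infty}}$, and the estimate must instead re-route $\nablah Z_t=\nablah\dt Z$ through the horizontally differentiated interface equation \eqref{eq:movingboundary} so that it is controlled by $\norm{z^{\alpha/2}\nablah v}{\hHnorm{1}}$- and $\norm{\nablah Z}{\Hnorm{1}}$-type quantities, without losing a power of $z$ near the vacuum boundary $\{z=0\}$.
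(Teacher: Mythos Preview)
Your plan is correct and matches the paper's proof almost step for step: test \subeqref{eq:dt-FBCPE}{1} with $Zv_t$, use \eqref{eq:density} to produce $\frac{d}{dt}\frac12\int z^\alpha Z^{\alpha+1}|v_t|^2$, integrate the viscosity by parts to get the $Z$-weighted dissipation, and convert the pressure contribution $g\int(z^\alpha Z^\alpha\nablah Z)_t\cdot Zv_t$ into $\frac{d}{dt}\frac{g}{2}\int Z^\alpha|Z_t|^2$ plus cubic remainders (the paper does this by integrating by parts in $\vech x$ and then substituting \subeqref{eq:dt-FBCPE}{2}, which is exactly your ``add the two tested identities'' cancellation written in reverse order). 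The decomposition of the $W_t\dz v$ term via \subeqref{eq:dt-FBCPE}{3} and the anisotropic H\"older/Minkowski estimates you describe are precisely what the paper calls $I_6', I_6'', I_6'''$.

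The one place where your route diverges slightly is the handling of $\nablah(\log Z)_t$ in the extra viscous terms and in the $W_t$ piece: you propose to control $\nablah Z_t$ by differentiating \eqref{eq:movingboundary}, whereas the paper simply integrates by parts in $\vech x$ to shift the horizontal derivative off $(\log Z)_t$ onto $\nablah v$, $v_t$, or $\dz v$, leaving only $Z_t\in L^2$ to estimate. Both approaches close within the stated right-hand side, but the paper's integration-by-parts is a bit cleaner since it avoids invoking $\norm{\nablah^2 Z}{\Lnorm{2}}$ inside the $\mathcal H$ coefficient and keeps every term manifestly at the level of $\norm{Z_t}{\Lnorm{2}}$ times a dissipation norm.
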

\begin{proof}
	Taking the $ L^2 $-inner product of \subeqref{eq:dt-FBCPE}{1} with $ Z v_t $ yields, after applying integration by parts,
	\begin{equation}\label{TE-001}
		\begin{aligned}
			& \dfrac{d}{dt} \biggl\lbrace \dfrac{1}{2} \int z^\alpha Z^{\alpha+1} \abs{v_t}{2} \idx \biggr\rbrace + \int g (z^\alpha Z^\alpha \nablah Z)_t \cdot Zv_t \idx \\
			& ~~~~ + \int Z\bigl( \mu \abs{\nablah v_t}{2} + (\mu+\lambda) \abs{\dvh v_t}{2} + \mu \abs{\dz v_t}{2} \bigr) \idx \\
			& =  \int \mu Z \nablah(\log Z)_t \cdot \nablah v \cdot v_t + (\mu+\lambda) Z v_t \cdot \nablah(\log Z)_t \dvh v \idx\\
			& ~~~~ - \int z^\alpha Z^{\alpha+1} v_t \cdot \nablah v \cdot v_t + \alpha z^\alpha Z^\alpha Z_t ( v_t + v\cdot \nablah v) \cdot v_t \idx \\
			& ~~~~ - \int z^\alpha Z^{\alpha+1} W_t \dz v \cdot v_t \idx - \alpha \int z^\alpha Z^\alpha Z_t W \dz v \cdot v_t \idx\\
			& =: I_{4}+ I_5 + I_6 + I_7.
		\end{aligned}
	\end{equation}
	At the same time, after substituting \subeqref{eq:dt-FBCPE}{2} and noticing that $ Z $ is independent of the $ z $-variable, one has,
	\begin{align*}
		& \int g (z^\alpha Z^\alpha \nablah Z)_t \cdot Zv_t \idx = \dfrac{g}{\alpha+1} \int z^\alpha ( \nablah Z^{\alpha +1})_t \cdot Z v_t \idx \\
		&  = - g \int  Z^{\alpha+1} Z_t  \overline{z^\alpha\dvh v_t} +  Z^\alpha Z_t \overline{z^\alpha v_t} \cdot \nablah Z \idx \\
		&  = g \int Z^{\alpha} Z_t \bigl( \dt Z_t +(\alpha+1) \overline{z^\alpha v} \cdot \nablah Z_t + \overline{z^\alpha \dvh v} Z_t + \alpha \overline{z^\alpha v_t} \cdot \nablah Z  \bigr) \idx \\
		& = \dfrac{d}{dt} \biggl\lbrace \dfrac{g}{2} \int Z^\alpha \abs{Z_t}{2}\idx \biggr\rbrace - \dfrac{\alpha g}{2} \int Z^{\alpha-1} Z_t \abs{Z_t}{2} \idx \\
		& ~~~~ + \alpha g \int Z^\alpha Z_t \overline{z^\alpha v_t} \cdot \nablah Z \idx + \dfrac{ (1- \alpha) g}{2} \int Z^\alpha \overline{z^\alpha \dvh v} \abs{Z_t}{2} \idx \\
		& ~~~~  - \dfrac{(\alpha+1)\alpha g}{2} \int Z^{\alpha-1} \abs{Z_t}{2} \overline{z^\alpha v} \cdot\nablah Z \idx =: \dfrac{d}{dt} \biggl\lbrace \dfrac{g}{2} \int Z^\alpha \abs{Z_t}{2}\idx \biggr\rbrace \\
		& ~~~~ - I_{8}.
	\end{align*}
	Similarly as before, we estimate the terms involving the vertical velocity $ W $ first. In fact, we have from \subeqref{eq:dt-FBCPE}{3} that
	\begin{align*}
		& I_6 = - (\alpha+1) \int Z^{\alpha+1} (z^{\alpha+1} \overline{z^\alpha v} - \int_0^z\xi^\alpha v \,d\xi) \cdot \nablah (\log Z)_t   \dz v \cdot v_t \idx \\
		& ~~~~ - (\alpha + 1) \int Z^{\alpha+1} (z^{\alpha+1} \overline{z^\alpha v_t} - \int_0^z \xi^\alpha v_t \,d\xi) \cdot \nablah \log Z \dz v \cdot v_t \idx \\
		& ~~~~ - \int Z^{\alpha+1} (z^{\alpha+1} \overline{z^\alpha \dvh v_t} - \int_0^z \xi^\alpha \dvh v_t \,d\xi) \dz v \cdot v_t \idx\\
		&  =: I_6' + I_6'' + I_6'''.
	\end{align*}
	Applying integration by parts yields
	\begin{align*}
		& I_6' = (\alpha+1) \int Z^{\alpha+1} (z^{\alpha+1} \overline{z^\alpha \dvh v} - \int_0^z \xi^\alpha \dvh v \,d\xi) (\log Z)_t \dz v\cdot v_t \idx \\
		& ~~~~ + (\alpha+1)^2 \int Z^\alpha (\log Z)_t (z^{\alpha+1} \overline{z^\alpha v} - \int_0^z \xi^\alpha v\,d\xi) \cdot \nablah Z \dz v \cdot v_t \idx \\
		& ~~~~ + (\alpha+1) \int Z^{\alpha+1} (\log Z)_t ( z^{\alpha+1} \overline{z^\alpha v} - \int_0^z \xi^\alpha v \,d\xi ) \cdot \nablah (\dz v \cdot v_t) \idx\\
		& = : I_{6,1}' + I_{6,2}' + I_{6,3}'.
	\end{align*}
	Now we apply H\"older's, Minkowski's, and the Sobolev embedding inequalities,
	\begin{align*}
		& I_{6,1}' \lesssim \int_0^1 \hnorm{Z_t}{\Lnorm{2}} \hnorm{\dz v}{\Lnorm{8}} z^{\alpha/2}\hnorm{v_t}{\Lnorm{4}}\bigl(\hnorm{\overline{z^\alpha\nablah v}}{\Lnorm{8}}   + (\int_0^z \xi^{\alpha} \hnorm{\nablah v}{\Lnorm{8}}^2 \,d\xi)^{1/2} \bigr) \,dz \\
		& ~~~~ \lesssim   \int_0^1 \hnorm{\dz v}{\Hnorm{1}} \hnorm{z^{\alpha/2} v_t}{\Lnorm{2}}^{1/2} \hnorm{z^{\alpha/2} v_t}{\Hnorm{1}}^{1/2} \,dz \norm{Z_t}{\Lnorm{2}} \norm{{z^{\alpha/2}\nablah v}}{\hHnorm{1}}\\
		& ~~~~ \lesssim \norm{Z_t}{\Lnorm{2}} \norm{z^{\alpha/2} v}{\hHnorm{2}}   \norm{\dz v}{\hHnorm{1}}\bigl( \norm{z^{\alpha/2}v_t}{\Lnorm{2}} + \norm{\nablah v_t}{\Lnorm{2}} \bigr),\\
		& I_{6,2}' \lesssim \int_0^1 \hnorm{Z_t}{\Lnorm{2}} \hnorm{\nablah Z}{\Lnorm{16}} \hnorm{\dz v}{\Lnorm{8}} z^{\alpha/2}\hnorm{v_t}{\Lnorm{4}} \bigl( \hnorm{\overline{z^\alpha v}}{\Lnorm{16}} \\
		& ~~~~ + (\int_0^z \xi^\alpha \hnorm{v}{\Lnorm{16}}\,d\xi )^{1/2} \bigr) \,dz \lesssim \int_0^1 \hnorm{\dz v}{\Hnorm{1}} \hnorm{z^{\alpha/2} v_t}{\Hnorm{1}} \,dz \norm{Z_t}{\Lnorm{2}}\\
		& ~~~~ \times \norm{\nablah Z}{\Hnorm{1}} \norm{z^{\alpha/2} v}{\hHnorm{1}} \lesssim \norm{Z_t}{\Lnorm{2}} \norm{\nablah Z}{\Hnorm{1}} \norm{z^{\alpha/2}v}{\hHnorm{1}} \norm{\dz v}{\hHnorm{1}}\\
		& ~~~~ \times \bigl( \norm{z^{\alpha/2} v_t}{\Lnorm{2}} + \norm{\nablah v_t}{\Lnorm{2}} \bigr),\\
		& I_{6,3}' \lesssim \int_0^1 \hnorm{Z_t}{\Lnorm{2}} \bigl( \hnorm{\dz v}{\Lnorm{\infty}} z^{\alpha/2} \hnorm{\nablah v_t}{\Lnorm{2}} + \hnorm{\nablah \dz v}{\Lnorm{4}} z^{\alpha/2} \hnorm{v_t}{\Lnorm{4}} \bigr)\\
		& ~~~~ \times \bigl( \hnorm{\overline{z^\alpha v}}{\Lnorm{\infty}} + (\int_0^z \xi^\alpha \hnorm{v}{\Lnorm{\infty}}^2 \,d\xi)^{1/2} \bigr) \,dz \lesssim \int_0^1 \hnorm{\dz v}{\Hnorm{2}} \hnorm{z^{\alpha/2} v_t}{\Hnorm{1}} \,dz \\
		& ~~~~ \times \norm{Z_t}{\Lnorm{2}} \norm{z^{\alpha/2}v}{\hHnorm{2}}\lesssim \norm{Z_t}{\Lnorm{2}}\norm{z^{\alpha/2}v}{\hHnorm{2}} \norm{\dz v}{\hHnorm{2}} \bigl( \norm{z^{\alpha/2}v_t}{\Lnorm{2}} \\
		& ~~~~ + \norm{\nablah v_t}{\Lnorm{2}}\bigr).
	\end{align*}
	Similarly,
	\begin{align*}
		& I_6'' \lesssim \int_0^1 \hnorm{\nablah Z}{\Lnorm{4}} \hnorm{\dz v}{\Lnorm{4}} z^{\alpha/2} \hnorm{v_t}{\Lnorm{4}} \bigl( \hnorm{\overline{z^\alpha v_t}}{\Lnorm{4}} + (\int_0^z \xi^\alpha\hnorm{v_t}{\Lnorm{4}}^2\,d\xi)^{1/2} \bigr) \,dz\\
		& ~~~~ \lesssim \norm{\nablah Z}{\Hnorm{1}}\norm{z^{\alpha/2}v_t}{\Lnorm{2}} \norm{\dz v}{\hHnorm{1}} \bigl( \norm{z^{\alpha/2}v_t}{\Lnorm{2}} + \norm{\nablah v_t}{\Lnorm{2}} \bigr),\\
		& I_6''' \lesssim \int_0^1 \hnorm{\dz v}{\Lnorm{\infty}}z^{\alpha/2} \hnorm{v_t}{\Lnorm{2}} \bigl( \hnorm{\overline{z^\alpha\nablah v_t}}{\Lnorm{2}} + (\int_0^z \xi^\alpha \hnorm{\nablah v_t}{\Lnorm{2}}^2 \,d\xi)^{1/2}\,dz\\
		& ~~~~ \lesssim \norm{z^{\alpha/2}v_t}{\Lnorm{2}} \norm{\dz v}{\hHnorm{2}} \norm{\nablah v_t}{\Lnorm{2}}.
	\end{align*}
	At the same time, from \eqref{eq:verticalvelocity}, we have
	\begin{align*}
		& I_7 = - \alpha (\alpha+1) \int Z^{\alpha-1} Z_t \bigl( z^{\alpha+1} \overline{z^\alpha v} -  \int_0^z \xi^\alpha v\,d\xi  \bigr)\cdot\nablah Z \dz v \cdot v_t \idx \\
		& ~~~~ - \alpha \int Z^{\alpha} Z_t \bigl( z^{\alpha+1} \overline{z^\alpha\dvh v} - \int_0^z \xi^\alpha\dvh v\,d\xi\bigr)\dz v \cdot v_t \idx =: I_7' + I_7'', 
	\end{align*}
	with
	\begin{align*}
		& I_7' \lesssim \int_0^1 \hnorm{Z_t}{\Lnorm{2}} \hnorm{\nablah Z}{\Lnorm{8}}\hnorm{\dz v}{\Lnorm{8}} z^{\alpha/2} \hnorm{v_t}{\Lnorm{4}} \bigl( \hnorm{\overline{z^\alpha v}}{\Lnorm{\infty}} \\
		& ~~~~ + (\int_0^z \xi^\alpha \hnorm{v}{\Lnorm{\infty}}^2 \,d\xi)^{1/2} \bigr) \,dz \lesssim \norm{Z_t}{\Lnorm{2}}\norm{\nablah Z}{\Hnorm{1}} \norm{z^{\alpha/2} v}{\hHnorm{2}} \\
		& ~~~~ \times \norm{\dz v}{\hHnorm{1}} \bigl( \norm{z^{\alpha/2} v_t}{\Lnorm{2}} + \norm{\nablah v_t}{\Lnorm{2}} \bigr), \\
		& I_7'' \lesssim \int_0^1 \hnorm{Z_t}{\Lnorm{2}}\hnorm{\dz v}{\Lnorm{8}} z^{\alpha/2} \hnorm{v_t}{\Lnorm{4}} \bigl( \hnorm{\overline{z^\alpha \nablah v}}{\Lnorm{8}} \\
		& ~~~~ + (\int_0^z \xi^\alpha \hnorm{\nablah v}{\Lnorm{8}}^2 \,d\xi)^{1/2} \bigr) \,dz \lesssim \norm{Z_t}{\Lnorm{2}} \norm{z^{\alpha/2} v}{\hHnorm{2}} \norm{\dz v}{\hHnorm{1}} \\
		& ~~~~ \times \bigl( \norm{z^{\alpha/2} v_t}{\Lnorm{2}} + \norm{\nablah v_t}{\Lnorm{2}} \bigr).
	\end{align*}
	Therefore, we have
	\begin{align*}
		& I_6 + I_7 \lesssim \mathcal H(\norm{\nablah Z}{\Hnorm{1}},\norm{Z_t}{\Lnorm{2}}, \norm{z^{\alpha/2}v}{\hHnorm{2}}, \norm{z^{\alpha/2} v_t}{\Lnorm{2}} ) \norm{\dz v}{\hHnorm{2}} \\
		& ~~~~ \times \bigl( \norm{z^{\alpha/2}v_t}{\Lnorm{2}} + \norm{\nablah v_t}{\Lnorm{2}} \bigr). 
	\end{align*}
	We list the estimates for the rest in the following:
	\begin{align*}
		& I_4 = - \int \mu (\log Z)_t \nablah Z \cdot \nablah v \cdot v_t + (\mu+\lambda) (\log Z)_t v_t \cdot \nablah Z \dvh v \idx \\
		& ~~~~ - \int \mu Z (\log Z)_t \deltah v \cdot v_t + (\mu+\lambda) Z (\log Z)_t v_t \cdot \nablah \dvh v \idx \\
		& ~~~~ - \int \mu Z (\log Z)_t \nablah v : \nablah v_t + (\mu+\lambda) Z (\log Z)_t \dvh v_t \dvh v\idx \\
		& ~~~~ \lesssim \int_0^1 \hnorm{Z_t}{\Lnorm{2}} \bigl( \hnorm{\nablah Z}{\Lnorm{4}} \hnorm{\nablah v}{\Lnorm{8}} \hnorm{v_t}{\Lnorm{8}} + \hnorm{\nablah^2 v}{\Lnorm{4}} \hnorm{v_t}{\Lnorm{4}} \\
		& ~~~~+ \hnorm{\nablah v}{\Lnorm{\infty}} \hnorm{\nablah v_t}{\Lnorm{2}}  \bigr) \,dz 
		\lesssim \bigl( \norm{\nablah Z}{\Hnorm{1}} + 1\bigr) \norm{Z_t}{\Lnorm{2}} \norm{\nablah v}{\hHnorm{2}}\\
		& ~~~~ \times \bigl( \norm{z^{\alpha/2}v_t}{\Lnorm{2}} + \norm{\nablah v_t}{\Lnorm{2}} \bigr)
		,\\
		& I_5 \lesssim \norm{z^{\alpha/2} v_t}{\Lnorm{2}}\norm{v_t}{\Lnorm{3}} \norm{\nablah v}{\Lnorm{6}} + \norm{Z_t}{\Lnorm{2}} \norm{v_t}{\Lnorm{3}} \norm{v_t}{\Lnorm{6}}\\
		& ~~~~ + \norm{Z_t}{\Lnorm{2}} \norm{v}{\Lnorm{6}}\norm{\nablah v}{\Lnorm{6}} \norm{v_t}{\Lnorm{6}} \lesssim \norm{z^{\alpha/2} v_t}{\Lnorm{2}} \norm{\nablah v}{\Hnorm{1}} \norm{v_t}{\Lnorm{2}}^{1/2} \\
		& ~~~~ \times \norm{v_t}{\Hnorm{1}}^{1/2}+ \norm{Z_t}{\Lnorm{2}} \norm{v_t}{\Lnorm{2}}^{1/2} \norm{v_t}{\Hnorm{1}}^{3/2} + \norm{Z_t}{\Lnorm{2}}\norm{v}{\Hnorm{1}} \norm{\nablah v}{\Hnorm{1}} \\
		& ~~~~ \times  \norm{v_t}{\Hnorm{1}} \lesssim \mathcal H( \norm{Z_t}{\Lnorm{2}}, \norm{z^{\alpha/2} v_t}{\Lnorm{2}}, \norm{v}{\Hnorm{1}}) \bigl(\norm{\nablah v}{\Hnorm{1}}^2 \\
		& ~~~~ + \norm{\nabla v_t}{\Lnorm{2}}^2 + \norm{z^{\alpha/2} v_t}{\Lnorm{2}}^2 \bigr), \\
		& I_8 \lesssim \norm{Z_t}{\Lnorm{3}}^3 + \norm{\nablah Z}{\Hnorm{1}}^{3/2} \norm{z^{\alpha/2} v_t}{\Lnorm{2}}^{3/2} + \norm{z^{\alpha/2} \nablah v}{\hHnorm{1}}^3 \\
		& ~~~~ + \norm{\nablah Z}{\Hnorm{1}}^3 \norm{z^{\alpha/2} v}{\hHnorm{2}}^3,
	\end{align*}
	where we have applied the inequality \eqref{TE-002}. 
	After summing up the inequalities above from \eqref{TE-001}, one has the following inequality:
	\begin{equation}\label{apriori-ineq-003}
	\begin{aligned}
		& \dfrac{d}{dt} \biggl\lbrace \dfrac{1}{2} \int z^\alpha Z^{\alpha+1} \abs{v_t}{2} \idx + \dfrac{g}{2} \int Z^\alpha \abs{Z_t}{2}\idx \biggr\rbrace \\
		& ~~~~ + \int Z\bigl( \mu \abs{\nablah v_t}{2} + (\mu+\lambda) \abs{\dvh v_t}{2} + \mu \abs{\dz v_t}{2} \bigr) \idx\\
		&  \lesssim \mathcal H(\norm{\nablah Z}{\Hnorm{1}}, \norm{Z_t}{\Lnorm{2}}, \norm{z^{\alpha/2} v}{\hHnorm{2}}, \norm{v}{\Hnorm{1}}, \norm{z^{\alpha/2}v_t}{\Lnorm{2}} )\\
		& ~~~~ \times\bigl( \norm{z^{\alpha/2} v_t}{\Lnorm{2}}^2 + \norm{\nabla v_t}{\Lnorm{2}}^2 + \norm{\dz v}{\hHnorm{2}}^2 + \norm{\nablah v}{\Hnorm{2}}^2 \\
		& ~~~~ + \norm{\nablah Z}{\Hnorm{1}}^2 \bigr) 
		 + \norm{Z_t}{\Lnorm{3}}^3.
	\end{aligned}
\end{equation} 
	Integrating this inequality in the temporal variable will finish the proof.
\end{proof}

In the next proposition, we establish estimates for the horizontal derivatives. After applying $ \partial_h $ to \subeqref{FB-CPE}{2}, \eqref{eq:movingboundary}, \eqref{eq:verticalvelocity}, we record the resultant equations in the following:
\begin{equation}\label{eq:dh-FBCPE}
	\begin{cases}
		z^\alpha Z^\alpha (\dt v_h + v \cdot \nablah v_h + W\dz v_h ) + z^\alpha Z^\alpha (v_h \cdot \nablah v + W_h \dz v) \\
		~~~~ ~~~~ + \alpha z^\alpha Z^{\alpha-1} Z_h (v_t + v\cdot \nablah v + W\dz v) + g(z^\alpha Z^\alpha \nablah Z)_h \\
		~~~~ = \mu \deltah v_h +(\mu+\lambda) \nablah \dvh v_h + \mu \partial_{zz} v_h  + \mu \nablah \log Z \cdot \nablah v_h \\
		~~~~ ~~~~ + (\mu+\lambda) \dvh v_h \nablah \log Z + \mu \nablah (\log Z)_h \cdot \nablah v \\
		~~~~ ~~~~ + (\mu+\lambda) \dvh v \nablah (\log Z)_h  , \\
		\dt Z_h + (\alpha+1)\overline{z^\alpha v} \cdot \nablah Z_h + \overline{z^\alpha \dvh v} Z_h + (\alpha+1) \overline{z^\alpha v_h} \cdot \nablah Z\\
		~~~~ ~~~~ + \overline{z^\alpha\dvh v_h} Z = 0, \\
		z^\alpha W_h = z^{\alpha+1}\bigl( (\alpha+1) \overline{z^\alpha v} \cdot \nablah (\log Z)_h + (\alpha+1) \overline{z^\alpha v_h} \cdot \nablah \log Z \\
		~~~~ ~~~~ + \overline{z^\alpha \dvh v_h} \bigr) - (\alpha+1) \bigl( \int_0^z \xi^\alpha v\,d\xi \bigr) \cdot\nablah (\log Z)_h\\
		~~~~ ~~~~ - (\alpha+1) \bigl( \int_0^z \xi^\alpha v_h \,d\xi \bigr) \cdot\nablah \log Z - \int_0^z \xi^\alpha \dvh v_h \,d\xi.
	\end{cases}
\end{equation}

\begin{proposition}\label{lm:L^2-v-dhh-est}
Under the same assumptions as in Proposition \ref{lm:basic-energy}, one has
	\begin{align}
		&  \dfrac{1}{2} \int z^\alpha Z^{\alpha+1} \abs{v_h}{2} \idx + \dfrac{g}{2}\int Z^\alpha \abs{Z_h}{2}\idx \nonumber \\
		& ~~~~ + \int_0^T \int Z \bigl( \mu \abs{\nablah v_h}{2} + (\mu+\lambda)\abs{\dvh v_h}{2} + \mu \abs{\dz v_h}{2} \bigr) \idx \,dt \nonumber \\
		& \leq \dfrac{1}{2} \int z^\alpha Z_0^{\alpha+1} \abs{v_{0,h}}{2} \idx + \dfrac{g}{2}\int Z_0^\alpha \abs{Z_{0,h}}{2}\idx + \int_0^T \mathcal H( \norm{\nablah Z}{\Hnorm{1}}, \nonumber \\
		& ~~~~ ~~~~ \norm{Z_t}{\Lnorm{2}}, \norm{z^{\alpha/2} v}{\hHnorm{2}}, \norm{v}{\Hnorm{1}}, \norm{z^{\alpha/2}v_t}{\Lnorm{2}} ) \bigl( \norm{\dz v}{\hHnorm{2}}^2 \nonumber  \\
		& ~~~~ + \norm{\nablah v}{\Hnorm{1}}^2  + \norm{\nablah Z}{\Hnorm{1}}^2\bigr) \,dt, 
		\label{apriori:tangential-01} \\
		\intertext{and}
		& \dfrac{1}{2} \int z^\alpha Z^{\alpha+1} \abs{v_{hh}}{2} \idx + \dfrac{g}{2} \int \abs{Z_{hh}}{2} \idx   \nonumber \\
		& ~~~~ + \int_0^T \int Z \bigl( \mu \abs{\nablah v_{hh}}{2} + (\mu+\lambda) \abs{\dvh v_{hh}}{2} + \mu \abs{\dz v_{hh}}{2} \bigr)\idx\,dt   \nonumber \\
		& \leq  \dfrac{1}{2} \int z^\alpha Z_0^{\alpha+1} \abs{v_{0,hh}}{2} \idx + \dfrac{g}{2} \int \abs{Z_{0,hh}}{2} \idx + \int_0^T  \mathcal H (\norm{Z-1}{\Hnorm{2}}, \nonumber \\
		& ~~~~ ~~~~ \norm{z^{\alpha/2} v}{\hHnorm{2}}, \norm{v}{\Hnorm{1}} )  \bigl( \norm{\nablah Z}{\Hnorm{1}}^2 + \norm{\nabla v}{\Lnorm{2}}^2 
		 + \norm{\nablah v}{\Hnorm{2}}^2 \nonumber \\
		 & ~~~~ + \norm{z^{\alpha/2} v_t}{\Lnorm{2}}^2 
		  + \norm{\nabla v_t}{\Lnorm{2}}^2 \bigr) \,dt 
		\label{apriori:tangential-02} 
	\end{align}
	Recall that $ \mathcal H (\cdot) $ is a polynomial of its arguments with $ \mathcal H(0) = 0 $. 
\end{proposition}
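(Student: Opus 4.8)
The proof follows the energy scheme of Proposition~\ref{lm:L^2-v-t-est}, now with the spatial operators $\partial_h$ and $\partial_h^2$ replacing $\dt$. For \eqref{apriori:tangential-01} the plan is to take the $L^2$-inner product of \subeqref{eq:dh-FBCPE}{1} with $Z v_h$ and integrate by parts; for \eqref{apriori:tangential-02}, to differentiate \eqref{eq:dh-FBCPE} once more in a horizontal variable and pair the resulting momentum equation with $Z v_{hh}$. In either case the term $z^\alpha Z^\alpha \dt v_h$ (respectively $z^\alpha Z^\alpha \dt v_{hh}$) produces $\frac{d}{dt}\bigl\{\frac12\int z^\alpha Z^{\alpha+1}\abs{v_h}{2}\idx\bigr\}$ up to a $Z_t$-weighted error, while the linearized viscous terms $\mu\deltah v_h+(\mu+\lambda)\nablah\dvh v_h+\mu\partial_{zz}v_h$, integrated by parts using $\dz v|_{z=0,1}=0$ and hence $\dz v_h|_{z=0,1}=0$, yield the dissipation $\int Z\bigl(\mu\abs{\nablah v_h}{2}+(\mu+\lambda)\abs{\dvh v_h}{2}+\mu\abs{\dz v_h}{2}\bigr)\idx$ together with commutator terms carrying the small factor $\nablah\log Z$ coming from the momentum-correction term.

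To extract the $Z$-energy I would treat the pressure term $g(z^\alpha Z^\alpha\nablah Z)_h=\frac{g}{\alpha+1}z^\alpha\nablah(Z^{\alpha+1})_h$ exactly as in Proposition~\ref{lm:L^2-v-t-est}: integrate by parts in the horizontal variables against $Zv_h$, use that $Z$ (hence $(Z^{\alpha+1})_h$) is independent of $z$ to reduce the leading contribution to $-\frac{g}{\alpha+1}\inth(Z^{\alpha+1})_h\,Z\,\overline{z^\alpha\dvh v_h}\idxh$, and then invoke \subeqref{eq:dh-FBCPE}{2} to replace $Z\,\overline{z^\alpha\dvh v_h}$ by $-\dt Z_h$ plus quadratically small terms; since $(Z^{\alpha+1})_h=(\alpha+1)Z^\alpha Z_h$ this gives $\frac{d}{dt}\bigl\{\frac g2\int Z^\alpha\abs{Z_h}{2}\idx\bigr\}$ modulo a cubic term in $Z_t$. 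The same computation with $\partial_h^2$ produces the $\frac g2\int\abs{Z_{hh}}{2}\idx$ term of \eqref{apriori:tangential-02}: the weight $Z^\alpha$ is replaced by $1$ at the cost of $\int(Z^\alpha-1)\abs{Z_{hh}}{2}\idx$, controlled by $\norm{Z-1}{\Lnorm{\infty}}\norm{Z_{hh}}{\Lnorm{2}}^2$, while the extra term $\alpha Z^{\alpha-1}\abs{Z_h}{2}$ appearing in $(Z^{\alpha+1})_{hh}$ is cubically small.

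The remaining nonlinear terms — the convective terms, the terms carrying the coefficients $\alpha z^\alpha Z^{\alpha-1}Z_h$, $Z_{hh}$ and $\abs{Z_h}{2}$, the commutators with $\nablah\log Z$, and (after one integration by parts in $z$ using \eqref{eq:density}) the terms $W\dz v_h\cdot v_h$ and $W\dz v_{hh}\cdot v_{hh}$ — are estimated exactly as the terms $I_1,\dots,I_8$ above, by Hölder's, Minkowski's, the Gagliardo--Nirenberg inequalities \eqref{ineq-supnorm}, and Hardy's inequality (Lemma~\ref{lm:hardy}), the latter used to trade unweighted norms of $v,v_h,v_{hh},v_t$ for $z^{\alpha/2}$-weighted ones plus one vertical derivative, as in \eqref{STB-001} and \eqref{TE-002}. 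For the first-order estimate one should eliminate $v_t$ from the term $\alpha z^\alpha Z^{\alpha-1}Z_h(v_t+v\cdot\nablah v+W\dz v)$ by substituting \subeqref{FB-CPE}{2}, i.e. replacing $z^\alpha Z^\alpha(v_t+v\cdot\nablah v+W\dz v)$ by $\mathcal D_v-g z^\alpha Z^\alpha\nablah Z$, so that this term acquires the small factor $\partial_h\log Z$; this is why no $v_t$-norm appears on the right-hand side of \eqref{apriori:tangential-01}. For the second-order estimate one may simply keep $v_{th}$ and $v_t$, since $\norm{z^{\alpha/2}v_t}{\Lnorm{2}}^2$ and $\norm{\nabla v_t}{\Lnorm{2}}^2$ already appear on the right-hand side of \eqref{apriori:tangential-02}.

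The main obstacle is, as in all the preceding estimates, the transport-by-$W$ terms, since $W$ has no evolution equation and must be replaced by the explicit representation \eqref{eq:verticalvelocity} and its derivatives \subeqref{eq:dh-FBCPE}{3}, introducing the vertically nonlocal factors $z^{\alpha+1}\overline{z^\alpha(\cdot)}-\int_0^z\xi^\alpha(\cdot)\,d\xi$. These are handled by Minkowski's inequality together with the splitting $\int_0^z\xi^\alpha g\,d\xi\lesssim z^{(\alpha+1)/2}\bigl(\int_0^z\xi^\alpha g^2\,d\xi\bigr)^{1/2}$ and its variants, precisely as in the bounds for $I_2'',I_6,I_7$ above; the $z^{(\alpha+1)/2}$ and $z^{\alpha+1}$ prefactors are what let the vertical integrals converge against the degenerate weight $z^\alpha$. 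The worst term occurs in the second-order estimate, namely $\int z^\alpha Z^{\alpha+1}W_{hh}\,\dz v\cdot v_{hh}\idx$, in which $W_{hh}$ carries $\overline{z^\alpha\dvh v_{hh}}$ and $\int_0^z\xi^\alpha\dvh v_{hh}\,d\xi$, i.e. third-order horizontal derivatives of $v$; these must be absorbed into the dissipation $\norm{\nablah v}{\Hnorm{2}}^2$ on the left, with the factors $\dz v$ and $v_{hh}$ controlled in $\Lnorm{\infty}$ and $\Lnorm{4}$ in $\Omega_h$ via \eqref{ineq-supnorm} and in the vertical direction via Hardy's inequality with the appropriate $z$-weights; similarly, in the first-order estimate $W_h$ carries $\dvh v_h$, absorbed into $\norm{\nablah v}{\Hnorm{1}}^2$. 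Summing the two resulting differential inequalities and integrating in time yields \eqref{apriori:tangential-01} and \eqref{apriori:tangential-02}.
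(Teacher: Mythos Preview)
Your plan is essentially the paper's: for \eqref{apriori:tangential-01} the paper also pairs \subeqref{eq:dh-FBCPE}{1} with $Zv_h$, and for \eqref{apriori:tangential-02} it pairs the \emph{once}-differentiated equation with $-(Zv_{hh})_h$, which is your twice-differentiated equation tested against $Zv_{hh}$ after one horizontal integration by parts. The pressure/$Z$-energy coupling via \subeqref{eq:dh-FBCPE}{2} and the treatment of the $W$-terms via \eqref{eq:verticalvelocity}, \subeqref{eq:dh-FBCPE}{3} are the same; in the paper's version the third-order derivative shows up as $v_{hhh}$ in the test function $(Zv_{hh})_h$ rather than as $W_{hh}$, but the resulting estimates are identical.

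One correction: your substitution step for the term $\alpha z^\alpha Z^{\alpha-1}Z_h(v_t+v\cdot\nablah v+W\dz v)$ is unnecessary and rests on a misreading of \eqref{apriori:tangential-01}. The norm $\norm{z^{\alpha/2}v_t}{\Lnorm{2}}$ \emph{does} appear on the right-hand side, as an argument of $\mathcal H$; what is absent is $\norm{z^{\alpha/2}v_t}{\Lnorm{2}}^2$ or $\norm{\nabla v_t}{\Lnorm{2}}^2$ in the dissipation factor. The paper simply estimates $\int z^\alpha Z^\alpha Z_h\,v_t\cdot v_h\idx$ directly (this is part of $I_{10}$), obtaining a bound of the form $\norm{\nablah Z}{\Hnorm{1}}\norm{z^{\alpha/2}v_t}{\Lnorm{2}}\norm{z^{\alpha/2}v}{\hHnorm{2}}$, and then uses that $\norm{\nablah Z}{\Hnorm{1}}^2$ is itself one of the admissible dissipation terms. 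Your substitution of \subeqref{FB-CPE}{2} would instead bring in $\partial_{zz}v$, which does not appear among the dissipation terms in \eqref{apriori:tangential-01}; it can still be handled after an integration by parts in $z$, but this is a detour the paper avoids.
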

\begin{proof}
	After taking inner product of \subeqref{eq:dh-FBCPE}{1} with $ Z v_{h} $, one has after applying integration by parts in the resultant, 
	\begin{equation}\label{TE-003}
		\begin{aligned}
			& \dfrac{d}{dt} \biggl\lbrace \dfrac{1}{2} \int z^\alpha Z^{\alpha+1} \abs{v_h}{2} \idx + \dfrac{g}{2}\int Z^\alpha \abs{Z_h}{2}\idx \biggr\rbrace\\
			& ~~~~ + \int \biggl( Z \bigl( \mu \abs{\nablah v_h}{2} + (\mu+\lambda)\abs{\dvh v_h}{2} + \mu \abs{\dz v_h}{2} \bigr) \biggr) \idx \\
			& = \int \biggl( \mu Z \nablah(\log Z)_h \cdot \nablah v \cdot v_h + (\mu+\lambda) Z v_h \cdot \nablah(\log Z)_h \dvh v \biggr) \idx\\
			& ~~~~ - \int \biggl( z^\alpha Z^{\alpha+1} v_h \cdot \nablah v \cdot v_h + \alpha z^\alpha Z^\alpha Z_h( v_t + v\cdot\nablah v) \cdot v_h \biggr) \idx \\
			& ~~~~ - \int z^\alpha Z^{\alpha+1} W_h \dz v \cdot v_h \idx - \alpha \int z^\alpha Z^\alpha Z_h W \dz v \cdot v_h \idx \\
			& ~~~~ + \dfrac{\alpha g}{2} \int Z^{\alpha-1} Z_t \abs{Z_h}{2} \idx  - \alpha g \int Z^\alpha Z_h \overline{z^\alpha v_h} \cdot \nablah Z \idx \\
			& ~~~~ - \dfrac{(1-\alpha)g}{2} \int Z^\alpha \abs{Z_h}{2} \overline{z^\alpha\dvh v} \idx \\
			& ~~~~ + \dfrac{(\alpha+1)\alpha g}{2} \int Z^{\alpha-1} \abs{Z_h}{2} \overline{z^\alpha v}\cdot \nablah Z \idx =: I_9 + I_{10} + I_{11}\\
			& ~~~~ ~~~~ + I_{12} + I_{13} + I_{14} + I_{15} + I_{16}.
		\end{aligned}
	\end{equation}
	As before, we shall estimate the integrals involving the vertical velocity first. 
	Substituting \eqref{eq:verticalvelocity} and \subeqref{eq:dh-FBCPE}{3}  yields
	\begin{align*}
		& I_{11} = - (\alpha+1) \int Z^{\alpha+1}\bigl(z^{\alpha+1} \overline{z^\alpha v} - \int_0^z \xi^\alpha v\,d\xi \bigr) \cdot \nablah (\log Z)_h \dz v \cdot v_h \idx \\
		& ~~~~ - (\alpha+1) \int Z^{\alpha+1} \bigl( z^{\alpha+1} \overline{z^\alpha v_h} - \int_0^z \xi^\alpha v_h\,d\xi\bigr) \cdot \nablah \log Z \dz v \cdot v_h \idx \\
		& ~~~~ - \int Z^{\alpha+1} \bigl( z^{\alpha+1} \overline{z^\alpha\dvh v_h} - \int_0^z \xi^\alpha \dvh v_h \,d\xi \bigr) \dz v \cdot v_h \idx,\\
		& I_{12} = - \alpha (\alpha+1) \int Z^{\alpha-1} Z_h \bigl( z^{\alpha+1} \overline{z^\alpha v} - \int_0^z \xi^\alpha v \,d\xi\bigr) \cdot \nablah Z \dz v\cdot v_h \idx\\
		& ~~~~ - \alpha \int Z^{\alpha} Z_h \bigl( z^{\alpha+1} \overline{z^\alpha \dvh v} - \int_0^z \xi^\alpha \dvh v\,d\xi \bigr) \dz v \cdot v_h \idx.
	\end{align*}
	Then after replacing the subscript $ t $ with $ h $, the estimates for $ I_{11}, I_{12} $ are the same as those for $ I_{6} $ and $ I_{7} $, above. Thus we have
	\begin{equation*}
		I_{11} + I_{12} \lesssim \mathcal H (\norm{\nablah Z}{\Hnorm{1}}, \norm{z^{\alpha/2} v}{\hHnorm{2}}) \norm{\dz v}{\hHnorm{2}} \norm{v_h}{\Hnorm{1}}.
	\end{equation*}
	The rest of the terms can be handled in the same way. That is, it holds that
	\begin{align*}
		& I_9 + I_{10}+ I_{13}+ I_{14}+ I_{15}+ I_{16} \\
		& ~~~~ \lesssim \mathcal H( \norm{\nablah Z}{\Hnorm{1}}, \norm{Z_t}{\Lnorm{2}}, \norm{z^{\alpha/2} v}{\hHnorm{2}}, \norm{v}{\Hnorm{1}} , \norm{z^{\alpha/2}v_t}{\Lnorm{2}} )\\
		& ~~~~ \times \bigl( \norm{\nablah v}{\Hnorm{1}}^2 + \norm{\nablah Z}{\Hnorm{1}}^2\bigr).
	\end{align*}
	Summing up the inequalities above yields 
	\begin{equation}\label{apriori-ineq-004}
	\begin{aligned}
		& \dfrac{d}{dt} \biggl\lbrace \dfrac{1}{2} \int z^\alpha Z^{\alpha+1} \abs{v_h}{2} \idx + \dfrac{g}{2}\int Z^\alpha \abs{Z_h}{2}\idx \biggr\rbrace  \\
		& ~~~~ + \int Z \bigl( \mu \abs{\nablah v_h}{2} + (\mu+\lambda)\abs{\dvh v_h}{2} + \mu \abs{\dz v_h}{2} \bigr) \idx  \\
		&   \lesssim \mathcal H( \norm{\nablah Z}{\Hnorm{1}}, \norm{Z_t}{\Lnorm{2}}, \norm{z^{\alpha/2} v}{\hHnorm{2}}, \norm{v}{\Hnorm{1}}  , \norm{z^{\alpha/2}v_t}{\Lnorm{2}}) \\
		& ~~~~ \times \bigl( \norm{\dz v}{\hHnorm{2}}^2 + \norm{\nablah v}{\Hnorm{1}}^2 + \norm{\nablah Z}{\Hnorm{1}}^2\bigr).
	\end{aligned}
	\end{equation}
	Integrating this inequality in the temporal variable completes the proof of \eqref{apriori:tangential-01}. 
	
	In order to show \eqref{apriori:tangential-02}, we take inner product of \subeqref{eq:dh-FBCPE}{1} with $ - (Z v_{hh})_h $. After applying integration by parts, it follows,
	\begin{equation}\label{TE-004}
		\begin{aligned}
			& \dfrac{d}{dt} \biggl\lbrace \dfrac{1}{2} \int z^\alpha Z^{\alpha+1} \abs{v_{hh}}{2} \idx \biggr\rbrace - g \int z^\alpha Z_{hh} \dvh (Z v_{hh}) \idx \\
			& + \int Z \bigl( \mu \abs{\nablah v_{hh}}{2} + (\mu+\lambda) \abs{\dvh v_{hh}}{2} + \mu \abs{\dz v_{hh}}{2} \bigr)\idx \\
			& =
			\int  \bigl( \mu \nablah (\log Z)_h\cdot\nablah v_h + (\mu+\lambda) \dvh v_h \nablah (\log Z)_h\bigr) \cdot Z v_{hh} \\
			& ~~~~  - \bigl( \mu \nablah (\log Z)_h\cdot\nablah v + (\mu+\lambda) \dvh v \nablah (\log Z)_h \bigr) \cdot \bigl( Z v_{hh} \bigr)_h \idx
			 \\
			& + \int z^\alpha Z^{\alpha-1} \bigl( Z v_h \cdot\nablah v + \alpha Z_h (v_t + v\cdot \nablah v) \bigr) (Z v_{hh})_h \idx \\
			& + \int z^\alpha Z^{\alpha} W_h \dz v \cdot (Z v_{hh})_h \idx + \alpha \int z^\alpha Z^{\alpha-1}Z_h W \dz v \cdot (Z v_{hh})_h \idx \\
			& - \int z^\alpha Z^{\alpha} \bigl( Z v_h \cdot \nablah v_h + \alpha Z_h ( \dt v_{h} + v\cdot \nablah v_h)  \bigr) \cdot v_{hh} \idx\\
			& - \int z^\alpha Z^{\alpha+1} W_h \dz v_h \cdot v_{hh} \idx - \alpha \int z^\alpha Z^\alpha Z_h W \dz v_h \cdot v_{hh} \idx\\
			& + g \int z^{\alpha} \bigl(  (Z^\alpha - 1) Z_{hh} + \alpha  Z^{\alpha-1} \abs{Z_h}{2}\bigr) \dvh ( Z v_{hh}) \idx \\
			& = : \sum_{k=17}^{24} I_{k}.
		\end{aligned}
	\end{equation}
	After substituting the following equation from applying horizontal derivative to \subeqref{eq:dh-FBCPE}{2},
	\begin{align*}
		& \dt Z_{hh} + (\alpha+1) \overline{z^\alpha v} \cdot \nablah Z_{hh} + \overline{z^\alpha \dvh v} Z_{hh}\\
		& ~~~~ +2 (\alpha+1) \overline{z^\alpha v_h} \cdot \nablah Z_h + 2 \overline{z^\alpha \dvh v_h} Z_h \\
		& ~~~~ + \alpha \overline{z^\alpha v_{hh}} \cdot \nablah Z + \overline{z^\alpha \dvh ( Z v_{hh})} = 0,
	\end{align*}
	one has that
	\begin{align*}
		& - g \int z^\alpha Z_{hh} \dvh (Z v_{hh}) \idx = g \int Z_{hh} \bigl( \dt Z_{hh} + (\alpha+1) \overline{z^\alpha v} \cdot \nablah Z_{hh} \\
		& ~~~~ + \overline{z^\alpha \dvh v} Z_{hh}
		 +2 (\alpha+1) \overline{z^\alpha v_h} \cdot \nablah Z_h + 2 \overline{z^\alpha \dvh v_h} Z_h \\
		& ~~~~ + \alpha \overline{z^\alpha v_{hh}} \cdot \nablah Z   \bigr) \idx = \dfrac{d}{dt} \biggl\lbrace \dfrac{g}{2} \int \abs{Z_{hh}}{2} \idx \biggr\rbrace\\
		& ~~~~ + g \int Z_{hh} \bigl( \dfrac{1-\alpha}{2} \overline{z^{\alpha}\dvh v} Z_{hh} + 2(\alpha+1) \overline{z^\alpha v_h} \cdot \nablah Z_h \\
		& ~~~~ + 2 \overline{z^\alpha \dvh v_h} Z_h + \alpha \overline{z^\alpha v_{hh}} \cdot \nablah Z \bigr) \idx =: \dfrac{d}{dt} \biggl\lbrace \dfrac{g}{2} \int \abs{Z_{hh}}{2} \idx \biggr\rbrace - I_{25}. 
	\end{align*}
	Now we estimate $ I_{17}, \cdots, I_{25} $. Substituting \subeqref{eq:dh-FBCPE}{3} in $ I_{19}, I_{22} $ yields
	\begin{align*}
		& I_{19} = (\alpha+1) \int \biggl\lbrack Z^\alpha \bigl( z^{\alpha+1} \overline{z^\alpha v} - \int_0^z \xi^\alpha v \,d\xi \bigr) \cdot \nablah (\log Z)_h \\
		& ~~~~ \times \dz v \cdot (Z v_{hhh} + Z_h v_{hh} ) \biggr\rbrack \idx 
		+ (\alpha+1) \int \biggl\lbrack Z^\alpha \bigl( z^{\alpha+1} \overline{z^\alpha v_h} \\
		& ~~~~ - \int_0^z \xi^\alpha v_h \,d\xi \bigr) \cdot \nablah \log Z 
		 \dz v \cdot (Z v_{hhh} + Z_h v_{hh} ) \biggr\rbrack \idx \\
		& ~~~~ + \int \biggl\lbrack Z^\alpha \bigl( z^{\alpha+1} \overline{z^\alpha\dvh v_h} - \int_0^z \xi^\alpha \dvh v_h \,d\xi \bigr) \\
		& ~~~~  \times \dz v \cdot (Z v_{hhh} + Z_h v_{hh} ) \biggr\rbrack \idx=: I_{19}' + I_{19}'' + I_{19}''', \\
		& I_{22} = - (\alpha+1) \int \biggl\lbrack Z^{\alpha+1} \bigl( z^{\alpha+1} \overline{z^\alpha v} - \int_0^z \xi^\alpha v\,d\xi \bigr) \cdot \nablah (\log Z)_h \\
		& ~~~~  \times \dz v_h \cdot v_{hh} \biggr\rbrack \idx
		 - (\alpha+1) \int \biggl\lbrack Z^{\alpha+1} \bigl( z^{\alpha+1} \overline{z^\alpha v_h} \\
		 & ~~~~ - \int_0^z \xi^\alpha v_h \,d\xi \bigr) \cdot \nablah \log Z 
		  \dz v_h \cdot v_{hh} \biggr\rbrack \idx \\
		& ~~~~ - \int Z^{\alpha+1} \bigl( z^{\alpha+1} \overline{z^\alpha \dvh v_h} - \int_0^z \xi^\alpha \dvh v_h \,d\xi \bigr)  \dz v_h \cdot v_{hh} \idx \\
		& ~~~~ =: I_{22}' + I_{22}'' + I_{22}'''.
	\end{align*}
Also, substituting \eqref{eq:verticalvelocity} in $ I_{20}, I_{23} $ yields,
\begin{align*}
	& I_{20} = \alpha(\alpha+1) \int \biggl\lbrack Z^{\alpha-2} Z_h \bigl(z^{\alpha+1} \overline{z^\alpha v} - \int_0^z \xi^\alpha v \,d\xi \bigr) \cdot \nablah Z \\
	& ~~~~ \times \dz v \cdot ( Z v_{hhh} + Z_h v_{hh}) \biggr\rbrack \idx 
	 + \alpha \int \biggl\lbrack Z^{\alpha-1} Z_h \bigl( z^{\alpha+1} \overline{z^\alpha\dvh v} \\
	& ~~~~ - \int_0^z \xi^\alpha \dvh v \,d\xi \bigr) \dz v \cdot ( Z v_{hhh} + Z_h v_{hh}) \biggr\rbrack \idx =: I_{20}' + I_{20}'', \\
	& I_{23} = - \alpha (\alpha+1) \int \biggl\lbrack Z^{\alpha-1} Z_h \bigl( z^{\alpha+1} \overline{z^\alpha v} - \int_0^z \xi^\alpha v\,d\xi \bigr) \cdot \nablah Z \\
	& ~~~~ \times \dz v_h \cdot v_{hh} \biggl\rbrack \idx - \alpha \int \biggl\lbrack Z^{\alpha} Z_h \bigl(z^{\alpha+1} \overline{z^\alpha \dvh v} \\
	& ~~~~ - \int_0^z \xi^\alpha \dvh v\,d\xi \bigr) \dz v_h \cdot v_{hh} \biggr\rbrack \idx =: I_{23}' + I_{23}''. 
\end{align*}

Then after applying H\"older's, Minkowski's, and the Sobolev embedding inequalities, we have
\begin{align*}
	& I_{19}' \lesssim \int_0^1 \biggl\lbrack z^{\alpha/2} \hnorm{\dz v}{\Lnorm{\infty}} \bigl( \hnorm{v_{hhh}}{\Lnorm{2}} + \hnorm{Z_h}{\Lnorm{4}} \hnorm{v_{hh}}{\Lnorm{4}} \bigr) \bigl( \hnorm{Z_{hh}}{\Lnorm{2}}+ \hnorm{Z_h}{\Lnorm{4}}^2 \bigr) \\
	& ~~~~ \times \bigl( \hnorm{\overline{z^\alpha v}}{\Lnorm{\infty}} + (\int_0^z \xi^\alpha \hnorm{v}{\Lnorm{\infty}}^2 \,d\xi)^{1/2}\bigr) \biggr\rbrack \,dz \lesssim \bigl(1+ \norm{\nablah Z}{\Hnorm{1}} \bigr)^2 \\
	& ~~~~ \times   \norm{z^{\alpha/2} v}{\hHnorm{2}} \norm{\nablah Z}{\Hnorm{1}}  \norm{\dz v}{\hHnorm{2}} \norm{\nablah^2 v}{\hHnorm{1}}, \\
	& I_{19}'' \lesssim \int_0^1\biggl\lbrack z^{\alpha/2} \hnorm{\dz v}{\Lnorm{8}} \bigl( \hnorm{v_{hhh}}{\Lnorm{2}} + \hnorm{Z_h}{\Lnorm{4}} \hnorm{v_{hh}}{\Lnorm{4}} \bigr) \hnorm{\nablah Z}{\Lnorm{4}} \\
	& ~~~~ \times \bigl( \hnorm{\overline{z^\alpha v_h}}{\Lnorm{8}} + (\int_0^z \xi^\alpha \hnorm{v_h}{\Lnorm{8}}^2 \,d\xi)^{1/2}\bigr) \biggr\rbrack \,dz \lesssim \bigl( 1 + \norm{\nablah Z}{\Hnorm{1}}\bigr) \\
	& ~~~~ \times \norm{z^{\alpha/2} v_h}{\hHnorm{1}} \norm{\nablah Z}{\Hnorm{1}} \norm{z^{\alpha/2} \dz v}{\hHnorm{1}} \norm{\nablah^2 v}{\hHnorm{1}},\\
	& I_{19}''' \lesssim \int_0^1 \biggl\lbrack z^{\alpha/2} \hnorm{\dz v}{\Lnorm{\infty}} \bigl( \hnorm{v_{hhh}}{\Lnorm{2}} + \hnorm{Z_h}{\Lnorm{4}} \hnorm{v_{hh}}{\Lnorm{4}} \bigr) \bigl\lbrack \hnorm{\overline{z^\alpha \nablah^2 v}}{\Lnorm{2}} \\
	& ~~~~ + (\int_0^z \xi^\alpha \hnorm{\nablah^2 v}{\Lnorm{2}}^2 \,d\xi)^{1/2} \bigr\rbrack \biggr\rbrack \,dz \lesssim \bigl(1+ \norm{\nablah Z}{\Hnorm{1}}\bigr) \norm{z^{\alpha/2} v}{\hHnorm{2}}\\
	& ~~~~ \times  \norm{\dz v}{\hHnorm{2}}  \norm{\nablah^2 v}{\hHnorm{1}}, \\
	& I_{22}' \lesssim \int_0^1 \biggl\lbrack z^{\alpha/2} \hnorm{\dz v_h}{\Lnorm{4}} \hnorm{v_{hh}}{\Lnorm{4}} \bigl( \hnorm{Z_{hh}}{\Lnorm{2}} + \hnorm{Z_h}{\Lnorm{4}}^2 \bigr)  \bigl( \hnorm{\overline{z^\alpha v}}{\Lnorm{\infty}} \\
	& ~~~~ + (\int_0^z \xi^\alpha \hnorm{v}{\Lnorm{\infty}}\,d\xi)^{1/2} \bigr) \biggr\rbrack \,dz \lesssim  \bigl(1+\norm{Z_h}{\Hnorm{1}}\bigr) \norm{Z_h}{\Hnorm{1}} \norm{z^{\alpha/2} v}{\hHnorm{2}}^{3/2}\\
	& ~~~~ \times \norm{z^{\alpha/2} \dz v_h}{\Lnorm{2}}^{1/2}  \norm{\nabla v}{\hHnorm{2}}, \\
	& I_{22}'' \lesssim \int_0^1 \biggl\lbrack z^{\alpha/2} \hnorm{\dz v_h}{\Lnorm{4}} \hnorm{v_{hh}}{\Lnorm{4}} \hnorm{\nablah Z}{\Lnorm{4}}\bigl( \hnorm{\overline{z^{\alpha} v_h}}{\Lnorm{4}}\\
	& ~~~~ + (\int_0^z \xi^\alpha \hnorm{v_h}{\Lnorm{4}}^2 \,d\xi)^{1/2} \bigr) \biggr\rbrack \,dz  \lesssim \norm{\nablah Z}{\Hnorm{1}}\norm{z^{\alpha/2} v}{\hHnorm{2}}^{3/2}\\
	& ~~~~ \times \norm{z^{\alpha/2} \dz v_h}{\Lnorm{2}}^{1/2}  \norm{\nabla v}{\hHnorm{2}},\\
	& I_{22}''' \lesssim \int_0^1 z^{\alpha/2} \hnorm{\dz v_h}{\Lnorm{4}} \hnorm{v_{hh}}{\Lnorm{4}} \bigl\lbrack \hnorm{\overline{z^\alpha \nablah^2 v}}{\Lnorm{2}} + (\int_0^z \xi^\alpha 
	\hnorm{\nablah^2 v}{\Lnorm{2}}^2 \,d\xi )^{1/2} \bigr\rbrack \,dz \\
	& ~~~~ \lesssim \norm{z^{\alpha/2} v}{\hHnorm{2}}^{3/2} \norm{z^{\alpha/2} \dz v_h}{\Lnorm{2}}^{1/2}  \norm{\nabla v}{\hHnorm{2}},\\
	& I_{20}' \lesssim \int_0^1 \biggl\lbrack z^{\alpha/2} \hnorm{\dz v}{\Lnorm{4}} \bigl( \hnorm{v_{hhh}}{\Lnorm{2}} + \hnorm{Z_h}{\Lnorm{4}} \hnorm{v_{hh}}{\Lnorm{4}} \bigr) \hnorm{\nablah Z}{\Lnorm{8}}^2 \bigl\lbrack \hnorm{\overline{z^\alpha v}}{\Lnorm{\infty}} \\
	& ~~~~ + (\int_0^z \xi^\alpha \hnorm{v}{\Lnorm{\infty}}^2 \,d\xi)^{1/2} \bigr\rbrack \biggr\rbrack \,dz \lesssim \bigl(1+ \norm{\nablah Z}{\Hnorm{1}}\bigr) \norm{\nablah Z}{\Hnorm{1}}^2\\
	& ~~~~ \times \norm{z^{\alpha/2} v}{\hHnorm{2}} \norm{\dz v}{\Lnorm{2}}^{1/2} \norm{z^{\alpha/2} \dz v}{\hHnorm{1}}^{1/2} \norm{\nablah^2 v}{\hHnorm{1}}, \\
	& I_{20}'' \lesssim \int_0^1 \biggl\lbrack z^{\alpha/2} \hnorm{\dz v}{\Lnorm{4}}\bigl( \hnorm{v_{hhh}}{\Lnorm{2}} + \hnorm{Z_h}{\Lnorm{4}} \hnorm{v_{hh}}{\Lnorm{4}}\bigr) \hnorm{Z_h}{\Lnorm{8}} \bigl\lbrack \hnorm{\overline{z^{\alpha} \nablah v}}{\Lnorm{8}} \\
	& ~~~~ + (\int_0^z \xi^\alpha \hnorm{\nablah v}{\Lnorm{8}}^2 \,d\xi)^{1/2} \bigr\rbrack \biggr\rbrack \,dz \lesssim \bigl( 1+ \norm{Z_h}{\Hnorm{1}}\bigr) \norm{Z_h}{\Hnorm{1}} \norm{z^{\alpha/2}v}{\hHnorm{2}}\\
	& ~~~~ \times \norm{z^{\alpha/2} \dz v}{\Lnorm{2}}^{1/2} \norm{z^{\alpha/2}\dz v}{\hHnorm{1}}^{1/2} \norm{\nablah^2 v}{\hHnorm{1}},\\
	& I_{23}' \lesssim \int_0^1 \biggl\lbrack z^{\alpha/2} \hnorm{\dz v_h}{\Lnorm{4}} \hnorm{v_{hh}}{\Lnorm{4}} \hnorm{\nablah Z}{\Lnorm{4}}^2 \bigl\lbrack \hnorm{\overline{z^\alpha v}}{\Lnorm{\infty}} \\
	& ~~~~ + (\int_0^z \xi^\alpha \hnorm{v}{\Lnorm{\infty}}^2\,d\xi)^{1/2} \bigr\rbrack \biggr\rbrack \,dz \lesssim \norm{\nablah Z}{\Hnorm{1}}^2 \norm{z^{\alpha/2} v}{\hHnorm{2}}^{3/2} \norm{z^{\alpha/2} \dz v}{\hHnorm{1}}^{1/2}\\
	& ~~~~ \times \norm{\nabla v}{\hHnorm{2}},\\
	& I_{23}'' \lesssim \int_0^1 \biggl\lbrack z^{\alpha/2} \hnorm{\dz v_h}{\Lnorm{4}} \hnorm{v_{hh}}{\Lnorm{4}} \hnorm{Z_h}{\Lnorm{4}} \bigl\lbrack \hnorm{\overline{z^\alpha \nablah v}}{\Lnorm{4}} \\
	& ~~~~ + (\int_0^z \xi^\alpha \hnorm{\nablah v}{\Lnorm{4}}^2 \,d\xi)^{1/2} \bigr\rbrack \biggr\rbrack \,dz \lesssim \norm{Z_h}{\Hnorm{1}} \norm{z^{\alpha/2} v}{\hHnorm{2}}^{3/2} \norm{z^{\alpha/2} \dz v}{\hHnorm{1}}^{1/2}\\
	& ~~~~ \times \norm{\nabla v}{\hHnorm{2}}.
\end{align*}
We list the estimates for the rest in the following:
\begin{align*}
	& I_{17} \lesssim \int_0^1 \biggl\lbrack \bigl( \hnorm{\nablah^2 Z}{\Lnorm{2}} + \hnorm{\nablah Z}{\Lnorm{4}}^2 \bigr) \bigl( \hnorm{\nablah^2  v}{\Lnorm{4}}^2 + \hnorm{\nablah v}{\Lnorm{\infty}} \hnorm{\nablah^3 v}{\Lnorm{2}} \\
	& ~~~~ + \hnorm{\nablah Z}{\Lnorm{4}} \hnorm{\nablah v}{\Lnorm{\infty}}  \hnorm{\nablah^2 v}{\Lnorm{4}} \bigr) \biggr\rbrack \,dz
	\lesssim \bigl( \norm{\nablah Z}{\Hnorm{1}}+1\bigl)^2 \norm{\nablah Z}{\Hnorm{1}} \\
	& ~~~~ \times \norm{\nablah v}{\hHnorm{2}}^2,
	\\
	& I_{18} \lesssim \bigl( \norm{z^{\alpha/2}v_h}{\Lnorm{2}}\norm{\nablah v}{\Lnorm{\infty}} + \norm{Z_h}{\Lnorm{6}}\norm{v_t}{\Lnorm{3}} \\
	& ~~~~ + \norm{Z_h}{\Lnorm{6}} \norm{v}{\Lnorm{6}} \norm{\nablah v}{\Lnorm{6}} \bigr) \bigl( \norm{v_{hhh}}{\Lnorm{2}} + \norm{Z_h}{\Lnorm{3}} \norm{v_{hh}}{\Lnorm{6}} \bigr) \\
	& ~~~~ \lesssim \bigl( 1+ \norm{Z_h}{\Hnorm{1}} \bigr) \bigl( \norm{z^{\alpha/2} v_h}{\Lnorm{2}} \norm{\nablah v}{\Hnorm{2}} + \norm{Z_h}{\Hnorm{1}} \norm{z^{\alpha/2} v_t}{\Lnorm{2}}\\
	& ~~~~ + \norm{Z_h}{\Hnorm{1}} \norm{\nabla v_t}{\Lnorm{2}} + \norm{Z_h}{\Hnorm{1}} \norm{v}{\Hnorm{1}} \norm{\nablah v}{\Hnorm{1}}   \bigr) \norm{\nablah^2 v}{\Hnorm{1}},\\
	& I_{21} \lesssim \bigl( \norm{z^{\alpha/2} v_h}{\Lnorm{2}} \norm{\nablah v_h}{\Lnorm{3}} + \norm{Z_h}{\Lnorm{3}} \norm{\dt v_h}{\Lnorm{2}} \\
	& ~~~~ + \norm{Z_h}{\Lnorm{3}} \norm{v}{\Lnorm{3}} \norm{\nablah v_h}{\Lnorm{6}}  \bigr)  \norm{v_{hh}}{\Lnorm{6}} \lesssim \bigl( \norm{z^{\alpha/2} v_h}{\Lnorm{2}} \norm{\nablah^2 v}{\Hnorm{1}} \\
	& ~~~~ + \norm{Z_h}{\Hnorm{1}} \norm{\nablah v_t}{\Lnorm{2}} + \norm{Z_h}{\Hnorm{1}} \norm{v}{\Hnorm{1}} \norm{\nablah^2 v}{\Hnorm{1}}\bigr) \norm{v_{hh}}{\Hnorm{1}}, \\
	& I_{24} \lesssim \bigl( \norm{Z-1}{\Lnorm{\infty}} \norm{Z_{hh}}{\Lnorm{2}} + \norm{Z_h}{\Lnorm{3}} \norm{Z_h}{\Lnorm{6}} \bigr) \bigl( \norm{\nablah v_{hh}}{\Lnorm{2}} \\
	& ~~~~ + \norm{\nablah Z}{\Lnorm{3}} \norm{v_{hh}}{\Lnorm{6}} \bigr) \lesssim \bigl( 1+ \norm{\nablah Z}{\Hnorm{1}}\bigr)  \norm{Z-1}{\Hnorm{2}} \norm{Z_h}{\Hnorm{1}} \\
	& ~~~~ \times  \norm{\nablah^2 v}{\Hnorm{1}}, \\
	& I_{25} \lesssim \bigl( \norm{Z_{hh}}{\Lnorm{2}} \norm{\overline{z^\alpha \nablah v}}{\Lnorm{\infty}} + \norm{Z_h}{\Lnorm{4}} \norm{\overline{z^\alpha \nablah^2 v}}{\Lnorm{4}} \bigr) \norm{Z_{hh}}{\Lnorm{2}}\\
	& ~~~~ \lesssim \norm{\nablah Z}{\Hnorm{1}}^2 \norm{\nablah v}{\hHnorm{2}},
\end{align*}
where we have applied \eqref{TE-002}. Then summing up the above inequalities from \eqref{TE-004} yields
\begin{equation}\label{apriori-ineq-005}
\begin{aligned}
	& \dfrac{d}{dt} \biggl\lbrace \dfrac{1}{2} \int z^\alpha Z^{\alpha+1} \abs{v_{hh}}{2} \idx + \dfrac{g}{2} \int \abs{Z_{hh}}{2} \idx  \biggr\rbrace \\
	& ~~~~ \int Z \bigl( \mu \abs{\nablah v_{hh}}{2} + (\mu+\lambda) \abs{\dvh v_{hh}}{2} + \mu \abs{\dz v_{hh}}{2} \bigr)\idx \\
	& \lesssim \mathcal H (\norm{Z-1}{\Hnorm{2}}, \norm{z^{\alpha/2} v}{\hHnorm{2}}, \norm{v}{\Hnorm{1}} )  \bigl( \norm{\nablah Z}{\Hnorm{1}}^2 \\
	& ~~~~ + \norm{\nabla v}{\Lnorm{2}}^2 + \norm{\nablah v}{\Hnorm{2}}^2 + \norm{z^{\alpha/2} v_t}{\Lnorm{2}}^2 + \norm{\nabla v_t}{\Lnorm{2}}^2 \bigr).
\end{aligned}\end{equation}
Integrating this inequality in the temporal variable implies \eqref{apriori:tangential-02}.
\end{proof}

\subsection{The spatial derivative estimates}
In this section, we will present some spatial derivative estimates for $ Z $ and $ v $. The main goal is to recover the dissipation factor of $ Z $ and the estimates for the vertical (normal) derivatives (i.e., $\dz$) of $ v $. 

We start by studying the dissipation of $ \nablah Z $. Indeed, since $ Z $ is independent of the $ z $-variable, by making use of the boundary conditions in  \eqref{FB-BC-CPE}, integrating \subeqref{FB-CPE}{2} in the vertical direction yields the identity,
\begin{equation}\label{eq:dh-Z}
	\begin{aligned}
		& \dfrac{g}{\alpha+1} Z^\alpha \nablah Z = \mu \overline{\deltah v} + (\mu+\lambda) \overline{\dvh \nablah v} + \mu \nablah \log Z \cdot  \overline{\nablah v}\\
		& ~~~~ + (\mu+\lambda) \overline{\dvh v} \nablah \log Z
		- Z^\alpha \overline{z^\alpha v_t} - Z^\alpha \overline{ z^\alpha v\cdot \nablah v}\\
		& ~~~~ + Z^\alpha \overline{\dz (z^\alpha W) v}.
	\end{aligned}
\end{equation}
Meanwhile, from \eqref{eq:verticalvelocity}, one has
\begin{equation}\label{eq:dz-W}
\begin{aligned}
	& \dz(z^\alpha W) = (\alpha+1) z^\alpha \bigl( (\alpha+1) \overline{z^\alpha v} \cdot \nablah \log Z + \overline{z^\alpha\dvh v} \bigr) \\
	& ~~~~ - (\alpha+1) z^\alpha v \cdot \nablah \log Z - z^\alpha\dvh v.
\end{aligned}
\end{equation}
Thus, after plugging this identity into the terms in \eqref{eq:dh-Z}, applying H\"older's inequality and the Sobolev embedding inequalities leads to the following estimates:
\begin{align*}
	& \hnorm{Z^\alpha \overline{\dz(z^\alpha W) v}}{\Lnorm{2}} \lesssim \hnorm{\overline{z^\alpha v}}{\Lnorm{\infty}} \bigl( \hnorm{\overline{z^\alpha v}}{\Lnorm{\infty}} \hnorm{\nablah Z}{\Lnorm{2}}
	   + \hnorm{\overline{z^\alpha \nablah v}}{\Lnorm{2}} \bigr) \\
	& ~~~~ + \hnorm{\nablah Z}{2} \int_0^1 \hnorm{z^{\alpha/2} v}{\infty}^2 \,dz + \int_0^1 \hnorm{z^{\alpha/2} v}{\Lnorm{\infty}} \hnorm{z^{\alpha/2} \nablah v}{\Lnorm{2}} \,dz \\
	& ~~~~ \lesssim \norm{z^{\alpha/2} v}{\hHnorm{2}}^2 \norm{\nablah Z}{\Lnorm{2}} + \norm{z^{\alpha/2} v}{\hHnorm{2}} \norm{z^{\alpha/2}\nablah v}{\Lnorm{2}}, \\
	& \hnorm{Z^\alpha\overline{z^\alpha v\cdot \nablah v}}{\Lnorm{2}} \lesssim \int_0^1 \hnorm{z^{\alpha/2} v}{\Lnorm{\infty}} \hnorm{z^{\alpha/2} \nablah v}{\Lnorm{2}} \,dz \\
	& ~~~~ \lesssim \norm{z^{\alpha/2} v}{\hHnorm{2}} \norm{z^{\alpha/2}\nablah v}{\Lnorm{2}},\\
	& \hnorm{Z^\alpha\overline{z^\alpha v_t}}{\Lnorm{2}} \lesssim \norm{z^{\alpha/2} v_t}{\Lnorm{2}}, \\
	& \hnorm{\mu \overline{\deltah v} + (\mu+\lambda) \overline{\dvh \nablah v}}{\Lnorm{2}} \lesssim \norm{\nablah^2 v}{\Lnorm{2}},\\
	& \hnorm{\mu \nablah \log Z \cdot \overline{\nablah v} + (\mu+\lambda)\overline{\dvh v}\nablah \log Z}{\Lnorm{2}} \lesssim \norm{\nablah Z}{\Hnorm{1}} \norm{\nablah v}{\hHnorm{1}}.
\end{align*}
Therefore we have from \eqref{eq:dh-Z} that 
\begin{equation}\label{SPE-001}
	\begin{aligned}
	& \norm{\nablah Z}{\Lnorm{2}} = \hnorm{\nablah Z}{\Lnorm{2}} \lesssim \norm{z^{\alpha/2} v_t}{\Lnorm{2}} + \norm{\nablah^2 v}{\Lnorm{2}} \\
	& ~~~~ + \norm{z^{\alpha/2} v}{\hHnorm{2}}^2 \norm{\nablah Z}{\Lnorm{2}} + \norm{z^{\alpha/2} v}{\hHnorm{2}} \norm{z^{\alpha/2}\nablah v}{\Lnorm{2}}\\
	& ~~~~ + \norm{\nablah Z}{\Hnorm{1}} \norm{\nablah v}{\hHnorm{1}}.
	\end{aligned}
\end{equation}
With \eqref{SPE-001} in hand, we will be able to derive the dissipation of $ \partial_{zz} v $. In fact, after taking inner product of \subeqref{FB-CPE}{2} with $ - Z \partial_{zz} v $, applying integration by parts in the resultant equation yields,
\begin{equation}\label{SPE-002}
	\begin{aligned}
		& \dfrac{d}{dt} \biggl\lbrace \dfrac{1}{2} \int z^\alpha Z^{\alpha+1} \abs{v_z}{2} \idx  \biggr\rbrace  + \int Z\bigl( \mu \abs{\nablah \dz v}{2} + (\mu+\lambda) \abs{\dvh \dz v}{2}\\
		& ~~~~ + \mu \abs{\partial_{zz} v}{2} \bigr)\idx
		= 
		g\int z^{\alpha} Z^{\alpha+1} \nablah Z \cdot \partial_{zz} v \idx \\
		& ~~~~ - \int z^\alpha Z^{\alpha+1}  v_z \cdot \nablah v \cdot \dz v \idx - \int Z^{\alpha+1} (z^\alpha  W)_z \dz v \cdot \dz v \idx \\
		& ~~~~ - \alpha \int z^{\alpha-1} Z^{\alpha+1} \bigl( \dt v + v \cdot \nablah v ) \cdot \dz v\idx
		 =: \sum_{k=26}^{29} I_k.
	\end{aligned}
\end{equation}
Applying Young's inequality implies that, for any constant $ \delta > 0 $, 
\begin{equation*}
	I_{26} \lesssim \delta \norm{\partial_{zz} v}{\Lnorm{2}}^2 + \delta^{-1}  \norm{\nablah Z}{\Lnorm{2}}^2.
\end{equation*}
Also, after substituting \eqref{eq:dz-W}, we have
\begin{align*}
	& I_{28} = - (\alpha+1)  \int  z^{\alpha}  Z^{\alpha+1} \bigl((\alpha+1) \overline{z^\alpha v} \cdot \nablah \log Z + \overline{z^\alpha \dvh v} \bigr) \dz v \cdot \dz v \idx\\
	& ~~~~ - \int Z^{\alpha+1} \bigl( (\alpha+1) z^\alpha v \cdot \nablah \log Z + z^\alpha \dvh v \bigr)  \dz v \cdot \dz v\idx \\
	& ~~~~ \lesssim \int_0^1 \bigl( \hnorm{\overline{z^\alpha v}}{\Lnorm{\infty}} \hnorm{\nablah Z}{\Lnorm{2}} + \hnorm{\overline{z^\alpha \nablah v}}{\Lnorm{2}}\bigr) \hnorm{\dz v}{\Lnorm{4}}^2 \,dz +  \bigl( \norm{\nablah Z}{\Lnorm{3}}\norm{v}{\Lnorm{6}} \\
	& ~~~~ + \norm{z^{\alpha/2} \nablah v}{\Lnorm{2}} \bigr) \norm{\dz v}{\Lnorm{3}} \norm{\dz v}{\Lnorm{6}} \lesssim \bigl( \norm{z^{\alpha/2} v}{\hHnorm{2}} \norm{\nablah Z}{\Lnorm{2}} \\
	& ~~~~ + \norm{v}{\Hnorm{1}} \norm{\nablah Z}{\Hnorm{1}} + \norm{z^{\alpha/2} \nablah v}{\Lnorm{2}}  \bigr) \norm{\dz v}{\Hnorm{1}}^2.
\end{align*}
Applying H\"older's, Hardy's (Lemma \ref{lm:hardy}), and the Sobolev embedding inequalities, yields,
\begin{align*}
	& I_{29} \lesssim \norm{v_t}{\Lnorm{2}} \norm{z^{-1}\dz v}{\Lnorm{2}} + \norm{v}{\Lnorm{6}} \norm{\nablah v}{\Lnorm{3}} \norm{z^{-1} \dz v}{\Lnorm{2}} \\
	& ~~~~ \lesssim \delta \norm{\partial_{zz} v}{\Lnorm{2}}^2 + \delta^{-1} \bigl( \norm{z^{\alpha/2} v_t}{\Lnorm{2}}^2 + \norm{\nabla v_t }{\Lnorm{2}}^2  \bigr) \\
	& ~~~~ + \norm{v}{\Hnorm{1}} \norm{\nablah v}{\Hnorm{1}} \norm{\partial_{zz}v}{\Lnorm{2}}, 
\end{align*}
where we have used \eqref{TE-002} and the following inequalities from Lemma \ref{lm:hardy}, 
\begin{equation}\label{SPE-007}
\begin{aligned}
	& \norm{z^{-1} \dz v}{\Lnorm{2}}^2 = \int_0^1 z^{-2} \hnorm{\dz v-\dz v|_{z=0}}{\Lnorm{2}}^2 \,dz  \\
	& ~~~~ \lesssim \int_0^1 
	\hnorm{\partial_{zz} v}{\Lnorm{2}}^2 \,dz = 
	\norm{\partial_{zz} v}{\Lnorm{2}}^2,
	\end{aligned}
	\end{equation}
since $ \dz v\big|_{z=0,1} = 0 $ from \eqref{FB-BC-CPE}. 
Similarly, we list the rest of the estimates in the following:
\begin{align*}
	& I_{27} \lesssim \norm{v_z}{\Lnorm{3}} \norm{z^{\alpha/2} \nablah v}{\Lnorm{2}} \norm{v_z}{\Lnorm{6}} \lesssim \norm{z^{\alpha/2} \nablah v}{\Lnorm{2}} \norm{\dz v}{\Hnorm{1}}^2.
\end{align*}
Thus, from \eqref{SPE-001}, \eqref{SPE-002} and the above inequalities with suitably small $ \delta $, we have the following inequality:
\begin{equation}\label{apriori-ineq-006}
	\begin{aligned}
		& \int \abs{\nablah Z}{2} \idx \lesssim  \hnorm{\nablah Z}{\Lnorm{2}}^2 \lesssim \norm{z^{\alpha/2} v_t}{\Lnorm{2}}^2 + \norm{\nablah^2 v}{\Lnorm{2}}^2 \\
		& ~~~~ + \norm{z^{\alpha/2} v}{\hHnorm{2}}^4 \norm{\nablah Z}{\Lnorm{2}}^2 + \norm{z^{\alpha/2} v}{\hHnorm{2}}^2 \norm{z^{\alpha/2}\nablah v}{\Lnorm{2}}^2\\
		& ~~~~+ \norm{\nablah Z}{\Hnorm{1}}^2 \norm{\nablah v}{\hHnorm{1}}^2,\\
		& \dfrac{d}{dt} \biggl\lbrace \dfrac{1}{2} \int z^\alpha Z^{\alpha+1} \abs{v_z}{2} \idx  \biggr\rbrace  + \int Z\bigl( \mu \abs{\nablah \dz v}{2} + (\mu+\lambda) \abs{\dvh \dz v}{2}\\
		& ~~~~ + \mu/2 \abs{\partial_{zz} v}{2} \bigr)\idx \lesssim \int \abs{\nablah Z}{2} \idx + \norm{z^{\alpha/2} v_t}{\Lnorm{2}}^2 + \norm{\nabla v_t }{\Lnorm{2}}^2 \\
		& ~~~~ + \mathcal H( \norm{z^{\alpha/2} v}{\hHnorm{2}}, \norm{\nablah Z}{\Hnorm{1}}, \norm{v}{\Hnorm{1}}) \bigl( \norm{\nablah Z}{\Lnorm{2}}^2\\
		& ~~~~ + \norm{\nabla v}{\Hnorm{1}}^2 \bigr).
	\end{aligned}
	\end{equation}
Then integrating the above inequalities in the temporal variable concludes the following proposition:
\begin{proposition}\label{lm:L^2-v-dz-est}Under the same assumptions as in Proposition \ref{lm:basic-energy}, one has that
	\begin{equation}\label{apriori:dsptn-dh-Z}
		\begin{aligned}
			& \dfrac{1}{2} \int z^\alpha Z^{\alpha+1} \abs{v_z}{2} \idx   + \int_0^T \int \biggl( Z\bigl( \mu \abs{\nablah \dz v}{2} + (\mu+\lambda) \abs{\dvh \dz v}{2}\\
		& ~~~~ + \mu/2 \abs{\partial_{zz} v}{2} \bigr) + \abs{\nablah Z}{2} \biggr) \idx \,dt \leq \dfrac{1}{2} \int z^\alpha Z_0^{\alpha+1} \abs{v_{0,z}}{2} \idx \\
		& ~~~~ + C \int_0^T \biggl( \norm{z^{\alpha/2} v_t}{\Lnorm{2}}^2 + \norm{\nabla v_t }{\Lnorm{2}}^2 \biggr) \,dt + C \int_0^T \biggl( \mathcal H( \norm{z^{\alpha/2} v}{\hHnorm{2}}, \\
		& ~~~~ ~~~~
		 \norm{\nablah Z}{\Hnorm{1}}, \norm{v}{\Hnorm{1}})  \bigl( \norm{\nablah Z}{\Lnorm{2}}^2
		 + \norm{\nabla v}{\Hnorm{1}}^2 \bigr) \biggr) \,dt,
		\end{aligned}
	\end{equation}
	for some constant $ 0 < C < \infty $. 
\end{proposition}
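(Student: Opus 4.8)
The plan is to combine two ingredients: an elliptic-type, pointwise-in-$\vec x_h$ bound for $\nablah Z$ obtained by averaging the momentum equation in the vertical variable, and a differential energy identity for $\tfrac12\int z^\alpha Z^{\alpha+1}\abs{v_z}{2}\idx$ whose dissipation controls $\int Z\bigl(\mu\abs{\nablah\dz v}{2}+(\mu+\lambda)\abs{\dvh\dz v}{2}+\mu\abs{\partial_{zz}v}{2}\bigr)\idx$, produced by testing \subeqref{FB-CPE}{2} against $-Z\partial_{zz}v$. Feeding the first estimate into the right-hand side of the second and then integrating in time yields the claim.

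For the first ingredient I would integrate \subeqref{FB-CPE}{2} over $z\in(0,1)$. Since $\partial_z Z=0$ and, by \eqref{FB-BC-CPE}, $\dz v|_{z=0,1}=0$ and $W|_{z=0,1}=0$, the term $\mu\partial_{zz}v$ integrates to zero while $z^\alpha Z^\alpha W\dz v$ integrates by parts in $z$ into $-\partial_z(z^\alpha W)Z^\alpha v$; this gives the identity \eqref{eq:dh-Z} for $\tfrac{g}{\alpha+1}Z^\alpha\nablah Z$, into which one substitutes \eqref{eq:dz-W} for $\partial_z(z^\alpha W)$ (read off from \eqref{eq:verticalvelocity}). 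Estimating each resulting term by H\"older's, Minkowski's and the Gagliardo-Nirenberg inequalities \eqref{ineq-supnorm}, and moving the single self-referential quadratic term $\norm{z^{\alpha/2}v}{\hHnorm{2}}^2\norm{\nablah Z}{\Lnorm{2}}$ to the left-hand side (absorbable because $\mathcal E$, hence $\norm{z^{\alpha/2}v}{\hHnorm{2}}$, is small), yields \eqref{SPE-001}, i.e.\ $\norm{\nablah Z}{\Lnorm{2}}\lesssim\norm{z^{\alpha/2}v_t}{\Lnorm{2}}+\norm{\nablah^2 v}{\Lnorm{2}}+\norm{\nablah Z}{\Hnorm{1}}\norm{\nablah v}{\hHnorm{1}}+\cdots$.

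For the second ingredient I would take the $L^2$ inner product of \subeqref{FB-CPE}{2} with $-Z\partial_{zz}v$; integration by parts in $z$ (boundary terms vanish since $\dz v|_{z=0,1}=0$) turns the viscous block into the dissipation above and the time term into $\tfrac{d}{dt}\tfrac12\int z^\alpha Z^{\alpha+1}\abs{v_z}{2}\idx$ after using \eqref{eq:density} for $\partial_t(z^\alpha Z^{\alpha+1})$, leaving the right-hand side $I_{26}+\cdots+I_{29}$ of \eqref{SPE-002}. The gravity term $I_{26}=g\int z^\alpha Z^{\alpha+1}\nablah Z\cdot\partial_{zz}v\idx$ I would split by Young's inequality as $\delta\norm{\partial_{zz}v}{\Lnorm{2}}^2+\delta^{-1}\norm{\nablah Z}{\Lnorm{2}}^2$; the convective term $I_{27}$ by $L^3$-$L^2$-$L^6$ H\"older; $I_{28}$ by substituting \eqref{eq:dz-W} once more and bounding by $\bigl(\norm{z^{\alpha/2}v}{\hHnorm{2}}\norm{\nablah Z}{\Lnorm{2}}+\norm{v}{\Hnorm{1}}\norm{\nablah Z}{\Hnorm{1}}+\norm{z^{\alpha/2}\nablah v}{\Lnorm{2}}\bigr)\norm{\dz v}{\Hnorm{1}}^2$. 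The delicate term is $I_{29}=-\alpha\int z^{\alpha-1}Z^{\alpha+1}(\dt v+v\cdot\nablah v)\cdot\dz v\idx$: the weight $z^{\alpha-1}$ is too singular to close by itself, so I would use Hardy's inequality (Lemma \ref{lm:hardy}, in the form \eqref{SPE-007}) $\norm{z^{-1}\dz v}{\Lnorm{2}}\lesssim\norm{\partial_{zz}v}{\Lnorm{2}}$, valid because $\dz v|_{z=0}=0$, together with \eqref{TE-002}, to bound $I_{29}$ by $\delta\norm{\partial_{zz}v}{\Lnorm{2}}^2+\delta^{-1}\bigl(\norm{z^{\alpha/2}v_t}{\Lnorm{2}}^2+\norm{\nabla v_t}{\Lnorm{2}}^2\bigr)+\norm{v}{\Hnorm{1}}\norm{\nablah v}{\Hnorm{1}}\norm{\partial_{zz}v}{\Lnorm{2}}$. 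Choosing $\delta$ small so that all the $\delta\norm{\partial_{zz}v}{\Lnorm{2}}^2$ terms are absorbed (leaving $\mu/2$ of the $\partial_{zz}v$ dissipation), and replacing the surviving $\norm{\nablah Z}{\Lnorm{2}}^2$ by \eqref{SPE-001} squared, yields the differential inequality \eqref{apriori-ineq-006}; integrating it over $[0,T]$ completes the proof. I expect the main obstacle to be exactly the gravity term $I_{26}$: it couples the degenerate vertical second derivative $\partial_{zz}v$ with the free-boundary unknown $Z$, so closing it forces the nonstandard elliptic bound \eqref{SPE-001} obtained from the vertical average, and it is essential there that the self-referential $\norm{z^{\alpha/2}v}{\hHnorm{2}}^2\norm{\nablah Z}{\Lnorm{2}}$ contribution carries a small coefficient so that it can be absorbed.
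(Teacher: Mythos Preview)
Your proposal is correct and follows essentially the same approach as the paper: first the vertical average of \subeqref{FB-CPE}{2} yields the elliptic bound \eqref{SPE-001} for $\norm{\nablah Z}{\Lnorm{2}}$, then testing \subeqref{FB-CPE}{2} with $-Z\partial_{zz}v$ gives the energy identity \eqref{SPE-002} with right-hand side $I_{26}+\cdots+I_{29}$, and the estimates you describe for each $I_k$ (Young for $I_{26}$, substitution of \eqref{eq:dz-W} for $I_{28}$, Hardy's inequality \eqref{SPE-007} for $I_{29}$) match the paper's exactly. The only cosmetic difference is that the paper does not absorb the self-referential term $\norm{z^{\alpha/2}v}{\hHnorm{2}}^2\norm{\nablah Z}{\Lnorm{2}}$ on the left of \eqref{SPE-001} but instead leaves it on the right and groups it into $\mathcal H(\cdot)\norm{\nablah Z}{\Lnorm{2}}^2$ in \eqref{apriori-ineq-006}; this is equivalent to your absorption step.
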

We will perform similar analysis to \subeqref{eq:dh-FBCPE}{1} in the following. The result leads to the following proposition:
\begin{proposition}\label{lm:L^2-v-dhz-est}
Under the same assumptions as in Proposition \ref{lm:basic-energy}, one has that 
	\begin{equation}\label{apriori:dsptn-dhh-z}
		\begin{aligned}
			& \dfrac{1}{2} \int z^\alpha Z^{\alpha+1} \abs{v_{hz}}{2} \idx  + \int_0^T \int Z \bigl( \mu \abs{\nablah \dz v_h}{2} \\ 
			& ~~~~ + (\mu+\lambda) \abs{\dvh \dz v_h}{2} 
			+ \mu/2 \abs{\partial_{zz} v_h}{2} \bigr) + \abs{\nablah^2 Z}{2} \idx\,dt \\
			&  \lesssim  \dfrac{1}{2} \int z^\alpha Z_0^{\alpha+1} \abs{v_{0,hz}}{2} \idx  + \int_0^T \norm{\nablah v_t}{\Lnorm{2}}^2 + \norm{\nablah^3 v}{\Lnorm{2}}^2 \,dt \\
			& ~~~~ 
			 + C \int_0^T \mathcal H( \norm{\nablah Z}{\Hnorm{1}}, \norm{z^{\alpha/2} v}{\hHnorm{2}}, \norm{v}{\Hnorm{1}} ) \\
			 & ~~~~ \times \bigl( \norm{\nablah Z}{\Hnorm{1}}^2 + \norm{\nablah v}{\Hnorm{2}}^2 
			 + \norm{\dz v}{\Hnorm{1}}^2 
			 + \norm{z^{\alpha/2} v_t}{\Lnorm{2}}^2 \\
			 & ~~~~ + \norm{\nabla v_t}{\Lnorm{2}}^2 \bigr) \,dt .
		\end{aligned}
	\end{equation}
\end{proposition}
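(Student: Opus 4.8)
The plan is to repeat the argument of Proposition~\ref{lm:L^2-v-dz-est}, carried out one horizontal derivative higher and applied to the differentiated system \eqref{eq:dh-FBCPE} rather than to \eqref{FB-CPE}; it splits into an elliptic-type bound for $\nablah^2 Z$ and a dissipation estimate obtained by testing against $-Z\partial_{zz}v_h$.

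First I would control $\nablah^2 Z$ by applying $\partial_h$ to the vertically-averaged momentum identity \eqref{eq:dh-Z}. On the left this produces the leading term $\frac{g}{\alpha+1}Z^\alpha\nablah Z_h$ (its companion $Z^{\alpha-1}Z_h\nablah Z$ being quadratic, hence $\mathcal H$-class); on the right the viscous averages contribute $\overline{\deltah v_h}+\overline{\dvh\nablah v_h}$, bounded by $\norm{\nablah^3 v}{\Lnorm2}$, the acceleration average contributes $Z^\alpha\overline{z^\alpha v_{ht}}$, bounded by $\norm{z^{\alpha/2}v_{ht}}{\Lnorm2}\le\norm{\nablah v_t}{\Lnorm2}$, and the transport term together with $\overline{\dz(z^\alpha W)\,v}$ --- after substituting \eqref{eq:dz-W} and differentiating --- yield only terms quadratic in small quantities, estimated by H\"older, Minkowski and the embeddings \eqref{ineq-supnorm} exactly as for \eqref{SPE-001}. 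After squaring this gives the analogue of \eqref{SPE-001}: $\hnorm{\nablah^2 Z}{\Lnorm2}^2\lesssim\norm{\nablah v_t}{\Lnorm2}^2+\norm{\nablah^3 v}{\Lnorm2}^2+\mathcal H(\norm{\nablah Z}{\Hnorm1},\norm{z^{\alpha/2}v}{\hHnorm2},\norm{v}{\Hnorm1})(\norm{\nablah Z}{\Hnorm1}^2+\norm{\nablah v}{\Hnorm2}^2+\norm{\dz v}{\Hnorm1}^2+\norm{z^{\alpha/2}v_t}{\Lnorm2}^2)$.

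Next I would take the $\Lnorm2$-inner product of \subeqref{eq:dh-FBCPE}{1} with $-Z\partial_{zz}v_h$ and integrate by parts; this is the $\partial_h$-version of the computation behind \eqref{SPE-002}, and the boundary terms at $z=0,1$ vanish thanks to $\dz v_h|_{z=0,1}=0$ and $W_h|_{z=0,1}=0$ from \eqref{FB-BC-CPE}. The linear viscosity and momentum-correction terms give the time derivative of $\frac12\int z^\alpha Z^{\alpha+1}\abs{v_{hz}}{2}\idx$ together with the target dissipation $\int Z(\mu\abs{\nablah\dz v_h}{2}+(\mu+\lambda)\abs{\dvh\dz v_h}{2}+\mu\abs{\partial_{zz}v_h}{2})\idx$, up to commutators with $\nablah\log Z$ sent to the right-hand side. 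The pressure term $-g\int z^\alpha(Z^\alpha\nablah Z)_h\cdot Z\partial_{zz}v_h\idx$ is split by Young's inequality into $\delta\norm{\partial_{zz}v_h}{\Lnorm2}^2$ plus $\delta^{-1}\hnorm{\nablah^2 Z}{\Lnorm2}^2$ plus an $\mathcal H$-class remainder, and the first step disposes of $\hnorm{\nablah^2 Z}{\Lnorm2}^2$; this is precisely why $\norm{\nablah^3 v}{\Lnorm2}^2$ and $\norm{\nablah v_t}{\Lnorm2}^2$ appear on the right of \eqref{apriori:dsptn-dhh-z} with fixed coefficients. The transport, $W_h$-, and $Z_h$-weighted terms are then handled as the integrals $I_6$--$I_8$ in the proof of Proposition~\ref{lm:L^2-v-t-est} and $I_{11}$--$I_{16}$ in the proof of Proposition~\ref{lm:L^2-v-dhh-est}: substitute \subeqref{eq:dh-FBCPE}{3} for $W_h$ and \eqref{eq:verticalvelocity} for $W$, split each into its vertical-average piece and its $\int_0^z$ piece, and apply H\"older, Minkowski, the embeddings \eqref{ineq-supnorm}, and the weighted Hardy bounds \eqref{TE-002}, \eqref{STB-001}. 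The one place the vacuum degeneracy enters is the term $\alpha z^\alpha Z^\alpha Z_h\,(v_t+v\cdot\nablah v+W\dz v)$: its integration by parts in $z$ generates a factor $z^{\alpha-1}$, and there I would bound $z^{\alpha-1}\le z^{-1}$ on $(0,1)$ and invoke Lemma~\ref{lm:hardy} via $\norm{z^{-1}\dz v_h}{\Lnorm2}\lesssim\norm{\partial_{zz}v_h}{\Lnorm2}$, which is legitimate because $\dz v_h$ vanishes at $z=0,1$.

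To conclude, I would choose $\delta$ small so that the $\delta\norm{\partial_{zz}v_h}{\Lnorm2}^2$ contributions are absorbed into the left-hand dissipation (leaving $\mu/2$ in front of $\abs{\partial_{zz}v_h}{2}$), add a small multiple of the bound from the first step so that $\int\abs{\nablah^2 Z}{2}\idx$ appears on the left, and integrate in time over $[0,T]$; this yields exactly \eqref{apriori:dsptn-dhh-z}, with remaining initial-data term $\frac12\int z^\alpha Z_0^{\alpha+1}\abs{v_{0,hz}}{2}\idx$ and all genuinely nonlinear contributions packaged as $\mathcal H(\norm{\nablah Z}{\Hnorm1},\norm{z^{\alpha/2}v}{\hHnorm2},\norm{v}{\Hnorm1})$ times quantities controlled by the dissipation functional. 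I expect the main obstacle to be purely organizational: substituting the long representation \subeqref{eq:dh-FBCPE}{3} of $W_h$ into an integrand already carrying $\partial_{zz}v_h$ produces a large number of trilinear terms, and for each one must check that the powers of $z$ distribute so that every factor lies in a space controlled by $\mathcal E$ or $\mathfrak D$ --- equivalently, that no $z$-derivative is ever left on the density profile $z^\alpha$ without a companion $z^{-1}\dz(\cdot)$ that Hardy's inequality can tame. The fixed-coefficient appearance of $\norm{\nablah^3 v}{\Lnorm2}^2$ and $\norm{\nablah v_t}{\Lnorm2}^2$ on the right of \eqref{apriori:dsptn-dhh-z} is harmless, since both are components of $\mathfrak D$ and are absorbed only later, when all the a priori inequalities are combined in Proposition~\ref{prop:stability-theory}.
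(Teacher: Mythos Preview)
Your proposal is correct and follows essentially the same two-step strategy as the paper: first derive an elliptic bound for $\hnorm{\nablah^2 Z}{\Lnorm2}$ by applying $\partial_h$ to \eqref{eq:dh-Z} (the paper records this as \eqref{eq:dhh-Z}--\eqref{SPE-003}), then test \subeqref{eq:dh-FBCPE}{1} against $-Z\partial_{zz}v_h$ and estimate the resulting integrals $I_{30}$--$I_{38}$ exactly as you outline. One small correction: the dangerous $z^{\alpha-1}$ factor (the paper's $I_{34}$) arises not from the $Z_h$-weighted commutator $\alpha z^\alpha Z^{\alpha-1}Z_h(v_t+\cdots)$ but from integrating the leading term $z^\alpha Z^\alpha\dt v_h$ by parts in $z$ (via $\dz z^\alpha=\alpha z^{\alpha-1}$); the $Z_h$-weighted terms ($I_{37}$, $I_{38}$) are instead handled directly by H\"older without the Hardy step.
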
 
\begin{proof}
	Applying $ \partial_h $ to \eqref{eq:dh-Z} and \eqref{eq:dz-W} gives us the following identities:
	\begin{align}
		&\dfrac{g}{\alpha+1} Z^\alpha \nablah Z_h = - \dfrac{\alpha g}{\alpha+1} Z^{\alpha-1} Z_h \nablah Z + \mu  \overline{\deltah v_h} + (\mu+\lambda) \overline{\dvh \nablah v_h} \nonumber \\
		& ~~~~ + \mu \nablah (\log Z)_h \cdot \overline{\nablah v} + (\mu+\lambda) \overline{\dvh v} \nablah (\log Z)_h + \mu \nablah \log Z \cdot \overline{\nablah v_h} \nonumber \\
		& ~~~~ + (\mu+\lambda) \overline{\dvh v_h} \nablah \log Z  
		 - Z^\alpha \overline{z^\alpha \dt v_h} - Z^\alpha \overline{z^\alpha v_h \cdot \nablah v} \nonumber \\
		& ~~~~ - Z^\alpha \overline{z^\alpha v \cdot \nablah v_h} + Z^\alpha \overline{\dz(z^\alpha W) v_h}
		+ Z^\alpha \overline{\dz( z^\alpha W_h) v} - \alpha Z^{\alpha-1}Z_h \overline{z^\alpha v_t} \nonumber\\
		& ~~~~ -  \alpha Z^{\alpha-1} Z_h \overline{z^\alpha v\cdot \nablah v} + \alpha Z^{\alpha-1} Z_h \overline{\dz (z^\alpha W)v}, \label{eq:dhh-Z}\\
		\intertext{and}
		& \dz (z^\alpha W_h) = (\alpha+1) z^\alpha \bigl( (\alpha+1) \overline{z^\alpha v_h} \cdot\nablah \log Z + (\alpha+1) \overline{z^\alpha v} \cdot \nablah (\log Z)_h \nonumber\\
		& ~~~~ + \overline{z^\alpha \dvh v_h} \bigr) - (\alpha+1) z^\alpha v_h \cdot \nablah \log Z - (\alpha+1) z^\alpha v \cdot \nablah (\log Z)_h \nonumber\\
		& ~~~~ - z^\alpha \dvh v_h. \label{eq:dzh-W}
	\end{align}
	Now we estimate the $ L^2 $-norm of $ \nablah^2 Z $. It suffices to calculate the $ L^2 $ norms of the right-hand side of \eqref{eq:dhh-Z}. Inserting \eqref{eq:dzh-W} in the following and applying Minkowski's inequality yield,
	\begin{align*}
		& \hnorm{Z^\alpha \overline{\dz(z^\alpha W_h) v}}{\Lnorm{2}} \lesssim \hnorm{\overline{z^{\alpha} v}}{\Lnorm{\infty}} \bigl\lbrack \hnorm{\overline{z^\alpha v_h}}{\Lnorm{4}}\hnorm{\nablah Z}{\Lnorm{4}} + \hnorm{\overline{z^\alpha v}}{\Lnorm{\infty}} \hnorm{\nablah^2 Z}{\Lnorm{2}} \\
		& ~~~~ + \hnorm{\overline{z^\alpha v}}{\Lnorm{\infty}} \hnorm{\nablah Z}{\Lnorm{4}}^2 + \hnorm{\overline{z^\alpha\nablah^2 v}}{\Lnorm{2}}  \bigr\rbrack + \hnorm{\nablah Z}{\Lnorm{4}} \int_0^1 \hnorm{z^{\alpha/2}v}{\Lnorm{\infty}} \hnorm{z^{\alpha/2} v_h}{\Lnorm{4}} \,dz\\
		& ~~~~ + \bigl( \hnorm{\nablah^2 Z }{\Lnorm{2}} + \hnorm{\nablah Z}{\Lnorm{4}}^2\bigr) \int_0^1 \hnorm{z^{\alpha/2} v}{\Lnorm{\infty}}^2 \,dz + \int_0^1 \hnorm{z^{\alpha/2} \nablah^2 v}{\Lnorm{2}} \hnorm{z^{\alpha/2} v}{\Lnorm{\infty}}\,dz\\
		& ~~~~ \lesssim \bigl(1 + \norm{\nablah Z}{\Hnorm{1}} \bigr) \norm{z^{\alpha/2} v}{\hHnorm{2}}^2  \norm{\nablah Z}{\Hnorm{1}} + \norm{z^{\alpha/2} v}{\hHnorm{2}} \norm{z^{\alpha/2} \nablah^2 v}{\Lnorm{2}}.
	\end{align*}
	Similarly, substituting \eqref{eq:dz-W} into the terms involving $ \dz(z^\alpha W) $ implies, 
	\begin{align*}
		& \hnorm{Z^\alpha \overline{\dz(z^\alpha W)v_h}}{\Lnorm{2}} \lesssim \hnorm{\overline{z^\alpha v_h}}{\Lnorm{4}} \bigl( \hnorm{\overline{z^\alpha v}}{\Lnorm{\infty}} \hnorm{\nablah Z}{\Lnorm{4}} + \hnorm{\overline{z^\alpha \nablah v}}{\Lnorm{4}} \bigr)  \\
		& ~~~~ + \hnorm{\nablah Z}{\Lnorm{4}} \int_0^1 \hnorm{z^{\alpha/2} v}{\Lnorm{\infty}} \hnorm{z^{\alpha/2} v_h}{\Lnorm{4}} \,dz + \int_0^1 \hnorm{z^{\alpha/2} \nablah v}{\Lnorm{4}}^2 \,dz \\
		& ~~~~ \lesssim \bigl( 1 + \norm{\nablah Z}{\Hnorm{1}}\bigr) \norm{z^{\alpha/2} v}{\hHnorm{2}} \norm{z^{\alpha/2} \nablah v}{\hHnorm{1}}  ,  \\
		& \hnorm{Z^{\alpha-1} Z_h \overline{\dz(z^\alpha W) v} }{\Lnorm{2}} \lesssim \hnorm{\nablah Z}{\Lnorm{4}} \hnorm{\overline{z^\alpha v}}{\Lnorm{\infty}} \bigl( \hnorm{\overline{z^\alpha v}}{\Lnorm{\infty}} \hnorm{\nablah Z}{\Lnorm{4}} \\
		& ~~~~ + \hnorm{\overline{z^\alpha \nablah v}}{\Lnorm{4}} \bigr) + \hnorm{\nablah Z}{\Lnorm{4}}^2 \int_0^1 \hnorm{z^{\alpha/2} v}{\Lnorm{\infty}}^2 \,dz + \hnorm{\nablah Z}{\Lnorm{4}} \\
		& ~~~~ \times \int_0^1 \hnorm{z^{\alpha/2} \nablah v}{\Lnorm{4}} \hnorm{z^{\alpha/2} v}{\Lnorm{\infty}} \,dz \lesssim \bigl( 1 + \norm{\nablah Z}{\Hnorm{1}}\bigr) \\
		& ~~~~ \times \norm{z^{\alpha/2} v}{\hHnorm{2}}^2 \norm{\nablah Z}{\Hnorm{1}}. 
	\end{align*}
	We list the rest of estimates in the following:
	\begin{align*}
		& \hnorm{Z^{\alpha-1} Z_h \nablah Z}{\Lnorm{2}} \lesssim \norm{\nablah Z}{\Hnorm{1}}^2, \\
		& \hnorm{Z^{\alpha} \overline{z^\alpha v_h \cdot \nablah v}}{\Lnorm{2}} \lesssim \int_0^1 \hnorm{z^{\alpha/2} \nablah v}{\Lnorm{4}}^2 \,dz \lesssim \norm{z^{\alpha/2} \nablah v}{\hHnorm{1}}^2, \\
		& \hnorm{Z^\alpha\overline{z^\alpha v\cdot \nablah v_h}}{\Lnorm{2}} \lesssim \int_0^1 \hnorm{z^{\alpha/2} v}{\Lnorm{\infty}} \hnorm{z^{\alpha/2} \nablah^2 v}{\Lnorm{2}}\,dz \\
		& ~~~~ \lesssim \norm{z^{\alpha/2} v}{\hHnorm{2}} \norm{z^{\alpha/2}\nablah v}{\hHnorm{1}}, \\
		& \hnorm{Z^{\alpha-1} Z_h \overline{z^\alpha v_t}}{\Lnorm{2}} \lesssim \hnorm{\nablah Z}{\Lnorm{4}} \hnorm{\overline{v_t}}{\Lnorm{4}} \lesssim \hnorm{\nablah Z}{\Hnorm{1}} \hnorm{\overline{v_t}}{\Lnorm{2}}^{1/2} \hnorm{\overline{v_t}}{\Hnorm{1}}^{1/2} \\
		& ~~~~ \lesssim \norm{\nablah Z}{\Hnorm{1}} \bigl( \norm{z^{\alpha/2}v_t}{\Lnorm{2}} + \norm{\nabla v_t}{\Lnorm{2}} \bigr), \\
		& \hnorm{Z^{\alpha-1}Z_h \overline{z^\alpha v \cdot \nablah v}}{\Lnorm{2}} \lesssim \hnorm{\nablah Z}{\Lnorm{4}} \int_0^1 \hnorm{z^{\alpha/2} v}{\Lnorm{\infty}} \hnorm{z^{\alpha/2} \nablah v}{\Lnorm{4}} \,dz \\
		& ~~~~ \lesssim \norm{\nablah Z}{\Hnorm{1}} \norm{z^{\alpha/2} v}{\hHnorm{2}} \norm{z^{\alpha/2} \nablah v}{\hHnorm{1}}, \\
		& \hnorm{Z^\alpha\overline{z^\alpha\dt v_h}}{\Lnorm{2}} \lesssim \norm{\nabla v_t}{\Lnorm{2}}, \\
		& \hnorm{\mu \overline{\deltah v_h} + (\mu+\lambda) \overline{\dvh \nablah v_h}}{\Lnorm{2}} \lesssim \norm{\nablah^3 v}{\Lnorm{2}},\\
		& \hnorm{\mu \nablah (\log Z)_h\cdot\overline{\nablah v} + (\mu + \lambda) \overline{\dvh v}\nablah (\log Z)_h + \mu \nablah \log Z \cdot \overline{\nablah v_h} \\
		& ~~~~ + (\mu+\lambda) \overline{\dvh v_h}\nablah \log Z}{\Lnorm{2}} \lesssim \bigl(\norm{\nablah Z}{\Hnorm{1}} + \norm{\nablah Z}{\Hnorm{1}}^2\bigr)\norm{\nablah v}{\hHnorm{2}},
	\end{align*}
	where we have applied \eqref{TE-002}. Then from \eqref{eq:dhh-Z}, we have
	\begin{equation}\label{SPE-003}
		\begin{aligned}
			& \norm{\nablah^2 Z}{\Lnorm{2}} = \hnorm{\nablah^2 Z}{\Lnorm{2}} \lesssim \norm{\nablah v_t}{\Lnorm{2}} + \norm{\nablah^3 v}{\Lnorm{2}} \\
			& ~~~~ + \mathcal H( \norm{\nablah Z}{\Hnorm{1}}, \norm{z^{\alpha/2} v}{\hHnorm{2}}) \bigl( \norm{\nablah Z}{\Hnorm{1}} + \norm{z^{\alpha/2} \nablah v}{\hHnorm{1}} \\
			& ~~~~ + \norm{z^{\alpha/2} v_t}{\Lnorm{2}} + \norm{\nabla v_t}{\Lnorm{2}} + \norm{\nablah v}{\hHnorm{2}} \bigr).
		\end{aligned}
	\end{equation}
	Integrating the square of \eqref{SPE-003} in the temporal variable yields,
	\begin{equation}\label{SPE-005}
		\begin{aligned}
			& \int_0^T \norm{\nablah^2 Z}{\Lnorm{2}}^2 \,dt \lesssim \int_0^T \norm{\nablah v_t}{\Lnorm{2}}^2 + \norm{\nablah^3 v}{\Lnorm{2}}^2 \\
			& ~~~~ + \mathcal H( \norm{\nablah Z}{\Hnorm{1}}, \norm{z^{\alpha/2} v}{\hHnorm{2}}) \bigl( \norm{\nablah Z}{\Hnorm{1}}^2 + \norm{z^{\alpha/2} \nablah v}{\hHnorm{1}}^2 \\
			& ~~~~ + \norm{z^{\alpha/2} v_t}{\Lnorm{2}}^2 + \norm{\nabla v_t}{\Lnorm{2}}^2 + \norm{\nablah v}{\hHnorm{2}}^2 \bigr)\,dt.
		\end{aligned}
	\end{equation}
	
	Next, we will derive the dissipation of $ \nablah \partial_{zz} v $. After taking inner product of \subeqref{eq:dh-FBCPE}{1} with $ - Z \partial_{zz} v_h $ and applying integration by parts, we have the following equation: 
	\begin{align}\label{SPE-004}
		& \dfrac{d}{dt}\biggl\lbrace \dfrac{1}{2} \int z^\alpha Z^{\alpha+1} \abs{v_{hz}}{2} \idx \biggr\rbrace + \int Z \bigl( \mu \abs{\nablah \dz v_h}{2} + (\mu+\lambda) \abs{\dvh \dz v_h}{2} \nonumber \\
		& ~~~~ + \mu \abs{\partial_{zz} v_h}{2} \bigr)\idx =
		 \int \mu \nablah (\log Z)_h \cdot \nablah v_z \cdot Z \dz v_h \nonumber \\
		& ~~~~ + (\mu+\lambda) \dvh v_z \nablah(\log Z)_h \cdot Z \dz v_h \idx
		+ g \int ( z^\alpha Z^\alpha \nablah Z)_h \cdot Z \partial_{zz} v_h \idx \nonumber \\
		& ~~~~ - \int z^\alpha Z^{\alpha+1} v_z \cdot \nablah v_h \cdot \dz v_h \idx - \int Z^{\alpha+1} ( z^\alpha W)_z \dz v_h \cdot \dz v_h \idx \nonumber\\
		& ~~~~ - \alpha \int z^{\alpha-1} Z^{\alpha+1} (\dt v_h + v\cdot\nablah v_h ) \cdot \dz v_h \idx \nonumber \\
		& ~~~~ + \int z^\alpha Z^{\alpha+1} v_h \cdot \nablah v \cdot \partial_{zz} v_h \idx  + \int z^\alpha Z^{\alpha+1} W_h \dz v \cdot \partial_{zz} v_h\idx \nonumber\\
		& ~~~~ + \alpha \int z^\alpha Z^\alpha Z_h (v_t + v \cdot \nablah v) \cdot \partial_{zz} v_h \idx \nonumber \\
		& ~~~~ + \alpha \int z^\alpha Z^\alpha Z_h W \dz v \cdot \partial_{zz} v_h \idx =: \sum_{k=30}^{38} I_{k}. 
	\end{align}	
	Applying Young's inequality yields that, for any constant $ \delta > 0 $, 
	\begin{align*}
		& I_{31} \lesssim \norm{\nablah^2 Z}{\Lnorm{2}} \norm{\partial_{zz}v_h}{\Lnorm{2}} + \norm{\nablah Z}{\Hnorm{1}}^2 \norm{\partial_{zz}v_h}{\Lnorm{2}} \\
		& ~~~~ \lesssim \delta \norm{\partial_{zz}v_h }{\Lnorm{2}}^2 + \delta^{-1} \norm{\nablah^2 Z}{\Lnorm{2}}^2 +  \norm{\nablah Z}{\Hnorm{1}}^2 \norm{\partial_{zz}v_h}{\Lnorm{2}}.
	\end{align*}
	Now we calculate the terms involving the vertical velocity. After substituting \eqref{eq:dz-W} into $I_{33}$, one has that
	\begin{align*}
		& I_{33} = - (\alpha+1) \int z^\alpha \dz v_h \cdot \dz v_h \bigl( \overline{z^\alpha v} \cdot \nablah Z^{\alpha+1} + \overline{z^\alpha \dvh v} Z^{\alpha+1} \bigr) \idx \\
		& ~~~~ + \int z^\alpha \dz v_h \cdot \dz v_h \bigl( v \cdot \nablah Z^{\alpha+1} + \dvh v Z^{\alpha+1} \bigr) \idx \lesssim \bigl( \hnorm{\nablah Z}{\Lnorm{2}} \hnorm{\overline{z^{\alpha} v}}{\Lnorm{\infty}} \\
		& ~~~~ + \hnorm{\overline{z^\alpha\nablah v}}{\Lnorm{2}}\bigr) \int_0^1 \hnorm{z^{\alpha/2} \dz v_h}{\Lnorm{4}}^2 \,dz + \bigl( \hnorm{\nablah Z}{\Lnorm{6}} \norm{v}{\Lnorm{3}} + \norm{\nablah v}{\Lnorm{2}} \bigr) \\
		& ~~~~ \times \norm{\dz v_h}{\Lnorm{6}}\norm{\dz v_h}{\Lnorm{3}} \lesssim \bigl( \norm{\nablah Z}{\Hnorm{1}} + 1\bigr) \bigl( \norm{z^{\alpha/2} v}{\hHnorm{2}} + \norm{v}{\Hnorm{1}} \bigr) \\
		& ~~~~ \times \norm{\dz v_h}{\Hnorm{1}}^2.
	\end{align*}
	Now we substitute  \subeqref{eq:dh-FBCPE}{3} into $ I_{36} $, and it follows that,
	\begin{align*}
		& I_{36} = \int z^{\alpha+1} \dz v \cdot \partial_{zz} v_h \bigl((\alpha+1) \overline{z^\alpha v} \cdot \nablah (\log Z)_h Z^{\alpha+1} + \overline{z^\alpha v_h} \cdot \nablah Z^{\alpha+1} \\
		& ~~~~ + \overline{z^\alpha \dvh v_h} Z^{\alpha+1}  \bigr) \idx - \int \dz v \cdot \partial_{zz} v_h \bigl( (\alpha+1) \int_0^z \xi^\alpha v \,d\xi \cdot \nablah (\log Z)_h \\
		& ~~~~ \times Z^{\alpha+1}
		 + \int_0^z \xi^\alpha v_h \,d\xi \cdot \nablah Z^{\alpha+1} + \int_0^z \xi^\alpha\dvh v_h\,d\xi Z^{\alpha+1}  \bigr) \idx\\
		& ~~~~ \lesssim \bigl((\hnorm{\nablah^2Z}{\Lnorm{2}} + \hnorm{\nablah Z}{\Lnorm{4}}^2 )\hnorm{\overline{z^\alpha v}}{\Lnorm{\infty}} + \hnorm{\overline{z^\alpha v_h}}{\Lnorm{4}} \hnorm{\nablah Z}{\Lnorm{4}} + \hnorm{\overline{z^\alpha\nablah^2 v}}{\Lnorm{2}} \bigr)\\
		& ~~~~ \times \int_0^1 \hnorm{\dz v}{\Lnorm{\infty}} \hnorm{\partial_{zz}v_h}{\Lnorm{2}} \,dz + \int_0^1 \bigl( (\hnorm{\nablah^2Z}{\Lnorm{2}} + \hnorm{\nablah Z}{\Lnorm{4}}^2 ) \\
		& ~~~~ \times (\int_0^z \xi^\alpha \hnorm{v}{\Lnorm{\infty}}^2 \,d\xi)^{1/2} + \hnorm{\nablah Z}{\Lnorm{4}} (\int_0^z \xi^\alpha \hnorm{v_h}{\Lnorm{4}}^2\,d\xi)^{1/2}\\
		& ~~~~ + (\int_0^z \xi^\alpha \hnorm{\nablah^2 v}{\Lnorm{2}}^2\,d\xi)^{1/2}  \bigr)  \hnorm{\dz v}{\Lnorm{\infty}} \hnorm{\partial_{zz} v_h}{\Lnorm{2}} \,dz \lesssim \bigl( 1  + \norm{\nabla Z}{\Hnorm{1}}^2  \bigr) \\
		& ~~~~ \times \norm{z^{\alpha/2} v}{\hHnorm{2}} \norm{\dz v}{\hHnorm{2}} \norm{\partial_{zz}v_h}{\Lnorm{2}}.
	\end{align*}
	Next, after substituting \eqref{eq:verticalvelocity} into $ I_{38} $, we have that,
	\begin{align*}
		& I_{38} = \alpha \int z^{\alpha+1} \dz v \cdot \partial_{zz}v_h \bigl( (\alpha+1) \overline{z^\alpha v} \cdot \nablah Z Z^{\alpha-1} Z_h \\
		& ~~~~ + \overline{z^\alpha \dvh v} Z^\alpha Z_h \bigr) \idx - \alpha \int \dz v\cdot \partial_{zz}v_h \bigl( (\alpha+1) \int_0^z \xi^\alpha v\,d\xi \cdot \nablah Z \\
		& ~~~~ \times Z^{\alpha-1} Z_h + \int_0^z \xi^\alpha \dvh v \,d\xi Z^\alpha Z_h \bigr) \idx \lesssim \bigl( \hnorm{\overline{z^\alpha v}}{\Lnorm{\infty}} \hnorm{\nablah Z}{\Lnorm{8}}^2 \\
		& ~~~~ + \hnorm{\overline{z^\alpha \dvh v}}{\Lnorm{8}} \hnorm{\nablah Z}{\Lnorm{8}} \bigr)  \int_0^1 \hnorm{\dz v}{\Lnorm{4}} \hnorm{\partial_{zz} v_h}{\Lnorm{2}} \,dz \\
		& ~~~~ + \int_0^1 \bigl( (\int_0^z \xi^\alpha \hnorm{v}{\Lnorm{\infty}}^2\,d\xi)^{1/2} \hnorm{\nablah Z}{\Lnorm{8}}^2 + (\int_0^z \xi^\alpha \hnorm{\nablah v}{\Lnorm{8}}^2 \,d\xi)^{1/2} \\
		& ~~~~ \times \hnorm{\nablah Z}{\Lnorm{8}} \bigr) \hnorm{\dz v}{\Lnorm{4}} \hnorm{\partial_{zz} v_h}{\Lnorm{2}} \,dz \lesssim \bigl( 1 + \norm{\nablah Z}{\Hnorm{1}}  \bigr)  \norm{\nablah Z}{\Hnorm{1}} \\
		& ~~~~ \times \norm{z^{\alpha/2} v}{\hHnorm{2}} \norm{\dz v}{\hHnorm{1}}\norm{\partial_{zz}v_h}{\Lnorm{2}}.
	\end{align*}
	On the other hand, applying H\"older's, Hardy's (Lemma \ref{lm:hardy}), and the Sobolev embedding inequalities, yields that for some $\delta \in (0,1)$,
	\begin{align*}
		& I_{34} \lesssim \norm{\dt v_h}{\Lnorm{2}} \norm{z^{-1} \dz v_h}{\Lnorm{2}} + \norm{v}{\Lnorm{6}} \norm{\nablah v_h}{\Lnorm{3}} \norm{z^{-1} \dz v_h}{\Lnorm{2}} \\
		& ~~~~ \lesssim \delta \norm{\partial_{zz}v_h}{\Lnorm{2}}^2 + \delta^{-1} \norm{\nablah v_t}{\Lnorm{2}}^2 + \norm{v}{\Hnorm{1}} \norm{\nablah^2 v}{\Hnorm{1}}\norm{\partial_{zz}v_h}{\Lnorm{2}},
	\end{align*}
	where we have used the inequality,
	\begin{equation}\label{SPE-008}
	\begin{aligned}
		& \norm{z^{-1} \dz v_h}{\Lnorm{2}}^2 = \int_0^1 z^{-2} \hnorm{\dz v_h - \dz v_h|_{z=0}}{\Lnorm{2}}^2 \,dz \\
		& ~~~~ \lesssim \int_0^1 \hnorm{\partial_{zz} v_h}{\Lnorm{2}}^2\,dz = \norm{\partial_{zz}v_h}{\Lnorm{2}}^2.
	\end{aligned}\end{equation}
	To finish the proof of this proposition, we list the estimates for the rest in \eqref{SPE-004}: 
	\begin{align*}
		& I_{30} \lesssim \int \bigl( \hnorm{\nablah^2 Z}{\Lnorm{2}} + \hnorm{\nablah Z}{\Lnorm{4}}^2 \bigr) \hnorm{\nablah v_z}{\Lnorm{4}}^2 \,dz \lesssim \bigl( \norm{\nablah Z}{\Hnorm{1}} + 1) \norm{\nablah Z}{\Hnorm{1}} \\
		& ~~~~ \times \norm{\dz v}{\hHnorm{2}}^2 ,\\
		& I_{32} \lesssim \norm{z^{\alpha/2} \nablah v_h}{\Lnorm{2}} \norm{v_z}{\Lnorm{3}} \norm{\dz v_h}{\Lnorm{6}} \lesssim \norm{z^{\alpha/2} v}{\hHnorm{2}} \norm{\dz v}{\Hnorm{1}} \\
		& ~~~~ \times \norm{\nablah v}{\Hnorm{2}},\\
		& I_{35} \lesssim \norm{z^{\alpha/2}v_h}{\Lnorm{2}} \norm{\nablah v}{\Lnorm{\infty}} \norm{\partial_{zz}v_h}{\Lnorm{2}} \lesssim \norm{z^{\alpha/2} \nablah v}{\Lnorm{2}} \norm{\nablah v}{\Hnorm{2}}^2, \\
		& I_{37} \lesssim \norm{Z_h}{\Lnorm{6}} \bigl( \norm{v_t}{\Lnorm{3}} + \norm{v}{\Lnorm{6}} \norm{\nablah v}{\Lnorm{6}}  \bigr) \norm{\partial_{zz}v_h}{\Lnorm{2}} \lesssim \norm{Z_h}{\Hnorm{1}} \\
		& ~~~~ \times \bigl( \norm{z^{\alpha/2}v_t}{\Lnorm{2}} + \norm{\nabla v_t}{\Lnorm{2}} + \norm{v}{\Hnorm{1}} \norm{\nablah v}{\Hnorm{1}}\bigr) \norm{\partial_{zz}v_h}{\Lnorm{2}}.
	\end{align*}
	Summing up the inequalities above with suitably small $ \delta $ will yield the following inequality,
	\begin{equation}\label{apriori-ineq-007}
	\begin{aligned}
			& \dfrac{d}{dt}\biggl\lbrace \dfrac{1}{2} \int  z^\alpha Z^{\alpha+1} \abs{v_{hz}}{2} \idx \biggr\rbrace + \int \biggl( Z \bigl( \mu \abs{\nablah \dz v_h}{2} \\ 
			& ~~~~ + (\mu+\lambda) \abs{\dvh \dz v_h}{2} 
			+ \mu/2 \abs{\partial_{zz} v_h}{2} \bigr)\biggr) \idx \lesssim \norm{\nablah^2 Z}{\Lnorm{2}}^2 \\
			& ~~~~  + \norm{\nablah v_t}{\Lnorm{2}}^2 + \norm{\nablah^3 v}{\Lnorm{2}}^2 
			 + \mathcal H( \norm{\nablah Z}{\Hnorm{1}}, \norm{z^{\alpha/2} v}{\hHnorm{2}}, \norm{v}{\Hnorm{1}} ) \\
			 & ~~~~ \times \bigl( \norm{\nablah Z}{\Hnorm{1}}^2 + \norm{\nablah v}{\Hnorm{2}}^2 
			 + \norm{\dz v}{\Hnorm{1}}^2 
			 + \norm{z^{\alpha/2} v_t}{\Lnorm{2}}^2 \\
			 & ~~~~ + \norm{\nabla v_t}{\Lnorm{2}}^2 \bigr).
	\end{aligned}\end{equation}
	Then integrating this inequality with respect to the temporal variable, together with \eqref{SPE-005} completes the proof of \eqref{apriori:dsptn-dhh-z}.
\end{proof}

\subsection{Stability}

This section is devoted to show Proposition \ref{prop:stability-theory}. Throughout the following arguments, we use $ 0< C <\infty $ to denote a positive constant independent of the solution $ (Z,v) $, which depends on the initial data and might be different from line to line. Summing up \eqref{apriori:conserv-kinetic-energy}, \eqref{apriori:dissipation-Z-t}, \eqref{apriori:dissipation-Z-t-L3}, \eqref{apriori:H^1-v-est}, \eqref{apriori:temporal-derivative}, \eqref{apriori:tangential-01}, \eqref{apriori:tangential-02}, \eqref{apriori:dsptn-dh-Z}, and \eqref{apriori:dsptn-dhh-z}, yields the inequality
\begin{equation}\label{SPE-006}
	\begin{aligned}
		& \sup_{0\leq t\leq T} \mathfrak E_c(t) + \int_0^T \mathfrak D(t) \,dt \leq C({\Arrowvert Z \Arrowvert_{\Lnorm{\infty}},\Arrowvert Z^{-1} \Arrowvert_{\Lnorm{\infty}}}) \mathcal E_0 + \int_0^T \mathcal H (\mathcal E(t)) \\
		& ~~~~ \times \bigl( \norm{Z-1}{\Hnorm{2}}^2 + \norm{Z_t}{\Lnorm{2}}^2 + \norm{\nabla v}{\Hnorm{1}}^2 + \norm{\nablah v}{\Hnorm{2}}^2 \\
		& ~~~~ + \norm{z^{\alpha/2}v_t}{\Lnorm{2}}^2 + \norm{\nabla v_t}{\Lnorm{2}}^2 \bigr),\dt,
	\end{aligned}
\end{equation}
where $ 0 < C({\Arrowvert Z \Arrowvert_{\Lnorm{\infty}},\Arrowvert Z^{-1} \Arrowvert_{\Lnorm{\infty}}}) < \infty $ is a constant depending on $ \Arrowvert Z \Arrowvert_{\Lnorm{\infty}},\Arrowvert Z^{-1} \Arrowvert_{\Lnorm{\infty}} < \infty $. Then, if $ \sup_{0\leq t\leq T} \mathcal E(t) < \varepsilon_2 < \varepsilon_1 $ for some sufficiently small $ 0 < \varepsilon_2 < \varepsilon_1 $ with $ \varepsilon_1 $ given in Proposition \ref{lm:equal-of-energies}, one has that
\begin{align*}
	&  \int_0^T \mathcal H (\mathcal E(t))\bigl( \norm{Z-1}{\Hnorm{2}}^2 + \norm{Z_t}{\Lnorm{2}}^2 + \norm{\nabla v}{\Hnorm{1}}^2 + \norm{\nablah v}{\Hnorm{2}}^2 \\
	& ~~~~ + \norm{z^{\alpha/2}v_t}{\Lnorm{2}}^2 + \norm{\nabla v_t}{\Lnorm{2}}^2 \bigr)\,dt \leq \dfrac{1}{2} \int_0^T \mathfrak D(t) \,dt,\\
	\text{and} \quad & | Z - 1 | \leq C \varepsilon_1^{1/2} < 1/2,
\end{align*}
where we have applied \eqref{aprasm:Z} and  \eqref{aprasm:dissipation}. Therefore \eqref{SPE-006} yields,
\begin{equation*}
	\sup_{0\leq t\leq T} \mathfrak E_c(t) + \dfrac{1}{2} \int_0^T \mathfrak D(t) \,dt \leq C \mathcal E_0. 
\end{equation*}
Furthermore, taking $ c = \frac{\mu}{8g} $ as in Proposition \ref{lm:equal-of-energies}, together with \eqref{aprasm:energy} and \eqref{aprasm:dissipation}, will finish the proof of Proposition \ref{prop:stability-theory}.

\section{Regularity up to the vacuum boundary}\label{sec:regularity-1}

In this section, we study the regularity of $ v $ near the vacuum boundary $ z= 0 $. We will make use of the viscosity to obtain the $ L^\infty(0,T; H^2(\Omega)) $ estimates for $ v $. In fact, we will show the following proposition:
\begin{proposition}\label{prop:H^2-of-v}
Under the same assumptions as in Proposition \ref{prop:stability-theory}, 
there is a constant $  \varepsilon_3 > 0 $ 
such that if $ \sup_{0\leq t \leq T} \mathcal E(t) \leq \varepsilon_3 $, then the following inequality holds:
	\begin{equation}\label{apriori:H^2-v}
		\norm{v(t)}{\Hnorm{2}} \leq C \bigl( \mathcal E(t) + 1 \bigr) \bigl( \mathcal E(t) \bigr)^{1/2},
	\end{equation}
	for some constant $ 0< C <\infty $ and any $ 0\leq t\leq T $. 
\end{proposition}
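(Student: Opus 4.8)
The plan is to use the momentum equation \subeqref{FB-CPE}{2} as a uniformly elliptic system for $v$ and to recover the full, non-degenerate $H^2(\Omega)$ bound from elliptic regularity; the degeneracy of the problem sits only in the coefficient $\rho = z^\alpha Z^\alpha$ multiplying the inertial and gravity terms, not in the viscous principal part, so the real work is to estimate the right-hand side by the degenerate energy $\mathcal E(t)$. Write \subeqref{FB-CPE}{2} as $\mathcal L v = F$, where
\[
\mathcal L v := \mu \deltah v + (\mu+\lambda)\nablah\dvh v + \mu\partial_{zz}v,
\]
\[
F := z^\alpha Z^\alpha\bigl(\dt v + v\cdot\nablah v + W\dz v + g\nablah Z\bigr) - \mu\bigl(\nablah\log Z\bigr)\cdot\nablah v - (\mu+\lambda)\bigl(\dvh v\bigr)\nablah\log Z .
\]
Acting on $\mathbb R^2$-valued maps, $\mathcal L$ has symbol with the two strictly positive eigenvalues $\mu|\xi|^2$ and $(2\mu+\lambda)|\xi_h|^2+\mu|\xi_z|^2$ and contains no mixed horizontal/vertical second derivative, so a Fourier transform in $\vech x$ reduces it to a coercive family of second order ODEs in $z\in(0,1)$ with the Neumann conditions $\dz v|_{z=0,1}=0$ of \eqref{FB-BC-CPE}; testing those ODEs against $\widehat v$ and against $-\partial_{zz}\widehat v$ (the Neumann condition annihilating all boundary contributions) and summing over horizontal modes yields the elliptic estimate
\[
\norm{v}{\Hnorm{2}} \lesssim \norm{F}{\Lnorm{2}} + \norm{v}{\Hnorm{1}}, \qquad \text{and recall } \norm{v}{\Hnorm{1}} \lesssim \mathcal E(t)^{1/2}.
\]

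It then remains to bound $\norm{F}{\Lnorm{2}}$ in terms of $\mathcal E(t)$ and, with arbitrarily small coefficient, $\norm{v}{\Hnorm{2}}$. Using $z^\alpha \le z^{\alpha/2}$ on $(0,1)$ and $\tfrac12 < Z < 2$: $\norm{z^\alpha Z^\alpha v_t}{\Lnorm{2}} \lesssim \norm{z^{\alpha/2}v_t}{\Lnorm{2}} \lesssim \mathcal E^{1/2}$ and $\norm{g z^\alpha Z^\alpha \nablah Z}{\Lnorm{2}} \lesssim \norm{\nablah Z}{\Lnorm{2}} \lesssim \mathcal E^{1/2}$; the transport term is handled by the three-dimensional Gagliardo--Nirenberg inequality, $\norm{z^\alpha Z^\alpha v\cdot\nablah v}{\Lnorm{2}} \lesssim \norm{v}{\Lnorm{6}}\norm{\nablah v}{\Lnorm{3}} \lesssim \norm{v}{\Hnorm{1}}\,\norm{\nablah v}{\Lnorm{2}}^{1/2}\norm{\nablah v}{\Hnorm{1}}^{1/2} \lesssim \mathcal E^{3/4}\norm{v}{\Hnorm{2}}^{1/2}$; and the two viscous correction terms by $\norm{\nablah Z}{\Lnorm{4}}\norm{\nablah v}{\Lnorm{4}} \lesssim \norm{Z-1}{\Hnorm{2}}\norm{v}{\Hnorm{2}} \lesssim \mathcal E^{1/2}\norm{v}{\Hnorm{2}}$, using $H^1(\Omega_h)\hookrightarrow L^4$ for the $z$-independent $\nablah Z$ and $H^1(\Omega)\hookrightarrow L^6 \subset L^4$ for $\nablah v$. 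For the degenerate convective term $z^\alpha Z^\alpha W\dz v$ one substitutes the representation \eqref{eq:verticalvelocity} in the form
\[
z^\alpha Z^\alpha W = Z^{\alpha-1}\Bigl\{z^{\alpha+1}\bigl((\alpha+1)\overline{z^\alpha v}\cdot\nablah Z+\overline{z^\alpha\dvh v}\,Z\bigr) - (\alpha+1)\Bigl(\int_0^z\xi^\alpha v\,d\xi\Bigr)\cdot\nablah Z - \Bigl(\int_0^z\xi^\alpha\dvh v\,d\xi\Bigr)Z\Bigr\},
\]
and estimates the four resulting pieces in $\Lnorm{2}(\Omega)$ by H\"older's and Minkowski's inequalities: the Cauchy--Schwarz bounds $\int_0^z\xi^\alpha|f|\,d\xi \le C z^{(\alpha+1)/2}(\int_0^1\xi^\alpha|f|^2\,d\xi)^{1/2}$ and $|\overline{z^\alpha f}| \le C(\int_0^1 z^\alpha|f|^2\,dz)^{1/2}$ turn every vertical integral into a factor $z^{(\alpha+1)/2}$ or $z^{\alpha+1}$ — absorbable into the weight $z^{\alpha/2}$ — times weighted $L^2$-norms of $v$ and $\nablah v$ that are already controlled by $\mathcal E$; together with $\norm{\nablah Z}{\Hnorm{1}}\lesssim\mathcal E^{1/2}$ this gives $\norm{z^\alpha Z^\alpha W\dz v}{\Lnorm{2}} \lesssim \mathcal H(\mathcal E^{1/2})\,\mathcal E^{1/2} \lesssim \mathcal E$.

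Collecting the bounds, $\norm{F}{\Lnorm{2}} \lesssim \mathcal E^{1/2} + \mathcal E + \mathcal E^{3/4}\norm{v}{\Hnorm{2}}^{1/2} + \mathcal E^{1/2}\norm{v}{\Hnorm{2}}$, so that the elliptic estimate becomes
\[
\norm{v}{\Hnorm{2}} \le C\bigl(\mathcal E^{1/2} + \mathcal E\bigr) + C\mathcal E^{3/4}\norm{v}{\Hnorm{2}}^{1/2} + C\mathcal E^{1/2}\norm{v}{\Hnorm{2}} .
\]
Choosing $\varepsilon_3$ so small that $C\varepsilon_3^{1/2} < \tfrac14$ absorbs the last term into the left-hand side, and Young's inequality applied to the middle term ($C\mathcal E^{3/4}\norm{v}{\Hnorm{2}}^{1/2} \le \tfrac14\norm{v}{\Hnorm{2}} + C'\mathcal E^{3/2}$) absorbs that one as well, leaving $\norm{v}{\Hnorm{2}} \lesssim \mathcal E^{1/2} + \mathcal E + \mathcal E^{3/2} \lesssim (\mathcal E+1)\mathcal E^{1/2}$, which is \eqref{apriori:H^2-v}. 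I expect the main obstacle to be the bookkeeping for the degenerate convective term $z^\alpha Z^\alpha W\dz v$: one must check, piece by piece through \eqref{eq:verticalvelocity} and with the help of Hardy's inequality (Lemma \ref{lm:hardy}) and the weighted norms built into $\mathcal E$, that each factor is controlled with at least one spare power of $\mathcal E^{1/2}$, so that this term — the one place where the genuinely vacuum-degenerate structure meets the $H^2$ estimate — is strictly lower order once the smallness of $\mathcal E$ is invoked; the elliptic estimate itself is routine since $\mathcal L$ is uniformly elliptic all the way up to $z=0$.
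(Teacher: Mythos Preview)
Your proposal is correct and follows essentially the same route as the paper: rewrite \subeqref{FB-CPE}{2} as $\mathcal L v = F$ with the uniformly elliptic principal part, bound $\norm{\nabla^2 v}{\Lnorm{2}}$ from below by $\norm{\mathcal L v}{\Lnorm{2}}$, estimate each piece of $F$ in $L^2$ by powers of $\mathcal E(t)$ plus small multiples of $\norm{v}{\Hnorm{2}}$, and absorb. The only cosmetic difference is that the paper obtains the coercivity $\norm{\mathcal L v}{\Lnorm{2}}^2 \gtrsim \norm{\nabla^2 v}{\Lnorm{2}}^2$ by a direct integration-by-parts expansion (the Neumann condition $\dz v|_{z=0,1}=0$ killing all boundary terms) rather than your Fourier formulation, and for the degenerate convective term the paper carries out the bookkeeping explicitly---placing $\dz v$ in the weighted slice norm $\hnorm{z^{\alpha/2}\dz v}{\Lnorm{4}}$ via \eqref{ineq-supnorm} to get $\norm{z^\alpha Z^\alpha W\dz v}{\Lnorm{2}}^2 \lesssim \norm{z^{\alpha/2} v}{\hHnorm{2}}^2 (\norm{\nablah Z}{\Hnorm{1}}^2+1)\norm{z^{\alpha/2}v_z}{\hHnorm{1}}^2 \lesssim \mathcal E^3 + \mathcal E^2$---which matches your heuristic conclusion but makes transparent exactly which weighted norms in $\mathcal E$ are being spent.
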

\begin{proof}
	Directly from \subeqref{FB-CPE}{2}, one has
\begin{equation}\label{RG-VacuumBD}
	\begin{aligned}
		& \norm{\mu \deltah v + (\mu+\lambda) \nablah \dvh v + \mu \partial_{zz} v}{\Lnorm{2}} \\
		& ~~~~ \lesssim \norm{\mu (\nablah \log Z)\cdot \nablah v + (\mu+\lambda)(\dvh v) \nablah \log Z }{\Lnorm{2}} \\
		& ~~~~ + \norm{z^\alpha Z^\alpha \nablah Z}{\Lnorm{2}} + \norm{z^\alpha Z^\alpha \dt v}{\Lnorm{2}} + \norm{z^\alpha Z^\alpha v\cdot\nablah v}{\Lnorm{2}} \\
		& ~~~~ + \norm{z^{\alpha} Z^\alpha W \dz v}{\Lnorm{2}}. 
	\end{aligned}
\end{equation}
On the one hand, applying integration by parts in \eqref{RG-VacuumBD} yields 
\begin{align*}
	& \norm{\mu \deltah v + (\mu+\lambda) \nablah \dvh v + \mu \partial_{zz} v}{\Lnorm{2}}^2 = \mu^2 \norm{\nablah^2 v}{\Lnorm{2}}^2 + \mu^2 \norm{\partial_{zz}v}{\Lnorm{2}}^2\\
	& ~~~~ + (\mu+\lambda)^2 \norm{\nablah \dvh v}{\Lnorm{2}}^2 + 2\mu (\mu+\lambda) \norm{\nabla \dvh v}{\Lnorm{2}}^2\\
	& ~~~~ + 2\mu^2 \norm{\nablah \dz v}{\Lnorm{2}}^2 \gtrsim \norm{\nabla^2 v}{\Lnorm{2}}^2. 
\end{align*}
On the other hand, we will show that the right-hand side of \eqref{RG-VacuumBD} can be hounded by $ \mathcal E(t) $. In fact, after substituting \eqref{eq:verticalvelocity}, one can check that
\begin{align*}
	& \norm{z^\alpha Z^\alpha W\dz v}{\Lnorm{2}}^2 \lesssim \int \biggl\lbrack z^{2\alpha+2} \bigl( \abs{\overline{z^\alpha v}}{2} \abs{\nablah Z^\alpha}{2} + \abs{\overline{z^\alpha\dvh v}}{2} \abs{Z}{2\alpha}  \bigr)\abs{\dz v}{2} \\
	& ~~~~ + \bigl(\int_0^z \xi^\alpha v \,d\xi\bigr)^2 \abs{\nablah Z^\alpha}{2} \abs{\dz v}{2} + \bigl( \int_0^z \xi^\alpha \dvh v \,d\xi \bigr)^2 \abs{Z}{2\alpha} \abs{\dz v}{2} \biggr\rbrack \idx\\
	& ~~~~ \lesssim \int_0^1 \biggl\lbrack \bigl( \hnorm{\overline{z^\alpha v}}{\Lnorm{\infty}}^2 \hnorm{Z^{2\alpha-2}}{\Lnorm{\infty}} \hnorm{\nablah Z}{\Lnorm{4}}^2 + \hnorm{Z}{\Lnorm{\infty}}^{2\alpha} \hnorm{\overline{z^\alpha\dvh v}}{\Lnorm{4}}^2\bigr)  \hnorm{z^{\alpha/2}\dz v}{\Lnorm{4}}^2 \\
	& ~~~~ + \bigl( \int_0^z \xi^\alpha \hnorm{v}{\Lnorm{\infty}}^2 \,d\xi \hnorm{Z^{2\alpha-2}}{\Lnorm{\infty}} \hnorm{\nablah Z}{\Lnorm{4}}^2 +  \int_0^z \xi^\alpha \hnorm{\nablah v}{\Lnorm{4}}^2 \,d\xi \hnorm{Z}{\Lnorm{\infty}}^{2\alpha}  \bigr) \\
	& ~~~~ \times \hnorm{z^{\alpha/2}\dz v}{\Lnorm{4}}^2 \biggr\rbrack \,dz \lesssim \bigl( \norm{Z^{2\alpha-2}}{\Lnorm{\infty}} + \norm{Z}{\Lnorm{\infty}}^{2\alpha} \bigr)\norm{z^{\alpha/2} v}{\hHnorm{2}}^2 \bigl( \norm{\nablah Z}{\Hnorm{1}}^2 \\
	& ~~~~ + 1 \bigr) \norm{z^{\alpha/2}v_z }{\hHnorm{1}}^2 \leq C_{\Arrowvert Z \Arrowvert_{\Lnorm{\infty}}, \Arrowvert Z^{-1} \Arrowvert_{\Lnorm{\infty}} } \bigl( \bigl(\mathcal E(t) \bigr)^3 +\bigl( \mathcal E(t) \bigr)^2 \bigr) .
\end{align*}
Similarly, the estimates for the rest terms in \eqref{RG-VacuumBD} are listed in the following:
\begin{align*}
	& \norm{\mu \nablah \log Z\cdot \nablah v + (\mu+\lambda)\dvh v \nablah \log Z }{\Lnorm{2}} \lesssim \norm{Z^{-1}}{\Lnorm{\infty}} \\
	& ~~~~ \times \norm{\nablah Z}{\Hnorm{1}} \norm{\nablah v}{\hHnorm{1}} \leq C({\Arrowvert Z^{-1} \Arrowvert_{\Lnorm{\infty}}}) \bigl( \mathcal E(t) \bigr)^{1/2} \norm{\nablah v}{\hHnorm{1}} , \\
	& \norm{z^\alpha Z^\alpha \nablah Z}{\Lnorm{2}} + \norm{z^\alpha Z^\alpha \dt v}{\Lnorm{2}} \leq C({\Arrowvert Z \Arrowvert_{\Lnorm{\infty}}}) \bigl(\mathcal E(t) \bigr)^{1/2} ,\\
	& \norm{z^\alpha Z^\alpha v\cdot\nablah v}{\Lnorm{2}} \leq C({\Arrowvert Z \Arrowvert_{\Lnorm{\infty}}}) \norm{v}{\Lnorm{3}} \norm{\nablah v}{\Lnorm{6}} \leq  C({\Arrowvert Z \Arrowvert_{\Lnorm{\infty}}})\norm{v}{\Hnorm{1}} \\
	& ~~~~ \times \norm{\nablah v}{\Hnorm{1}} \leq C({\Arrowvert Z \Arrowvert_{\Lnorm{\infty}}})  \bigl(\mathcal E(t) \bigr)^{1/2} \norm{\nablah v}{\Hnorm{1}}.
\end{align*}
Therefore, \eqref{RG-VacuumBD} yields 
\begin{equation*}
	\norm{\nabla^2 v}{\Lnorm{2}} \leq C({\Arrowvert Z \Arrowvert_{\Lnorm{\infty}}, \Arrowvert Z^{-1} \Arrowvert_{\Lnorm{\infty}}}) \bigl(\mathcal E(t) \bigr)^{1/2} \bigl( \mathcal E(t) + \norm{\nablah v}{\Hnorm{1}} + 1 \bigr).
\end{equation*}
This concludes the proof of \eqref{apriori:H^2-v} after choosing $ \sup_{0\leq t\leq T} \mathcal E(t) < \varepsilon_3 $ sufficiently small. 
\end{proof}

Next, we will derive estimate for $ \partial_{zzz} v $. 
After applying $ \dz $ to \subeqref{FB-CPE}{2}, one has the following equation:
\begin{equation}\label{eq:dzzz-v}
	\begin{aligned}
		& \mu \partial_{zzz} v = - \mu \deltah v_z - (\mu + \lambda) \nablah \dvh v_z - \mu \nablah \log Z \cdot \nablah v_z \\
		& ~~~~ - (\mu+\lambda) \dvh v_z \nablah \log Z + \alpha z^{\alpha-1} Z^\alpha \bigl(  \dt v + v\cdot \nablah v \bigr)\\ 
		& ~~~~ + z^\alpha Z^\alpha \bigl( \dt v_z + v_z \cdot \nablah v + v \cdot \nablah v_z + W \dz v_z \bigr)  + \dz (z^\alpha W) Z^\alpha \dz v \\
		& ~~~~ + \alpha g z^{\alpha-1} Z^{\alpha} \nablah Z.
	\end{aligned}
\end{equation}
For the sake of convenience, in the following arguments, we recall again assumption \eqref{apriori-boundness-of-Z}, i.e., that $ 1/2 \leq \abs{Z-1}{} \leq 2 $. 
Hence we have
\begin{align*}
	& \norm{\mu \deltah v_z + (\mu+\lambda) \nablah \dvh v_z}{\Lnorm{2}} \lesssim \norm{\nablah v}{\Hnorm{2}},\\
	& \norm{\mu \nablah \log Z \cdot \nablah v_z + (\mu+\lambda) \dvh v_z \nablah \log Z}{\Lnorm{2}} \lesssim \norm{\nablah Z}{\Hnorm{1}} \norm{\nablah v}{\Hnorm{2}} \\
	& ~~~~ \lesssim \bigl(\mathcal E(t)\bigr)^{1/2} \norm{\nablah v}{\Hnorm{2}},\\
	& \norm{z^\alpha Z^\alpha \dt v_z}{\Lnorm{2}} \lesssim \norm{\nabla v_t}{\Lnorm{2}}, \\
	& \norm{z^\alpha Z^\alpha v_z \cdot \nablah v}{\Lnorm{2}} \lesssim \norm{z^{\alpha/2}v_z}{\Lnorm{2}} \norm{\nablah v}{\Lnorm{\infty}} \lesssim \bigl(\mathcal E(t)\bigr)^{1/2} \norm{\nablah v}{\Hnorm{2}} ,\\
	& \norm{z^\alpha Z^\alpha v \cdot \nablah v_z}{\Lnorm{2}} \lesssim \norm{v}{\Lnorm{3}} \norm{\nablah v_z}{\Lnorm{6}} \lesssim \norm{v}{\Hnorm{1}} \norm{\nablah v}{\Hnorm{2}} \\
	& ~~~~ \lesssim \bigl(\mathcal E(t)\bigr)^{1/2} \norm{\nablah v}{\Hnorm{2}}.
\end{align*}
On the other hand, after substituting \eqref{eq:dz-W} into the following, one can easily check that 
\begin{align*}
	& \norm{\dz (z^\alpha W) Z^\alpha \dz v}{\Lnorm{2}} \lesssim \bigl( \norm{\overline{z^\alpha v}}{\Lnorm{\infty}}\norm{\nablah Z }{\Lnorm{3}} + \norm{\overline{z^\alpha \dvh v}}{\Lnorm{3}} \bigr) \norm{\dz v}{\Lnorm{6}}\\
	& ~~~~ +  \norm{v}{\Lnorm{6}} \norm{\nablah Z}{\Lnorm{6}} \norm{\dz v}{\Lnorm{6}} + \norm{\nablah v}{\Lnorm{\infty}}\norm{z^{\alpha/2}\dz v}{\Lnorm{2}} \\
	& \lesssim \bigl( \norm{\nablah Z}{\Hnorm{1}} + 1\bigr) \norm{z^{\alpha/2} v}{\hHnorm{2}} \norm{\dz v}{\Hnorm{1}} + \norm{v}{\Hnorm{1}} \norm{\nablah Z}{\Hnorm{1}} \norm{\dz v}{\Hnorm{1}} \\
	& ~~~~ + \norm{z^{\alpha/2} v_z}{\Lnorm{2}} \norm{\nablah v}{\Hnorm{2}} \lesssim \bigl( \mathcal E(t) + \bigl( \mathcal E(t) \bigr)^{1/2} \bigr)\bigl( \norm{\nabla v}{\Hnorm{1}} + \norm{\nablah v}{\Hnorm{2}} \bigr) .
\end{align*}
Moreover, after using \eqref{eq:verticalvelocity}, as before, one concludes that
\begin{align*}
	& \norm{z^\alpha Z^\alpha W\dz v_z}{\Lnorm{2}}^2 \lesssim \int \biggl(  z^{2\alpha+2} \bigl( \abs{\overline{z^\alpha v}}{2} \abs{\nablah Z^\alpha}{2} + \abs{\overline{z^\alpha\dvh v}}{2} \abs{Z}{2\alpha}  \bigr)\abs{\dz v_z}{2} \\
	& ~~~~ + \bigl(\int_0^z \xi^\alpha v \,d\xi\bigr)^2 \abs{\nablah Z^\alpha}{2} \abs{\dz v_z}{2} + \bigl( \int_0^z \xi^\alpha \dvh v \,d\xi \bigr)^2 \abs{Z}{2\alpha} \abs{\dz v_z}{2} \biggr) \idx\\
	& ~~~~ \lesssim \int_0^1 \biggl( \bigl( \hnorm{\overline{z^\alpha v}}{\Lnorm{\infty}}^2 \hnorm{Z^{2\alpha-2}}{\Lnorm{\infty}} \hnorm{\nablah Z}{\Lnorm{4}}^2 + \hnorm{Z}{\Lnorm{\infty}}^{2\alpha} \hnorm{\overline{z^\alpha\dvh v}}{\Lnorm{4}}^2\bigr)  \hnorm{\dz v_z}{\Lnorm{4}}^2 \\
	& ~~~~ + \bigl\lbrack \bigl( \int_0^z \xi^\alpha \hnorm{v}{\Lnorm{\infty}}^2 \,d\xi \bigr) \hnorm{Z^{2\alpha-2}}{\Lnorm{\infty}} \hnorm{\nablah Z}{\Lnorm{4}}^2 + \bigl( \int_0^z \xi^\alpha \hnorm{\nablah v}{\Lnorm{4}}^2 \,d\xi \bigr) \hnorm{Z}{\Lnorm{\infty}}^{2\alpha}  \bigr\rbrack \\
	& ~~~~ \times \hnorm{\dz v_z}{\Lnorm{4}}^2 \biggr) \,dz \lesssim \norm{z^{\alpha/2} v}{\hHnorm{2}}^2 \bigl( \norm{\nablah Z}{\Hnorm{1}}^2 + 1 \bigr) \bigl( \norm{\partial_{zz}v}{\Lnorm{2}}^2 + \norm{\nablah v}{\Hnorm{2}}^2 \bigr) \\
	& ~~~~ \lesssim \bigl( \bigl(\mathcal E(t) \bigr)^2 + \mathcal E(t) \bigr) \bigl( \norm{\nabla v}{\Hnorm{1}}^2 + \norm{\nablah v}{\Hnorm{2}}^2 \bigr).
\end{align*}
Next, if $ \alpha > 1/2 $, (or equivalently $ 2 \alpha - 2 > -1 $), i.e., $ 1< \gamma < 3 $, we have,
\begin{align*}
	& \norm{z^{\alpha-1} Z^\alpha \nablah Z}{\Lnorm{2}}^2 = \int_0^1 z^{2\alpha-2} \,dz \times \hnorm{Z^\alpha \nablah Z }{\Lnorm{2}}^2 \lesssim \norm{\nablah Z}{\Lnorm{2}}^2, \\
	& \norm{z^{\alpha-1} Z^\alpha \dt v}{\Lnorm{2}}^2 \lesssim \int_0^1 z^{2\alpha - 2} \hnorm{\dt v}{\Lnorm{2}}^2\,dz \lesssim \norm{z^{\alpha/2} \dt v}{\Lnorm{2}}^2 + \norm{\nabla \dt v}{\Lnorm{2}}^2,\\
	& \norm{z^{\alpha-1} Z^\alpha v\cdot \nablah v}{\Lnorm{2}}^2 \lesssim \int_0^1 z^{2\alpha-2} \hnorm{v\cdot \nablah v}{\Lnorm{2}}^2\,dz \lesssim \int_0^1 z^{2\alpha} \bigl(\hnorm{v\cdot \nablah v}{\Lnorm{2}}^2 \\
	& ~~~~ + \hnorm{\dz (v\cdot \nablah v)}{\Lnorm{2}}^2 \bigr)\,dz \lesssim \norm{v \cdot \nablah v}{\Lnorm{2}}^2 + \norm{\dz v \cdot \nablah v}{\Lnorm{2}}^2 \\
	& ~~~~ + \norm{v \cdot \nablah v_z}{\Lnorm{2}}^2 \lesssim \norm{v}{\Lnorm{2}}^2 \norm{\nablah v}{\Lnorm{\infty}}^2 + \norm{\dz v}{\Lnorm{2}}^2 \norm{\nablah v}{\Lnorm{\infty}}^2 \\
	& ~~~~ + \norm{v}{\Lnorm{3}}^2 \norm{\nablah v_z}{\Lnorm{6}}^2 \lesssim \norm{v}{\Hnorm{1}}^2  \norm{\nablah v}{\Hnorm{2}}^2 \lesssim \mathcal E(t) \norm{\nablah v}{\Hnorm{2}}^2,
\end{align*}
where we have applied H\"older's, Hardy's (Lemma \ref{lm:hardy}), and the Sobolev embedding inequalities. 
Therefore, after summing up the above inequalities, one has, for $ \alpha > 1/2 $, 
\begin{align*}
	& \norm{\partial_{zzz} v}{\Lnorm{2}} \lesssim
	\norm{\nablah v}{\Hnorm{2}} + \norm{z^{\alpha/2} \dt v}{\Lnorm{2}} + \norm{\nabla v_t}{\Lnorm{2}} + \norm{\nablah Z}{\Lnorm{2}} \\
	& ~~~~ + \bigl( \mathcal E(t) + \bigl(\mathcal E(t)\bigr)^{1/2} \bigr) \bigl( \norm{\nabla v}{\Hnorm{1}} + \norm{\nablah v}{\Hnorm{2}} \bigr).
\end{align*}
On the other hand, if $ 0 < \alpha \leq 1/2 $, i.e., $ \gamma \geq 3 $, let $ \beta $ be any number such that $ 2\alpha + 2\beta - 2 > -1 $. Then the same arguments as above show that 
\begin{align*}
	& \norm{z^{\beta}\partial_{zzz} v}{\Lnorm{2}} \lesssim
	\norm{\nablah v}{\Hnorm{2}} + \norm{z^{\alpha/2} \dt v}{\Lnorm{2}} + \norm{\nabla v_t}{\Lnorm{2}} + \norm{\nablah Z}{\Lnorm{2}} \\
	& ~~~~ + \bigl( \mathcal E(t) + \bigl(\mathcal E(t)\bigr)^{1/2} \bigr) \bigl( \norm{\nabla v}{\Hnorm{1}} + \norm{\nablah v}{\Hnorm{2}} \bigr).
\end{align*}
Therefore, we have demonstrated the following proposition: 
\begin{proposition}\label{lm:diss-dzzz-v}
Under the same assumptions as in Proposition \ref{prop:stability-theory}, one has
	\begin{equation}\label{apriori:diss-dzzz-v-est}
		\begin{aligned}
			& \bullet \text{if $ \alpha > 1/2 $, i.e., $ 1 < \gamma < 3 $}, ~ \int_0^T \norm{\partial_{zzz} v}{\Lnorm{2}}^2 \,dt \leq \int_0^T \biggl\lbrack \norm{\nablah v}{\Hnorm{2}}^2 \\
			& ~~~~ + \norm{z^{\alpha/2} \dt v}{\Lnorm{2}}^2 + \norm{\nabla v_t}{\Lnorm{2}}^2 + \norm{\nablah Z}{\Lnorm{2}}^2 \\
			& ~~~~ + \bigl( \bigl(\mathcal E(t)\bigr)^2 + \mathcal E(t) \bigr) \bigl( \norm{\nabla v}{\Hnorm{1}}^2
			 + \norm{\nablah v}{\Hnorm{2}}^2 \bigr) \biggr\rbrack \,dt;\\
			& \bullet \text{if $ 0 < \alpha\leq 1/2 $, i.e., $ \gamma \geq 3 $}, ~ \int_0^T \norm{z^{\beta}\partial_{zzz} v}{\Lnorm{2}}^2  \,dt \leq \int_0^T \norm{\nablah v}{\Hnorm{2}}^2 \\
			& ~~~~ + \norm{z^{\alpha/2} \dt v}{\Lnorm{2}}^2
			 + \norm{\nabla v_t}{\Lnorm{2}}^2 + \norm{\nablah Z}{\Lnorm{2}}^2 + \bigl( \bigl(\mathcal E(t)\bigr)^2 + \mathcal E(t) \bigr) \\
			 & ~~~~ \times \bigl( \norm{\nabla v}{\Hnorm{1}}^2
			+ \norm{\nablah v}{\Hnorm{2}}^2 \bigr) \,dt,
		\end{aligned}
	\end{equation}
	for any $ \beta > \frac{1}{2} - \alpha $. In particular,
	\begin{equation}\label{apriori:diss-dzzz-v-est-02}
		\begin{aligned}
			&  \int_0^T \norm{z^{1/2}\partial_{zzz} v}{\Lnorm{2}}^2 \,dt \leq \int_0^T \biggl\lbrack \norm{\nablah v}{\Hnorm{2}}^2 + \norm{z^{\alpha/2} \dt v}{\Lnorm{2}}^2\\
			& ~~~~ + \norm{\nabla v_t}{\Lnorm{2}}^2 + \norm{\nablah Z}{\Lnorm{2}}^2 + \bigl( \bigl(\mathcal E(t)\bigr)^2 + \mathcal E(t) \bigr) \bigl( \norm{\nabla v}{\Hnorm{1}}^2\\
			& ~~~~ + \norm{\nablah v}{\Hnorm{2}}^2 \bigr) \biggr\rbrack \,dt. 
		\end{aligned}
	\end{equation}
\end{proposition}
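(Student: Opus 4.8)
The plan is to start from the pointwise identity \eqref{eq:dzzz-v}, obtained by applying $\dz$ to the momentum equation \subeqref{FB-CPE}{2} and isolating $\mu\partial_{zzz}v$, and then to estimate the $\Lnorm{2}(\Omega)$ norm of every term on its right-hand side, paying special attention to the two terms that carry the singular weight $z^{\alpha-1}$. Throughout I would freely use the standing bound \eqref{apriori-boundness-of-Z} on $Z$ and the Gagliardo--Nirenberg inequalities \eqref{ineq-supnorm} in $\Omega_h$.

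First I would dispose of the ``regular'' terms: $\mu\deltah v_z+(\mu+\lambda)\nablah\dvh v_z$ is controlled by $\norm{\nablah v}{\Hnorm{2}}$; the terms $\mu(\nablah\log Z)\cdot\nablah v_z$ and $(\mu+\lambda)(\dvh v_z)\nablah\log Z$ are bounded by $\norm{\nablah Z}{\Hnorm{1}}\norm{\nablah v}{\Hnorm{2}}\lesssim(\mathcal E(t))^{1/2}\norm{\nablah v}{\Hnorm{2}}$; and the advective pieces $z^\alpha Z^\alpha(\dt v_z+v_z\cdot\nablah v+v\cdot\nablah v_z)$ are handled by H\"older's and the Sobolev embedding inequalities, yielding $\norm{\nabla v_t}{\Lnorm{2}}$ together with $(\mathcal E(t))^{1/2}(\norm{\nabla v}{\Hnorm{1}}+\norm{\nablah v}{\Hnorm{2}})$. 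For the two terms involving the reconstructed vertical velocity I would invoke the explicit formulas \eqref{eq:dz-W} and \eqref{eq:verticalvelocity}: substituting \eqref{eq:dz-W} into $\dz(z^\alpha W)Z^\alpha\dz v$, a H\"older--Minkowski--Sobolev argument of the type used for $I_2',I_2''$ in Proposition \ref{lm:H^1-v-est} bounds it by $(\mathcal E(t)+(\mathcal E(t))^{1/2})(\norm{\nabla v}{\Hnorm{1}}+\norm{\nablah v}{\Hnorm{2}})$; substituting \eqref{eq:verticalvelocity} into $z^\alpha Z^\alpha W\dz v_z$ and estimating the averaged and vertically-integrated pieces separately gives $\norm{z^\alpha Z^\alpha W\dz v_z}{\Lnorm{2}}^2\lesssim((\mathcal E(t))^2+\mathcal E(t))(\norm{\nabla v}{\Hnorm{1}}^2+\norm{\nablah v}{\Hnorm{2}}^2)$. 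The key structural point here, which is the whole reason for the change of coordinates, is that none of these bounds ever involves $\dz\rho$.

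The main obstacle --- and the source of the dichotomy in the statement --- is the pair of terms $\alpha z^{\alpha-1}Z^\alpha(\dt v+v\cdot\nablah v)$ and $\alpha g z^{\alpha-1}Z^\alpha\nablah Z$, whose weight $z^{\alpha-1}$ is square-integrable near $z=0$ precisely when $2\alpha-2>-1$, i.e.\ $\alpha>1/2$. In that regime I would write $\norm{z^{\alpha-1}Z^\alpha\nablah Z}{\Lnorm{2}}^2=\big(\int_0^1 z^{2\alpha-2}\,dz\big)\,\hnorm{Z^\alpha\nablah Z}{\Lnorm{2}}^2\lesssim\norm{\nablah Z}{\Lnorm{2}}^2$, and for the $\dt v$ and $v\cdot\nablah v$ pieces I would apply Hardy's inequality (Lemma \ref{lm:hardy}, part (a), with $k=2\alpha>1$ and $s$ replaced by $z$) to trade the weight $z^{\alpha-1}$ for $z^\alpha$ at the cost of one extra $\dz$, reducing everything to quantities already controlled by $\mathcal E(t)$, $\norm{\nabla v_t}{\Lnorm{2}}$, and $\norm{\nablah v}{\Hnorm{2}}$. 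When $0<\alpha\le1/2$ this fails since $\int_0^1 z^{2\alpha-2}\,dz=\infty$, so I would instead estimate the weighted field $z^{\beta}\partial_{zzz}v$: repeating the same computations with the extra factor $z^\beta$ replaces $z^{\alpha-1}$ by $z^{\alpha+\beta-1}$, which is square-integrable (and Hardy applies with $k=2\alpha+2\beta>1$) as soon as $\beta>\tfrac12-\alpha$. Finally, integrating the resulting inequality over $[0,T]$ gives \eqref{apriori:diss-dzzz-v-est}, and \eqref{apriori:diss-dzzz-v-est-02} follows by noting that $z^{1/2}\le1$ on $(0,1)$ when $\alpha>1/2$, while $\beta=1/2$ is an admissible exponent when $0<\alpha\le1/2$.
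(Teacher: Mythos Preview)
Your proposal is correct and follows essentially the same route as the paper: starting from the pointwise identity \eqref{eq:dzzz-v}, estimating the regular and vertical-velocity terms via \eqref{eq:dz-W} and \eqref{eq:verticalvelocity} with H\"older--Minkowski--Sobolev arguments, and then handling the singular $z^{\alpha-1}$ terms via direct integrability and Hardy's inequality (Lemma \ref{lm:hardy}(a) with $k=2\alpha$) when $\alpha>1/2$, or by inserting the extra weight $z^{\beta}$ when $0<\alpha\le 1/2$. The only cosmetic difference is that the paper bounds $\norm{z^{\alpha-1}Z^\alpha\dt v}{\Lnorm{2}}$ and $\norm{z^{\alpha-1}Z^\alpha(v\cdot\nablah v)}{\Lnorm{2}}$ by writing out the Hardy step explicitly rather than phrasing it as ``trading $z^{\alpha-1}$ for $z^\alpha$'', but the content is identical.
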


\section{Global well-posedness and stability of the stationary equilibrium state}\label{sec:global-stability}
In this section, we will establish the global well-posedness and the stability of the stationary equilibrium state of the compressible primitive equation \eqref{FB-CPE}. 

\subsection{Local well-posedness theory}\label{sec:local}
We first show the local-in-time well-posedness theory of \eqref{FB-CPE} with the given initial data $ (Z_0, v_0) \in H^2(\Omega)\times H^2(\Omega) $. For this purpose, we introduce the following ``linearized'' system associated with \eqref{FB-CPE}, given $ (\hat Z, \hat v) $ smooth enough, to be specified later:
\begin{equation}\label{eq:linearized-FBCPE}
	\begin{cases}
		\dt Z + (\alpha+1) \overline{z^\alpha \hat v} \cdot \nablah Z + \overline{z^\alpha\dvh \hat v} \hat Z = 0, \\
		(z^\alpha {\hat Z}^{\alpha} + \eta )\dt v + z^\alpha {\hat Z}^\alpha \bigl( \hat v \cdot \nablah \hat v + \hat W \dz \hat v + g \nablah \hat Z \bigr) = \mu \deltah v \\
		~~~~ + (\mu+\lambda) \nablah \dvh v + \mu \partial_{zz} v + \mu \nablah \log \hat Z \cdot \nablah \hat v \\
		~~~~ + (\mu+\lambda) \dvh \hat v \nablah \log \hat Z ,\\
		\dz Z =0,
	\end{cases}
\end{equation}
for any fixed positive constant $ \eta \in (0,1) $, where $ \hat W $ is given by
\begin{equation}\label{eq:linearized-vertical-velocity}
	\begin{aligned}
		& z^\alpha {\hat Z} \hat W = z^{\alpha+1}\bigl( (\alpha+1) \overline{z^\alpha \hat v} \cdot \nablah \hat Z + \overline{z^\alpha \dvh \hat v} \hat Z\bigr)\\
		& ~~~~ - (\alpha+1) \bigl(\int_0^z \xi^\alpha \hat v \,d\xi\bigr) \cdot \nablah \hat Z - \bigl(\int_0^z \xi^\alpha \dvh \hat v \,d\xi\bigr) \hat Z.
	\end{aligned}
\end{equation}
Also, the following boundary condition is imposed
$$ \dz v|_{z=0,1} = 0. $$
In the following, we will study \eqref{eq:linearized-FBCPE} with the given approximating initial data $ (Z,v)\big|_{t=0} = (Z_{0,\eta}, v_{0,\eta}) $. The approximating initial data satisfies
\begin{gather*}
	\norm{Z_{0,\eta}-1}{\Hnorm{2}}, \norm{v_{0,\eta}}{\Hnorm{2}} < \infty,\\
	(Z_{0,\eta}, v_{0,\eta}) \rightarrow (Z_0,v_0), ~~ \text{as $ \eta \rightarrow 0^+ $ in $ H^2(\Omega) \times H^2(\Omega) $}.
\end{gather*}
Also we denote the approximating total initial energy as
\begin{equation}\label{linearized:ini-energy}
	\begin{aligned}
		& \mathcal E_{0,\eta} : = \norm{(z^{\alpha}+ \eta)^{1/2} v_{0,\eta}}{\hHnorm{2}}^2 + \norm{(z^\alpha +\eta)^{1/2} v_{1,\eta}}{\Lnorm{2}}^2\\
		& ~~~~ + \norm{(z^\alpha+\eta)^{1/2} v_{0,\eta,z}}{\hHnorm{1}}^2 + \norm{v_{0,\eta}}{\Hnorm{1}}^2  + \norm{Z_{0,\eta}-1}{\Hnorm{2}}^2 \\
		& ~~~~ + \norm{Z_{1,\eta}}{\Lnorm{2}}^2 \lesssim \mathcal E_0 < \infty,
	\end{aligned}
\end{equation}
where $ v_{1,\eta} = v_{t}|_{t=0}, Z_{1,\eta} = Z_{t}|_{t=0} $ are given by
\begin{align*}
	&  Z_{1,\eta} + (\alpha+1) \overline{z^\alpha v_{0,\eta}} \cdot \nablah Z_{0,\eta} + \overline{z^\alpha\dvh v_{0,\eta}} Z_{0,\eta} = 0, \\
	&	(z^\alpha Z_{0,\eta}^{\alpha} + \eta )v_{1,\eta} + z^\alpha Z_{0,\eta}^\alpha \bigl( v_{0,\eta} \cdot \nablah v_{0,\eta} +  W_{0,\eta} \dz  v_{0,\eta} + g \nablah  Z_{0,\eta} \bigr) \\
	& ~~~~= \mu \deltah v_{0,\eta} 
	+ (\mu+\lambda) \nablah \dvh v_{0,\eta} + \mu \partial_{zz} v_{0,\eta} + \mu \nablah \log Z_{0,\eta} \cdot \nablah v_{0,\eta}\\
	& ~~~~ + (\mu+\lambda) \dvh v_{0,\eta} \nablah \log Z_{0,\eta}, \\
	& z^\alpha Z_{0,\eta} W_{0,\eta} = z^{\alpha+1} \bigl( (\alpha+1) \overline{z^\alpha v_{0,\eta}} \cdot \nablah Z_{0,\eta} + \overline{z^\alpha \dvh v_{0,\eta}} Z_{0,\eta}\bigr)  \nonumber \\
	& ~~~~ - (\alpha+ 1) \int_0^z \xi^\alpha v_{0,\eta}\,d\xi \cdot \nablah Z_{0,\eta} - \int_0^z \xi^\alpha \dvh v_{0,\eta} \,d\xi Z_{0,\eta}.
\end{align*}
Recall that $ \mathcal E_0 $ is given in \eqref{STB-ttl-ini-energy}.
Then as $ \eta \rightarrow 0^+ $, it is easy to verify $ \mathcal E_{0,\eta} \rightarrow \mathcal E_0 $. Accordingly, the approximating total energy functional is defined as,
\begin{equation}\label{linearized:tot-energy}
	\begin{aligned}
		& \mathcal E_{\eta}(t): = \norm{(z^{\alpha}+ \eta)^{1/2} v}{\hHnorm{2}}^2 + \norm{(z^\alpha +\eta)^{1/2} v_{t}}{\Lnorm{2}}^2 + \norm{(z^\alpha+\eta)^{1/2} v_{z}}{\hHnorm{1}}^2 \\
		& ~~~~ + \norm{v}{\Hnorm{1}}^2  + \norm{Z-1}{\Hnorm{2}}^2 
		 + \norm{Z_{t}}{\Lnorm{2}}^2. 
	\end{aligned}
\end{equation}

\bigskip

We will establish the local well-posedness of \eqref{FB-CPE} by a fixed point argument. In fact, the well-posedness of the ``linearized'' system \eqref{eq:linearized-FBCPE} follows from the standard existence theory of hyperbolic-parabolic systems with $ (\hat Z, \hat v) $ given and smooth enough (cf. \cite{EvansPDE}). In the following, we show some estimates for the solution $ (Z,v) $ to \eqref{eq:linearized-FBCPE} in a small time interval which are independent of $ \eta $; and show that the map $ \Phi : (\hat Z, \hat v) \mapsto (Z, v) $ has a fixed point $ (Z_\eta, v_\eta) $ in some functional space via the Tychonoff fixed point theorem. Then taking the limit $ \eta\rightarrow 0^+ $ yields the existence of local solution to \eqref{FB-CPE}, since our estimates are independent of $ \eta $. Such a procedure is similar to that presented in our previous work \cite{LT2018a}. In the following, we will only sketch the estimates and show that they are independent of $ \eta $. In fact, we will work in the following functional space:
\begin{equation}\label{ln:functional-space}
	\begin{aligned}
		& X_T : = \bigl\lbrace (Z,v)\in H^2(\Omega) \times H^2(\Omega) | (Z,v)|_{t=0} = (Z_{0,\eta},v_{0,\eta}) , ~  \abs{Z-1}{} \leq \frac{1}{2}, \\
		& ~~~~ \sup_{0\leq t \leq T} \mathcal E_\eta(t) + \int_0^T \norm{\nabla v}{\Hnorm{1}}^2 + \norm{\nablah v}{\Hnorm{2}}^2 + \norm{\nabla v_t}{\Lnorm{2}}^2 \,dt \\
		& ~~~~  \leq C_0 \mathcal E_{0,\eta} 
		\bigr\rbrace,
	\end{aligned}
\end{equation}
for some $ T, C_0 \in (0,\infty) $ to be specified later. Note that, $ X_T $ is convex and compact in the $ L^2(0,T;L^2(\Omega)) \times L^2(0,T;L^2(\Omega)) $ topology. We will also require $ \mathcal E_{0,\eta}$ to be small. Let $ (\hat Z, \hat v) \in X_T $ and $ (Z,v) $ be the corresponding solution to \eqref{eq:linearized-FBCPE} which is guaranteed to exist by \cite{EvansPDE}. For the sake of convenience, we denote the total energy functional for $ (\hat Z, \hat v) $ as
\begin{equation*}
	\begin{aligned}
		& \mathcal{\hat E}_{\eta}:= \norm{(z^{\alpha}+ \eta)^{1/2} \hat v}{\hHnorm{2}}^2 + \norm{(z^\alpha +\eta)^{1/2} \hat v_{t}}{\Lnorm{2}}^2 + \norm{(z^\alpha+\eta)^{1/2} \hat v_{z}}{\hHnorm{1}}^2 \\
		& ~~~~ + \norm{\hat v}{\Hnorm{1}}^2  + \norm{\hat Z-1}{\Hnorm{2}}^2 
		 + \norm{\hat Z_{t}}{\Lnorm{2}}^2. 
	\end{aligned}
\end{equation*}
Also, we will frequently use \eqref{SPE-007}, \eqref{SPE-008}, and the following inequality, which resembles \eqref{STB-001} and \eqref{TE-002},
\begin{gather}\label{ineq:poincare-type-001}
	\norm{f}{\Lnorm{2}}^2 \lesssim \norm{z^{\alpha/2}f}{\Lnorm{2}}^2 + \norm{\nabla f}{\Lnorm{2}}^2. 
\end{gather}
\subsubsection*{Estimates for $ Z $}
We start with the estimates for $ Z $ by presenting the $ H^2 $ estimates for $ Z-1 $. First, multiplying \subeqref{eq:linearized-FBCPE}{1} with $ 2(Z-1) $  and integrating the resultant yield
\begin{align*}
	& \dfrac{d}{dt} \norm{Z-1}{\Lnorm{2}}^2 = (\alpha+1) \int \overline{z^\alpha \dvh \hat v} \abs{Z-1}{2} \idx - 2\int \overline{z^\alpha \dvh \hat v} \hat Z (Z-1)\idx \\
	& ~~~~ \lesssim \norm{\overline{z^{\alpha} \dvh \hat v}}{\Lnorm{\infty}} \bigl( \norm{Z-1}{\Lnorm{2}}^2 + 1 \bigr) \lesssim \norm{\nablah \hat v}{\Hnorm{2}}\norm{Z-1}{\Lnorm{2}}^2 \\
	& ~~~~ + \norm{\nablah \hat v}{\Hnorm{2}}.
\end{align*}
Then applying Gr\"onwall's inequality implies
\begin{equation}\label{linear:001}
	\begin{aligned}
	& \sup_{0<t<T}\norm{Z-1}{\Lnorm{2}}^2 \leq e^{C \int_0^T \norm{\nablah \hat v}{\Hnorm{2}}\,dt}\bigl( \norm{Z_{0,\eta}-1}{\Lnorm{2}}^2 \\
	& ~~~~ + \int_0^T \norm{\nablah \hat v}{\Hnorm{2}}\,dt \bigr) \leq e^{CC_0^{1/2}\mathcal E_{0,\eta}^{1/2} T^{1/2}} \bigl( \mathcal E_{0,\eta} + C_0^{1/2}\mathcal E_{0,\eta}^{1/2} T^{1/2} \bigr)\\
	& ~~~~ \leq 2 \mathcal E_{0,\eta},
	\end{aligned}
\end{equation}
provided $ T $ small enough. On the other hand, after applying $ \partial_{hh} $  to \subeqref{eq:linearized-FBCPE}{1}, it holds
\begin{align*}
	& \dt Z_{hh} + (\alpha+1) \overline{z^\alpha \hat v} \cdot \nablah Z_{hh} + 2 (\alpha+1) \overline{z^\alpha \hat v_h} \cdot \nablah Z_h\\
	& ~~~~ + (\alpha+1) \overline{z^\alpha \hat v_{hh}} \cdot \nablah Z
	 + \overline{z^\alpha \dvh \hat v} \hat Z_{hh} + 2 \overline{z^\alpha \dvh \hat v_h} \hat Z_{h}\\
	 & ~~~~ + \overline{z^\alpha \dvh \hat v_{hh}} \hat Z=0.
\end{align*}
Taking the $ L^2 $-inner product of the above equation with $ 2Z_{hh} $ yields
\begin{equation*}
	\dfrac{d}{dt} \norm{Z_{hh}}{\Lnorm{2}}^2 \lesssim \norm{\nablah \hat v}{\Hnorm{2}} \norm{\nablah Z}{\Hnorm{1}}^2 + \norm{\hat Z}{\Hnorm{2}}\norm{\nablah \hat v}{\Hnorm{2}} \norm{\nablah Z}{\Hnorm{1}}.
\end{equation*}
To close the above estimate, we observe that similar arguments also hold for $ Z_h $. That is, we have
\begin{equation*}
	\dfrac{d}{dt} \norm{\nablah Z}{\Hnorm{1}}^2 \lesssim \norm{\nablah \hat v}{\Hnorm{2}} \norm{\nablah Z}{\Hnorm{1}}^2 + \norm{\hat Z}{\Hnorm{2}}\norm{\nablah \hat v}{\Hnorm{2}} \norm{\nablah Z}{\Hnorm{1}}.
\end{equation*}
Therefore, G\"onwall's inequality implies that
\begin{equation}\label{linear:002}
	\begin{aligned}
		& \sup_{0\leq t\leq T} \norm{\nablah Z}{\Hnorm{1}}^2 \leq e^{C \int_0^T (1+ \norm{\hat Z}{\Hnorm{2}}) \norm{\nablah \hat v}{\Hnorm{2}}\,dt }\bigl( \norm{\nablah Z_{0,\eta}}{\Hnorm{1}}^2 \\
		& ~~~~ + C \int_0^T (1+ \norm{\hat Z}{\Hnorm{2}}) \norm{\nablah \hat v}{\Hnorm{2}}\,dt\bigr)\leq 2 \mathcal E_{0,\eta},
	\end{aligned}
\end{equation}
for $ T $ small enough. Furthermore, from \subeqref{eq:linearized-FBCPE}{1}, similar arguments as those established in \eqref{BE-004} yield that
\begin{equation}\label{linear:003}
	\begin{aligned}
	& \norm{Z_t}{\Lnorm{2}}\lesssim \norm{z^{\alpha/2}\hat v}{\hHnorm{1}} \norm{\nablah Z}{\Hnorm{1}} + \norm{z^{\alpha/2} \nablah \hat v}{\Lnorm{2}}\\
	& ~~~~ \lesssim \norm{(z^{\alpha}+\eta)^{1/2}\hat v}{\hHnorm{1}} \norm{\nablah Z}{\Hnorm{1}} + \norm{(z^{\alpha}+\eta)^{1/2} \nablah \hat v}{\Lnorm{2}}\\
	& ~~~~ \lesssim \mathcal E_{0,\eta} + \mathcal E_{0,\eta}^{1/2}. 
	\end{aligned}
\end{equation}

\subsubsection*{Estimates for $ v $}

{\par\noindent\bf The $ L^2 $ estimate for $ v $ ~~ }
We start with the $ L^2 $ estimate for $ v $. After taking inner product of $ 2 v $ with \subeqref{eq:linearized-FBCPE}{2}, we have
\begin{equation}\label{ln-est:901}
\begin{aligned}
	& \dfrac{d}{dt} \norm{(z^\alpha\hat Z^\alpha+\eta)^{1/2} v}{\Lnorm{2}}^2 + 2\mu \norm{\nablah v}{\Lnorm{2}}^2 + 2(\mu+\lambda) \norm{\dvh v}{\Lnorm{2}}^2 \\
	& ~~~~ + 2 \mu \norm{\dz v}{\Lnorm{2}}^2 = \int \alpha z^\alpha \hat Z^{\alpha-1} \hat Z_t \abs{v}{2} \idx - \int 2 z^\alpha\hat Z^\alpha \hat v \cdot \nablah \hat v \cdot v \\
	& ~~~~ + 2 g z^\alpha \hat Z^\alpha \nablah \hat Z \cdot v \idx - 2 \int z^\alpha \hat Z^\alpha \hat W  \dz \hat v \cdot v \idx + 2 \int \bigl( \mu \nablah \log \hat Z \cdot \nablah \hat v\\
	& ~~~~ + (\mu+\lambda) \dvh \hat v \nablah \log \hat Z\bigr) \cdot v\idx \lesssim \bigl( 1 + \norm{\hat Z_t}{\Lnorm{2}} + \norm{\hat v}{\Hnorm{1}}^2 \bigr) \norm{v}{\Hnorm{1}}^2 \\
	& ~~~~ + \norm{\hat v}{\Hnorm{1}}^2 + \norm{\nablah \hat Z}{\Hnorm{1}}^2 - 2 \int z^\alpha \hat Z^\alpha \hat W \dz \hat v \cdot v\idx.
\end{aligned}
\end{equation}
Meanwhile, after plugging \eqref{eq:linearized-vertical-velocity} into the following term and applying H\"older's, Minkowski's, and the Sobolev embedding inequalities, one has that
\begin{equation}\label{ln-est:902}
\begin{aligned}
	& - 2 \int z^\alpha \hat Z^\alpha \hat W \dz \hat v \cdot v\idx \lesssim \int_0^1 \biggl\lbrack z^{\alpha/2} \hnorm{\dz \hat v}{\Lnorm{2}} \hnorm{v}{\Lnorm{4}} \bigl( \hnorm{\overline{z^\alpha \hat v}}{\Lnorm{8}}\hnorm{\nablah \hat Z}{\Lnorm{8}} + \hnorm{\overline{z^\alpha \nablah \hat v}}{\Lnorm{4}} \\
	& ~~~~ + (\int_0^z \xi^{\alpha} \hnorm{\hat v}{\Lnorm{8}}^2 \,d\xi)^{1/2} \hnorm{\nablah \hat Z}{\Lnorm{8}} + (\int_0^z \xi^\alpha\hnorm{\nablah \hat v}{\Lnorm{4}}^2\,d\xi)^{1/2} \bigr) \biggr\rbrack \,dz \\
	& ~~~~ \lesssim  \bigl( \norm{\nablah \hat Z}{\Hnorm{1}} + 1 \bigr)\norm{\dz \hat v}{\Lnorm{2}} \norm{v}{\Hnorm{1}} \norm{z^{\alpha/2}\hat v}{\hHnorm{2}}.
	\end{aligned}
\end{equation}
	Therefore, after substituting \eqref{ln-est:902} into \eqref{ln-est:901}, one integrates the resulting equation of \eqref{ln-est:901} with respect to the temporal variable to obtain:
	\begin{equation}\label{est-linearized-eq-001}
		\begin{aligned}
			&  \sup_{0\leq t\leq T} \norm{(z^\alpha\hat Z^\alpha+\eta)^{1/2} v}{\Lnorm{2}}^2 + \int_0^T \biggl\lbrack 2\mu \norm{\nablah v}{\Lnorm{2}}^2 + 2(\mu+\lambda) \norm{\dvh v}{\Lnorm{2}}^2 \\
	& ~~~~ + 2 \mu \norm{\dz v}{\Lnorm{2}}^2 \biggr\rbrack \,dt \leq \norm{(z^\alpha Z_{0,\eta}^\alpha+\eta)^{1/2} v_{0,\eta}}{\Lnorm{2}}^2 \\
	& ~~~~ + \int_0^T \mathcal H(\mathcal{\hat E}_{\eta}) \bigl( \norm{v}{\Hnorm{1}}^2 + 1 \bigr)\,dt + C T.
		\end{aligned}
	\end{equation}
	On the other hand, taking the $ L^2 $-inner product of \subeqref{eq:linearized-FBCPE}{2} with $ 2 v_t $ yields,
	\begin{equation}\label{ln-est:903}
	\begin{aligned}
		& \dfrac{d}{dt} \bigl( \mu \norm{ \nablah v}{\Lnorm{2}}^2 +(\mu+\lambda)\norm{\dvh v}{\Lnorm{2}}^2 + \lambda \norm{\dz v}{\Lnorm{2}}^2 \bigr) \\
		& ~~~~ + 2 \norm{(z^\alpha \hat Z^\alpha +\eta)^{1/2} v_t}{\Lnorm{2}}^2 = - 2\int z^\alpha \hat Z^\alpha \hat v \cdot \nablah \hat v \cdot v_t + g z^\alpha \hat Z^\alpha \nablah \hat Z \cdot v_t \idx \\
		& ~~~~ - 2\int z^\alpha \hat Z^\alpha \hat W \dz \hat v \cdot v_t \idx + 2 \int \bigl( \mu \nablah\log \hat Z \cdot \nablah \hat v \\
		& ~~~~ + (\mu+\lambda)\dvh \hat v \nablah \log \hat Z \bigr) \cdot v_t \idx  
		\lesssim \norm{\hat v}{\Hnorm{1}} \norm{\nablah \hat v}{\Hnorm{1}} \norm{z^{\alpha/2} v_t}{\Lnorm{2}}\\
		& ~~~~ + \norm{\nablah \hat Z}{\Lnorm{2}}  \norm{z^{\alpha/2}v_t}{\Lnorm{2}} + \norm{\nablah \hat Z}{\Hnorm{1}} \norm{\hat v}{\Hnorm{1}} \bigl( \norm{z^{\alpha/2} v_t}{\Lnorm{2}} + \norm{\nabla v_t}{\Lnorm{2}} \bigr) \\
		& - 2 \int z^\alpha \hat Z^\alpha \hat W \dz \hat v \cdot v_t \idx.
	\end{aligned}
\end{equation}
	Similarly as before, applying H\"older's and Minkowski's inequalities implies that
	\begin{equation}\label{ln-est:904}
	\begin{aligned}
		& - 2 \int z^\alpha \hat Z^\alpha \hat W \dz \hat v \cdot v_t \idx \lesssim \bigl( \norm{\nablah \hat Z}{\Hnorm{1}} + 1\bigr) \norm{z^{\alpha/2} v_t}{\Lnorm{2}} \norm{z^{\alpha/2}\hat v}{\hHnorm{2}} \\
		& ~~~~ \times \norm{\dz \hat v}{\hHnorm{1}}.
	\end{aligned}
\end{equation}
	Then substituting \eqref{ln-est:904} into \eqref{ln-est:903} and integrating the resulting inequality yields that
	\begin{equation}\label{est-linearized-eq-002}
		\begin{aligned}
			& \sup_{0\leq t\leq T} \bigl\lbrace \mu \norm{\nablah v(t)}{\Lnorm{2}}^2 + (\mu+\lambda) \norm{\dvh v(t)}{\Lnorm{2}}^2 + \mu\norm{\dz v(t)}{\Lnorm{2}}^2 \bigr\rbrace \\
			& ~~~~ + 2 \int_0^T \norm{(z^\alpha\hat Z^\alpha + \eta )^{1/2}v_t}{\Lnorm{2}}^2\,dt \leq  \mu \norm{\nablah v_{0,\eta}}{\Lnorm{2}}^2 \\
			& ~~~~ + (\mu+\lambda) \norm{\dvh v_{0,\eta}}{\Lnorm{2}}^2 + \lambda\norm{\dz v_{0,\eta}}{\Lnorm{2}}^2 +  \omega \int_0^T\norm{\dz \hat v}{\hHnorm{1}}^2 \\
			& ~~~~ + \norm{\nablah \hat v}{\Hnorm{1}}^2 + \norm{\nabla v_t}{\Lnorm{2}}^2 \,dt  +\int_0^T \mathcal H( \mathcal{\hat E}_{\eta}, 1/\omega) \bigl(\norm{z^{\alpha/2}v_t}{\Lnorm{2}}^2 + 1\bigr) \,dt,
		\end{aligned}
	\end{equation}
	for any $ \omega \in (0,1) $. 
{\par\noindent\bf The temporal derivative of $ v $ ~~ }
After applying $ \dt $ to \subeqref{eq:linearized-FBCPE}{2}, we have
\begin{align*}
	& (z^\alpha {\hat Z}^\alpha + \eta) \dt v_t= \mu \deltah v_t + (\mu+\lambda) \nablah \dvh v_t + \mu \partial_{zz} v_t \\
	& ~~~~ - \alpha z^\alpha {\hat Z}^{\alpha-1} \hat Z_t \dt v - z^\alpha {\hat Z}^\alpha \bigl( \hat v \cdot \nablah \hat v_t + \hat v_t \cdot \nablah \hat v + g \nablah \hat Z_t \bigr)  \\
	& ~~~~ - z^\alpha {\hat Z}^\alpha \bigl( \hat W \dz \hat v_t + \hat W_t \dz \hat v \bigr) - \alpha z^\alpha {\hat Z}^{\alpha-1}\hat Z_t \bigl( \hat v \cdot \nablah \hat v + g \nablah \hat Z \bigr) \\
	& ~~~~ - \alpha z^\alpha {\hat Z}^{\alpha-1} \hat Z_t \hat W \dz \hat v + \mu \nablah (\log \hat Z)_t \cdot \nablah \hat v + (\mu+\lambda) \dvh \hat v \nablah (\log \hat Z)_t\\
	& ~~~~ + \mu \nablah \log \hat Z \cdot \nablah \hat v_t + (\mu+\lambda)\dvh \hat v_t \nablah \log \hat Z.
\end{align*}
Taking the $ L^2 $-inner product of the above equation with $ 2v_t $ and applying integration by parts in the resultant yield that
\begin{align*}
	& \dfrac{d}{dt} \norm{(z^\alpha {\hat Z}^\alpha + \eta)^{1/2} v_t}{\Lnorm{2}}^2 + 2\mu\norm{\nablah v_t}{\Lnorm{2}}^2 + 2(\mu+\lambda) \norm{\dvh v_t}{\Lnorm{2}}^2 \\
	& ~~~~ + 2\mu \norm{\dz v_t}{\Lnorm{2}}^2 = - \alpha \int z^\alpha {\hat Z}^{\alpha-1} \hat Z_t \abs{\dt v}{2} \idx - 2 \int \bigl( z^\alpha {\hat Z}^\alpha \hat v \cdot \nablah \hat v_t \cdot v_t \bigr) \idx\\
	& ~~~~ - 2 \int \bigl( z^\alpha {\hat Z}^\alpha \hat v_t \cdot \nablah \hat v \cdot v_t \bigr) \idx  +  2 g \int z^\alpha {\hat Z}^{\alpha-1} \hat Z_t ( \alpha \nablah \hat Z \cdot v_t + {\hat Z} \dvh v_t ) \idx\\
	& ~~~~ - 2 \int z^\alpha  \hat Z^\alpha \hat W \dz \hat v_t\cdot v_t \idx  - 2 \int z^\alpha {\hat Z}^\alpha \hat W_t \dz \hat v \cdot v_t \idx \\
	& ~~~~ - 2\alpha \int \bigl( z^\alpha \hat Z^{\alpha-1} \hat Z_t \hat W \dz \hat v \cdot v_t \bigr) \idx  - 2 \alpha \int \bigl\lbrack z^\alpha {\hat Z}^{\alpha-1} \hat Z_t (\hat v\cdot\nablah \hat v + g \nablah \hat Z) \cdot v_t \bigr\rbrack \idx\\
	& ~~~~ -2 \int \bigl\lbrack\mu (\log \hat Z)_t \deltah \hat v  +(\mu+\lambda) (\log\hat Z)_t \nablah \dvh \hat v \bigr\rbrack \cdot v_t \idx \\
	& ~~~~ - 2 \int \bigl\lbrack \mu (\log \hat Z)_t \nablah \hat v : \nablah v_t + (\mu+\lambda) (\log \hat Z)_t \dvh \hat v \dvh v_t \bigr\rbrack \idx \\
	& ~~~~ + 2 \int \bigl\lbrack \bigl( \mu \nablah \log \hat Z \cdot \nablah \hat v_t + (\mu+\lambda) \dvh \hat v_t \nablah \log \hat Z\bigr) \cdot v_t \bigr\rbrack \idx \\
	& ~~~~ =: \sum_{k=1}^{11} L_k.
\end{align*}
Similarly as before, the estimates for $ L_k$'s are as follows: for every $ \delta, \omega \in (0,1) $,
\begin{align*}
	& L_1 \lesssim \delta \norm{\nabla v_t}{\Lnorm{2}}^2 + (C_\delta \norm{\hat Z_t}{\Lnorm{2}}^4 + \delta) \norm{z^{\alpha/2} v_t}{\Lnorm{2}}^{2},\\
	& L_2 \lesssim \delta \norm{\nabla v_t}{\Lnorm{2}}^2 + \omega \norm{\nablah \hat v_t}{\Lnorm{2}}^2 + (C_{\delta,\omega} \norm{\hat v}{\Hnorm{1}}^4 + \delta) \norm{z^{\alpha/2}v_t}{\Lnorm{2}}^2, \\
	& L_3 \lesssim \delta \norm{\nabla v_t}{\Lnorm{2}}^2 + \omega \norm{\nablah \hat v_t}{\Lnorm{2}}^2 + (C_{\delta,\omega} \norm{\hat v}{\Hnorm{1}}^4 + \omega) \norm{z^{\alpha/2}\hat v_t}{\Lnorm{2}}^2\\
	& ~~~~ + \delta \norm{z^{\alpha/2}v_t}{\Lnorm{2}}^2,\\
	& L_4 \lesssim \delta \norm{\nabla v_t}{\Lnorm{2}}^2 + \delta \norm{z^{\alpha/2}v_t}{\Lnorm{2}}^2 + C_\delta \bigl(\norm{\hat Z}{\Hnorm{2}}^2 + 1 \bigr) \norm{\hat Z_t}{\Lnorm{2}}^2, \\
	& L_5 = -2 \int \bigl( \hat Z^{\alpha-1} \dz \hat v_t \cdot v_t \bigr) \bigl\lbrack z^{\alpha+1} \bigl( (\alpha+1) \overline{z^\alpha \hat v} \cdot \nablah \hat Z + \overline{z^\alpha \dvh \hat v} \hat Z \bigr)\\
	& ~~~~ - (\alpha+1) (\int_0^z \xi^\alpha \hat v \,d\xi) \cdot \nablah \hat Z - (\int_0^z \xi^\alpha \dvh \hat v\,d\xi) \hat Z \bigr\rbrack \idx\\
	& ~~~~\lesssim \delta \norm{\nabla v_t}{\Lnorm{2}}^2 +\bigl( \delta  + C_\delta \bigr) \norm{z^{\alpha/2}v_t}{\Lnorm{2}}^2  + \bigl( \norm{\nablah \hat Z}{\Hnorm{1}}^2 + 1 \bigr) \\
	& ~~~~ \times \norm{z^{\alpha/2} \hat v}{\hHnorm{2}}^2 \norm{\nabla\hat v_t}{\Lnorm{2}}^2, \\
	& L_6 = 2 (\alpha+1) \int \biggl\lbrack (\dz \hat v \cdot v_t)  \hat Z^\alpha (z^{\alpha+1} \overline{z^\alpha \dvh \hat v} - \int_0^z \xi^\alpha \dvh \hat v \, d\xi ) (\log\hat Z)_t \\
	& ~~~~ ~~~~ + \alpha (\dz \hat v \cdot v_t) \hat Z^{\alpha-1}( z^{\alpha+1} \overline{z^\alpha \hat v} - \int_0^z \xi^\alpha \hat v\,d\xi)\cdot \nablah \hat Z (\log \hat Z)_t \\
	& ~~~~ ~~~~ + \hat Z^\alpha (z^{\alpha+1} \overline{z^\alpha\hat v} - \int_0^z \xi^\alpha \hat v\,d\xi) \cdot \nablah (\dz \hat v \cdot v_t) ( \log \hat Z)_t \biggr\rbrack \idx \\
	& ~~~~ - 2 (\alpha+1) \int \biggl\lbrack (\hat Z^\alpha \dz \hat v \cdot v_t) ( z^{\alpha+1} \overline{z^\alpha\hat v_t} - \int_0^z \xi^\alpha\hat v_t \,d\xi)\cdot\nablah \log \hat Z \biggr\rbrack \idx \\
	& ~~~~ - 2 \int \biggl\lbrack (\hat Z^\alpha \dz \hat v \cdot v_t) ( z^{\alpha+1}\overline{z^\alpha \dvh \hat v_t} - \int_0^z \xi^\alpha \dvh \hat v_t \,d\xi) \biggr\rbrack \idx \\
	& ~~~~ \lesssim \delta \norm{\nabla v_t}{\Lnorm{2}}^2+ \delta \norm{z^{\alpha/2}v_t}{\Lnorm{2}}^2 + C_\delta \bigl(\norm{\hat Z}{\Hnorm{2}}^2 + 1\bigr) \norm{\hat Z_t}{\Lnorm{2}}^2\norm{z^{\alpha/2}\hat v}{\hHnorm{2}}^2 \\
	& ~~~~ \times \norm{\dz \hat v}{\hHnorm{2}}^2 + C_\delta \bigl( \norm{\nablah \hat Z}{\Hnorm{1}}^2 + 1 \bigr) \norm{z^{\alpha/2}\dz \hat v}{\hHnorm{1}}^2 \bigl( \norm{z^{\alpha/2}\hat v_t}{\Lnorm{2}}^2 \\
	& ~~~~ + \norm{\nabla \hat v_t}{\Lnorm{2}}^2 \bigr) ,\\
	& L_7 = - 2\alpha \int \biggl\lbrack \hat Z^{\alpha-2} \hat Z_t \dz \hat v \cdot v_t \bigl( z^{\alpha+1}\bigl( (\alpha+1) \overline{z^\alpha \hat v} \cdot \nablah \hat Z + \overline{z^\alpha \dvh \hat v} \hat Z\bigr) \\
	& ~~~~ - (\alpha+1) (\int_0^z \xi^\alpha \hat v \,d\xi) \cdot \nablah \hat Z - (\int_0^z \xi^\alpha \dvh \hat v \,d\xi) \hat Z\bigr) \biggr\rbrack \idx \\
	& ~~~~  \lesssim  \delta \norm{\nabla v_t}{\Lnorm{2}}^2+ \delta \norm{z^{\alpha/2}v_t}{\Lnorm{2}}^2 \\
	& ~~~~ + C_\delta \bigl(\norm{\hat Z}{\Hnorm{2}}^2 + 1\bigr)\norm{\hat Z_t}{\Lnorm{2}}^2 \norm{z^{\alpha/2}\hat v}{\hHnorm{2}}^2 \norm{\hat v_z}{\hHnorm{1}}^2 ,\\
	& L_8 \lesssim \delta \norm{\nabla v_t}{\Lnorm{2}}^2 + \delta \norm{z^{\alpha/2}v_t}{\Lnorm{2}}^2 + C_\delta \norm{\hat Z}{\Hnorm{2}}^2 \norm{\hat Z_t}{\Lnorm{2}}^2 \\
	& ~~~~ + C_\delta \norm{\hat Z_t}{\Lnorm{2}}^2 \norm{\hat v}{\Hnorm{1}}^2 \norm{\nablah \hat v}{\Hnorm{1}}^2, \\
	& L_9 + L_{10} \lesssim \delta \norm{\nabla v_t}{\Lnorm{2}}^2 + \delta\norm{z^{\alpha/2}v_t}{\Lnorm{2}}^2 + C_\delta \norm{\hat Z_t}{\Lnorm{2}}^2 \norm{\nablah \hat v}{\Hnorm{2}}^2, \\
	& L_{11} \lesssim \delta \norm{\nabla v_t}{\Lnorm{2}}^2 + \delta\norm{z^{\alpha/2}v_t}{\Lnorm{2}}^2 + C_\delta \norm{\nablah \hat Z}{\Hnorm{1}}^2 \norm{\nabla\hat v_t}{\Lnorm{2}}^2,
\end{align*}
where we have applied integration by parts, H\"older's, Minkowski's, the Sobolev embedding inequalities, and
the fact that from \eqref{eq:linearized-vertical-velocity}, one has
\begin{align}
	& z^\alpha \hat W_t = z^{\alpha+1} \bigl( (\alpha+1) \overline{z^\alpha \hat v} \cdot \nablah (\log \hat Z)_t + ( \alpha+1) \overline{z^\alpha \hat v_t} \cdot \nablah \log \hat Z \nonumber\\
	& ~~~~ + \overline{z^\alpha \dvh \hat v_t} \bigr) - (\alpha+1) (\int_0^z \xi^\alpha \hat v \,d\xi) \cdot \nablah (\log \hat Z)_t \nonumber \\
	& ~~~~ - (\alpha + 1) (\int_0^z \xi^\alpha \hat v_t \,d\xi) \cdot \nablah \log \hat Z - (\int_0^z \xi^\alpha \dvh \hat v_t\,d\xi). \nonumber
\end{align}
Then after summing up the above inequalities with small enough $ \delta > 0 $ and every $ \omega \in (0,1) $ and integrating the resultant in the temporal variable, it follows that,
\begin{equation}\label{est-linearized-eq-003}
\begin{aligned}
	& \sup_{0\leq t\leq T} \norm{(z^\alpha\hat Z^\alpha + \eta)^{1/2} v_t(t)}{\Lnorm{2}}^2 + \int_0^T \mu \norm{\nablah v_t}{\Lnorm{2}}^2 + \mu \norm{\dz v_t}{\Lnorm{2}}^2 \,dt \\
	& ~~~~ \leq \norm{(z^\alpha Z_0^\alpha + \eta)^{1/2} v_{1,\eta}}{\Lnorm{2}}^2
	 + \int_0^T \mathcal H(\mathcal{\hat E}_{\eta}, \omega) \bigl(\norm{\nabla \hat v_t}{\Lnorm{2}}^2 \\
	 & ~~~~ ~~~~ + \norm{\nablah \hat v}{\Hnorm{2}}^2 \bigr) \,dt 
	 +\int_0^T \biggl( \mathcal H( \mathcal{\hat E}_{\eta}, 1/\omega ,1) \norm{z^{\alpha/2}v_t}{\Lnorm{2}}^2 \\
	 & ~~~~ ~~~~ + \mathcal H(\mathcal{\hat E}_{\eta}, 1/\omega , \omega) \biggr) \,dt.
\end{aligned}
\end{equation}

{\par\noindent\bf The horizontal spatial derivatives of $ v $ ~~ }
After applying $ \partial_{hh} $ to \subeqref{eq:linearized-FBCPE}{2}, it holds that,
\begin{equation}\label{eq:dhh-momentumeq-linearized}
\begin{aligned}
	& (z^\alpha \hat Z^\alpha +\eta ) \dt v_{hh} - \mu \deltah v_{hh} - (\mu+\lambda) \nablah \dvh v_{hh} - \mu \partial_{zz}v_{hh} \\
	& ~~~~ = - 2 \alpha z^\alpha\hat Z^{\alpha-1} \hat Z_h \dt v_h - \alpha z^\alpha \hat Z^{\alpha-1} \hat Z_{hh} \dt v - \alpha(\alpha-1) z^\alpha \hat Z^{\alpha-2} (\hat Z_h)^2 \dt v\\
	& ~~~~ - \partial_{hh}\bigl( z^\alpha \hat Z^\alpha (\hat v\cdot \nablah \hat v + g\nablah \hat Z  )\bigr) - \partial_{hh} ( z^\alpha\hat Z^\alpha \hat W \dz \hat v)\\
	& ~~~~ + \partial_{hh}\bigl( \mu \nablah \log \hat Z \cdot \nablah \hat v + 
	( \mu+\lambda) \dvh\hat v \nablah \log \hat Z\bigr) .
\end{aligned}
\end{equation}
Then taking the $ L^2 $-inner product of the above equation with $ 2 v_{hh} $ yields that
\begin{align*}
	& \dfrac{d}{dt} \norm{(z^\alpha\hat Z^\alpha+\eta)^{1/2} v_{hh}}{\Lnorm{2}}^2 + 2\mu \norm{\nablah v_{hh}}{\Lnorm{2}}^2 + 2(\mu+\lambda) \norm{\dvh v_{hh}}{\Lnorm{2}}^2 \\
	& ~~~~ + 2\mu\norm{\dz v_{hh}}{\Lnorm{2}}^2 = \alpha\int z^\alpha \hat Z^{\alpha-1} \hat Z_t \abs{v_{hh}}{2}\idx \\
	& ~~~~ - 4 \alpha \int z^\alpha \hat Z^{\alpha-1} \hat Z_h \dt v_h \cdot v_{hh} \idx - 2 \alpha \int \biggl\lbrack z^\alpha ( \hat Z^{\alpha-1} \hat Z_{hh} \\
	& ~~~~ ~~~~ + ( \alpha-1) \hat Z^{\alpha-2}(\hat Z_h)^2 ) \dt v \cdot v_{hh} \biggr\rbrack \idx + 2\int \biggl\lbrack  \partial_h \bigl( z^\alpha \hat Z^\alpha (\hat v\cdot \nablah \hat v \\
	& ~~~~ ~~~~ + g\nablah \hat Z)\bigr) \cdot v_{hhh} \biggr\rbrack \idx + 2\int \biggl\lbrack \partial_h(z^\alpha \hat Z^\alpha \hat W \dz \hat v) \cdot v_{hhh} \biggr\rbrack \idx \\
	& ~~~~ - 2 \int \biggl\lbrack  \partial_h\bigl( \mu \nablah \log\hat Z \cdot \nablah \hat v + (\mu + \lambda) \dvh \hat v\nablah \log \hat Z \bigr) \cdot v_{hhh} \biggr\rbrack \idx  \\
	& ~~~~ =: \sum_{k=12}^{17} L_k.
\end{align*}
Then applying H\"older's, Minkowski's, and the Sobolev embedding inequalities yields that, for every $ \delta,\omega\in (0,1) $, 
\begin{align*}
	& L_{12} \lesssim \delta \norm{\nabla v_{hh}}{\Lnorm{2}}^2 + \bigl(C_\delta \norm{\nablah \hat Z}{\Hnorm{1}}^4 + \delta\bigr) \norm{z^{\alpha/2} v_{hh}}{\Lnorm{2}}^2, \\
	& L_{13} \lesssim \delta \norm{\nabla v_{hh}}{\Lnorm{2}}^2 + \omega \norm{\nabla v_t}{\Lnorm{2}}^2 + \bigl( C_{\delta,\omega}\norm{\nablah \hat Z}{\Hnorm{1}}^4 + \delta \bigr) \norm{z^{\alpha/2}v_{hh}}{\Lnorm{2}}^2, \\
	& L_{14} \lesssim \delta \norm{\nabla v_{hh}}{\Lnorm{2}}^2 + \omega \norm{\nabla v_t}{\Lnorm{2}}^2 + \bigl( C_{\delta,\omega} (\norm{\nablah \hat Z}{\Hnorm{1}}^8 + \norm{\nablah \hat Z}{\Hnorm{1}}^4) + \omega \bigr) \\
	& ~~~~ \times \norm{z^{\alpha/2}v_t}{\Lnorm{2}}^2 + \delta \norm{z^{\alpha/2}v_{hh}}{\Lnorm{2}}^2, \\
	& L_{15} \lesssim \delta \norm{\nabla v_{hh}}{\Lnorm{2}}^2 + C_\delta \bigl( \norm{\nablah \hat Z}{\Hnorm{1}}^2 + 1 \bigr) \bigl( \norm{\nablah \hat Z}{\Hnorm{1}}^2 + \norm{\hat v}{\Hnorm{1}}^2 \norm{\nablah \hat v}{\Hnorm{2}}^2 \bigr), \\
	& L_{16} = \int \biggl\lbrack \hat Z^\alpha \partial_{hz} \hat v \cdot v_{hhh} \bigl( z^{\alpha+1} ((\alpha+1)\overline{z^\alpha\hat v} \cdot \nablah \log \hat Z + \overline{z^\alpha \dvh \hat v} ) \\
	& ~~~~ - (\alpha+1) (\int_0^z \xi^\alpha\hat v\,d\xi) \cdot \nablah \log \hat Z - \int_0^z \xi^\alpha \dvh \hat v\,d\xi\bigr) \biggr\rbrack \idx \\
	& ~~~~ + \int \biggl\lbrack \dz \hat v \cdot v_{hhh} \bigl( z^{\alpha+1} (\frac{{\alpha+1}}{\alpha }\overline{z^\alpha\hat v} \cdot \nablah (\hat Z^{\alpha})_h + \overline{z^\alpha \dvh \hat v}(\hat Z^{\alpha})_h)\\
	& ~~~~ - \frac{\alpha+1}{\alpha} (\int_0^z \xi^\alpha\hat v \,d\xi) \cdot \nablah (\hat Z^{\alpha})_h - \int_0^z \xi^\alpha\dvh \hat v \,d\xi ( \hat Z^\alpha )_h \bigr) \biggr\rbrack \idx \\
	& ~~~~ + \int \biggl\lbrack \dz \hat v \cdot v_{hhh} \bigl( z^{\alpha+1} (\frac{{\alpha+1}}{\alpha }\overline{z^\alpha\hat v_h} \cdot \nablah \hat Z^{\alpha} + \overline{z^\alpha \dvh \hat v_h}\hat Z^{\alpha})\\
	& ~~~~ - \frac{\alpha+1}{\alpha} (\int_0^z \xi^\alpha\hat v_h \,d\xi) \cdot \nablah \hat Z^{\alpha} - \int_0^z \xi^\alpha\dvh \hat v_h \,d\xi \hat Z^\alpha  \bigr) \biggr\rbrack \idx \\
	& ~~~~ \lesssim \delta \norm{v_{hhh}}{\Lnorm{2}}^2 + C_\delta \bigl(\norm{\nablah \hat Z}{\Hnorm{1}}^4 + 1\bigr) \norm{z^{\alpha/2}\hat v}{\hHnorm{2}}^2\norm{\nablah \hat v}{\Hnorm{2}}^2, \\
	& L_{17} \lesssim \delta \norm{v_{hhh}}{\Lnorm{2}}^2 + C_\delta \bigl( \norm{\nablah \hat Z}{\Hnorm{1}}^4 + \norm{\nablah \hat Z}{\Hnorm{1}}^2\bigr) \norm{\nablah \hat v}{\Hnorm{2}}^2.
\end{align*}
Therefore, integrating the sum of the above inequalities and the equation with $ \delta > 0 $ small enough with respect to the temporal variable yields that
\begin{equation}\label{est-linearized-eq-004}
	\begin{aligned}
		& \sup_{0\leq t\leq T} \norm{(z^\alpha\hat Z^\alpha+\eta)^{1/2} v_{hh}(t)}{\Lnorm{2}}^2 + \int_0^T \biggl( \mu \norm{\nablah v_{hh}}{\Lnorm{2}}^2 + \mu\norm{\dz v_{hh}}{\Lnorm{2}}^2\biggr) \,dt\\
		& ~~~~ \leq \norm{(z^\alpha Z_{0,\eta}^\alpha+\eta)^{1/2} v_{0,\eta,hh}}{\Lnorm{2}}^2 + \omega \int_0^T \norm{\nabla v_t}{\Lnorm{2}}^2 \,dt  + \int_0^T \biggl( \mathcal H(\mathcal{\hat E}_\eta) \\
		& ~~~~ \times \norm{\nablah \hat v}{\Hnorm{2}}^2 \biggr) \,dt 
		 + \int_0^T \biggl( \mathcal H(\mathcal{\hat E}_\eta, \omega, 1/\omega ,1) 
		\bigl(  \norm{z^{\alpha/2}v}{\hHnorm{2}}^2 \\
		& ~~~~ + \norm{z^{\alpha/2}v_t}{\Lnorm{2}}^2 
		 + 1 \bigr) \biggr) \,dt .
	\end{aligned}
\end{equation}
Similar arguments yield that
\begin{equation}\label{est-linearized-eq-005}
	\begin{aligned}
		& \sup_{0\leq t\leq T} \norm{(z^\alpha\hat Z^\alpha+\eta)^{1/2} v_{h}(t)}{\Lnorm{2}}^2 + \int_0^T \biggl( \mu \norm{\nablah v_{h}}{\Lnorm{2}}^2 + \mu\norm{\dz v_{h}}{\Lnorm{2}}^2 \biggr) \,dt\\
		& ~~~~ \leq \norm{(z^\alpha Z_{0,\eta}^\alpha+\eta)^{1/2} v_{0,\eta,h}}{\Lnorm{2}}^2 + \omega \int_0^T \norm{\nabla v_t}{\Lnorm{2}}^2 \,dt  + \int_0^T \biggl(  \mathcal H(\mathcal{\hat E}_\eta) \\
		& ~~~~ \times \norm{\nablah \hat v}{\Hnorm{2}}^2 \biggr) \,dt 
		 + \int_0^T \biggl(  \mathcal H(\mathcal{\hat E}_\eta, \omega, 1/\omega ,1) 
		\bigl(  \norm{z^{\alpha/2}v}{\hHnorm{2}}^2 \\
		& ~~~~ + \norm{z^{\alpha/2}v_t}{\Lnorm{2}}^2 
		 + 1 \bigr) \biggr) \,dt .
	\end{aligned}
\end{equation}
Therefore, from \eqref{est-linearized-eq-001}, \eqref{est-linearized-eq-002}, \eqref{est-linearized-eq-003}, \eqref{est-linearized-eq-004}, and \eqref{est-linearized-eq-005}, we have that
\begin{equation}\label{est-linearized-eq-total-v-1}
	\begin{aligned}
		&\sup_{0\leq t\leq T} \bigl\lbrace \norm{(z^\alpha+\eta)^{1/2} v(t)}{\hHnorm{2}}^2 + \mu \norm{\nabla v(t)}{\Lnorm{2}}^2 + \norm{(z^\alpha +\eta)^{1/2} v_t(t)}{\Lnorm{2}}^2  \bigr\rbrace  \\
		& ~~~~ + \int_0^T \biggl( \norm{(z^{\alpha} + \eta)^{1/2} v_t}{\Lnorm{2}}^2 + \mu \norm{\nablah v}{\hHnorm{2}}^2 + \mu \norm{\dz v}{\hHnorm{2}}^2 \\
		& ~~~~ + \mu \norm{\nablah v_t}{\Lnorm{2}}^2 + \mu \norm{\dz v_t}{\Lnorm{2}}^2 \biggr)  \,dt
		 \leq C \bigl( \norm{(z^\alpha+\eta)^{1/2} v_{0,\eta}}{\hHnorm{2}}^2 \\
		 & ~~~~ +  \norm{v_{0,\eta}}{\Hnorm{1}}^2 + \norm{(z^\alpha+\eta)^{1/2} v_{1,\eta}}{\Lnorm{2}}^2  \bigr) + \omega \int_0^T \norm{\nabla v_t}{\Lnorm{2}}^2 \,dt \\
		& ~~~~ + \int_0^T \biggl( \mathcal H(\mathcal{\hat E}_\eta ,\omega) \bigl( \norm{\nabla \hat v_t}{\Lnorm{2}}^2 + \norm{\nabla \hat v}{\Hnorm{1}}^2 + \norm{\nablah \hat v}{\Hnorm{2}}^2 \bigr) \biggr) \,dt \\
		& ~~~~ +  \int_0^T \biggl( \mathcal H(\mathcal{\hat E}_\eta,\omega, 1/\omega, 1) \bigl( \norm{v}{\Hnorm{1}}^2 + \norm{z^{\alpha/2}v_t}{\Lnorm{2}}^2 + \norm{z^{\alpha/2} v}{\hHnorm{2}}^2 + 1\bigr) \biggr) \,dt \\
		& ~~~~ + CT.
	\end{aligned}
\end{equation}

{\par\noindent \bf The vertical spatial derivative of $ v $ ~~ }
Taking the $ L^2 $-inner product of \eqref{eq:dhh-momentumeq-linearized} with $ 2 \partial_{zz} v $, after applying integration by parts in the resultant, yields that
\begin{align*}
	& \dfrac{d}{dt} \norm{(z^\alpha\hat Z^\alpha+\eta)^{1/2} v_{hz}}{\Lnorm{2}}^2 + 2 \mu \norm{\nablah v_{hz}}{\Lnorm{2}}^2 + 2(\mu+\lambda) \norm{\dvh v_{hz}}{\Lnorm{2}}^2 \\
	& ~~~~ + 2 \mu \norm{\dz v_{hz}}{\Lnorm{2}}^2 = \alpha \int z^\alpha\hat Z^{\alpha-1} \hat Z_t \abs{v_{hz}}{2} \idx \\
	& ~~~~ - 2 \alpha \int (z^\alpha \hat Z^{\alpha-1} \hat Z_h \dt v_z \cdot v_{hz}) \idx
	- 2 \alpha\int (z^{\alpha-1} \hat Z^\alpha \dt v_h \cdot v_{hz}) \idx \\\
	& ~~~~ - 2 \alpha^2 \int (z^{\alpha-1} \hat Z^{\alpha-1} \hat Z_h \dt v \cdot v_{hz}) \idx  + 2 \int (\partial_{h} (z^\alpha\hat Z^\alpha \hat W \dz \hat v) \cdot v_{hzz}) \idx \\
	& ~~~~ + 2 \int \bigl\lbrack \partial_{h} \bigl( z^\alpha \hat Z^\alpha (\hat v \cdot \nablah \hat v + g \nablah \hat Z) \bigr) \cdot v_{hzz} \bigr\rbrack \idx \\
	& ~~~~ - 2 \int \bigl\lbrack\partial_h \bigl( \mu \nablah \log \hat Z \cdot \nablah \hat v + (\mu+\lambda) \dvh \hat v\nablah \log \hat Z \bigr) \cdot v_{hzz}\bigr\rbrack \idx \\
	& ~~~~ =: \sum_{k=18}^{24} L_k.
\end{align*}
We list the estimates for the right-hand side in the following: for every $ \delta \in (0,1) $, 
\begin{align*}
	& L_{18} \lesssim \delta \norm{\nabla v_{hz}}{\Lnorm{2}}^2 + \bigl( \delta + C_\delta \norm{\hat Z_t}{\Lnorm{2}}^4 \bigr)\norm{z^{\alpha/2} v_{hz}}{\Lnorm{2}}^2, \\
	& L_{19} \lesssim \delta \norm{\nabla v_{hz}}{\Lnorm{2}}^2 + \delta \norm{z^{\alpha/2} v_{hz}}{\Lnorm{2}}^2 + C_\delta \norm{\nablah \hat Z}{\Hnorm{1}}^2 \norm{\nabla \dt v}{\Lnorm{2}}^2, \\
	& L_{20} \lesssim \delta \norm{\nabla v_{hz}}{\Lnorm{2}}^2 + C_\delta \norm{\nabla v_t}{\Lnorm{2}}^2, \\
	& L_{21} \lesssim \delta \norm{\nabla v_{hz}}{\Lnorm{2}}^2 + C_\delta \norm{\nablah \hat Z}{\Hnorm{1}}^2 \bigl( \norm{\nabla \dt v}{\Lnorm{2}}^2 + \norm{z^{\alpha/2} v_t}{\Lnorm{2}}^2 \bigr), \\
	& L_{22} = 2 \alpha \int (z^\alpha \hat Z^{\alpha-1} \hat Z_h \hat W \dz \hat v \cdot v_{hzz}) \idx + 2 \int (z^\alpha \hat Z^\alpha \hat W_h \dz \hat v \cdot v_{hzz}) \idx \\
	& ~~~~ + 2 \int (z^\alpha \hat Z^\alpha \hat W \dz \hat v_h \cdot v_{hzz})\idx \\
	& ~~~~ = 2\alpha \int \biggl( \hat Z^{\alpha-1} \dz \hat v \cdot v_{hzz} \hat Z_h \bigl\lbrack z^{\alpha+1}((\alpha+1) \overline{z^\alpha \hat v} \cdot \nablah \log \hat Z + \overline{z^\alpha \dvh \hat v})\\
	& ~~~~ - (\alpha+1) \int_0^z \xi^\alpha \hat v \,d\xi \cdot \nablah \log \hat Z - \int_0^z \xi^\alpha \dvh \hat v\,d\xi  \bigr\rbrack \biggr) \idx \\
	& ~~~~ + 2 \int \biggl( \hat Z^{\alpha} \dz \hat v \cdot v_{hzz} \bigl\lbrack (\alpha+1) z^{\alpha+1} ( \overline{z^\alpha\hat v} \cdot \nablah ( \log \hat Z)_h + \overline{z^\alpha\hat v_h}\cdot \nablah \log \hat Z) \\
	& ~~~~ + z^{\alpha+1} \overline{z^\alpha\dvh \hat v_h} - (\alpha+1) (\int_0^z \xi^\alpha\hat v \,d\xi \cdot \nablah (\log \hat Z)_h \\
	& ~~~~ + (\int_0^z \xi^\alpha \hat v_h \,d\xi) \cdot \nablah \log \hat Z)     - \int_0^z \xi^\alpha \dvh \hat v_h \,d\xi   \bigr\rbrack \biggr) \idx \\
	& ~~~~ + 2 \int \biggl( \hat Z^\alpha \dz \hat v_h \cdot v_{hzz} \bigl\lbrack z^{\alpha+1} ((\alpha+1) \overline{z^\alpha\hat v} \cdot \nablah \log\hat Z + \overline{z^\alpha\dvh \hat v}) \\
	& ~~~~ - (\alpha+1) \int_0^z \xi^\alpha \hat v \,d\xi \cdot \nablah \log \hat Z - \int_0^z \xi^\alpha\dvh \hat v \,d\xi  \bigr\rbrack \biggr) \idx \\
	& ~~~~ \lesssim \delta \norm{v_{hzz}}{\Lnorm{2}}^2 + C_\delta \bigl( \norm{\nablah \hat Z}{\Hnorm{1}}^4 + 1\bigr) \norm{z^{\alpha/2} \hat v}{\hHnorm{2}}^2 \norm{\dz \hat v}{\hHnorm{2}}^2  ,\\
	& L_{23} \lesssim \delta \norm{v_{hzz}}{\Lnorm{2}}^2 + C_\delta \bigl( \norm{\nablah \hat Z}{\Hnorm{1}}^2 + 1 \bigr) \bigl( \norm{\nablah \hat Z}{\Hnorm{1}}^2 + \norm{\hat v}{\Hnorm{1}}^2 \norm{\nablah \hat v}{\Hnorm{2}}^2 \bigr),\\
	& L_{24} \lesssim \delta \norm{v_{hzz}}{\Lnorm{2}}^2 + C_\delta \bigl( \norm{\nablah \hat Z}{\Hnorm{1}}^4 + \norm{\nablah \hat Z}{\Hnorm{1}}^2 \bigr) \norm{\nablah \hat v}{\Hnorm{2}}^2.
\end{align*}
Therefore, choosing $ \delta > 0 $ to be small enough and integrating the above equation with respect to the temporal variable yields that 
\begin{equation}\label{est-linearized-eq-006}
	\begin{aligned}
		& \sup_{0\leq t\leq T} \norm{(z^\alpha\hat Z^\alpha+\eta)^{1/2} v_{hz}(t)}{\Lnorm{2}}^2 + \int_0^T \biggl( \mu \norm{\nablah v_{hz}}{\Lnorm{2}}^2 + \mu \norm{\dz v_{hz}}{\Lnorm{2}}^2 \biggr) \,dt\\
		& ~~~~ \leq \norm{(z^\alpha Z_{0,\eta}^\alpha+\eta)^{1/2} v_{0,\eta,hz}}{\Lnorm{2}}^2 + C \int_0^T \norm{\nabla \dt v}{\Lnorm{2}}^2\,dt \\
		& ~~~~ + \int_0^T \mathcal H(\mathcal{\hat E}_\eta) \bigl( \norm{\nabla \dt v}{\Lnorm{2}}^2 + \norm{\nablah \hat v}{\Hnorm{2}}^2 \bigr)\,dt \\
		& ~~~~ + \int_0^T \mathcal H(\mathcal{\hat E}_\eta, 1)  \bigl(\norm{z^{\alpha/2} v_{z}}{\hHnorm{1}}^2 + \norm{z^{\alpha/2}v_t}{\Lnorm{2}}^2 + 1 \bigr)\,dt.
	\end{aligned}
\end{equation}
Similar arguments yield
\begin{equation}\label{est-linearized-eq-total-v-2}
	\begin{aligned}
		&\sup_{0\leq t\leq T} \norm{(z^\alpha+\eta)^{1/2} v_{z}(t)}{\hHnorm{1}}^2 + \int_0^T \biggl( \mu \norm{\nablah v_{z}}{\hHnorm{1}}^2 + \mu \norm{\dz v_{z}}{\hHnorm{1}}^2\,\biggr) dt\\
		& ~~~~ \leq \norm{(z^\alpha + \eta)^{1/2} v_{0,\eta,z}}{\hHnorm{1}}^2 + C \int_0^T \norm{\nabla \dt v}{\Lnorm{2}}^2\,dt \\
		& ~~~~ + \int_0^T \mathcal H(\mathcal{\hat E}_\eta) \bigl( \norm{\nabla \dt v}{\Lnorm{2}}^2 + \norm{\nablah \hat v}{\Hnorm{2}}^2 \bigr)\,dt \\
		& ~~~~ + \int_0^T \mathcal H(\mathcal{\hat E}_\eta, 1)  \bigl(\norm{z^{\alpha/2} v_{z}}{\hHnorm{1}}^2 + \norm{z^{\alpha/2}v_t}{\Lnorm{2}}^2 + 1 \bigr)\,dt.
	\end{aligned}
\end{equation}
\subsubsection*{The fixed point argument}
Since $ (\hat Z, \hat v) \in X_T $, \eqref{est-linearized-eq-total-v-1} and \eqref{est-linearized-eq-total-v-2} implies that for $ \omega $ small enough, there are constants $ 0 < C, C', C'', C''' < \infty $ that,
\begin{align*}
	& \sup_{0\leq t\leq T} \bigl\lbrace \norm{(z^\alpha+\eta)^{1/2} v(t)}{\hHnorm{2}}^2 + \mu \norm{\nabla v(t)}{\Lnorm{2}}^2 + \norm{(z^\alpha + \eta)^{1/2} v_t(t)}{\Lnorm{2}}^2 \\
	& ~~~~ + C' \norm{(z^\alpha+\eta)^{1/2} v_z(t)}{\hHnorm{1}}^2 \bigr\rbrace
	+  C'' \int_0^T \biggl( \norm{\nabla v}{\Hnorm{1}}^2 + \norm{\nablah v}{\Hnorm{2}}^2 \\
	& ~~~~ + \norm{\nabla v_t}{\Lnorm{2}}^2 + \norm{(z^\alpha+\eta)^{1/2}v_t}{\Lnorm{2}}^2 \biggr) \,dt \leq C''' \mathcal E_{0,\eta} + \mathcal H(\mathcal E_{0,\eta},\omega) \\
	& ~~~~ \times \bigl( C_0 \mathcal E_{0,\eta} + \int_0^T \norm{\nabla v_t}{\Lnorm{2}}^2 \,dt \bigr) + T \mathcal H(\mathcal E_{0,\eta},\omega,1/\omega,1) \sup_{0\leq t\leq T}\bigl\lbrace \norm{\nabla v}{\Lnorm{2}}^2 \\
	& ~~~~ + \norm{z^{\alpha/2} v}{\hHnorm{2}}^2 + \norm{z^{\alpha/2}v_t}{\Lnorm{2}}^2 + \norm{z^{\alpha/2} v_z}{\hHnorm{1}}^2 + 1 \bigr\rbrace  + CT.
\end{align*}
Then for $ \mathcal E_{0,\eta} $ small enough, taking $ \omega $ small and then $ T $ small implies that
\begin{align*}
	& \sup_{0\leq t\leq T} \bigl\lbrace \norm{(z^\alpha+\eta)^{1/2} v(t)}{\hHnorm{2}}^2 + \mu \norm{\nabla v(t)}{\Lnorm{2}}^2 + \norm{(z^\alpha + \eta)^{1/2} v_t(t)}{\Lnorm{2}}^2 \\
	& ~~~~ + C' \norm{(z^\alpha+\eta)^{1/2} v_z(t)}{\hHnorm{1}}^2 \bigr\rbrace
	+  C'' \int_0^T \biggl( \norm{\nabla v}{\Hnorm{1}}^2 + \norm{\nablah v}{\Hnorm{2}}^2 \\
	& ~~~~ + \norm{\nabla v_t}{\Lnorm{2}}^2 + \norm{(z^\alpha+\eta)^{1/2}v_t}{\Lnorm{2}}^2 \biggr) \,dt \leq 4C''' \mathcal E_{0,\eta}.
\end{align*}
Therefore, together with \eqref{linear:001}, \eqref{linear:002} and \eqref{linear:003}, this implies $ ( Z, v) \in X_T $ for small enough $ \mathcal E_{0,\eta}, T $ and large enough $ C_0 $. This verifies that $ \Phi: (\hat Z, \hat v) \mapsto (Z,v) $ is from $ X_T $ to itself. It is easy to verify that $ \Phi $ is continuous in the topology of $ L^2(0,T;L^2(\Omega)) \times L^2(0,T;L^2(\Omega)) $ and that $ X_T $ is convex and compact in the $ L^2(0,T;L^2(\Omega)) \times L^2(0,T;L^2(\Omega)) $ topology.  Therefore, the Tychonoff fixed point theorem implies that there is a fixed point of $ \Phi $ in $ X_T $ for small enough $ \mathcal E_{0,\eta}, T $ and large enough $ C_0 $. This is the local solution to the following equations,
\begin{equation*}
	\begin{cases}
		\dt Z + (\alpha+1) \overline{z^\alpha  v} \cdot \nablah Z + \overline{z^\alpha\dvh  v}  Z = 0, \\
		(z^\alpha { Z}^{\alpha} + \eta )\dt v + z^\alpha { Z}^\alpha \bigl(  v \cdot \nablah  v +  W \dz  v + g \nablah  Z \bigr) = \mu \deltah v \\
		~~~~ + (\mu+\lambda) \nablah \dvh v + \mu \partial_{zz} v + \mu \nablah \log Z \cdot \nablah v \\
		~~~~ + (\mu+\lambda) \dvh v \nablah \log Z,\\
		\dz Z =0,\\
		z^\alpha { Z}  W = z^{\alpha+1}\bigl( (\alpha+1) \overline{z^\alpha  v} \cdot \nablah  Z + \overline{z^\alpha \dvh  v}  Z\bigr)\\
		~~~~ - (\alpha+1) (\int_0^z \xi^\alpha  v \,d\xi) \cdot \nablah  Z - (\int_0^z \xi^\alpha \dvh  v \,d\xi)  Z.
	\end{cases}
\end{equation*}
Notice, the above estimates are independent of $ \eta $. Thus by taking $ \eta \rightarrow 0^+ $ in the above equations, we achieve the local solutions to \eqref{FB-CPE} for small initial energy $ \mathcal E_0 $ given in \eqref{STB-ttl-ini-energy}. In other words, we have shown the following proposition:
\begin{proposition}\label{prop:local-existence}
There is a constant $ \varepsilon_4 < \varepsilon_3 $, where $ \varepsilon_3 $ is given in Proposition \ref{prop:H^2-of-v}, such that if $ \mathcal E_0 < \varepsilon_4 $ small enough, that there are a $ T_{\varepsilon_4} \in (0,\infty) $ and a local strong solution $ (Z,v) \in L^\infty(0,T_{\varepsilon_4};H^2(\Omega)) \times L^\infty(0,T_{\varepsilon_4};H^2(\Omega)) $ to \eqref{FB-CPE}. Moreover, there is a constant $ 0 < C_2 < \infty $ such that
\begin{equation}
	\mathcal E(t) \leq C_2 \mathcal E_0,
\end{equation}
for $ t \in (0,T_{\varepsilon_4}) $. 
\end{proposition}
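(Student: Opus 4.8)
The plan is to build the local-in-time solution by an approximation scheme that simultaneously removes the degeneracy of the weight $z^\alpha Z^\alpha$ at the vacuum boundary $\lbrace z=0\rbrace$ and copes with the absence of an evolution equation for the vertical velocity $W$; the construction then rests on a fixed point argument for a linearized problem, followed by a limit passage in the regularization parameter. Concretely, first I would introduce, for $\eta\in(0,1)$, the regularized coefficient $z^\alpha Z^\alpha+\eta$ in front of $\dt v$ and mollify the data to $(Z_{0,\eta},v_{0,\eta})\to(Z_0,v_0)$ in $H^2(\Omega)\times H^2(\Omega)$ with $\mathcal E_{0,\eta}\to\mathcal E_0$. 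Given a reference state $(\hat Z,\hat v)$ in a suitable ball, one solves the linear system \eqref{eq:linearized-FBCPE}: its first equation is a linear transport equation for $Z$ and its second is a uniformly parabolic system for $v$ (uniform parabolicity being exactly what the $\eta$-term buys), with $\hat W$ determined algebraically from $(\hat Z,\hat v)$ through \eqref{eq:linearized-vertical-velocity}. Existence of $(Z,v)$ for this linear problem is classical (see \cite{EvansPDE}).

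The core of the proof is to show that the solution operator $\Phi\colon(\hat Z,\hat v)\mapsto(Z,v)$ maps the set $X_T$ in \eqref{ln:functional-space} into itself, with all bounds \emph{independent of $\eta$}, once $T$ and $\mathcal E_{0,\eta}$ are small. For $Z$ I would exploit the transport structure: testing $\partial_{hh}$ of the transport equation against $Z_{hh}$ and applying Gr\"onwall's inequality gives $\sup_{0\le t\le T}\norm{Z-1}{\Hnorm{2}}^2\le 2\mathcal E_{0,\eta}$ for $T$ small, after which $\norm{Z_t}{\Lnorm{2}}$ is controlled as in \eqref{BE-004}. For $v$ one mirrors the hierarchy of a priori estimates of Section \ref{sec:aprior}: successive $L^2$-tests of \subeqref{eq:linearized-FBCPE}{2} and of its $\partial_t$-, $\partial_h$-, $\partial_{hh}$-, $\partial_h\partial_z$-differentiated forms against $v$, $v_t$, $v_h$, $v_{hh}$, $v_{hz}$ (and the reference-state versions $\partial_z v$, $\partial_z v_h$), using H\"older, Minkowski, Sobolev embeddings, the weighted H\"older lemma, and the Hardy- and Poincar\'e-type inequalities \eqref{SPE-007}, \eqref{SPE-008}, \eqref{ineq:poincare-type-001}. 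The contributions involving $\hat W$ are absorbed by substituting the explicit formula \eqref{eq:linearized-vertical-velocity} together with its differentiated versions, in the same spirit as the terms $I_2$, $I_6$, $I_7$ were handled in the a priori estimates. Summing the resulting differential inequalities and absorbing the top-order dissipative terms on the right-hand side — by first fixing an auxiliary parameter $\omega$ small, then $\mathcal E_{0,\eta}$ small, then $T$ small — yields $\sup_{0\le t\le T}\mathcal E_\eta(t)+\int_0^T(\norm{\nabla v}{\Hnorm{1}}^2+\norm{\nablah v}{\Hnorm{2}}^2+\norm{\nabla v_t}{\Lnorm{2}}^2)\,dt\le C_0\mathcal E_{0,\eta}$, i.e. $(Z,v)\in X_T$.

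With $\Phi$ mapping $X_T$ to itself, I would then invoke the Tychonoff fixed point theorem: $X_T$ is convex and compact in the $L^2(0,T;L^2(\Omega))\times L^2(0,T;L^2(\Omega))$ topology, and $\Phi$ is continuous there, so $\Phi$ has a fixed point $(Z_\eta,v_\eta)$, which solves the $\eta$-regularized version of \eqref{FB-CPE}. Since the estimates of the previous step are uniform in $\eta$, one passes to the limit $\eta\to0^+$ — weak-$\ast$ convergence for the high-order norms and strong $L^2(0,T;L^2(\Omega))$ compactness to pass the nonlinearities — to obtain a strong solution $(Z,v)\in L^\infty(0,T_{\varepsilon_4};H^2(\Omega))\times L^\infty(0,T_{\varepsilon_4};H^2(\Omega))$ of \eqref{FB-CPE}. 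Lower semicontinuity of the norms under this limit then gives $\mathcal E(t)\le C_2\mathcal E_0$ on $(0,T_{\varepsilon_4})$, and choosing $\varepsilon_4<\varepsilon_3$ keeps us inside the smallness regime of Propositions \ref{prop:stability-theory} and \ref{prop:H^2-of-v}.

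The main obstacle is the $\eta$-uniform closure of the $v$-estimates in the presence of the vacuum weight. The troublesome terms are those produced by $\hat W$ and by the factors $z^{\alpha-1}$ that appear when $z^\alpha\hat Z^\alpha$ is differentiated; these are only controllable after invoking the Hardy inequalities, which demand $z^{\alpha/2}$-weighted control of $v$ and its derivatives. One has to verify that the weighted norms bundled into $\mathcal E_\eta$ are precisely those that the estimates require, and that the added $\eta$-term never turns a weighted estimate into an unweighted one that cannot be closed uniformly. Equally delicate is the order of the smallness choices: one must fix $\omega$ first to kill the $\norm{\nabla v_t}{\Lnorm{2}}$ terms transferred to the right, then take $\mathcal E_{0,\eta}$ small so the polynomial prefactors $\mathcal H(\mathcal E_{0,\eta},\dots)$ are dominated by the dissipation, and only afterwards take $T$ small to absorb the remainder that is linear in $T$ — reordering these steps breaks the scheme.
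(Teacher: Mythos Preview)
Your proposal is correct and follows essentially the same approach as the paper: the $\eta$-regularization of the degenerate coefficient, the linearized system \eqref{eq:linearized-FBCPE} with algebraic recovery of $\hat W$, the transport-type estimates for $Z$, the hierarchy of weighted energy estimates for $v$ (tested against $v,\,v_t,\,v_h,\,v_{hh},\,\partial_{zz}v_h$), the Tychonoff fixed point in $X_T$, and the $\eta\to0^+$ limit are exactly the steps the paper carries out in Section~\ref{sec:local}. You have also correctly isolated the two genuine subtleties the paper handles, namely the $\eta$-uniform closure via Hardy's inequalities and the order in which $\omega$, $\mathcal E_{0,\eta}$, and $T$ must be chosen small.
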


\subsection{Uniqueness and global stability theory} 
We summarize the result from Propositions \ref{prop:stability-theory}, \ref{prop:H^2-of-v}, \ref{prop:local-existence} in the following proposition. 
\begin{proposition}\label{prop:global-stability}
	Consider initial data $ (Z_0, v_0) \in H^2(\Omega) \times H^2(\Omega) $ satisfying $ \mathcal E_0 < \varepsilon' $ with some $ \varepsilon' \leq \min \lbrace \varepsilon_2/C_1, \varepsilon_2/C_2,\varepsilon_4/C_1, \varepsilon_4/C_2, \varepsilon_2 , \varepsilon_4 \rbrace $ where $ \varepsilon_2, C_1 $ are given in Proposition \ref{prop:stability-theory} and $ \varepsilon_4, C_2 $ is given in Proposition \ref{prop:local-existence}. There exists a unique global strong solution $ (Z,v) $  to \eqref{FB-CPE} satisfying estimates \eqref{apriori:total-energy} and \eqref{apriori:H^2-v}.
\end{proposition}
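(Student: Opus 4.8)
The plan is a continuation argument combined with a weighted difference estimate for uniqueness. The starting observation is the bookkeeping of the threshold $\varepsilon'$: since $\varepsilon'\le\min\{\varepsilon_2/C_1,\varepsilon_2/C_2,\varepsilon_4/C_1,\varepsilon_4/C_2,\varepsilon_2,\varepsilon_4\}$, the hypothesis $\mathcal E_0<\varepsilon'$ forces both $C_2\mathcal E_0<\min\{\varepsilon_2,\varepsilon_4\}$ and $C_1\mathcal E_0<\min\{\varepsilon_2,\varepsilon_4\}$. First I would invoke Proposition \ref{prop:local-existence} to get a strong solution $(Z,v)$ on some interval $[0,T_{\varepsilon_4}]$ with $\mathcal E(t)\le C_2\mathcal E_0<\varepsilon_2$; because $\mathcal E(t)<\varepsilon_2<\varepsilon_1$ there, Propositions \ref{prop:stability-theory} and \ref{prop:H^2-of-v} immediately apply on $[0,T_{\varepsilon_4}]$ and upgrade the bound to $\sup_{[0,T_{\varepsilon_4}]}\mathcal E(t)\le C_1\mathcal E_0$, together with the integrated dissipation in \eqref{apriori:total-energy} and the $H^2$ bound \eqref{apriori:H^2-v}.

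Next I would set $T^\ast:=\sup\{T>0:\text{there is a strong solution on }[0,T]\text{ with }\sup_{[0,T]}\mathcal E(t)<\varepsilon_2\}$, which is $\ge T_{\varepsilon_4}>0$, and show $T^\ast=\infty$ by contradiction. If $T^\ast<\infty$, then on $[0,T^\ast)$ Proposition \ref{prop:stability-theory} gives $\sup_{[0,T^\ast)}\mathcal E(t)\le C_1\mathcal E_0<\min\{\varepsilon_2,\varepsilon_4\}$ and a finite bound for $\int_0^{T^\ast}\mathfrak D(t)\,dt$. Using these uniform bounds, weak-$\ast$ compactness, and the equations \eqref{FB-CPE} (which additionally control $\dt Z$ and, via the weighted relation $z^\alpha Z^\alpha v_t=\cdots$, the time derivative of $v$) one obtains a limit $(Z,v)(\cdot,T^\ast)\in H^2(\Omega)\times H^2(\Omega)$ with $\mathcal E(T^\ast)\le C_1\mathcal E_0<\varepsilon_4$ (lower semicontinuity of $\mathcal E$), and continuity of $t\mapsto\mathcal E(t)$ up to $T^\ast$. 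Since the existence time in Proposition \ref{prop:local-existence} depends only on the smallness threshold, restarting from a time $t_0<T^\ast$ with $\mathcal E(t_0)<\varepsilon_4$ extends the solution to $[0,T^\ast+\tau]$ for a fixed $\tau>0$. On this longer interval I would run a continuity argument: the set $\{T\in[0,T^\ast+\tau]:\sup_{[0,T]}\mathcal E(t)<\varepsilon_2\}$ is nonempty, relatively open by continuity of $\mathcal E$, and relatively closed because on it the \emph{a priori} estimate \eqref{apriori:total-energy} forces $\mathcal E(t)\le C_1\mathcal E_0<\varepsilon_2$ strictly; hence it equals $[0,T^\ast+\tau]$, contradicting the maximality of $T^\ast$. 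Therefore $T^\ast=\infty$ and \eqref{apriori:total-energy}, \eqref{apriori:H^2-v} hold on every finite time interval.

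For uniqueness I would take two solutions $(Z_1,v_1)$ and $(Z_2,v_2)$ issuing from the same data in the class above, write the system for the differences $\delta Z:=Z_1-Z_2$, $\delta v:=v_1-v_2$, $\delta W:=W_1-W_2$ by subtracting \eqref{FB-CPE}, \eqref{eq:movingboundary} and \eqref{eq:verticalvelocity} for the two solutions, and test the $\delta v$-equation with $Z_1\,\delta v$ and the $\delta Z$-equation with $\delta Z$. Integrating by parts as in the proof of Proposition \ref{lm:basic-energy} and estimating the resulting error terms with H\"older's inequality, the Sobolev embeddings, the uniform $H^2$ control \eqref{apriori:H^2-v}, and Hardy's inequality (Lemma \ref{lm:hardy}) to absorb the degenerate weight $z^\alpha$ and the nonlocal $\delta W$-terms coming from \eqref{eq:verticalvelocity}, one should arrive at $\frac{d}{dt}\bigl(\norm{z^{\alpha/2}\delta v}{\Lnorm{2}}^2+\norm{\delta Z}{\Lnorm{2}}^2\bigr)\le C(t)\bigl(\norm{z^{\alpha/2}\delta v}{\Lnorm{2}}^2+\norm{\delta Z}{\Lnorm{2}}^2\bigr)$ with $C\in L^1_{\mathrm{loc}}([0,\infty))$, and Gr\"onwall's inequality together with the vanishing of the initial difference yields $(\delta Z,\delta v)\equiv 0$. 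The main obstacle I expect is precisely this difference estimate: the momentum equation carries the degenerate factor $z^\alpha Z^\alpha$ and couples to $Z$ only through first-order, partly nonlocal terms ($\nablah\delta Z$, $\delta W$), so it cannot be closed in the unweighted $L^2$ of $\delta v$; the estimate must be run at the level of $z^{\alpha/2}\delta v$, and balancing the derivative loss in the $\delta W$- and $Z_t$-type terms against the available $H^2$ regularity of one of the two solutions — the same care that makes the verification of the $H^2$-limit in the continuation step nontrivial — is the delicate point.
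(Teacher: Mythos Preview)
Your proposal is correct and follows essentially the same approach as the paper: a continuation argument for global existence (which the paper dispatches in a single sentence, whereas you spell out the bootstrap between Propositions \ref{prop:stability-theory} and \ref{prop:local-existence}), and a weighted $L^2$ difference estimate closed by Gr\"onwall for uniqueness. The paper's uniqueness proof tests the $\delta v$-equation with $2\,\delta v$ rather than $Z_1\,\delta v$ (giving the energy $\norm{z^{\alpha/2}Z_1^{\alpha/2}\delta v}{\Lnorm{2}}^2$), keeps the dissipation $\mu\norm{\nabla\delta v}{\Lnorm{2}}^2$ explicitly on the left (needed to absorb the pressure and correction terms), and resolves the derivative loss in the $\delta W$-term by integrating by parts to move $\nablah$ off $\log Z_1-\log Z_2$ onto the remaining factors---exactly the ``delicate point'' you identify.
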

\begin{proof}
	The existence of the global strong solution follows by a continuity argument, using the fact that the relevant norms of the solutions remain bounded on any finite maximal interval of existence. What is left is to show the uniqueness. 
	Consider $(Z_1,v_1)$, $(Z_2,v_2)$ (in this proof, not to get confused with the initial data) being two strong solutions to \eqref{FB-CPE} with the same initial data. Then, for $ i = 1,2 $, the following equations hold,
	\begin{equation}
		\begin{cases}
			z^\alpha Z^\alpha_i (\dt v_i + v_i \cdot \nablah v_i + W_i \partial_z v_i + g \nablah Z_i) = \mu \deltah v_i 
		+ (\mu + \lambda) \nablah \dvh v_i \\
		~~~~ ~~~~ 
		+ \mu \partial_{zz} v_i + \mu \nablah \log Z_i \cdot \nablah v_i +(\mu+\lambda) \dvh v_i \nablah \log Z_i ,\\
		\dt Z_i + (\alpha + 1) \overline{z^\alpha v_i} \cdot \nablah Z_i + \overline{z^\alpha \dvh v_i} Z_i = 0,   \\
		z^\alpha W_i = z^{\alpha+1} \bigl( (\alpha+1) \overline{z^\alpha v_i} \cdot \nablah \log Z_i + \overline{z^\alpha \dvh v_i} \bigr)  \nonumber \\
		~~~~ ~~~~ - (\alpha+ 1) \int_0^z \xi^\alpha v_i\,d\xi \cdot \nablah \log Z_i - \int_0^z \xi^\alpha \dvh v_i \,d\xi,\\
		\partial_z Z_i = 0.
		\end{cases}
	\end{equation}
	Next, we denote the total energy functionals for these two solutions as $ \mathcal E_i(t) $ for $ i = 1,2 $, which are given as in \eqref{STB-total-energy},
	\begin{align*}
		& \mathcal E_i(t) := \norm{z^{\alpha/2} v_i}{\hHnorm{2}}^2 + \norm{z^{\alpha/2} v_{i,t}}{\Lnorm{2}}^2 + \norm{z^{\alpha/2}v_{i,z}}{\hHnorm{1}}^2 + \norm{v_i}{\Hnorm{1}}^2 \\
		& ~~~~ + \norm{Z_i - 1}{\Hnorm{2}}^2 + \norm{Z_{i,t}}{\Lnorm{2}}^2.
	\end{align*}
	In the meantime, we denote $ Z_{12} = Z_1 - Z_2, v_{12} = v_1-v_2 $. Thus $ (Z_{12},v_{12})|_{t=0} = 0 $. Then we have the following equations for $ (Z_{12},v_{12}) $,
	\begin{align}
			& \dt Z_{12} + (\alpha+1) \overline{z^\alpha v_1} \cdot \nablah Z_{12} + \overline{z^\alpha \dvh v_1} Z_{12} \nonumber \\
			& ~~~~ + (\alpha+1)\overline{z^\alpha v_{12}} \cdot \nablah Z_2 + \overline{z^\alpha \dvh v_{12}} Z_2, \label{uniq-001} \\
			& z^\alpha Z^\alpha_1 \dt v_{12} - \mu \deltah v_{12} - (\mu+\lambda) \nablah \dvh v_{12} - \mu \partial_{zz} v_{12} \nonumber \\
			& ~~~~   = - z^\alpha (Z^\alpha_1 - Z^\alpha_2) ( \dt v_2 + v_2 \cdot \nablah v_2 + W_2 \partial_z v_2 + g \nablah Z_2) \nonumber \\
			& ~~~~ - z^\alpha Z^\alpha_1 ( v_{12} \cdot\nablah v_1 + v_2 \cdot \nablah v_{12} + (W_{1}-W_2) \dz v_1 + W_{2} \dz v_{12} \nonumber \\
			& ~~~~ + g\nablah Z_{12} ) 
			+ \mu \nablah ( \log Z_1 - \log Z_2) \cdot \nablah v_1 + \mu \nablah \log Z_2 \cdot \nablah v_{12} \nonumber \\
			& ~~~~ + (\mu+\lambda) \dvh v_1 \nablah (\log Z_1 - \log Z_2) + (\mu+\lambda) \dvh v_{12} \nablah \log Z_2, \label{uniq-002} \\
			& z^\alpha (W_{1}-W_2) = z^{\alpha+1} \bigl( (\alpha+1) \overline{z^\alpha v_{1}} \cdot \nablah (\log Z_1 - \log Z_2) \nonumber \\
			& ~~~~ + (\alpha+1) \overline{z^\alpha v_{12}}\cdot \nablah \log Z_2 + \overline{z^\alpha\dvh v_{12}} \bigr) \nonumber \\
			& ~~~~ - (\alpha+1) \int_0^z \xi^\alpha v_1 \,d\xi \cdot \nablah (\log Z_1- \log Z_2) \nonumber \\
			& ~~~~ - (\alpha+1)\int_0^z \xi^\alpha v_{12}\,d\xi \cdot \nablah \log Z_2 - \int_0^z \xi^\alpha \dvh v_{12} \,d\xi. \nonumber
	\end{align}
	Now we deduce $ L^2 $ estimates for \eqref{uniq-001} and \eqref{uniq-002}. It follows that,
	\begin{align*}
		& \dfrac{d}{dt} \norm{Z_{12}}{\Lnorm{2}}^2 = (\alpha - 1) \inth \overline{z^\alpha \dvh v_1} \abs{Z_{12}}{2} \idxh - \inth 2(\alpha+1) \overline{z^\alpha v_{12}} \cdot\nablah Z_2 Z_{12} \\
		& ~~~~ + 2 \overline{z^\alpha \dvh v_{12}} Z_2 Z_{12} \idxh =: G_1 + G_2 ,  \\
		\shortintertext{and}
		& \dfrac{d}{dt} \norm{z^{\alpha/2} Z^{\alpha/2}_1 v_{12}}{\Lnorm{2}}^2 + 2\mu \norm{\nablah v_{12}}{\Lnorm{2}}^2 + 2(\mu+\lambda) \norm{\dvh v_{12}}{\Lnorm{2}}^2 \\
		& ~~~~ + 2\mu \norm{\dz v_{12}}{\Lnorm{2}}^2 = - \alpha \int z^\alpha Z_1^{\alpha-1} Z_{1,t} \abs{ v_{12}}{2} \idx - 2 \int z^\alpha (Z^\alpha_1 - Z^\alpha_2) ( \dt v_2 \\
		& ~~~~ + v_2 \cdot \nablah v_2 + g\nablah Z_2) \cdot v_{12} \idx - 2 \int z^\alpha ( Z_1^\alpha - Z_2^\alpha) W_2 \dz v_2 \cdot v_{12} \idx \\
		& ~~~~ - 2 \int z^\alpha Z_1^\alpha (v_{12} \cdot \nablah v_1 + v_2 \cdot \nablah v_{12} + g \nablah Z_{12}) \cdot v_{12} \idx \\
		& ~~~~ -2 \int z^\alpha Z_1^\alpha (W_1-W_2) \dz v_1 \cdot v_{12} \idx +  \int Z_1^\alpha \dz (z^\alpha W_2) \abs{v_{12}}{2} \idx \\
		& ~~~~ + 2\int \mu \nablah (\log Z_1 - \log Z_2) \cdot \nablah v_1 \cdot v_{12} \\
		& ~~~~ ~~~~ + (\mu+\lambda) \dvh v_1 \nablah (\log Z_1 - \log Z_2) \cdot v_{12} \idx \\
		& ~~~~ + 2 \int \mu \nablah \log Z_2 \cdot \nablah v_{12} \cdot  v_{12} + (\mu+\lambda) \dvh v_{12} \nablah \log Z_2 \cdot v_{12} \idx \\
		& ~~~~ =: \sum_{k=3}^{10} G_k.
	\end{align*}
	Notice that $ (Z_i, v_i) $ are the solutions that were established before, satisfying 
	\begin{align*}
 		& \sup_{0 \leq t \leq T}\mathcal E_i (t) +\int_0^T \norm{Z_i-1}{\Hnorm{2}}^2+ \norm{Z_{i,t}}{\Lnorm{2}}^2  + \norm{\nabla v_i}{\Hnorm{1}}^2 + \norm{\nablah v_i}{\Hnorm{2}}^2 \\
		& ~~~~ + \norm{z^{\alpha/2} v_{i,t}}{\Lnorm{2}}^2 + \norm{\nabla v_{i,t}}{\Lnorm{2}}^2 \,dt  < C \mathcal E_0,\\
		& \abs{Z_i-1}{} \leq 1/2.
	\end{align*}
 	Then we have that, for any $ \delta \in (0,1) $, 
	\begin{align*}
		& G_1 \lesssim \norm{\nablah v_{1}}{\Hnorm{2}} \norm{Z_{12}}{\Lnorm{2}}^2,\\
		& G_2 \lesssim \delta \norm{\nablah v_{12}}{\Lnorm{2}}^2 + C_\delta \bigl( \norm{\nablah Z_2}{\Hnorm{1}}^2 + 1 \bigr) \bigl( \norm{Z_{12}}{\Lnorm{2}}^2 + \norm{z^{\alpha/2} v_{12}}{\Lnorm{2}}^2 \bigr) , \\
		& G_3 \lesssim \delta \norm{\nabla v_{12}}{\Lnorm{2}} + \bigl( \delta + C_\delta \norm{Z_{1,t}}{\Lnorm{2}}^4 \bigr) \norm{z^{\alpha/2}v_{12}}{\Lnorm{2}}^2, \\
		& G_4 \lesssim \delta \norm{\nabla v_{12}}{\Lnorm{2}}^2 + \delta \norm{z^{\alpha/2} v_{12}}{\Lnorm{2}}^2 + C_\delta \bigl( \norm{z^{\alpha/2} \dt v_2}{\Lnorm{2}}^2 + \norm{\nablah Z_2}{\Hnorm{1}}^2\\
		& ~~~~ + \norm{\nabla \dt v_2}{\Lnorm{2}}^2 + \norm{v_2}{\Hnorm{1}}^2 \norm{\nablah v_{2}}{\Hnorm{1}}^2 \bigr) \norm{Z_{12}}{\Lnorm{2}}^2, \\
		& G_5 = -2 \int \biggl( (Z_1^\alpha - Z_2^\alpha) \dz v_2 \cdot v_{12} \bigl\lbrack z^{\alpha+1} ((\alpha+1) \overline{z^\alpha v_2} \cdot\nablah \log Z_2 \\
		& ~~~~ + \overline{z^\alpha \dvh v_2}) - (\alpha+1)\int_0^z \xi^\alpha v_2 \,d\xi \cdot \nablah \log Z_2 - \int_0^z \xi^\alpha \dvh v_2 \,d\xi  \bigr\rbrack \biggr) \idx \\
		& ~~~~ \lesssim \delta\norm{\nablah v_{12}}{\Lnorm{2}}^2 + \delta \norm{z^{\alpha/2}v_{12}}{\Lnorm{2}}^2 + C_\delta \bigl(\norm{\nablah Z_2}{\Hnorm{1}}^2 + 1\bigr) \norm{z^{\alpha/2} v_2}{\hHnorm{2}}^2 \\
		& ~~~~ \times \norm{\dz v_2}{\hHnorm{1}}^2 \norm{Z_{12}}{\Lnorm{2}}^2,\\
		& G_6 = -2 \int (z^\alpha Z_1^\alpha v_{12} \cdot \nablah v_1) \cdot v_{12} \idx + \int z^\alpha \dvh (Z_1^\alpha v_2) \abs{v_{12}}{2}\idx \\
		& ~~~~ + 2 g \int z^\alpha \bigl( Z_{12} \nablah Z_1^\alpha \cdot v_{12} + Z_1^\alpha Z_{12} \dvh v_{12} \bigr)\idx \lesssim \delta \norm{\nabla v_{12}}{\Lnorm{2}}^2 \\
		& ~~~~ + \delta \norm{z^{\alpha/2} v_{12}}{\Lnorm{2}}^2 + C_\delta \bigl( \norm{\nablah Z_1}{\Hnorm{1}}^2 \norm{v_2}{\Hnorm{1}}^2 + \norm{\nablah v_1}{\Hnorm{1}}^2 \\
		& ~~~~ + \norm{\nablah v_2}{\Hnorm{1}}^2 \bigr) \norm{z^{\alpha/2} v_{12}}{\Lnorm{2}}^2 + C_\delta \bigl( \norm{\nablah Z_1}{\Hnorm{1}}^2 + 1 \bigr) \norm{Z_{12}}{\Lnorm{2}}^2, \\
		& G_{7} = - 2 \int \biggl( Z_1^\alpha \dz v_1 \cdot v_{12} \bigl\lbrack z^{\alpha+1} ((\alpha+1) \overline{z^\alpha v_1} \cdot \nablah (\log Z_1 - \log Z_2) \\
		& ~~~~ + (\alpha+1) \overline{z^\alpha v_{12}} \cdot \nablah \log Z_2 + \overline{z^\alpha \dvh v_{12}})\\
		& ~~~~ - (\alpha+1 ) \int_0^z \xi^\alpha v_1 \,d\xi \cdot \nablah (\log Z_1 - \log Z_2)  \\
		& ~~~~ - (\alpha+1) \int_0^z \xi^\alpha v_{12} \,d\xi \cdot \nablah \log Z_2 - \int_0^z \xi^\alpha \dvh v_{12}\,d\xi   \bigr\rbrack \biggr) \idx \\
		& ~~~~ = - 2 \int \biggl( Z_1^\alpha \dz v_1 \cdot v_{12} \bigl\lbrack z^{\alpha+1}((\alpha+1)\overline{z^\alpha v_{12}} \cdot \nablah \log Z_2 + \overline{z^\alpha \dvh v_{12}}) \\
		& ~~~~ - (\alpha+1) \int_0^z \xi^\alpha v_{12} \,d\xi \cdot \nablah \log Z_2 - \int_0^z \xi^\alpha \dvh v_{12} \,d\xi \bigr\rbrack \biggr) \idx \\
		& ~~~~ + 2 (\alpha+1) \int \biggl( z^{\alpha+1}  (\log Z_1 - \log Z_2)   \dvh (Z_1^\alpha (\dz v_1 \cdot v_{12}) \overline{z^\alpha v_1}) \biggr) \idx \\
		& ~~~~ - 2 (\alpha + 1) \int \biggl(  (\log Z_1 - \log Z_2)   \dvh \lbrack Z_1^\alpha (\dz v_1 \cdot v_{12}) \int_0^z \xi^\alpha v_1\,d\xi \rbrack \biggr) \idx \\
		& ~~~~ \lesssim \delta \norm{\nabla v_{12}}{\Lnorm{2}}^2 + \delta \norm{z^{\alpha/2} v_{12}}{\Lnorm{2}}^2 + C_\delta \bigl( \norm{\nablah Z_2}{\Hnorm{1}}^2 + 1 \bigr) \norm{\nablah v_1}{\Hnorm{2}}^2 \\
		& ~~~~ \times \norm{z^{\alpha/2} v_{12}}{\Lnorm{2}}^2 + C_\delta \bigl( \norm{\nablah Z_1}{\Hnorm{1}}^2 + 1\bigr) \norm{z^{\alpha/2}v_1}{\hHnorm{2}}^2 \norm{\nablah v_1}{\Hnorm{2}}^2 \\
		& ~~~~ \times \norm{Z_{12}}{\Lnorm{2}}^2  ,\\
		& G_8 = \int \biggl( Z_1^\alpha \abs{v_{12}}{2} \bigl\lbrack (\alpha+1)z^{\alpha}((\alpha+1) \overline{z^\alpha v_2} \cdot \nablah \log Z_2 + \overline{z^\alpha \dvh v_2}) \\
		& ~~~~ -(\alpha+1) z^\alpha v_2 \cdot \nablah \log Z_2 - z^\alpha \dvh v_2  \bigr\rbrack \biggr) \idx \lesssim \delta \norm{\nabla v_{12}}{\Lnorm{2}}^2 \\
		& ~~~~ + \delta \norm{z^{\alpha/2}v_{12}}{\Lnorm{2}}^2 + C_\delta
		 \bigl( \norm{\nablah Z_2}{\Hnorm{1}}^2 \norm{v_2}{\Hnorm{1}}^2 + \norm{\nablah v_2}{\Hnorm{1}}^2\bigr)\\
		 & ~~~~ \times \norm{z^{\alpha/2} v_{12}}{\Lnorm{2}}^2, \\
		& G_9 = -2 \mu \int \biggl( (\log Z_1 -\log Z_2)  ( \deltah v_1 \cdot v_{12} + \nablah v_1 \otimes \nablah v_{12} )  \biggr) \idx \\
		& ~~~~ - 2(\mu+\lambda) \int \biggl( (\log Z_1 -\log Z_2)  (v_{12} \cdot \nablah \dvh v_1 \\
		& ~~~~ + \dvh v_1 \dvh v_{12} ) \biggr) \idx \lesssim \delta \norm{\nabla v_{12}}{\Lnorm{2}}^2 + \delta \norm{z^{\alpha/2}v_{12}}{\Lnorm{2}}^2 \\
		& ~~~~ + C_\delta \norm{\nablah v_1}{\Hnorm{2}}^2 \norm{Z_{12}}{\Lnorm{2}}^2. 
	\end{align*}
	Moreover, for $ \omega > 0 $, small enough,
	\begin{align*}
		& G_{10} \lesssim \norm{\nablah Z_2}{\Lnorm{6}}\norm{\nablah v_{12}}{\Lnorm{2}} \norm{v_{12}}{\Lnorm{3}} \lesssim \norm{\nablah Z_2}{\Hnorm{1}}\norm{\nablah v_{12}}{\Lnorm{2}} \norm{v_{12}}{\Lnorm{2}}^{1/2} \\
		& ~~~~ \times \bigl( \norm{z^{\alpha/2}v_{12}}{\Lnorm{2}}^{1/2} + \norm{\nabla v_{12}}{\Lnorm{2}}^{1/2} \bigr) \lesssim \norm{\nablah Z_2}{\Hnorm{1}}\norm{\nablah v_{12}}{\Lnorm{2}} \\
		& ~~~~ \times \bigl( ( \omega^{-1- \max\lbrace [\alpha/2] - 1, 0 \rbrace/2 } + \omega^{-\alpha/4} )  \norm{z^{\alpha/2}v_{12}}{\Lnorm{2}}^{1/2} + \omega^{1/2} \norm{\nabla v_{12}}{\Lnorm{2}}^{1/2} \bigr) \\
		& ~~~~ \times \bigl( \norm{z^{\alpha/2}v_{12}}{\Lnorm{2}}^{1/2} + \norm{\nabla v_{12}}{\Lnorm{2}}^{1/2} \bigr) \lesssim \delta \norm{\nabla v_{12}}{\Lnorm{2}}^{2} \\
		& ~~~~ + C_\delta \mathcal H(\norm{\nablah Z_2}{\Hnorm{1}}) \norm{z^{\alpha/2}v_{12}}{\Lnorm{2}}^2,
	\end{align*}
	where we have applied the following inequality with small enough $ \omega = \omega(\norm{\nablah Z_2}{\Hnorm{1}}^{-1}, \delta ) $ in $ G_{10} $,
	\begin{align*}
		& \norm{v_{12}}{\Lnorm{2}}^2 = \int_0^{\omega} \hnorm{v_{12}}{\Lnorm{2}}^2\,dz +  \int_{\omega}^1 \hnorm{v_{12}}{\Lnorm{2}}^2\,dz \lesssim \bigl( \omega^{-4- 2\max\lbrace [\alpha/2] - 1, 0 \rbrace } \\
		& ~~~~ + \omega^{-\alpha} \bigr)  \norm{z^{\alpha/2}v_{12}}{\Lnorm{2}}^2 + \omega^2 \norm{\nabla v_{12}}{\Lnorm{2}}^2, 
	\end{align*}
	in which we use Hardy's inequality as follows:
	\begin{align*}
		& \int_0^\omega \hnorm{v_{12}}{\Lnorm{2}}^2 \,dz \lesssim \omega^{-2} \int_0^\omega z^2 \hnorm{v_{12}}{\Lnorm{2}}^2 \,dz + \omega^2 \int_0^\omega  \hnorm{\dz v_{12}}{\Lnorm{2}}^{2} \,dz \\
		& ~~~~ \lesssim \omega^{-4} \int_0^\omega z^{4} \hnorm{v_{12}}{\Lnorm{2}}^2 \,dz + \omega^{2} \int_0^\omega \hnorm{\dz v_{12}}{\Lnorm{2}}^2 \,dz \\
		& \underbrace{ \lesssim \cdots \lesssim }_{\max\lbrace [\alpha/2] - 1, 0 \rbrace ~ \text{times}} \omega^{-4- 2\max\lbrace [\alpha/2] - 1, 0 \rbrace } \norm{z^{\alpha/2}v_{12}}{\Lnorm{2}}^2 + \omega^2 \norm{\nabla v_{12}}{\Lnorm{2}}^2.
	\end{align*}
	Therefore, summing up the above inequalities and equations with small enough $ \delta > 0 $ yields that for some constant $ C_{\mu,\lambda} > 0 $,
	\begin{align*}
		& \dfrac{d}{dt} \bigl\lbrace \norm{Z_{12}}{\Lnorm{2}}^2 + C_{\mu,\lambda} \norm{z^{\alpha/2}Z_1^{\alpha/2} v_{12}}{\Lnorm{2}}^2  \bigr\rbrace + \mu \norm{\nablah v_{12}}{\Lnorm{2}}^2 + \mu \norm{\dz v_{12}}{\Lnorm{2}}^2 \\
		& ~~~~ \lesssim \mathcal H(\mathcal E_1(t), \mathcal E_2(t),1) \bigl(  \norm{\nablah v_1}{\Hnorm{2}}^2 + \norm{\nablah v_2}{\Hnorm{2}}^2 + \norm{\nabla v_1}{\Hnorm{1}}^2\\
		& ~~~~ + \norm{\nabla v_2}{\Hnorm{1}}^2 + \norm{\nabla \dt v_2}{\Lnorm{2}}^2 + 1\bigr) \bigl( \norm{Z_{12}}{\Lnorm{2}}^2 + \norm{z^{\alpha/2} Z_1^{\alpha/2} v_{12}}{\Lnorm{2}}^2 \bigr).
	\end{align*}
	Then after applying Gr\"onwall's inequality to the above inequality, one has 
	$$ \norm{Z_{12}}{\Lnorm{2}} \equiv 0 \quad \text{and} \quad   \norm{z^{\alpha/2} Z_1^{\alpha/2} v_{12}}{\Lnorm{2}} \equiv 0.$$
	Therefore $ (Z_1, v_1 ) \equiv (Z_2, v_2 ) $. This finishes the proof of the proposition. 
\end{proof}

\section{Decay estimates and asymptotic stability}\label{sec:decay}

In this section, we will establish the asymptotic stability theory. In fact, we will show the following proposition:
\begin{proposition}\label{prop:decay-est}
	Under the same assumptions as in Proposition \ref{prop:stability-theory} and $ 0 < \alpha < 3 $, there is a constant $ 0 < \varepsilon_5 < \varepsilon_2 $ where $ \varepsilon_2 $ is given in Proposition \ref{prop:stability-theory}, such that if $ \mathcal E(t) < \varepsilon_5 $, for $ t \in (0,T) $, the following inequality holds,
	\begin{equation}\label{decay-est}
		\mathcal E(t) \leq e^{-C_1t} C_2 \mathcal E_0,
	\end{equation}
	for some positive constants $ 0 < C_1, C_2 < \infty $. 
\end{proposition}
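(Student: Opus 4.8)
The plan is to turn the differential energy inequality underlying Proposition \ref{prop:stability-theory} into a closed Gr\"onwall inequality for the relative energy $\mathfrak E_c(t)$, the missing ingredient being a Poincar\'e-type bound that is made available precisely by the momentum correction term in \eqref{FB-CPE} together with the vanishing-momentum assumption \eqref{vanish-of-initial-momentum}.

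First I would record the pointwise-in-time form of the a priori estimate. Summing \eqref{apriori-ineq-001}, \eqref{apriori-ineq-002}, \eqref{apriori-ineq-003}, \eqref{apriori-ineq-004}, \eqref{apriori-ineq-005}, \eqref{apriori-ineq-006}, \eqref{apriori-ineq-007} with the weight $c=\mu/(8g)$ of Proposition \ref{lm:equal-of-energies} yields
\[ \frac{d}{dt}\mathfrak E_c(t) + \mathfrak D(t) \le C\,\mathcal H(\mathcal E(t))\bigl(\norm{Z-1}{\Hnorm{2}}^2+\norm{Z_t}{\Lnorm{2}}^2+\norm{\nabla v}{\Hnorm{1}}^2+\norm{\nablah v}{\Hnorm{2}}^2+\norm{z^{\alpha/2}v_t}{\Lnorm{2}}^2+\norm{\nabla v_t}{\Lnorm{2}}^2\bigr), \]
and, by Proposition \ref{lm:equal-of-dissipation}, the right-hand side is bounded by $\tfrac12\mathfrak D(t)$ once $\mathcal E(t)<\varepsilon_5$ is small enough (recall $\mathcal H(0)=0$). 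Hence $\frac{d}{dt}\mathfrak E_c(t)+\tfrac12\mathfrak D(t)\le 0$ on the interval in question.

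The crux is to show $\mathfrak E_c(t)\lesssim\mathfrak D(t)$. Comparing \eqref{STB-inst-energy} and \eqref{STB-inst-dissipation} term by term, every summand of $\mathfrak E_c$ is already dominated by $\mathfrak D$: one uses $z^\alpha\le 1$ for the weighted higher-order kinetic terms, the conservation of mass $\int Z^{\alpha+1}\idx=1$ together with the Poincar\'e inequality on $\Omega_h=\mathbb T^2$ to get $\norm{Z^{\alpha+1}-1}{\Lnorm{2}}\lesssim\norm{\nablah Z}{\Lnorm{2}}$ (handling the relative potential energy and the cross term, the latter also via Young's inequality), and $\norm{Z_t}{\Lnorm{2}}^2,\norm{z^{\alpha/2}v_t}{\Lnorm{2}}^2,\norm{\nablah^2 Z}{\Lnorm{2}}^2$ are in $\mathfrak D$ directly — with the single exception of the lowest-order kinetic term $\tfrac12\int z^\alpha Z^{\alpha+1}\abs{v}{2}\idx\simeq\norm{z^{\alpha/2}v}{\Lnorm{2}}^2$ (which also controls $\norm{v}{\Hnorm{1}}^2$ through the Hardy chain \eqref{STB-001}). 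For this term I would invoke the Poincar\'e-type inequality, Lemma \ref{lm:poincare-ineq}: decomposing $v=m(z)+\tilde v$ with $m(z)$ the average of $v$ over $\mathbb T^2$, the fluctuation $\tilde v$ has vanishing horizontal mean, so $\norm{z^{\alpha/2}\tilde v}{\Lnorm{2}}^2\lesssim\norm{\nablah\tilde v}{\Lnorm{2}}^2\le\norm{\nablah v}{\Lnorm{2}}^2$; the conserved momentum $\int z^\alpha Z^{\alpha+1}v\idx=0$ together with $\int_{\Omega_h}Z^{\alpha+1}\idxh\equiv1$ (from $\dz Z=0$ and conservation of mass) reduces to $\int_0^1 z^\alpha m(z)\,dz=-\int z^\alpha(Z^{\alpha+1}-1)\tilde v\idx$, which is $\lesssim\norm{Z-1}{\Lnorm{\infty}}\norm{z^{\alpha/2}v}{\Lnorm{2}}$, and a one-dimensional weighted Poincar\'e inequality with weight $z^\alpha$ on $(0,1)$ then gives $\norm{z^{\alpha/2}m}{\Lnorm{2}}^2\lesssim\bigl(\int_0^1 z^\alpha m\bigr)^2+\norm{\dz v}{\Lnorm{2}}^2$. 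Combining and absorbing the $\mathcal E(t)^{1/2}$-small contribution yields $\norm{z^{\alpha/2}v}{\Lnorm{2}}^2\lesssim\norm{\nabla v}{\Lnorm{2}}^2\le\mathfrak D(t)$; it is here that $0<\alpha<3$ enters, guaranteeing the validity of the degenerate weighted Poincar\'e bound (the same $\gamma>4/3$ threshold as for the gaseous star problem). I would emphasize that without the momentum correction term the momentum would not be conserved and this step — hence the decay — would fail, since $v$ could retain a nontrivial space-independent component forever.

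Finally, the two facts $\frac{d}{dt}\mathfrak E_c(t)+\tfrac12\mathfrak D(t)\le0$ and $\mathfrak E_c(t)\lesssim\mathfrak D(t)$ combine into $\frac{d}{dt}\mathfrak E_c(t)+c_0\mathfrak E_c(t)\le0$ for some $c_0>0$, and Gr\"onwall's inequality gives $\mathfrak E_c(t)\le e^{-c_0 t}\mathfrak E_c(0)$. Invoking the energy equivalence $C^{-1}\mathcal E\le\mathfrak E_c\le C\mathcal E$ of Proposition \ref{lm:equal-of-energies} and $\mathfrak E_c(0)\le C\mathcal E_0$ then yields $\mathcal E(t)\le e^{-C_1 t}C_2\mathcal E_0$, which is \eqref{decay-est}. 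The main obstacle is the third paragraph: establishing the degenerate weighted Poincar\'e inequality, keeping track of how the $Z\ne1$ error terms are absorbed, and verifying that the restriction $\alpha<3$ is exactly what is needed there; the rest is bookkeeping against the already-established a priori, dissipation, and equivalence estimates.
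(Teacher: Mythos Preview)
Your overall scheme — assemble the pointwise differential inequality $\frac{d}{dt}\mathfrak E+\mathfrak D\le \mathcal H(\mathcal E)\,\mathfrak D$ from the a~priori identities, absorb the right-hand side by smallness, close the loop by showing the energy is dominated by the dissipation via a Poincar\'e-type control of $\norm{z^{\alpha/2}v}{\Lnorm{2}}^2$, and conclude by Gr\"onwall — is exactly the paper's route; the paper uses a slightly different linear combination $\mathfrak E_d,\mathfrak G_d$ with free constants $c_1,\ldots,c_9$, but the content is identical.

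Where you genuinely diverge is in the Poincar\'e step. The paper's Lemma~\ref{lm:poincare-ineq} passes to the vertical variable $z'=z^{\alpha+1}/(\alpha+1)$, applies the standard Poincar\'e inequality on $\Omega'$, inherits the singular weight $z^{-\alpha}$ on $|\dz v|^2$, and then iterates Hardy's inequality to trade that singularity for a weight $z^\beta$ on up to $|\partial_{zzz}v|^2$; the restriction $\alpha<3$ arises precisely in that Hardy chain, and the third-derivative term is afterwards absorbed into the dissipation via Proposition~\ref{lm:diss-dzzz-v}. Your decomposition $v=m(z)+\tilde v$, with horizontal Poincar\'e on $\tilde v$ and the one-dimensional weighted Poincar\'e $\int_0^1 z^\alpha m^2\lesssim(\int_0^1 z^\alpha m)^2+\int_0^1|m'|^2$ on the mean, is more elementary: it needs only $\norm{\nabla v}{\Lnorm{2}}^2$, avoids $\partial_{zzz}v$ and Proposition~\ref{lm:diss-dzzz-v} entirely, and — contrary to what you assert — is valid for every $\alpha>0$ (indeed for $\alpha>-1$: since $m\in H^1(0,1)$ one writes $m(z)=m(1)-\int_z^1 m'$ and bounds $m(1)$ via $\int_0^1 z^\alpha m$ and $\|m'\|_{L^2}$ directly). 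So your remark that ``it is here that $0<\alpha<3$ enters'' is misplaced: that threshold belongs to the paper's Hardy-based argument, not to yours; your proof as written actually delivers the proposition without the upper bound on $\alpha$.
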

In order to show Proposition \ref{prop:decay-est}, we need the following lemma concerning the Poincar\'e type inequality.
\begin{lemma}\label{lm:poincare-ineq}
	Under the same assumption as in Proposition \ref{lm:basic-energy} and $ 0 < \alpha < 3 $,
	we have
	\begin{equation}\label{poincare-ineq}
	\begin{aligned}
		& \int z^\alpha \abs{v}{2} \idx \leq C \int z^{\beta}  \bigl(\abs{\dz v}{2} + \abs{\partial_{zz} v}{2} + \abs{\partial_{zzz} v}{2} \bigr) \idx \\
		& ~~~~ + C \norm{z^{\alpha/2} \nablah v}{L^2}^2 + C \norm{z^{\alpha/2} v}{\Lnorm{2}}^2 \norm{Z^{\alpha+1}-1}{\Hnorm{2}}^2,
	\end{aligned}
	\end{equation}
	for some positive constant $ 0 < C < \infty $ and $ \beta $ satisfying,
	\begin{equation*}
		0 \leq \beta < 4 - \alpha.
	\end{equation*}
	In particular, one can choose $ \beta = 1 $. 
\end{lemma}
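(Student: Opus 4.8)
The plan is to exploit the conservation of mass, which fixes the ``total amount'' of the profile $Z^{\alpha+1}$, in order to extract a Poincar\'e-type estimate in the singular vertical direction. First I would use the mass conservation $\int Z^{\alpha+1}\idx = 1$ (Proposition \ref{lm:basic-energy}) together with $\partial_z Z = 0$ to write $\int_{\Omega_h} (Z^{\alpha+1}-1)\idxh = 0$ on the horizontal slice, so that $\|Z^{\alpha+1}-1\|_{\Lnorm{2}}$ is controlled by $\|\nablah Z\|_{\Lnorm{2}}$ via the ordinary Poincar\'e inequality on $\mathbb T^2$; the appearance of $\|Z^{\alpha+1}-1\|_{\Hnorm{2}}$ in the right-hand side of \eqref{poincare-ineq} then absorbs the ``slow'' directions. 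The genuinely new ingredient is a one-dimensional weighted inequality: I want to control $\int_0^1 z^\alpha |v(z)|^2\,dz$ (with $v$ evaluated on a fixed horizontal point, suppressing $\vech x$) by $\int_0^1 z^\beta(|v'|^2 + |v''|^2 + |v'''|^2)\,dz$ plus a zeroth-order term that can be matched against a quantity already controlled by the energy.

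The key step is therefore the following purely vertical lemma: for $0<\alpha<3$ and $0\le \beta<4-\alpha$ (in particular $\beta=1$), and for $f:[0,1]\to\mathbb R$ with $f'|_{z=0,1}=0$,
\begin{equation*}
	\int_0^1 z^\alpha |f|^2\,dz \lesssim \int_0^1 z^\beta \bigl(|f'|^2 + |f''|^2 + |f'''|^2\bigr)\,dz + \Bigl|\int_0^1 z^\alpha f\,dz\Bigr|^2 \cdot(\text{something bounded}).
\end{equation*}
To prove this I would argue by contradiction / compactness, or more constructively, split the interval at a fixed $\omega\in(0,1)$: on $[\omega,1]$ the weights $z^\alpha$ and $z^\beta$ are comparable to $1$, so the classical $H^1$ (indeed $L^2$) Poincar\'e inequality on $[\omega,1]$ with mean controlled by $\int_0^1 z^\alpha f\,dz$ does the job; on $[0,\omega]$ I would iterate Hardy's inequality, Lemma \ref{lm:hardy}, case (c), starting from $\int_0^\omega z^\alpha|f|^2 \lesssim \omega^{-2}\int_0^\omega z^{\alpha+2}|f|^2 + \omega^2\int_0^\omega z^\alpha|f'|^2$ and repeatedly trading a factor $z^2$ against a derivative until the weight on the undifferentiated term reaches a regime where it can be absorbed, and then using case (b) of Hardy (valid because the relevant exponent is $<1$ once enough derivatives have been taken and $0<\alpha<3$ guarantees the exponent budget) to pick up the boundary term $f'(0)=0$. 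The constraint $\alpha<3$ is exactly what allows $\beta=1$: one needs $\beta+(\text{number of derivative trades})\cdot 2$ to land in the admissible Hardy range while $\beta<4-\alpha$.

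Once the $1$D inequality is in hand, I would integrate it over $\Omega_h$, pulling the horizontal derivatives inside (so that $|f'|,|f''|,|f'''|$ become $|\dz v|,|\partial_{zz}v|,|\partial_{zzz}v|$ and the horizontal term $\|z^{\alpha/2}\nablah v\|_{\Lnorm{2}}^2$ enters when one differentiates $|v|^2$ horizontally to use the $2$D Poincar\'e step), and estimate the remaining ``mean'' term $\int_0^1 z^\alpha v\,dz = \overline{z^\alpha v}$: using the momentum equation integrated in $z$, \eqref{eq:dh-Z}, or more directly the fact that $\overline{z^\alpha v}\cdot\nablah Z$ and $\overline{z^\alpha\dvh v}$ are tied to $\dt Z$ via \eqref{eq:movingboundary}, one bounds $\|\overline{z^\alpha v}\|$ in terms of quantities bounded by $\mathcal E(t)$, and the factor $\|Z^{\alpha+1}-1\|_{\Hnorm{2}}^2$ then appears naturally when the $\nablah Z$-type terms are moved to the right. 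The main obstacle I anticipate is getting the derivative-counting in the Hardy iteration to close with $\beta=1$ rather than some larger $\beta$: one must be careful that each application of Lemma \ref{lm:hardy}(c) is legitimate (the exponent must exceed $1$) and that the final application of Lemma \ref{lm:hardy}(b) to capture the vanishing trace at $z=0$ is in the range $k<1$; this is precisely where the hypothesis $0<\alpha<3$ is used and is the delicate bookkeeping of the proof.
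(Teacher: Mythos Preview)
Your proposal has the right skeleton (a weighted Poincar\'e in the vertical direction plus control of a mean), but it misses the crucial mechanism for the mean. You invoke conservation of \emph{mass} at the outset; that only constrains $Z$, not $v$. The quantity that obstructs a naive Poincar\'e on $\int z^\alpha\abs{v}{2}\idx$ is the full three-dimensional average $\int_\Omega z^\alpha v\idx$, and neither \eqref{eq:movingboundary} (which only sees the scalar $\overline{z^\alpha v}\cdot\nablah Z$, not the full vector $\overline{z^\alpha v}$) nor \eqref{eq:dh-Z} supplies a usable bound on it. The paper instead uses conservation of \emph{momentum} together with the initial normalization \eqref{vanish-of-initial-momentum}: since $\int z^\alpha Z^{\alpha+1}v\idx \equiv 0$, one has
\[
	\int z^\alpha v\idx = \int z^\alpha\bigl(1-Z^{\alpha+1}\bigr)v\idx,
\]
and it is precisely this identity that produces the term $\norm{z^{\alpha/2}v}{\Lnorm{2}}^2\norm{Z^{\alpha+1}-1}{\Hnorm{2}}^2$ in \eqref{poincare-ineq}. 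Without it, your mean term remains of the same size as the left-hand side and cannot be absorbed when $\mathcal E(t)$ is small; this is also why the momentum correction in \eqref{def:artificial-viscosity} was introduced in the first place (cf.\ Remark~\ref{rm:viscosity}).

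On the purely weighted part, your direct Hardy iteration could in principle be made to work, but the paper takes a cleaner route: the change of variable $z' = z^{\alpha+1}/(\alpha+1)$ turns $\int z^\alpha\abs{v}{2}\idx$ into an unweighted $L^2$ norm on $\Omega_h\times(0,\tfrac{1}{\alpha+1})$, so the \emph{standard} three-dimensional Poincar\'e yields $\lesssim \int z^{-\alpha}\abs{\dz v}{2}\idx + \int z^\alpha\abs{\nablah v}{2}\idx + \bigl|\int z^\alpha v\idx\bigr|^2$. Two applications of Hardy (Lemma~\ref{lm:hardy}, using $\dz v|_{z=0}=0$ for case~(b) when the exponent drops below $1$) then upgrade the weight $z^{-\alpha}$ to $z^{4-\alpha-\sigma}$; this is exactly the constraint $\beta<4-\alpha$ and explains why three vertical derivatives are enough precisely when $\alpha<3$.
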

\begin{proof}
	First, as we have mentioned in the introduction, from \subeqref{FB-CPE}{2}, \eqref{eq:density}, the conservation of momentum holds,
	\begin{equation*}
		\dfrac{d}{dt} \int z^\alpha Z^{\alpha+1} v \idx  = 0.
	\end{equation*}
	Therefore, we have from \eqref{vanish-of-initial-momentum},
	\begin{equation*}
		\int z^\alpha Z^{\alpha+1} v \idx = \int z^\alpha Z_0^{\alpha+1} v_0 \idx = 0,
	\end{equation*}
	or
	\begin{equation}\label{DC-001}
		\int z^\alpha  v \idx = \int z^\alpha \bigl( 1 - Z^{\alpha+1} \bigr) v \idx.
	\end{equation}
	Next, we introduce a new coordinate system $ (x,y,z') $ given by 
	\begin{equation}
		x = x, y = y, z' = \dfrac{1}{\alpha+1} z^{\alpha+1}.
	\end{equation}
	Then $ \partial_{z'} =  z^{-\alpha} \partial_z $, and $ \int_\Omega z^\alpha \cdot \idx = \int_{\Omega'} \cdot \,dxdydz' $ where $ \Omega' = \Omega_h \times (0, \frac{1}{\alpha+1} ) $ and $ |\Omega'| < \infty $. Then we apply the standard Poincar\'e inequality as follows, 
	\begin{align*}
		& \int_\Omega z^\alpha \abs{v}{2} \idx = \int_{\Omega'} \abs{v}{2} \,dxdydz' \lesssim \int_{\Omega'} \abs{v - \int_{\Omega'} v \,dxdydz'}{2} \,dxdydz' \\
		& ~~~~ + \abs{\int_{\Omega'} v \,dxdydz'}{2} \lesssim \int_{\Omega'} \abs{\partial_{z'} v}{2} \,dxdydz' + \int_{\Omega'} \abs{\nablah v}{2} \,dxdydz' + \abs{\int_{\Omega} z^\alpha v \idx  }{2} \\
		& ~~~~ = \int_\Omega z^{-\alpha} \abs{\dz v}{2} \idx  + \int_{\Omega} z^\alpha \abs{\nablah v}{2} \,dxdydz + \abs{\int_\Omega z^\alpha(1-Z^{\alpha+1}) v \idx }{2}.
	\end{align*}
	Therefore, for 
	$ 0 < \alpha < 3 $, applying Hardy's inequality to the right of the above inequality yields,
	\begin{align*}
		& \int z^\alpha \abs{v}{2} \idx \lesssim \int z^{-\alpha - \sigma} \abs{\dz v}{2} \idx  + \int_{\Omega} z^\alpha \abs{\nablah v}{2} \,dxdydz + \int z^{2\alpha} \abs{1-Z^{\alpha+1}}{2} \abs{v}{2} \idx\\
		& ~~~~ \lesssim \int z^{2-\alpha-\sigma} \bigl( \abs{\dz v}{2} + \abs{\partial_{zz} v}{2} \bigr) \idx  + \int_{\Omega} z^\alpha \abs{\nablah v}{2} \,dxdydz+ \norm{Z^{\alpha+1}-1}{\Lnorm{\infty}}^2  \int z^{2\alpha}\abs{v}{2} \idx\\
		& ~~~~\lesssim \int z^{4-\alpha - \sigma}  \bigl(\abs{\dz v}{2} + \abs{\partial_{zz} v}{2} + \abs{\partial_{zzz} v}{2} \bigr) \idx \\
		& ~~~~ ~~~~ + \int_{\Omega} z^\alpha \abs{\nablah v}{2} \,dxdydz +  \norm{z^{\alpha/2} v}{\Lnorm{2}}^2 \norm{Z^{\alpha+1}-1}{\Hnorm{2}}^2,
	\end{align*}
	where $ \sigma > 0 $ is a small number such that the following conditions hold
	\begin{gather*}
		- \alpha - \sigma \neq -1, ~ 2-\alpha-\sigma > -1.
	\end{gather*}
	For $ 0 < \alpha < 3 $, such $ \sigma $ alway exists. 
	This finishes the proof. 
\end{proof}
Now we will show Proposition \ref{prop:decay-est}.
\begin{proof}[Proof of Proposition \ref{prop:decay-est}]
	From \eqref{apriori-ineq-001}, \eqref{BE-004}, \eqref{apriori-ineq-002}, \eqref{apriori-ineq-003}, \eqref{apriori-ineq-004}, \eqref{apriori-ineq-005}, \eqref{apriori-ineq-006},  \eqref{SPE-003}, \eqref{apriori-ineq-007}, there are constants $ c_1, c_2 \cdots c_9 $ such that the quantities defined by
	\begin{align*}
		& \mathfrak E_d(t) : = \dfrac{1}{2} \int z^\alpha Z^{\alpha+1} \abs{v}{2} \idx + \dfrac{g}{\alpha+2} \int \biggl( \bigl(Z^{\alpha+2}-1\bigr) \\
		& ~~~~ ~~~~ - \dfrac{\alpha+2}{\alpha+1}\bigl( Z^{\alpha+1}-1\bigr) \biggr) \idx 
		 + c_2\int \biggl( \dfrac{\mu}{2} \abs{\nablah v}{2} + \dfrac{\mu+\lambda}{2} \abs{\dvh v}{2} \\
		& ~~~~ ~~~~ + \dfrac{\mu}{2}\abs{\dz v}{2} \biggr) \idx
		 - \dfrac{c_2}{\alpha+1} \int g z^\alpha \bigl( Z^{\alpha+1}-1\bigr) \dvh v \idx \\
		& ~~~~ + \dfrac{c_3}{2} \int z^\alpha Z^{\alpha+1} \abs{v_t}{2} \idx 
		 + \dfrac{c_3 g}{2} \int Z^\alpha \abs{Z_t}{2} \idx + \dfrac{c_4}{2} \int z^\alpha Z^{\alpha+1} \abs{\nablah v}{2} \idx \\
		& ~~~~ + \dfrac{c_4 g}{2} \int Z^\alpha \abs{\nablah Z}{2} \idx  
		 + \dfrac{c_5}{2} \int z^\alpha Z^{\alpha+1} \abs{\nablah^2 v}{2} \idx + \dfrac{c_5 g}{2} \int \abs{\nablah^2 Z}{2} \idx \\
		& ~~~~ + \dfrac{c_7}{2} \int z^\alpha Z^{\alpha+1} \abs{v_z}{2}\idx 
		 + \dfrac{c_9}{2} \int z^\alpha Z^{\alpha+1} \abs{\nablah v_{z}}{2} \idx
		, \\
		& \mathfrak G_d(t) : = \mu/2 \int Z \abs{\nabla v}{2} \idx + c_1 \norm{Z_t}{\Lnorm{2}}^2 + c_2/2 \int z^\alpha Z^\alpha \abs{v_t}{2} \idx \\
		& ~~~~ + c_3\mu/2 \int Z \abs{\nabla v_t}{2} \idx + c_4\mu/2\int Z \abs{\nabla \nablah v}{2} \idx \\
		& ~~~~ + c_5 \mu/2\int Z\abs{\nabla \nablah^2 v}{2}\idx 
		 + c_6 \norm{\nablah Z}{\Lnorm{2}}^2 + c_7\mu/4 \int Z\abs{\nabla v_{z}}{2} \idx \\
		& ~~~~ + c_8 \norm{\nablah^2 Z}{\Lnorm{2}}^2 
		 + c_9 \mu/4 \int Z \abs{\nabla \nablah v_z}{2} \idx,
	\end{align*}
	satisfy
	\begin{equation*}
		\dfrac{d}{dt} \mathfrak E_d(t)  + \mathfrak G_d(t) \leq \mathcal H(\mathcal E(t)) \mathfrak G_d(t).
	\end{equation*}
	Notice that, following similar arguments as in Proposition \ref{lm:equal-of-energies}, one can choose $ c_i's ~ (i = 1,2 \cdots 9) $ such that there is a constant $ C < \infty $ such that 
	\begin{gather*}
		C^{-1}\mathcal E(t)  \leq \mathfrak E_d(t) \leq C \mathcal E(t), \\
		C^{-1} \mathfrak D(t) \leq \mathfrak G_d (t) \leq C \mathfrak D(t) .
	\end{gather*}
	Moreover, from Proposition \ref{lm:diss-dzzz-v}, Lemma \ref{lm:poincare-ineq}, 
	and \eqref{aprasm:dissipation}, the above inequality can be extended to 
	\begin{equation*}
		\dfrac{d}{dt} \mathfrak E_d(t)  + \mathfrak G_d(t) + c_{10}\norm{z^{\alpha/2} v}{\Lnorm{2}}^2 \leq \mathcal H(\mathcal E(t)) \mathfrak G_d(t),
	\end{equation*}
	for some positive constant $ c_{10} $. Then for $ \mathcal E(t) $ small enough, one has 
	\begin{equation*}
		\dfrac{d}{dt} \mathfrak E_d(t) + C_1\mathfrak E_d(t) \leq  \dfrac{d}{dt} \mathfrak E_d(t)  + \dfrac{1}{2} \mathfrak G_d(t) + c_{10}\norm{z^{\alpha/2} v}{\Lnorm{2}}^2 \leq 0,
	\end{equation*}
	for some positive constant $ C_1 <\infty $.
	Therefore, one has
	\begin{equation*}
		\mathfrak E_d(t) \leq e^{-C_1t} \mathfrak E_d(0),
	\end{equation*}
	which finishes the proof. 
\end{proof}

\section{Higher regularity}\label{sec:regularity-2}

In this section, we show the regularity of the global solution with more regular initial data. In particular, this shows that the change of coordinates in \eqref{new-coordinates} is regular, and it justifies the equivalence of \eqref{isen-CPE-fb} and \eqref{rfeq:isen-CPE-fb} (or \eqref{FB-CPE} with the simplified viscosity tensor). Now we write down the equations after differentiating \subeqref{eq:dh-FBCPE}{1}, \subeqref{eq:dh-FBCPE}{3} horizontally once and \subeqref{eq:dh-FBCPE}{2} horizontally twice . It holds,
\begin{equation}\label{eq:dhhh-FBCPE}
	\begin{cases}
		z^\alpha Z^\alpha\bigl( \dt v_{hh} + v\cdot \nablah v_{hh} + W \dz v_{hh}\bigr) + z^\alpha Z^\alpha\bigl( 2 v_h \cdot \nablah v_{h} + v_{hh} \cdot \nablah v \\
		~~~~ ~~~~ + 2 W_h \dz v_{h} + W_{hh} \dz v\bigr) + 2z^\alpha (Z^\alpha)_h \bigl( \dt v + v\cdot \nablah v + W \dz v\bigr)_{h}\\
		~~~~ ~~~~ + z^\alpha (Z^\alpha)_{hh} \bigl( \dt v + v\cdot \nablah v + W \dz v\bigr) + g(z^\alpha Z^\alpha \nablah Z)_{hh}\\
		~~~~ = \mu \deltah v_{hh} + (\mu+\lambda) \nablah \dvh v_{hh} + \mu \partial_{zz} v_{hh} + \bigl( \mu \nablah \log Z \cdot \nablah v \\
		~~~~ ~~~~ + (\mu+\lambda) \dvh v \nablah \log Z \bigr)_{hh},
		\\
		\dt Z_{hhh} + (\alpha+1) \overline{z^\alpha v} \cdot \nablah Z_{hhh} + \overline{z^\alpha \dvh v} Z_{hhh} + 3 (\alpha+1) \overline{z^\alpha v_h} \cdot \nablah Z_{hh} \\
		~~~~ ~~~~ + 3\overline{z^\alpha \dvh v_h} Z_{hh} +  3 (\alpha+1) \overline{z^\alpha v_{hh}} \cdot \nablah Z_{h}  + 3\overline{z^\alpha \dvh v_{hh}} Z_{h} \\
		~~~~ ~~~~ + (\alpha+1) \overline{z^\alpha v_{hhh}}\cdot\nablah Z + \overline{z^\alpha \dvh v_{hhh}} Z = 0,\\
		z^\alpha W_{hh} = z^{\alpha+1} \bigl( (\alpha+1) \overline{z^\alpha v} \cdot \nablah (\log Z)_{hh} + 2(\alpha+1) \overline{z^\alpha v_{h}} \cdot \nablah (\log Z)_{h} \\
		~~~~ ~~~~ + (\alpha +1) \overline{z^\alpha v_{hh} } \cdot \nablah \log Z + \overline{z^\alpha \dvh v_{hh}}\\
		~~~~ ~~~~ - (\alpha+1) \int_0^z \xi^\alpha v \,d\xi \cdot \nablah (\log Z)_{hh}\\
		~~~~ ~~~~ - 2(\alpha+1) \int_0^z \xi^\alpha v_h \,d\xi \cdot \nablah (\log Z)_{h}\\
		~~~~ ~~~~ - (\alpha+1) \int_0^z \xi^\alpha v_{hh} \,d\xi \cdot \nablah \log Z - \int_0^z \xi^\alpha \dvh v_{hh} \,d\xi.
	\end{cases}
\end{equation}

\begin{proposition}\label{prop:Higher-order}
Suppose in addition to the assumptions as in Proposition \ref{prop:global-stability}, $ (Z_0, v_0) $ satisfies \eqref{high-regular-initial}. That is,
\begin{equation*}
	\norm{Z_0}{\Hnorm{3}} + \norm{z^{\alpha/2} \nablah^3 v_0}{\Lnorm{2}} < \infty. 
\end{equation*}
Then for any $ T > 0 $, there is a positive constant $ 0< C_T < \infty $ such that,
\begin{equation}\label{higher-regularity-001}
	\begin{aligned}
	& \sup_{0<t<T} \bigl\lbrace \norm{z^{\alpha/2} \nablah^3 v}{\Lnorm{2}}^2 + \norm{\nablah^3 Z}{\Lnorm{2}}^2 \bigr\rbrace + \int_0^T \norm{\nabla \nablah^3 v}{\Lnorm{2}}^2 \,dt \\
	& ~~~~ \leq C_T \bigl( \norm{z^{\alpha/2} \nablah^3 v_0}{\Lnorm{2}}^2 
	 + \norm{\nablah^3 Z_0}{\Lnorm{2}}^2 + 1 \bigr).
	\end{aligned}
\end{equation}
Moreover, 
$ \norm{\dt Z}{\Lnorm{\infty}}, \norm{\nablah Z}{\Lnorm{\infty}} < \infty $ for any finite $ t< \infty $. 
\end{proposition}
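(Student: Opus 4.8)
The plan is to run the energy method at one more horizontal derivative than in Section~\ref{sec:aprior}, treating everything of lower order as already controlled by the global bound $\sup_{0\le t\le T}\mathcal E(t)+\int_0^T\mathfrak D(t)\,dt\le C\mathcal E_0$ from Proposition~\ref{prop:stability-theory} (and the equivalences in Propositions~\ref{lm:equal-of-energies} and \ref{lm:equal-of-dissipation}). Concretely, I would take the $L^2$-inner product of the $v_{hh}$-equation \subeqref{eq:dhhh-FBCPE}{1} with $-(Zv_{hhh})_h$, in exact analogy with the way the $\nablah^2v$-estimate \eqref{apriori:tangential-02} is obtained by testing \subeqref{eq:dh-FBCPE}{1} with $-(Zv_{hh})_h$. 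After integrating by parts in the horizontal variables this produces $\frac{d}{dt}\frac12\int z^\alpha Z^{\alpha+1}\abs{v_{hhh}}{2}\idx$ together with the good viscous term $\int Z\bigl(\mu\abs{\nablah v_{hhh}}{2}+(\mu+\lambda)\abs{\dvh v_{hhh}}{2}+\mu\abs{\dz v_{hhh}}{2}\bigr)\idx$, which by \eqref{apriori-boundness-of-Z} dominates $\|\nabla\nablah^3 v\|_{\Lnorm2}^2$; the boundary terms vanish since $\dz v|_{z=0,1}=0$ is preserved under $\partial_h$. The pressure contribution $g\bigl(z^\alpha Z^\alpha\nablah Z\bigr)_{hh}$, after two more integrations by parts and substitution of the $Z_{hhh}$-equation \subeqref{eq:dhhh-FBCPE}{2}, yields $\frac{g}{2}\frac{d}{dt}\int Z^\alpha\abs{Z_{hhh}}{2}\idx$ plus lower-order commutators — this is precisely the cancellation structure already used for $I_{25}$ in \eqref{TE-004}, one order higher.

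Next I would estimate all the resulting error terms. Each of them carries: $v$-derivatives of order at most three together with the weight $z^{\alpha/2}$ and placed in $L^2$; the time derivative $v_t$ in at most the norm $\norm{\nabla v_t}{\Lnorm2}$ (together with $\norm{z^{\alpha/2}v_t}{\Lnorm2}$); $v$ itself in at most $\norm{\nablah^2 v}{\Hnorm1}$-type norms, hence in $L^\infty$ or $L^4$ via \eqref{ineq-supnorm}; and $Z$ in at most $\norm{\nablah^3 Z}{\Lnorm2}$ (with $Z-1\in H^2(\Omega_h)\hookrightarrow L^\infty$). The vertical velocity and its horizontal derivatives $W,W_h,W_{hh}$ are eliminated using \eqref{eq:verticalvelocity}, \eqref{eq:dz-W}, and \subeqref{eq:dhhh-FBCPE}{3}, which express them through $v,v_h,v_{hh}$ and the horizontal derivatives of $\log Z$ up to third order only; as throughout the paper, no singular vertical derivative of the density enters. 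Applying H\"older's, Minkowski's, the Gagliardo--Nirenberg \eqref{ineq-supnorm} and Hardy's (Lemma~\ref{lm:hardy}) inequalities, together with $\sup_{[0,T]}\mathcal E\le C\mathcal E_0$, every term is bounded by one of the following: (i) $\delta$ times the third-order dissipation $\mathfrak D_3(t):=\int Z\abs{\nabla\nablah^3 v}{2}\idx+\norm{\nablah^3 Z}{\Lnorm2}^2$, absorbed on the left for $\delta$ small; (ii) $\mathcal H(\mathcal E(t))\,\mathfrak D_3(t)$, absorbed once $\mathcal E$ is small; (iii) $\phi(t)$ times the third-order energy $\mathfrak E_3(t):=\int z^\alpha Z^{\alpha+1}\abs{v_{hhh}}{2}\idx+g\int Z^\alpha\abs{Z_{hhh}}{2}\idx$, where $\phi(t)$ is built from $\mathfrak D(t)$ and bounded lower-order sup-norms, hence $\phi\in L^1(0,T)$; or (iv) a time-integrable forcing $\psi(t)$ with $\psi\in L^1(0,T)$, coming from $\mathfrak D(t)$ (which satisfies $\int_0^T\mathfrak D\,dt\le C\mathcal E_0$) and from $\int_0^T\norm{\partial_{zzz}v}{\Lnorm2}^2\,dt\le C\mathcal E_0+C\mathcal E_0^3$ of Proposition~\ref{lm:diss-dzzz-v}.

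Collecting these, for $\mathcal E(t)$ below a suitably small threshold $\varepsilon_6<\varepsilon_2$ one arrives at
\begin{equation*}
	\dfrac{d}{dt}\mathfrak E_3(t)+\dfrac12\mathfrak D_3(t)\le \phi(t)\,\mathfrak E_3(t)+\psi(t),\qquad \phi,\psi\in L^1(0,T),\quad \|\phi\|_{L^1(0,T)}+\|\psi\|_{L^1(0,T)}\le C(\mathcal E_0,T).
\end{equation*}
Gr\"onwall's inequality then gives $\mathfrak E_3(t)\le e^{\|\phi\|_{L^1}}\bigl(\mathfrak E_3(0)+\|\psi\|_{L^1}\bigr)$ for $t\in[0,T]$, and integrating the differential inequality back in time controls $\int_0^T\mathfrak D_3\,dt$ by $C_T\bigl(\mathfrak E_3(0)+1\bigr)$; since $Z\ge 1/2$ and $\mathfrak E_3(0)\lesssim\norm{z^{\alpha/2}\nablah^3 v_0}{\Lnorm2}^2+\norm{\nablah^3 Z_0}{\Lnorm2}^2<\infty$ by \eqref{high-regular-initial}, this is exactly \eqref{higher-regularity-001}, with $C_T$ allowed to depend on $T$ and on $\mathcal E_0$. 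As in Sections~\ref{sec:aprior}--\ref{sec:local}, this a priori estimate is then promoted to an actual bound for the global solution by the continuity argument, applied to the higher-order norm along the solution constructed in Proposition~\ref{prop:global-stability}. Finally, since $Z$ is independent of $z$, \eqref{higher-regularity-001} gives $\nablah Z\in L^\infty(0,T;H^2(\Omega_h))\hookrightarrow L^\infty$; and from \eqref{eq:movingboundary}, $\dt Z=-(\alpha+1)\overline{z^\alpha v}\cdot\nablah Z-\overline{z^\alpha\dvh v}\,Z$, where $\overline{z^\alpha v}$ and $\overline{z^\alpha\dvh v}$ lie in $H^2(\Omega_h)$ (the first from $\norm{z^{\alpha/2}\nablah^2 v}{\Lnorm2}^2\lesssim\mathcal E$, the second from $\norm{z^{\alpha/2}\nablah^3 v}{\Lnorm2}$ just bounded) and $\nablah Z,\,Z-1\in H^2(\Omega_h)$; since $H^2(\Omega_h)$ is a Banach algebra continuously embedded in $L^\infty$, $\dt Z\in L^\infty$ for every finite $t$.

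The step I expect to be the main obstacle is items (ii)--(iii) for the top-order terms in which all three horizontal derivatives fall on $Z$, either inside the substituted vertical-velocity expression or on the viscosity coefficient $\log Z$ — e.g. terms of the type $\int z^\alpha Z^\alpha\bigl(\overline{z^\alpha v}\cdot\nablah(\log Z)_{hh}\bigr)\dz v\cdot(Z v_{hhhh})\idx$ that arise after substituting $W_{hh}$ and testing against the $v_{hhhh}$-part of $-(Zv_{hhh})_h$. One must integrate by parts in the horizontal variables to shed the fourth derivative on $v$, then split $\nablah(\log Z)_{hh}=\nablah^3 Z/Z+\text{(lower order)}$ so that $\nablah^3 Z$ occurs \emph{linearly}, is carried in $L^2$, and is absorbed into $\norm{\nablah^3 Z}{\Lnorm2}^2$ with a coefficient made small through factors such as $\norm{\dz v}{\hHnorm1}$ (hence by $\mathcal E^{1/2}$), while keeping the $z^{\alpha/2}$-weight on the remaining factor so that Hardy's and the Gagliardo--Nirenberg inequalities still apply. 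These manipulations are routine in spirit — they mirror the treatment of $I_{19},\dots,I_{25}$ in Proposition~\ref{lm:L^2-v-dhh-est} and of $I_{30},\dots,I_{38}$ in Proposition~\ref{lm:L^2-v-dhz-est} — but they are the place where genuine care is required and where the specific algebraic structure of \eqref{eq:verticalvelocity} (the separation-of-variables form of $\rho$ and the vanishing of $W$ at $z=0,1$) is actually used.
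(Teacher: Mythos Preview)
Your proposal is correct and follows essentially the same route as the paper: test \subeqref{eq:dhhh-FBCPE}{1} against $-(Zv_{hhh})_h$, use \subeqref{eq:dhhh-FBCPE}{2} to convert the pressure term into $\frac{d}{dt}\frac{g}{2}\int\abs{Z_{hhh}}{2}\idx$ plus lower-order commutators, estimate the error terms $H_1,\dots,H_7$ (including the $W,W_h,W_{hh}$ contributions via \eqref{eq:verticalvelocity} and \subeqref{eq:dhhh-FBCPE}{3}) by H\"older/Minkowski/Sobolev, and close via Gr\"onwall with an $L^1(0,T)$ coefficient built from $\norm{\nablah v}{\Hnorm2}^2+\norm{\nabla v}{\Lnorm2}^2+\norm{\nabla v_t}{\Lnorm2}^2$. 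Two minor remarks: the paper obtains $\frac{g}{2}\int\abs{Z_{hhh}}{2}\idx$ without the $Z^\alpha$ weight (as in the $Z_{hh}$-estimate \eqref{apriori:tangential-02}), and the $\partial_{zzz}v$ bound from Proposition~\ref{lm:diss-dzzz-v} is not actually needed here since no triple vertical derivative appears in the $H_k$'s.
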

\begin{proof}
	Taking inner product of \subeqref{eq:dhhh-FBCPE}{1} with $ - (Z \partial_{hhh}v)_h  $ yields,
	\begin{align*}
		& \dfrac{d}{dt} \biggl\lbrace \dfrac{1}{2} \int z^\alpha Z^{\alpha+1} \abs{v_{hhh}}{2} \idx + \dfrac{g}{2} \int \abs{Z_{hhh}}{2} \idx  \biggr\rbrace \\
		& ~~~~ + \int Z \bigl( \mu \abs{\nablah v_{hhh}}{2} + (\mu+\lambda) \abs{\dvh v_{hhh}}{2} + \mu \abs{\dz v_{hhh}}{2} \bigr) \idx \\
		& = \int \bigl( \mu \nablah (\log Z)_h \cdot\nablah v_{hh} +(\mu+\lambda) \dvh v_{hh} \nablah (\log Z)_h \bigr) \cdot Z v_{hhh}\\
		& ~~~~ - \bigl(2 \mu \nablah (\log Z)_{h} \cdot \nablah v_{h} + 2(\mu+\lambda) \dvh v_h \nablah (\log Z)_h \\
		& ~~~~ ~~~~ + \mu \nablah (\log Z)_{hh} \cdot \nablah v + (\mu+\lambda) \dvh v \nablah (\log Z)_{hh} \bigr) \cdot (Z v_{hhh})_h \idx   \\
		& ~~~~ - \int z^\alpha Z \bigl( (Z^\alpha)_h  (\dt v_{hh} + v \cdot \nablah v_{hh}) + Z^{\alpha} v_h \cdot \nablah v_{hh} \bigr)\cdot v_{hhh} \idx \\
		& ~~~~ - \int z^\alpha Z \bigl( (Z^\alpha)_h W \dz v_{hh} + Z^\alpha W_h \dz v_{hh} \bigr)\cdot v_{hhh} \idx \\
		& ~~~~ + \int z^\alpha \bigl( Z^\alpha (2 v_h \cdot \nablah v_h + v_{hh} \cdot \nablah v) + 2(Z^\alpha)_h (\dt v + v \cdot \nablah v)_h \\
		& ~~~~ ~~~~ + (Z^\alpha)_{hh}(\dt v + v\cdot \nablah v)  \bigr) \cdot \bigl( Z v_{hhh} \bigr)_h \idx \\
		& ~~~~ + \int z^\alpha \bigl( Z^\alpha (2 W_h \dz v_h + W_{hh} \dz v) + 2 (Z^\alpha)_h (W \dz v)_h \\
		& ~~~~ ~~~~ + (Z^\alpha)_{hh} W \dz v \bigr) \cdot \bigl( Z v_{hhh} \bigr)_h \idx \\
		& ~~~~ + g\int z^\alpha \bigl( (Z^\alpha-1) Z_{hhh} + 2 (Z^\alpha)_h Z_{hh} + (Z^\alpha)_{hh} Z_h \bigr) \dvh (Z v_{hhh}) \idx \\ 
		& ~~~~ - g \int Z_{hhh} \bigl( \dfrac{1-\alpha}{2} \overline{z^\alpha\dvh v} Z_{hhh} + 3(\alpha+1) \overline{z^\alpha v_h} \cdot \nablah Z_{hh} \\
		& ~~~~ ~~~~ + 3 \overline{z^\alpha \dvh v_h} Z_{hh} + 3 (\alpha+1) \overline{z^\alpha v_{hh}} \cdot \nablah Z_h \\
		& ~~~~ ~~~~ + 3 \overline{z^\alpha \dvh v_{hh}} Z_h + \alpha \overline{z^\alpha v_{hhh}} \cdot \nablah Z \bigr) \idx =: \sum_{k=1}^{7} H_k.
	\end{align*}
	Now we show the estimates for $ H_k $'s in the following. Similarly, applying integration by parts and making use of  H\"older's, Minkowski's, the Sobolev embedding, and Young's inequalities yield that,
	\begin{align*}
		& H_1 \lesssim \int_0^1 \bigl( \hnorm{\nablah^2 Z}{\Lnorm{2}} +\hnorm{\nablah Z}{\Lnorm{4}}^2 \bigr) \hnorm{v_{hhh}}{4}^2 + \bigl( (\hnorm{\nablah^2 Z}{\Lnorm{4}} + \hnorm{\nablah Z}{\Lnorm{8}}^2) \hnorm{\nablah^2 v}{\Lnorm{4}} \\
		& ~~~~ + ( \hnorm{\nablah^3 Z}{\Lnorm{2}} + \hnorm{\nablah^2 Z}{\Lnorm{4}} \hnorm{\nablah Z}{\Lnorm{4}} + \hnorm{\nablah Z}{\Lnorm{6}}^3) \hnorm{\nablah v}{\Lnorm{\infty}} \bigr) \bigl( \hnorm{v_{hhhh}}{\Lnorm{2}} \\
		& ~~~~ + \hnorm{Z_h}{\Lnorm{4}} \hnorm{v_{hhh}}{\Lnorm{4}} \bigr) \,dz \lesssim \delta \norm{\nablah v_{hhh}}{\Lnorm{2}}^2 + C_\delta \bigl( \norm{\nablah Z}{\Hnorm{1}}^8 + 1\bigr) \norm{\nablah v}{\hHnorm{2}}^2 \\
		& ~~~~ + C_\delta \bigl( \norm{\nablah Z}{\Hnorm{1}}^2 + 1\bigr) \norm{\nablah v}{\hHnorm{2}}^2\norm{\nablah Z}{\Hnorm{2}}^2,\\
		& H_2 = \dfrac{\alpha}{\alpha+1} \int z^\alpha \bigl( (Z^{\alpha+1})_{hh} \dt v_h \cdot v_{hhh} + (Z^{\alpha+1})_h \dt v_h \cdot v_{hhhh} \bigr) \idx \\
		& ~~~~ - \int z^\alpha Z \bigl( (Z^\alpha)_h v \cdot \nablah v_{hh} + Z^\alpha v_h \cdot \nablah v_{hh} \bigr) \cdot v_{hhh} \idx \lesssim \delta \norm{\nabla v_{hhh}}{\Lnorm{2}}^2\\
		& ~~~~ + \delta \norm{\nablah v}{\Hnorm{2}}^2 + C_\delta \norm{\nablah Z}{\Hnorm{1}}^4  \norm{\nabla \dt v}{\Lnorm{2}}^2  + C_\delta \norm{\nabla \dt v}{\Lnorm{2}}^2 \norm{\nablah Z}{\Hnorm{2}}^2\\
		& ~~~~ + C_\delta \bigl( \norm{\nablah Z}{\Hnorm{1}}^4 + 1 \bigr)\norm{v}{\Hnorm{1}}^4 \norm{\nablah v}{\Hnorm{2}}^2,\\
		& H_4 \lesssim \delta \norm{\nabla v_{hhh}}{\Lnorm{2}}^2 + C_\delta \norm{\nablah Z}{\Hnorm{1}}^4 \norm{v}{\Hnorm{1}}^2 \norm{\nablah v}{\Hnorm{2}}^2 \\
		& ~~~~ + C_\delta \norm{\nablah Z}{\Hnorm{1}}^4 \bigl( \norm{z^{\alpha/2}v_t}{\Lnorm{2}}^2 + \norm{\nabla v_t}{\Lnorm{2}}^2 \bigr)  + C_\delta \bigl( \norm{z^{\alpha/2} v}{\hHnorm{1}}^2 \norm{\nablah v}{\Hnorm{2}}^2\\
		& ~~~~ + \norm{v}{\Hnorm{1}}^2 \norm{\nablah v}{\Hnorm{2}}^2 + \norm{\nablah v}{\Hnorm{2}}^2 + \norm{z^{\alpha/2} v_t}{\Lnorm{2}}^2 + \norm{\nabla \dt v}{\Lnorm{2}}^2 \bigr) \\
		& ~~~~ \times \norm{\nablah Z}{\Hnorm{2}}^2 + C_\delta \norm{z^{\alpha/2} v}{\hHnorm{2}}^2 \norm{\nablah v}{\Hnorm{2}}^2, \\
		& H_6 \lesssim \delta \norm{\nabla v_{hhh}}{\Lnorm{2}}^2 + \delta \norm{v_{hhh}}{\Lnorm{2}}^2 + C_\delta \bigl( \norm{Z-1}{\Hnorm{2}}^4 + 1\bigr) \norm{\nablah Z}{\Hnorm{2}}^2,\\
		& H_7 \lesssim \bigl( \norm{\nablah v}{\Hnorm{2}}^2 + 1 \bigr) \norm{\nablah Z}{\Hnorm{2}}^2.
	\end{align*}
	On the other hand, 
	\begin{align*}
		& H_{3} + H_5 = \int \bigl(Z(Z^\alpha)_h \dz v_{hh} \cdot v_{hhh} + 2 (Z^\alpha)_h \dz v_h \cdot (Z v_{hhh})_h \\
		& ~~~~ + (Z^\alpha)_{hh} \dz v \cdot (Z v_{hhh})_h \bigr) z^\alpha W \idx + \int \bigl( Z^{\alpha+1} \dz v_{hh} \cdot v_{hhh} \\
		& ~~~~ + 2 Z^\alpha\dz v_h \cdot (Z v_{hhh})_h + 2(Z^\alpha)_h \dz v \cdot (Z v_{hhh})_h \bigr) z^\alpha W_h \idx\\
		& ~~~~ + \int Z^\alpha \dz v \cdot (Z v_{hhh})_h z^\alpha W_{hh} \idx \lesssim \int_0^1 \bigl( \hnorm{Z_h}{\Lnorm{4}} \hnorm{\dz v_{hh}}{\Lnorm{2}} \hnorm{v_{hhh}}{\Lnorm{4}} \\
		& ~~~~ + (\hnorm{Z_h}{\Lnorm{4}} \hnorm{\dz v_h}{\Lnorm{4}} + \hnorm{Z_{hh}}{\Lnorm{2}} \hnorm{\dz v}{\Lnorm{\infty}} + \hnorm{Z_{h}}{\Lnorm{4}}^2 \hnorm{\dz v}{\Lnorm{\infty}} ) ( \hnorm{v_{hhhh}}{\Lnorm{2}} \\
		& ~~~~ + \hnorm{Z_h}{\Lnorm{4}}\hnorm{v_{hhh}}{\Lnorm{4}})  \bigr)  \hnorm{z^\alpha W}{\Lnorm{\infty}} \,dz + \int_0^1 \bigl( \hnorm{\dz v_{hh}}{\Lnorm{2}} \hnorm{v_{hhh}}{\Lnorm{4}} \\
		& ~~~~ + (\hnorm{\dz v_h}{\Lnorm{4}} + \hnorm{Z_h}{\Lnorm{8}} \hnorm{\dz v}{\Lnorm{8}}  ) (\hnorm{v_{hhhh}}{\Lnorm{2}} + \hnorm{Z_h}{\Lnorm{4}} \hnorm{v_{hhh}}{\Lnorm{4}})    \bigr) \hnorm{z^\alpha W_h }{\Lnorm{4}} \,dz\\
		& ~~~~ + \int_0^1  \hnorm{\dz v}{\Lnorm{\infty}}\bigl( \hnorm{v_{hhhh}}{\Lnorm{2}} + \hnorm{Z_h}{\Lnorm{4}} \hnorm{v_{hhh}}{\Lnorm{4}} \bigr) \hnorm{z^\alpha W_{hh}}{\Lnorm{2}} \,dz \\
		& ~~~~ \lesssim \delta \norm{\nablah v_{hhh}}{\Lnorm{2}}^2 + \delta \norm{\nabla v_{hh}}{\Lnorm{2}}^2 + C_\delta \bigl(\mathcal H(\mathcal E(t)) + 1\bigr) \\
		& ~~~~ \times \bigl( \norm{\nablah v}{\Hnorm{2}}^2 + \norm{\nabla v}{\Lnorm{2}}^2 \bigr) \bigl( \norm{z^{\alpha/2}\nablah^3 v}{\Lnorm{2}}^2 + \norm{\nablah^3 Z}{\Lnorm{2}}^2  + 1\bigr),
	\end{align*}
	where we have applied the following facts
	\begin{align*}
		& \hnorm{z^\alpha W}{\Lnorm{\infty}} \lesssim \norm{z^{\alpha/2} v}{\hHnorm{3}} + \norm{z^{\alpha/2} v}{\hHnorm{2}} \norm{\nablah Z}{\Hnorm{2}}, \\
		& \hnorm{z^\alpha W_h}{\Lnorm{4}} \lesssim \norm{z^{\alpha/2} v}{\hHnorm{3}} + \norm{z^{\alpha/2} v}{\hHnorm{2}}\bigl( \norm{\nablah Z}{\Hnorm{2}}+ \norm{\nablah Z}{\Hnorm{1}}^2  \bigr),\\
		& \hnorm{z^\alpha W_{hh}}{\Lnorm{2}} \lesssim \norm{z^{\alpha/2} v}{\hHnorm{3}} + \norm{z^{\alpha/2} v}{\hHnorm{2}}\bigl( \norm{\nablah Z}{\Hnorm{2}}+ \norm{\nablah Z}{\Hnorm{1}}^3 \bigr).
	\end{align*}
	Summing up the above inequalities with a small enough $ \delta > 0 $ yields,
	\begin{align*}
		& \dfrac{d}{dt} \biggl\lbrace \dfrac{1}{2} \int z^\alpha Z^{\alpha+1} \abs{\nablah^3 v}{2} \idx + \dfrac{g}{2} \int \abs{\nablah^3 Z}{2} \idx  \biggr\rbrace \\
		& ~~~~ + \mu/2 \int Z \bigl( \abs{\nablah^4 v}{2} + \abs{\dz \nablah^3 v}{2} \bigr) \idx\\
		& \lesssim \bigl(\mathcal H(\mathcal E(t)) + 1\bigr)\bigl( \norm{\nablah v}{\Hnorm{2}}^2 + \norm{\nabla v}{\Lnorm{2}}^2 + \norm{\nabla v_t}{\Lnorm{2}}^2 + 1 \bigr) \\
		& ~~~~ \times \bigl( \norm{z^{\alpha/2} Z^{\alpha+1} \nablah^3 v}{\Lnorm{2}}^2 + \norm{\nablah^3 Z}{\Lnorm{2}}^2  + 1\bigr).
	\end{align*}
	Then applying the Gr\"onwall's inequality to above inequality will show \eqref{higher-regularity-001}. On the other hand, notice \eqref{eq:movingboundary}. One has
	\begin{equation*}
		\norm{\dt Z}{\Lnorm{\infty}} \leq \norm{z^{\alpha/2}v}{\hHnorm{2}}\norm{\nablah Z}{\Hnorm{2}} + \norm{z^{\alpha/2} v}{\hHnorm{3}}. 
	\end{equation*}
	This finishes the proof.
\end{proof}

\section{Global stability theory of a non-conservative system}\label{sec:non-conservative}

System \eqref{FB-CPE}, as discussed in Remark \ref{rm:viscosity}, satisfies the conservations of mass, momentum, and energy; see Proposition \refeq{lm:basic-energy}. To provide a foundation for application, e.g. simulation etc., we introduce the following non-conservative system with constant viscosity:
\begin{equation}\label{FB-CPE-non-conservative}
	\begin{cases}
		(\alpha + 1) z(\dt Z + v \cdot \nablah Z) + z Z ( \dvh v + \partial_z W) \\
		~~~~ ~~~~ + \alpha Z W = 0 & \text{in} ~ \Omega,\\
		\rho (\dt v + v \cdot \nablah v + W \partial_z v + g \nablah Z) = \Delta_h v + \partial_{zz} v & \text{in} ~ \Omega,\\
		\partial_z Z = 0 & \text{in} ~ \Omega,
	\end{cases}
\end{equation}
with the boundary condition \eqref{FB-BC-CPE}. The same identities \eqref{eq:movingboundary} and \eqref{eq:verticalvelocity} still hold. For the initial data, we assume again
\begin{equation}\label{initial-mass-non-conserv}
	\int Z^{\alpha+1} \idx\vert_{t=0} = \int Z_0^{\alpha+1}\idx = 1.
\end{equation}
Then one can check, for the non-conservative system \eqref{FB-CPE-non-conservative}, the conservation of mass still holds, i.e.,
\begin{equation}\label{mass-non-conserv}
	\int Z^{\alpha+1}\idx = \int Z_0^{\alpha+1}\idx = 1
\end{equation}
for a solution. 

Then one can easily check the calculation, and conclude that, the same global stability theory holds for solutions $ (v, Z) $ to \eqref{FB-CPE-non-conservative}. That is
\begin{theorem}[Global Stability for the Non-conservative System]\label{thm:global-non-conserv}
	Under the same assumptions as in Theorem \ref{thm:global}, there exists a unique global solution $ (v,Z) $ to system \eqref{FB-CPE-non-conservative}, which satisfies the same regularity and estimates as in Theorem \ref{thm:global}.
\end{theorem}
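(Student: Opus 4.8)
The plan is to re-run, in order, the a priori estimates of Section~\ref{sec:aprior}, the vacuum-boundary regularity of Section~\ref{sec:regularity-1}, the local well-posedness and uniqueness of Section~\ref{sec:global-stability}, and the higher regularity of Section~\ref{sec:regularity-2}, checking at each step that replacing the viscous term $\mu\deltah v+(\mu+\lambda)\nablah\dvh v+\mu\partial_{zz}v$ together with the momentum correction by the plain Laplacian $\deltah v+\partial_{zz}v$ changes nothing essential. Two structural facts are preserved outright: the equation \eqref{eq:movingboundary} for $Z$ is untouched, so the conservation of mass \eqref{mass-non-conserv} holds; consequently Proposition~\ref{lm:equal-of-dissipation} goes through verbatim --- its only extra input beyond \eqref{STB-002} is the Poincar\'e bound $\norm{Z^{\alpha+1}-1}{\Lnorm{2}}\lesssim\norm{\nablah Z}{\Lnorm{2}}$, which rests on conservation of mass --- so the dissipation functional still controls $\norm{Z-1}{\Hnorm{2}}^2$. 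We take $\mathcal E,\mathfrak E_c,\mathfrak D$ as in Sections~\ref{sec:functional}--\ref{sec:aprior} with the $(\mu+\lambda)$-divergence terms deleted and $\mu=1$; the plain Laplacian is coercive on its own, which is all the estimates use. Only the decay of Theorem~\ref{thm:stability} is unavailable here, since momentum is no longer conserved and Lemma~\ref{lm:poincare-ineq} fails --- which is why Theorem~\ref{thm:global-non-conserv} asks only for the conclusions of Theorem~\ref{thm:global}.

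The heart of the matter is the basic energy estimate. Testing \subeqref{FB-CPE-non-conservative}{2} against $Zv$ and integrating by parts, using $\dz Z=0$ and $\dz v|_{z=0,1}=0$, the viscous contribution is
\[
-\int(\deltah v+\partial_{zz}v)\cdot Zv\idx=\int Z\bigl(\abs{\nablah v}{2}+\abs{\dz v}{2}\bigr)\idx+\int\nablah Z\cdot(v\cdot\nablah v)\idx.
\]
The first term is the clean dissipation; the second --- which the momentum correction was designed to cancel in \eqref{FB-CPE} --- is the only genuinely new error. Using $\nablah Z\in\Hnorm{1}(\mathbb T^2)\hookrightarrow\Lnorm{4}$ together with $\norm{Z-1}{\Hnorm{2}}^2\lesssim\mathfrak D$, it is bounded by $\norm{Z-1}{\Hnorm{2}}\norm{v}{\Hnorm{1}}\norm{\nablah v}{\Lnorm{2}}\lesssim\norm{v}{\Hnorm{1}}\bigl(\norm{Z-1}{\Hnorm{2}}^2+\norm{\nablah v}{\Lnorm{2}}^2\bigr)\lesssim\mathcal H(\mathcal E(t))\,\mathfrak D(t)$, which is absorbable for $\mathcal E$ small; the manipulation of the gravity term producing the relative potential energy is viscosity-independent and unchanged. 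The same bookkeeping handles Propositions~\ref{lm:H^1-v-est}--\ref{lm:L^2-v-dhz-est}: where the multiplier is $v_t$ no $Z$-coefficient is crossed and the estimate is in fact simpler because the correction no longer contributes; where the multiplier is $Zv_t$, $Zv_h$, $(Zv_{hh})_h$, $Z\partial_{zz}v$, or $Z\partial_{zz}v_h$, integrating the plain Laplacian by parts produces errors of $\nablah Z$- and $\deltah Z$-type of exactly the kind already present in the original proof (e.g.\ $I_{17}$, $I_{24}$), estimated the same way and again of the form $\mathcal H(\mathcal E)\,\mathfrak D$. The elliptic bound \eqref{RG-VacuumBD} and the $\partial_{zzz}v$ identity \eqref{eq:dzzz-v} use only that $\deltah v+\partial_{zz}v$ is a coercive constant-coefficient operator whose $\Lnorm{2}$ norm controls $\norm{\nabla^2 v}{\Lnorm{2}}$, respectively $\norm{\partial_{zzz}v}{\Lnorm{2}}$, which remains true, and their right-hand sides are unchanged apart from the disappearance of the correction terms. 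Hence Propositions~\ref{prop:stability-theory}, \ref{prop:H^2-of-v} and \ref{lm:diss-dzzz-v} hold for \eqref{FB-CPE-non-conservative}.

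Next I would re-run the fixed-point scheme of Section~\ref{sec:local} with the linearized viscosity $\deltah v+\partial_{zz}v$ in place of the modified one: the hyperbolic--parabolic solvability for fixed smooth $(\hat Z,\hat v)$ is unaffected, and the $\eta$-uniform bounds \eqref{est-linearized-eq-001}--\eqref{est-linearized-eq-total-v-2} are only simpler, since the terms carrying the correction (several of the $L_k$'s) are absent, so $\Phi:(\hat Z,\hat v)\mapsto(Z,v)$ still maps $X_T$ into itself for $T,\mathcal E_{0,\eta}$ small and $C_0$ large, and Tychonoff's theorem plus the limit $\eta\to0^+$ yield the local strong solution. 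Uniqueness follows from the $\Lnorm{2}$ difference estimate for $(Z_{12},v_{12})$ exactly as in Proposition~\ref{prop:global-stability}: in \eqref{uniq-001}--\eqref{uniq-002} the $\nablah(\log Z_1-\log Z_2)$ and $\nablah\log Z_2$ terms drop, removing $G_9$ and simplifying $G_7,G_8$, while the new integration-by-parts errors are again $\mathcal H(\mathcal E_1,\mathcal E_2)\,\mathfrak D$-type, so the Gr\"onwall argument closes verbatim. Combining these yields the analog of Proposition~\ref{prop:global-stability}, whence the continuity argument produces the global strong solution with the estimates \eqref{apriori:total-energy} and \eqref{apriori:H^2-v}; the higher-regularity part of Theorem~\ref{thm:global} follows from the Section~\ref{sec:regularity-2} argument in the same manner.

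The main obstacle is organizational rather than conceptual: in the top-order estimates \eqref{apriori:tangential-02} and \eqref{apriori:dsptn-dhh-z}, and in the Section~\ref{sec:regularity-2} estimate for $\nablah^3 v$, one must check term by term that every error produced by commuting $\partial_h$ (or $\partial_h^2$, $\partial_h^3$) past the coefficient $Z$ and by integrating the plain Laplacian by parts against $Zv_{hh}$, $Z\partial_{zz}v_h$, $(Zv_{hhh})_h$ is still of the schematic form $\mathcal H(\mathcal E)\,\mathfrak D$, with no term demanding a dissipation norm that $\mathfrak D$ does not contain. Since $\mathfrak D$ already controls $\int Z\bigl(\abs{\nabla v}{2}+\abs{\nabla^2 v}{2}+\abs{\nabla^2\nablah v}{2}\bigr)\idx$, $\norm{\nabla v_t}{\Lnorm{2}}^2$, $\norm{z^{\alpha/2}v_t}{\Lnorm{2}}^2$ and $\norm{Z-1}{\Hnorm{2}}^2$, the plain Laplacian supplies at least as strong a dissipation as the modified viscosity did, so no such obstruction actually arises; but this is the point where the original proof relied most delicately on the precise algebra of the viscosity tensor and the momentum correction, and the verification should be carried out with care.
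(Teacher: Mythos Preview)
Your proposal is correct and matches the paper's approach, which in fact offers no detailed proof for Theorem~\ref{thm:global-non-conserv} beyond the assertion that ``one can easily check the calculation''; your outline supplies exactly the verification the paper leaves implicit, and your identification of the single new error term arising from the loss of the momentum correction agrees with the paper's own computation in \eqref{energy-non-conserv} (done there with $v-\delta(t)$ for the decay argument). One small slip: the cross term from integrating $-\int\deltah v\cdot Zv\idx$ by parts is $\int(\nablah Z\cdot\nablah v)\cdot v\idx=\tfrac{1}{2}\int\nablah Z\cdot\nablah\abs{v}{2}\idx$, not $\int\nablah Z\cdot(v\cdot\nablah v)\idx$, but your trilinear estimate applies equally to the correct form.
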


On the other hand, to describe the asymptotic stability, let
\begin{equation}\label{def:momentum-non-conserv}
	\delta(t):= \dfrac{\int \rho v Z\idx}{\int \rho Z \idx} = (\alpha+1) \int z^\alpha Z^{\alpha + 1} v \idx.
\end{equation}
We remind readers that we have assumed $ A_\alpha = 1 $ for the sake of clear presentation. 
Then from \subeqref{FB-CPE-non-conservative}{2}, one can verify
\begin{equation}\label{eq:momemtum-non-conserv}
	\dfrac{d}{dt}\delta(t) = \delta'(t) = - (\alpha+1)\int (\nablah Z \cdot \nablah) v \idx \lesssim\norm{\nablah Z}{L^2} \norm{\nablah v}{L^2}.
\end{equation}
Then from Theorem \refeq{thm:global-non-conserv}, we have that there exists $ \delta_0 > 0 $ such that 
\begin{equation}\label{lim:momentum-non-conserv}
	\lim_{t\rightarrow \infty }\delta(t) = \delta_\infty.
\end{equation}

Moreover, the energy functional for the non-conservative system is defined by replacing $ v $ with $ v - \delta(t) $, i.e.,
\begin{equation}\label{def:total-energy-non-conserv}
	\begin{aligned}
		& \mathcal E_n(t) := \norm{z^{\alpha/2} (v-\delta(t))}{\hHnorm{2}}^2 + \norm{z^{\alpha/2} (v-\delta(t))_t}{\Lnorm{2}}^2 \\
		& ~~~~ + \norm{z^{\alpha/2}(v-\delta(t))_z}{\hHnorm{1}}^2 + \norm{(v-\delta(t))}{\Hnorm{1}}^2 \\
		& ~~~~ + \norm{Z - 1}{\Hnorm{2}}^2 + \norm{Z_t}{\Lnorm{2}}^2,
	\end{aligned}
\end{equation}
with the initial energy $ \mathcal E_{n,0} $ defined similarly. 

Rewrite \subeqref{FB-CPE-non-conservative}{2} as
\begin{equation}\label{eq:momentum-non-conserv}
	\begin{aligned}
	& z^\alpha Z^\alpha (\dt (v-\delta(t)) + v \cdot \nablah (v-\delta(t)) + W\dz (v-\delta(t)) + g\nablah Z) \\
	& \qquad = \Delta_h(v-\delta(t)) + \partial_{zz}(v-\delta(t)) - z^\alpha Z^\alpha \delta'(t).
	\end{aligned}
\end{equation}
Then taking the $ L^2 $-inner product of \eqref{eq:momemtum-non-conserv} with $ Z (v-\delta(t)) $ and following the same calculation as in Proposition \refeq{lm:basic-energy} lead to
\begin{equation}\label{energy-non-conserv}
	\begin{aligned}
	& \dfrac{d}{dt}\biggl\lbrace \dfrac{1}{2} \int z^\alpha Z^{\alpha+1}|v-\delta(t)|^2 \idx + \dfrac{g}{\alpha+2} \int (Z^{\alpha+2}-1) - \dfrac{\alpha+2}{\alpha+1} (Z^{\alpha+1} - 1) \idx \biggr\rbrace\\
	& \quad + \int Z (|\nablah(v-\delta(t))|^2 + |\partial_z(v-\delta(t))|^2) \idx \\
	& = - \int \nablah Z \cdot \nablah (v-\delta(t)) \cdot (v-\delta(t)) \idx - \int z^\alpha Z^{\alpha + 1} \delta'(t)(v-\delta(t)) \idx\\
	& \leq \norm{\nablah Z}{L^6} \norm{\nablah(v-\delta(t))}{L^3} \norm{v-\delta(t)}{L^2}\\
	& \lesssim \norm{v-\delta(t)}{H^1}\norm{\nablah Z}{H^1} \norm{\nablah(v-\delta(t))}{H^1}.
	\end{aligned}
\end{equation}

Then from \eqref{energy-non-conserv}, \eqref{BE-004} \eqref{apriori-ineq-002}, \eqref{apriori-ineq-003}, \eqref{apriori-ineq-004}, \eqref{apriori-ineq-005}, \eqref{apriori-ineq-006}, \eqref{SPE-003}, \eqref{apriori-ineq-007}, there exist constants $ d_1, d_2 \cdots d_9 $ such that the quantities defined by
\begin{align*}
	& \mathfrak E_{n,d}(t) : = \dfrac{1}{2} \int z^\alpha Z^{\alpha+1} \abs{v-\delta(t)}{2} \idx + \dfrac{g}{\alpha+2} \int \biggl( \bigl(Z^{\alpha+2}-1\bigr) \\
	& ~~~~ ~~~~ - \dfrac{\alpha+2}{\alpha+1}\bigl( Z^{\alpha+1}-1\bigr) \biggr) \idx 
	 + d_2\int \biggl( \dfrac{1}{2} \abs{\nablah (v-\delta(t))}{2}  + \dfrac{1}{2}\abs{\dz (v-\delta(t))}{2} \biggr) \idx \\
	& ~~~~ - \dfrac{d_2}{\alpha+1} \int g z^\alpha \bigl( Z^{\alpha+1}-1\bigr) \dvh (v-\delta(t)) \idx \\
	& ~~~~ + \dfrac{d_3}{2} \int z^\alpha Z^{\alpha+1} \abs{(v-\delta(t))_t}{2} \idx 
	 + \dfrac{d_3 g}{2} \int Z^\alpha \abs{Z_t}{2} \idx \\
	& ~~~~ + \dfrac{d_4}{2} \int z^\alpha Z^{\alpha+1} \abs{\nablah (v-\delta(t))}{2} \idx + \dfrac{d_4 g}{2} \int Z^\alpha \abs{\nablah Z}{2} \idx \\
	& ~~~~ 
	 + \dfrac{d_5}{2} \int z^\alpha Z^{\alpha+1} \abs{\nablah^2 (v-\delta(t))}{2} \idx + \dfrac{d_5 g}{2} \int \abs{\nablah^2 Z}{2} \idx \\
	& ~~~~ + \dfrac{d_7}{2} \int z^\alpha Z^{\alpha+1} \abs{(v-\delta(t))_z}{2}\idx 
	 + \dfrac{d_9}{2} \int z^\alpha Z^{\alpha+1} \abs{\nablah (v-\delta(t))_{z}}{2} \idx
	, \\
	& \mathfrak G_{n,d}(t) : = \dfrac{1}{2} \int Z \abs{\nabla (v-\delta(t))}{2} \idx + d_1 \norm{Z_t}{\Lnorm{2}}^2 + \dfrac{d_2}{2} \int z^\alpha Z^\alpha \abs{(v-\delta(t))_t}{2} \idx \\
	& ~~~~ + \dfrac{d_3}{2} \int Z \abs{\nabla (v-\delta(t))_t}{2} \idx + \dfrac{d_4}{2}\int Z \abs{\nabla \nablah (v-\delta(t))}{2} \idx \\
	& ~~~~ + \dfrac{d_5}{2}\int Z\abs{\nabla \nablah^2 (v-\delta(t))}{2}\idx 
	 + d_6\norm{\nablah Z}{\Lnorm{2}}^2 + \dfrac{d_7}{4} \int Z\abs{\nabla v_{z}}{2} \idx \\
	& ~~~~ + d_8 \norm{\nablah^2 Z}{\Lnorm{2}}^2 
	 + \dfrac{d_9}{4} \int Z \abs{\nabla \nablah (v-\delta(t))_z}{2} \idx,
\end{align*}
satisfy
\begin{equation*}
	\dfrac{d}{dt} \mathfrak E_{n,d}(t)  + \mathfrak G_{n,d}(t) \leq \mathcal H(\mathcal E_{n}(t)) \mathfrak G_{n,d}(t).
\end{equation*}
Then similar arguments as in Proposition \refeq{lm:equal-of-energies}, one can choose $d_i$'s($i=1,2\cdots 9$) such that there is a constant $ C \in (0,\infty) $ such that
\begin{equation*}
	C^{-1} \mathcal E_n(t) \leq \mathfrak E_{n,d}(t) \leq C\mathcal E_n(t),
\end{equation*}
and similar Poincar\'e inequality in 
Proposition \refeq{lm:diss-dzzz-v} and Lemma \refeq{lm:poincare-ineq} hold for $ v $ replaced by $ v - \delta(t) $. Therefore, for $ \mathcal E_n(t) $ small enough, we arrive at
\begin{equation}
	\begin{gathered}
	\dfrac{d}{dt}\mathfrak E_{n,d}(t) + C_{n,1}\mathfrak E_d(t) \leq \dfrac{d}{dt} \mathfrak E_{n,d}(t) + \dfrac{1}{2} \mathfrak G_{n,d}(t) \\
	+ d_{10} \norm{z^{\alpha/2}(v-\delta(t))}{L^2}^2 \leq 0,
	\end{gathered}
\end{equation}
for some positive constants $ d_{10} $ and $ C_{n,1} <\infty $. Therefore, one has that
\begin{equation}
	\mathcal E_n(t) \leq C \mathfrak E_{n,d}(t) \leq C e^{-C_{n,1}t} \mathfrak E_{n,d}(0).
\end{equation}
Moreover, from \eqref{eq:momentum-non-conserv}, one has that
\begin{equation}
	|\delta'(t)| \leq \mathcal E_n(t) \leq C e^{-C_{n,1}t}\mathfrak E_{n,d}(0).
\end{equation}
Consequently, it follows that
\begin{equation}
	| \delta(t) - \delta_\infty | = |\int_t^\infty \delta'(s) \,ds| \leq \dfrac{C}{C_{n,1}} e^{-C_{n,1}t}\mathfrak E_{n,d}(0).
\end{equation}
Moreover, Proposition \ref{prop:H^2-of-v} and, in particular \eqref{apriori:H^2-v}, still hold with $ v $ and $ \mathcal E(t) $ replaced by $ v-\delta(t) $ and $ \mathcal E_n(t) $, respectively.

Therefore, we have proved the following theorem:
\begin{theorem}[Asymptotic Stability for the Non-conservative System]\label{thm:stability-non-conserv} Suppose that in addition to the conditions given in Theorem \ref{thm:global-non-conserv}, $ 0 < \alpha < 3 $(or equivalently $ \gamma > \frac{4}{3}$). 
Then there are constants $ \varepsilon ''' \in (0,1) $ small enough and $ C_{n,1}, C_{n,2} \in (0,\infty) $ such that if $ \mathcal E_{n,0} < \varepsilon''' $, we have
\begin{equation}
	\mathcal E_{n}(t) \leq e^{-C_{n,1}}C_{n,2} \mathcal E_{n,0}.
\end{equation}
Moreover, there exists $ \delta_\infty $ as defined in \eqref{lim:momentum-non-conserv} such that
\begin{equation}
	|Z-1| + |v-\delta_\infty| \leq \norm{Z-1}{H^2} + \norm{v-\delta_\infty}{H^2} \leq C_ne^{-C_nt},
\end{equation}
for some constant $ C_n \in (0,\infty ) $. 
	
\end{theorem}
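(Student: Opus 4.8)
The plan is to mirror the proof of Proposition \ref{prop:decay-est} (hence of Theorem \ref{thm:stability}) for the conservative system, but with $v$ replaced throughout by the drift-corrected field $v-\delta(t)$, where $\delta(t)$ is the mean momentum of \eqref{def:momentum-non-conserv}. First I would invoke Theorem \ref{thm:global-non-conserv} to obtain the unique global solution $(Z,v)$ together with its uniform-in-time \emph{a priori} bounds; in particular, by taking $\mathcal E_{n,0}$ small, $\sup_{t\ge 0}\mathcal E_n(t)$ can be made as small as needed, and the conservation of mass \eqref{mass-non-conserv} persists. Since $\delta(t)$ is spatially constant, $\nabla^k(v-\delta(t))=\nabla^k v$ for every $k\ge 1$ and $\partial_t(v-\delta(t))=v_t-\delta'(t)$, so the only genuinely new feature of the momentum equation \eqref{eq:momentum-non-conserv} for $v-\delta(t)$ is the extra forcing $-z^\alpha Z^\alpha\delta'(t)$ on the right-hand side; and from \eqref{eq:momemtum-non-conserv} one has $|\delta'(t)|\lesssim \|\nablah Z\|_{L^2}\|\nablah v\|_{L^2}\lesssim \mathcal E_n(t)$, so this forcing is of higher order.

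Next I would rerun the entire energy hierarchy of Section \ref{sec:aprior} — the basic energy identity \eqref{energy-non-conserv}, then the temporal, tangential, and vertical (normal) derivative estimates — with $v$ replaced by $v-\delta(t)$ and the modified viscosity of \eqref{FB-CPE} replaced by the constant viscosity of \eqref{FB-CPE-non-conservative}, which only simplifies the estimates since the momentum-correction terms drop out. The new term $-z^\alpha Z^\alpha\delta'(t)$ (and, after differentiating, $-z^\alpha Z^\alpha\delta''(t)$ with $\delta''$ again controlled by $\mathcal E_n$) contributes only quantities absorbable into $\mathcal H(\mathcal E_n(t))\,\mathfrak G_{n,d}(t)$, so one obtains, exactly as indicated after \eqref{energy-non-conserv}, the differential inequality $\frac{d}{dt}\mathfrak E_{n,d}(t)+\mathfrak G_{n,d}(t)\le \mathcal H(\mathcal E_n(t))\,\mathfrak G_{n,d}(t)$, with the equivalences $\mathfrak E_{n,d}\sim\mathcal E_n$ and $\mathfrak G_{n,d}\sim$ (the dissipation adapted to $v-\delta(t)$) proved as in Proposition \ref{lm:equal-of-energies}, using $\|Z^{\alpha+1}-1\|_{L^2}\lesssim\|\nablah Z\|_{L^2}$ from \eqref{mass-non-conserv}. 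The crucial point is that the Poincaré-type inequality of Lemma \ref{lm:poincare-ineq} and the $\partial_{zzz}$-estimate of Lemma \ref{lm:diss-dzzz-v} apply verbatim with $v-\delta(t)$ in place of $v$: the vanishing-mean condition $\int z^\alpha Z^{\alpha+1}(v-\delta(t))\,\idx=0$ holds by the very definition of $\delta$, so no assumption like \eqref{vanish-of-initial-momentum} is needed — this is precisely the reason for the drift correction. Absorbing a term $c_{10}\|z^{\alpha/2}(v-\delta(t))\|_{L^2}^2$ and taking $\mathcal E_n$ small then yields $\frac{d}{dt}\mathfrak E_{n,d}+C_{n,1}\mathfrak E_{n,d}\le 0$, hence $\mathcal E_n(t)\le C_{n,2}e^{-C_{n,1}t}\mathcal E_{n,0}$, with the smallness hypothesis $\mathcal E_{n,0}<\varepsilon'''$ closed by the global theory of Theorem \ref{thm:global-non-conserv}.

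Finally I would extract the convergence of the drift and assemble the pointwise statement. From $|\delta'(t)|\lesssim\mathcal E_n(t)\le C e^{-C_{n,1}t}\mathcal E_{n,0}$ the function $\delta$ is Cauchy as $t\to\infty$, so $\delta(t)\to\delta_\infty$ as in \eqref{lim:momentum-non-conserv} and $|\delta(t)-\delta_\infty|=\bigl|\int_t^\infty\delta'(s)\,ds\bigr|\le \tfrac{C}{C_{n,1}}e^{-C_{n,1}t}\mathcal E_{n,0}$. Then $\|v-\delta_\infty\|_{H^2(\Omega)}\le\|v-\delta(t)\|_{H^2(\Omega)}+|\delta(t)-\delta_\infty|$, where the first term is bounded by $C(\mathcal E_n(t)+1)\mathcal E_n(t)^{1/2}\lesssim e^{-C_n t}$ via the analogue of Proposition \ref{prop:H^2-of-v} with $v$ replaced by $v-\delta(t)$ and $\mathcal E$ by $\mathcal E_n$; together with $\|Z-1\|_{H^2(\Omega)}^2\le\mathcal E_n(t)$ and the Sobolev embedding $H^2(\Omega)\hookrightarrow L^\infty(\Omega)$ this gives $|Z-1|+|v-\delta_\infty|\le C_n e^{-C_n t}$. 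I expect the main obstacle to be bookkeeping rather than conceptual: one must verify that each of the energy estimates of Section \ref{sec:aprior} genuinely survives the substitution $v\mapsto v-\delta(t)$ and the switch to constant viscosity — in particular that the temporal-derivative estimate still closes once $\delta'$ (and $\delta''$) enter the forcing, and that every such contribution is of the form $\mathcal H(\mathcal E_n)\,\mathfrak G_{n,d}$ — and that the $\delta$-corrected Poincaré inequality is applied consistently at each place where the original argument relied on the zero-momentum normalization.
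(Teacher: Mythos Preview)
Your proposal is correct and follows essentially the same approach as the paper: substitute $v\mapsto v-\delta(t)$, rerun the energy hierarchy (with the extra forcing $-z^\alpha Z^\alpha\delta'(t)$ absorbed as higher order), invoke the Poincar\'e-type inequality using the vanishing of $\int z^\alpha Z^{\alpha+1}(v-\delta(t))\,\idx$ by the definition of $\delta$, deduce exponential decay of $\mathfrak E_{n,d}$, and finally integrate $|\delta'(t)|\lesssim\mathcal E_n(t)$ to get $\delta(t)\to\delta_\infty$ exponentially. The paper also notes (as you do) that Proposition~\ref{prop:H^2-of-v} transfers with $v$ replaced by $v-\delta(t)$ to yield the $H^2$ decay of $v-\delta_\infty$.
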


\section*{Acknowledgements}
	The research of Z.X. is partially supported by Zheng Ge Ru Foundation, Hong Kong RGC Earmarked Research Grants CUHK-14301421, CUHK-14302819, and CUHK-14300819. The research of E.S.T. has benefited from the inspiring environment of the CRC 1114 ``Scaling Cascades in Complex Systems'', Project Number 235221301, Project C06, funded by Deutsche Forschungsgemeinschaft (DFG). Moreover, the work of E.S.T. was also supported in part by the DFG Research Unit FOR 5528 on Geophysical Flows.

\bibliographystyle{plain}

\end{document}